\documentclass[10pt,reqno]{amsart} 
\usepackage{graphicx,subfig}
\usepackage{enumerate} 
\usepackage[colorlinks=true, pdfstartview=FitV,
linkcolor=cyan, citecolor=red, urlcolor=blue]{hyperref}
\usepackage{amsmath,amsfonts,latexsym,amssymb}
\usepackage[utf8]{inputenc} 
\usepackage[T1]{fontenc}
\usepackage{ae,aecompl}
                                                                                                                                                                                          
\usepackage{braket}
% package pour sync TeX et pdf sous mac : \usepackage{pdfsync}
%\usepackage{showlabels} 
\usepackage{accents}
\newcommand*{\dt}[1]{%
  \overset{\bullet}{#1}}
  \newcommand{\dep}{\Delta^+}
\newtheorem{theorem}{Theorem}[subsection]
\newtheorem{lemma}[theorem]{Lemma}
\newtheorem{proposition}[theorem]{Proposition}
\newtheorem{corollary}[theorem]{Corollary}

\newtheorem{definition}[theorem]{Definition}

%%%%% commandes%%%%%
% \newcommand{\proof}{\vskip 0.2 cm\par\noindent{\sc Proof: }}

\renewcommand{\leq}{\leqslant}
\renewcommand{\geq}{\geqslant}

\renewcommand{\epsilon}{\varepsilon}

 \newcommand{\ms }{\mathsf}
\newcommand{\mk}{\mathfrak} \newcommand{\mc }{\mathcal}

\newcommand{\End}{\operatorname{End}}

\newcommand{\Kill}[2]{\langle{{#1}|{#2}}\rangle}
\renewcommand{\d}{{\rm{d}}} 
\newcommand{\ww}{\, {\pmb\wedge}\, }
\newcommand{\VS}{\vert_\Sigma } 
\newcommand{\CK}[1]{\rho\left({#1}\right)}
\newcommand{\CG}[1]{\overline{#1}}
% \circledast \vartriangle \curlywedge
\newcommand{\T}{\ms T} 
 
\newcommand{\X}{\ms X}

\newcommand{\ad}{\operatorname{ad}}

\newcommand\defeq
{{}\mathrel{\mathop:}={}} 
\newcommand{\eqdef}{{}=\mathrel{\mathop:}{}}
\newcommand{\rk}{\operatorname{rank}} 
\newcommand{\D}{\ms D}
\newcommand{\cwdot}{\,}
\newcommand{\MinArea}{\operatorname{MinArea}}

\newcommand{\tfe}{the following equalities in $\Omega^*(\Sigma,\mc G)$}

% \newcommand{\qed}{~{\sc Q.e.d.}\vskip 0.2 cm}

%% Suite

%% Application

%%%%%%%%%%%%%%%%%%%%
\title{Cyclic surfaces and Hitchin components in rank 2}
\author{Fran\c cois LABOURIE}\address{Univ. Nice Sophia Antipolis, Laboratoire J.-A. Dieudonné, Nice}
  \thanks{The research leading to these results has received funding from the European Research Council under the {\em European Community}'s seventh Framework Programme (FP7/2007-2013)/ERC {\em grant agreement} ${\rm n^o}$ FP7-246918}

\begin{document}
\begin{abstract}
  We prove that given a Hitchin representation in a split real rank 2
  group $\ms G_0$, there exists a unique equivariant minimal surface in the
  corresponding symmetric space. As a corollary, we obtain a
  parametrisation of the Hitchin component by a Hermitian bundle over
  Teichmüller space. The proof goes through introducing holomorphic
  curves in a suitable bundle over the symmetric space of $\ms G_0$. Some partial
  extensions of the construction hold for cyclic bundles in higher
  rank.
\end{abstract}

\maketitle

\tableofcontents
\section{Introduction}

We will study in this article minimal surfaces in rank 2 symmetric
spaces. More precisely, let $\ms G_0$ be a split real simple Lie group of
rank 2 and $\ms S(\ms G_0)$ be the associated symmetric space. We will
take $\ms G_0$ to be the connected component of the isometry group of
$\ms S(\ms G_0)$. The group $\ms G_0$ is in particular
isomorphic to 
$\ms{SL}(3,\mathbb R)$, $\ms{PSp}(4,\mathbb R)$ or $\ms G_{2,0}$.

Let $\Sigma$ be a connected oriented closed surface of genus greater
than 1.  We consider {\em Hitchin representations} from $\pi_1(\Sigma)$
with values in $\ms G_0$. Recall that those are deformations of {\em Fuchsian representations}; that is, discrete faithful representations in the principal $\ms{SL}_2$ in $\ms
G_0$ (See Paragraph \ref{sec:HitchinRep} for details). The {\em
  Hitchin component} $\mc H(\Sigma,\ms G_0)$ is then the space of
Hitchin representations up to conjugation by the automorphism group of
$\ms G_0$. By Hitchin \cite{Hitchin:1992es}, the Hitchin component is a smooth manifold consisting of irreducible representations. From \cite{Labourie:2006} for $\ms{PSL}(n,\mathbb R)$ (and the split groups contained in such sharing the same principal $\ms{SL}(2,\mathbb R)$) completed by  Fock and Goncharov \cite{Fock:2006a} for the remaining cases, a
Hitchin representation is discrete faithful.

Hitchin representations have a geometric interpretation. For $\ms{PSL}(2,\mathbb R)$, Hitchin representations are monodromies of hyperbolic structures, for $\ms{PSL}(3,\mathbb R)$ they are monodromies of convex real projective structures by Choi and Goldman \cite{Choi:1993vr}, in general Guichard and Wienhard have shown they  are monodromies of geometric structures  on higher dimensional compact manifolds \cite{Guichard:2012eg} The special case of $\ms{PSL}(4,\mathbb R)$ has been described by these latter authors as convex foliated projective structures in \cite{Guichard:2008cv}.

\subsection{Minimal surfaces}
One of our two main results is the following

\begin{theorem}\label{MainA}
  Given a Hitchin representation $\delta$, there exists a unique
  $\delta$-equivariant minimal mapping from $\Sigma$ to $\ms S(\ms
  G_0)$.\end{theorem}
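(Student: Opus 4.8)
The plan is to separate existence from uniqueness, since the latter carries the real content. For existence, I would first recall that a Hitchin representation $\delta$ is irreducible, hence reductive, so the theory of equivariant harmonic maps (Corlette--Donaldson, and Labourie for the equivariant reductive case) provides, for every conformal structure $J$ on $\Sigma$, a unique $\delta$-equivariant harmonic map $f_J\colon\tilde\Sigma\to\ms S(\ms G_0)$. Such an $f_J$ is minimal precisely when it is conformal, equivalently when its Hopf differential vanishes, equivalently when $J$ is a critical point of the energy functional $E_\delta$ on Teichmüller space $\mc T(\Sigma)$. Since $\delta$ is Anosov, I expect $E_\delta$ to be proper on $\mc T(\Sigma)$, so it attains a minimum and a $\delta$-equivariant minimal map exists.

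For uniqueness, the plan is to encode minimal surfaces as \emph{cyclic surfaces}. Given $(J,f_J)$ with $f_J$ conformal harmonic, the pair determines a Higgs bundle over $X=(\Sigma,J)$, and conformality forces the Higgs field to be concentrated in the extreme graded pieces of $\mk g_{\mathbb C}$ for the principal grading; the quadratic differential in this data is exactly the Hopf differential and hence vanishes, leaving $J$ together with holomorphic differentials of the higher exponent degrees. The osculating frame of $f_J$ then lifts it to a holomorphic curve in a bundle over $\ms S(\ms G_0)$ whose fibre is a complex flag variety, tangent to the distribution cut out by the cyclic grading: this is the cyclic surface. I would organise these into a space $\mc B$, fibering over $\mc T(\Sigma)$ with vector-space fibres (so $\mc B$ is connected), equipped with a monodromy map $\mathrm{hol}\colon\mc B\to\mc H(\Sigma,\ms G_0)$ whose fibre over $\delta$ is exactly the set of $\delta$-equivariant minimal maps.

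The crux, and the step I expect to be hardest, is to prove that $\mathrm{hol}$ is a local diffeomorphism, i.e.\ the infinitesimal rigidity of cyclic surfaces: any deformation of a cyclic surface fixing the monodromy is trivial. I would linearise the cyclic structure equations to obtain an elliptic operator on $\Sigma$ and then establish a vanishing theorem for it. The essential point is a Bochner--Weitzenböck identity in which the cyclic symmetry of the Higgs field forces the zeroth-order curvature term to have a definite sign; integrating by parts over the closed surface $\Sigma$ (or invoking a maximum principle) then kills the infinitesimal deformation. This positivity, which comes entirely from the special algebraic form of the cyclic datum, is the technical heart of the argument.

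Finally, I would show $\mathrm{hol}$ is proper, again using the Anosov property to control sequences of cyclic surfaces with bounded monodromy. A proper local diffeomorphism is a covering map, and since the Hitchin component is connected and contractible (a cell, by Hitchin) while $\mc B$ is connected and nonempty, the covering must be a diffeomorphism. In particular $\mathrm{hol}$ is a bijection: surjectivity re-proves existence, while injectivity on each fibre is exactly the assertion that the $\delta$-equivariant minimal map is unique.
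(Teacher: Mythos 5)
Your proposal follows the paper's strategy very closely for most of its length: existence from the properness of the energy functional $E_\delta$ on Teichm\"uller space (this is exactly the paper's use of \cite{Labourie:2005a}), the lift of the minimal surface to a cyclic surface in a flag-variety bundle over the symmetric space (the paper's space $\X$ of Hitchin--Kostant quadruples, Theorem \ref{Higgs2cyc}), and infinitesimal rigidity proved by a Bochner-type integration by parts (Proposition \ref{IFI0}); in rank $2$ the immersion statement of Theorem \ref{IFI0co} is indeed a local diffeomorphism statement because the dimensions of $\mc E_{m_2+1}$ and $\mc H(\Sigma,\ms G_0)$ agree. Where you diverge is the endgame, and that is where there is a genuine gap: you conclude by asserting that the monodromy map $\mathrm{hol}\colon\mc B\to\mc H(\Sigma,\ms G_0)$ is \emph{proper}, ``using the Anosov property to control sequences of cyclic surfaces with bounded monodromy,'' and then invoke covering-space theory. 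Properness of $\mathrm{hol}$ is not a soft consequence of the Anosov property. The fibre of $\mathrm{hol}$ over $\delta$ consists of \emph{all} critical points of $E_\delta$, not only minima, so properness of $E_\delta$ gives no a priori upper bound on the energy (equivalently the area) of the surfaces in a fibre, and no compactness. The paper's own introduction points at the obstruction: quasi-Fuchsian representations are Anosov and quasi-isometrically embedded, yet admit several, even arbitrarily many, equivariant minimal surfaces \cite{Anderson:1983cm,Huang:2012tj}; for such representations the analogous monodromy map is not even injective, so no argument relying only on coarse properties of $\delta$ can establish the properness you need. Any proof of properness of $\mathrm{hol}$ must use the cyclic structure in an essential way, and you give no such argument.

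The paper is built precisely to avoid this step. Instead of making $\mathrm{hol}$ proper, it works on $P=\mc T\times\mc H(\Sigma,\ms G_0)$ with the energy function $F$, whose restriction $F_\delta$ to each fibre is proper by \cite{Labourie:2005a}, and whose critical locus $N$ is identified with the (connected) image of the cyclic-bundle space, transverse to the fibres by Theorem \ref{IFI0co}. The key device is Theorem \ref{calculus}: an open--closed argument along the connected manifold $N$ (Lemmas \ref{C11} and \ref{C12}, the latter a delicate gradient-flow argument) shows that \emph{every} critical point of every $F_\delta$ is a local minimum, after which elementary Morse theory for a proper positive function on the connected fibre $\mc T\times\{\delta\}$ yields a unique critical point. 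Uniqueness and existence follow simultaneously, and properness of $\mathrm{hol}$ is only obtained \emph{a posteriori}, once it is known to be a bijection. To repair your proof you would either have to supply an independent compactness theorem for cyclic surfaces with converging monodromy --- which is essentially as hard as the theorem itself --- or replace your covering-space argument by the paper's fibrewise Morse-theoretic one.
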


The existence was proved by the author in \cite{Labourie:2005a}
without any assumption on the rank. The existence only relies on the fact that Hitchin representations are quasi isometric embeddings. Notice that this is not enough to guarantee uniqueness: for some quasifuchsian representations, two minimal surfaces have been constructed by Michael Anderson \cite{Anderson:1983cm} and arbitrary many by Zheng Huang and Biao Wang \cite{Huang:2012tj}.

The case of $\ms{SL}(3,\mathbb R)$ was obtained by the author in
\cite{Labourie:2006b}. The new cases are thus $\ms{Sp}(4,\mathbb R)$
and $\ms G_{2,0}$, however the proof is general. Interestingly enough,  the theorem is also valid when $\ms G_0$ is semisimple, that is $\ms G_0=\ms{SL}(2,\mathbb R)\times\ms{SL}(2,\mathbb R)$. This was done  by R. Schoen in \cite{Schoen:1993td} and see also
\cite{Bonsante:2010gk} for generalisations. 

\subsection{Parametrisation of Hitchin components}
Using the {\em Hitchin pa\-ra\-metri\-sation} of the space of minimal surfaces
\cite{Hitchin:1992es,Labourie:2005a} one obtains equivalently the following 
Theorem

\begin{theorem}\label{MainB}
  There exists an analytic diffeomorphism, equivariant under the
  mapping class group action, from the Hitchin component $\mc
  H(\Sigma,\ms G_0)$ for $\ms G_0$, when $\ms G_0$ is of real rank 2,
  to the space of pairs $(J,Q)$ where $J$ is a complex structure on
  $\Sigma$ and $Q$ a holomorphic differential with respect to $J$ of
  degree $\frac{\dim(\ms G_0)-2}{2}$.
\end{theorem}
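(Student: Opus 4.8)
The plan is to deduce Theorem \ref{MainB} from Theorem \ref{MainA} by feeding the unique equivariant minimal surface into the Hitchin parametrisation. First I would recall the precise form of that parametrisation: for a \emph{fixed} complex structure $J$ on $\Sigma$, Hitchin's section identifies $\mc H(\Sigma,\ms G_0)$ with the Hitchin base $\bigoplus_i H^0(\Sigma, K_J^{d_i})$, where the $d_i=m_i+1$ run over the exponents $m_i$ of $\ms G_0$; in rank $2$ there are exactly two summands, of degree $2$ and of degree $d\defeq\frac{\dim(\ms G_0)-2}{2}$ (degrees $(2,3)$ for $\ms{SL}(3,\mathbb R)$, $(2,4)$ for $\ms{PSp}(4,\mathbb R)$, $(2,6)$ for $\ms G_{2,0}$). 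Under this identification the harmonic metric produced by Hitchin's equations yields a $\delta$-equivariant harmonic map $f\colon(\tilde\Sigma,J)\to\ms S(\ms G_0)$ whose Hopf differential is, up to a nonzero factor, the degree-$2$ component $q_2$ of the Hitchin data; in particular $f$ is conformal — equivalently a (branched) minimal immersion — if and only if $q_2=0$.

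Second, I would define the map $\Phi\colon\mc H(\Sigma,\ms G_0)\to\{(J,Q)\}$. Given $\delta$, Theorem \ref{MainA} furnishes a \emph{unique} $\delta$-equivariant minimal surface; let $J_\delta$ be the complex structure it induces on $\Sigma$ by conformality. Reading $\delta$ through the Hitchin parametrisation based at $J_\delta$, the quadratic term vanishes by the previous paragraph, so $\delta$ is recorded by a single holomorphic differential $Q_\delta\in H^0(\Sigma,K_{J_\delta}^{d})$, and I set $\Phi(\delta)\defeq(J_\delta,Q_\delta)$. The inverse $\Psi$ runs Hitchin's section backwards: to $(J,Q)$ associate the representation $\Psi(J,Q)$ with Hitchin data $(q_2,Q)=(0,Q)$ over $(\Sigma,J)$. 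Because $q_2=0$, the associated harmonic map is conformal, hence minimal; by the \emph{uniqueness} clause of Theorem \ref{MainA} it is the minimal surface of $\Psi(J,Q)$, so that $J_{\Psi(J,Q)}=J$ and $Q_{\Psi(J,Q)}=Q$. This gives $\Phi\circ\Psi=\mathrm{id}$, while $\Psi\circ\Phi=\mathrm{id}$ is just the statement that $(0,Q_\delta)$ is the Hitchin data of $\delta$ based at $J_\delta$. Hence $\Phi$ is a bijection, and I would record the consistency check $6(g-1)+2(2d-1)(g-1)=(2g-2)\dim(\ms G_0)=\dim\mc H$.

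Third, I would verify the structural properties. Naturality of all the ingredients — the minimal surface, the induced conformal structure, and Hitchin's section — under $\mathrm{Diff}^+(\Sigma)$ gives mapping-class-group equivariance once one passes to the appropriate quotients: conjugation on the representation side, and isotopy on the $(J,Q)$ side, so that $J$ becomes a point of Teichmüller space and the target is the bundle of holomorphic $d$-differentials (the Hermitian bundle of the abstract). Analyticity of $\Phi$ and $\Psi$ would follow from the real-analyticity of Hitchin's section together with analytic dependence of the solution of the minimal-surface system on $\delta$, the latter resting on non-degeneracy of the linearised problem (an analytic implicit function theorem argument), a fact closely tied to the uniqueness in Theorem \ref{MainA}. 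This upgrades the smooth bijection to an analytic diffeomorphism.

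The main obstacle lies not in this deduction but in Theorem \ref{MainA}: the entire geometric content — in particular the well-definedness and injectivity of $\Phi$ — is the \emph{uniqueness} of the equivariant minimal surface. Once uniqueness is granted, Theorem \ref{MainB} is essentially a change of coordinates, in which the free quadratic differential of Hitchin's $J$-dependent parametrisation is traded for the freedom to move $J$, the minimal surface being precisely the device that selects the $J$ for which that quadratic differential vanishes. The only genuinely technical residue is the analyticity of the inverse, i.e. controlling how $J_\delta$ varies analytically with $\delta$.
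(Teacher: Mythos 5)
Your reduction of the \emph{bijectivity} of the correspondence to Theorem \ref{MainA} is essentially sound, and it is in fact how the paper itself presents the relation between the two statements (``using the Hitchin parametrisation \dots one obtains equivalently''): granting the unique equivariant minimal surface, its induced conformal structure $J_\delta$, the uniqueness of the equivariant harmonic map for a fixed $(J,\delta)$, and the fact that the Hopf differential of the Hitchin harmonic map is a nonzero multiple of the quadratic component of the Hitchin data, one gets $\Phi\circ\Psi=\mathrm{id}$ and $\Psi\circ\Phi=\mathrm{id}$ exactly as you write, and your dimension count is correct.

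The genuine gap is in your last step, where the bijection is upgraded to an \emph{analytic diffeomorphism}. An analytic bijection between manifolds of equal dimension need not have a differentiable inverse (think of $x\mapsto x^3$); what is needed is injectivity of $\T\Psi$, i.e.\ that the Hitchin map restricted to the locus where the quadratic differential vanishes --- the space $\mc E_{m_\ell+1}$ of cyclic bundles --- is an immersion. You assert that this non-degeneracy of the linearised problem follows from an implicit-function-theorem argument ``closely tied to the uniqueness in Theorem \ref{MainA}.'' This is false as stated: uniqueness of a critical point does not imply non-degeneracy of the linearisation (a proper function can have a unique but degenerate critical point), and no uniqueness statement will produce the required transversality. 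In the paper this immersion statement is precisely Theorem \ref{Cyclicinjects} (equivalently Proposition \ref{IFI0}), the infinitesimal rigidity of cyclic surfaces, whose proof via the Pfaffian system and B\"ochner-type formulas occupies Sections \ref{sec:CS} and \ref{sec:IR} and is the technical heart of the paper; it is logically independent of Theorem \ref{MainA}. Indeed the paper's architecture is the reverse of yours: it never proves Theorem \ref{MainA} first, but combines properness of the energy functional, the infinitesimal rigidity, and the differential-calculus Theorem \ref{calculus} to show that $(J,{\rm q})\mapsto\Psi(J,{\rm q})$ is a diffeomorphism, from which \emph{both} Theorem \ref{MainA} and Theorem \ref{MainB} are read off as corollaries. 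So your proposal, read as a proof of Theorem \ref{MainB}, leaves its entire content beyond Theorem \ref{MainA} --- the diffeomorphism property --- resting on a claim that does not hold; to close the gap you must invoke (or reprove) Theorem \ref{Cyclicinjects}.
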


For $\ms{SL}(3,\mathbb R)$, this corollary was obtained by Loftin in \cite{Loftin:2001} and the author
in \cite{Labourie:2006b} (announced in \cite{Labourie:BzdlFk3Q}). Theorem \ref{MainB} holds for compact surfaces; in the non compact case, the natural question is to extend the remarkable results that have
been obtained in the case of $\ms{SL}(3,\mathbb R)$, on one hand for
polynomial cubic differentials (Dumas and Wolf \cite{Dumas:2014wz}) and on
the other hand for the unit disk (Benoist and Hulin
\cite{Benoist:2013vx}. Finally, Theorem \ref{MainB} is conjectured to also hold for $\ms{SL}(2,\mathbb R)\times\ms{SL}(2,\mathbb R)$  by slightly different techniques \cite{C-L}.
\subsection{Kähler structures}
Using the theory of positive bundles and the work of Bo Berndtsson
\cite{Berndtsson:2009hr}, we extend a result obtained for cubic holomorphic differentials by Inkang Kim and
Genkai Zhang \cite{Kim:2013wc}  to
get
\begin{proposition}
  Let ${\rm m}=(m_1,\ldots,m_p)$ be a $p$-tuple of integers greater than
  1. Let $\mc E({\rm m})$ be the holomorphic vector bundle over
  Teichmüller space whose fibre at a Riemann surface $\Sigma$ is
\begin{align}
\mc E({\rm m})_\Sigma
&\defeq\bigoplus_{i=1,\ldots,p}H^0\left(\Sigma,{\mc K}^{m_i}\right),
\end{align}
where $\mc K$ is the canonical bundle of $\Sigma$.

Then the dual holomorphic bundle  ${\mc E({\rm m})^*_\Sigma}$ carries a $(p-1)$-dimensional family of mapping class group invariant Kähler
metrics, linear along the fibres and whose restriction to the zero
section is the Weil--Petersson metric.
\end{proposition}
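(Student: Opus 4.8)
The plan is to realise the fibre $\mc E({\rm m})_\Sigma=\bigoplus_i H^0(\Sigma,\mc K^{m_i})$ as the fibre of a direct image bundle over Teichm\"uller space and to feed it into Berndtsson's positivity theorem. Let $\pi\colon\mc C\to\mc T$ be the universal curve over Teichm\"uller space $\mc T$, with relative canonical bundle $\mc K_{\mc C/\mc T}$, so that $\pi_*\!\left(\mc K_{\mc C/\mc T}^{\,m_i}\right)$ is a holomorphic vector bundle $\mc E_i$ with fibre $H^0(\Sigma,\mc K^{m_i})$; its rank $(2m_i-1)(g-1)$ is constant because $H^1(\Sigma,\mc K^{m_i})=0$ for $m_i\ge 2$, so $\mc E_i$ is genuinely a bundle and $\mc E({\rm m})=\bigoplus_i\mc E_i$. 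First I would uniformise: the fibrewise hyperbolic metric induces a Hermitian metric on $\mc K_{\mc C/\mc T}$ whose Chern curvature is a $(1,1)$-form on the total space $\mc C$ that is semi-positive and strictly positive along the fibres. Writing $\mc K^{m_i}=\mc K_{\mc C/\mc T}\otimes L_i$ with $L_i=\mc K_{\mc C/\mc T}^{\,m_i-1}$, the bundle $L_i$ then carries a metric of semi-positive curvature (positive along fibres, as $m_i-1\ge 1$). Berndtsson's theorem applied to $\mc E_i=\pi_*(\mc K_{\mc C/\mc T}\otimes L_i)$ yields that the $L^2$-metric $h_i$ on $\mc E_i$ is Nakano semi-positive, and in fact Nakano positive because the Kodaira--Spencer map of the universal family is injective.

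Next I would pass to the dual. Nakano positivity of $\mc E_i$ makes its dual negative, whence the fibre-norm $\xi_i\mapsto\|\xi_i\|^2_{h_i^*}$ is plurisubharmonic on the total space of $\mc E_i^*$ and restricts to a flat Hermitian form on each fibre. On the total space of $\mc E({\rm m})^*=\bigoplus_i\mc E_i^*$, with fibre coordinates $\xi=(\xi_1,\dots,\xi_p)$, I would then set, for positive weights $t=(t_1,\dots,t_p)$,
\[
\Omega_t \;=\; \rho_t \;+\; i\partial\bar\partial\Big(\textstyle\sum_i t_i\,\|\xi_i\|^2_{h_i^*}\Big),
\qquad
\rho_t=\textstyle\sum_i t_i\,c_1\big(\det\mc E_i,\det h_i\big),
\]
where the base term $\rho_t$ is the weighted sum of the first Chern forms of the determinant line bundles with their induced $L^2$-metrics. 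The form $\Omega_t$ is K\"ahler: in the vertical directions it is positive definite because the fibre block equals $\sum_i t_i\,h_i^*>0$, while $\rho_t$ controls the horizontal directions; being a sum of a semi-positive form (from the negativity of the $\mc E_i^*$) and a horizontally positive form with vertical kernel, the two have trivial common kernel, so $\Omega_t>0$. By construction $\Omega_t$ is linear along the fibres, its restriction to each fibre being the flat Hermitian metric $\sum_i t_i\,h_i^*$.

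Finally I would identify the restriction to the zero section and count parameters. On the zero section the $i\partial\bar\partial$-term has vanishing horizontal Hessian, so $\Omega_t$ restricts to $\rho_t|_{\mc T}$. By the Wolpert--Zograf--Takhtajan curvature formula for the $L^2$ (Quillen) metric on $\det\pi_*\mc K^{m_i}$, each $c_1(\det\mc E_i,\det h_i)$ equals a strictly positive multiple $c_i\,\omega_{WP}$ of the Weil--Petersson form, so the zero-section restriction is $(\sum_i t_i c_i)\,\omega_{WP}$. Imposing the single normalisation $\sum_i t_i c_i=1$ cuts the positive orthant of weights down to a $(p-1)$-dimensional family, each member giving a K\"ahler metric whose restriction to the zero section is exactly $\omega_{WP}$. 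Invariance under the mapping class group is automatic, since the universal curve, the uniformising metric, the induced $L^2$-metrics and $\omega_{WP}$ are all canonical, hence $\mathrm{MCG}(\Sigma)$-equivariant.

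The main obstacle I anticipate is the positivity input for Berndtsson's theorem: establishing that the curvature of the uniformising metric on $\mc K_{\mc C/\mc T}$ is semi-positive on the whole total space $\mc C$ (not merely along the fibres) and upgrading Nakano semi-positivity of the $L^2$-metrics to strict positivity, which is what guarantees that $\Omega_t$ is a genuine (positive definite) K\"ahler metric rather than a degenerate one. A secondary but essential point is pinning down the constants $c_i$ through the local index formula, so as to obtain precisely the stated $(p-1)$-dimensional family normalised to the Weil--Petersson metric on the zero section.
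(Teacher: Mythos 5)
Your proposal follows the paper's architecture quite closely: realize $\mc E({\rm m})$ as a direct image of powers of the relative canonical bundle of the universal curve, apply Berndtsson's theorem together with the positivity of the fibrewise hyperbolic metric to get Nakano positivity of the $L^2$-metric, dualize to get Griffiths negativity, and build the Kähler form as $i\partial\overline{\partial}$ of a weighted fibre norm plus a term pulled back from the base. Two remarks on points you flag as obstacles: the positivity of the hyperbolic metric on $\mc K_{\mc C/\mc T}$ over the \emph{whole} total space is not an open issue -- it is exactly Lemma 5.8 of Wolpert's 1986 paper, which is what the paper cites -- and Berndtsson's theorem, in the strict form the paper quotes, then gives strict Nakano positivity directly, so no Kodaira--Spencer upgrade is needed.

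The genuine gap is in your base term. You set $\rho_t=\sum_i t_i\,c_1(\det\mc E_i,\det h_i)$ and assert that each $c_1(\det\mc E_i,\det h_i)$ equals $c_i\,\omega_{WP}$ by ``the Wolpert--Zograf--Takhtajan curvature formula for the $L^2$ (Quillen) metric''. This conflates two different metrics on $\det\pi_*\mc K^{m_i}$. The local index theorem of Zograf--Takhtajan (and Bismut--Gillet--Soul\'e) says that the curvature of the \emph{Quillen} metric is $\frac{6m_i^2-6m_i+1}{12\pi^2}\,\omega_{WP}$; the Quillen and $L^2$ metrics differ by the analytic torsion $\det\Delta_{m_i}$, so the curvature of the $L^2$ metric -- which is the metric your bundles actually carry -- is that multiple of $\omega_{WP}$ \emph{plus} a term proportional to $\partial\overline{\partial}\log\det\Delta_{m_i}$, and there is no theorem making this correction vanish or be itself a multiple of $\omega_{WP}$. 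Consequently the restriction of your $\Omega_t$ to the zero section, which is exactly $\rho_t\vert_{\mc T}$, is not a multiple of the Weil--Petersson form, and the final clause of the proposition -- the one your normalisation $\sum_i t_ic_i=1$ was designed to achieve -- is not established. The repair is to discard the determinant bundles altogether and do what the paper does: take the base term to be $\pi^*\omega_{WP}$ itself, the Weil--Petersson form being Kähler and mapping class group invariant. Then $\Omega_t=\pi^*\omega_{WP}+i\partial\overline{\partial}\bigl(\sum_i t_i\Vert\xi_i\Vert^2_{h_i^*}\bigr)$ is Kähler by your own vertical/horizontal argument, is linear along the fibres, and restricts to $\omega_{WP}$ on the zero section for \emph{every} choice of weights, since the fibre term vanishes to second order there; the free weights $t\in(\mathbb R_{>0})^p$ then give the family (of dimension $p$, containing the stated $(p-1)$-dimensional one), with the zero section totally geodesic as the fixed locus of the isometry $\xi\mapsto-\xi$.
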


The metric and its properties are given explicitly in Section
\ref{sec:K}. Using the (real) isomorphism between the vector bundles $\mc E(m)_\Sigma$ and $\mc E(m)^*_\Sigma$ given by the Petersson metric on the fibres, one gets:

\begin{corollary}
  The Hitchin component $\mc H(\Sigma,\ms G_0)$, when $\ms G_0$ is of
  real rank 2, carries a complex structure and a $1$-dimensional family of compatible  mapping class group invariant Kähler
  metrics for which the Fuchsian locus is totally geodesic and whose
  restriction to the Fuchsian locus is the Weil--Petersson metric.
\end{corollary}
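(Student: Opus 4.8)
The plan is to read the corollary off Theorem \ref{MainB} together with the Proposition, by presenting the Hitchin component as the total space of a holomorphic bundle over Teichmüller space. Writing $m=\tfrac{\dim(\ms G_0)-2}{2}$, I would first invoke Theorem \ref{MainB} to identify $\mc H(\Sigma,\ms G_0)$, real-analytically and equivariantly for the mapping class group, with the total space of the bundle $\mc E(m)$ whose fibre over $J$ is $H^0(\Sigma_J,\mc K^m)$. Teichmüller space carries its Teichmüller complex structure and the bundle of holomorphic $m$-differentials depends holomorphically on $J$, so the total space of $\mc E(m)$ is a complex manifold; transporting this structure through the diffeomorphism of Theorem \ref{MainB} defines the complex structure on $\mc H(\Sigma,\ms G_0)$, and its naturality makes it mapping class group invariant.

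To obtain the metrics I would feed this bundle into the Proposition. The fibrewise Petersson $L^2$ pairing gives a mapping class group equivariant, fibrewise conjugate-linear isomorphism $\mc E(m)\to\mc E(m)^*$, under which $\mc H(\Sigma,\ms G_0)$ becomes the total space of $\mc E(m)^*$; I would then pull back the Kähler metrics supplied by the Proposition. Concretely each metric has the shape $\pi^*\omega_{\mathrm{WP}}+t\,\omega_{\mathrm{Pet}}$, where $\omega_{\mathrm{WP}}$ is the Weil--Petersson form on the base, $\omega_{\mathrm{Pet}}$ is the positive $(1,1)$-form produced by the Petersson metric on the fibres, and $t>0$; Berndtsson positivity of the direct-image bundle, which is the engine of the Proposition, is exactly what guarantees that these remain Kähler. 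The parameter $t$ gives the asserted one-dimensional family, and since $\omega_{\mathrm{Pet}}$ vanishes to second order along the zero section every member restricts there to $\omega_{\mathrm{WP}}$. One small point to check is that the complex structure used in the Proposition agrees, up to conjugation, with the one transported from $\mc E(m)$; this is harmless, a Kähler metric for $J$ being Kähler for $-J$ with the same underlying Riemannian metric.

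For the last two properties I would argue geometrically. Under Theorem \ref{MainB} the Fuchsian representations are precisely those with vanishing differential, so the Fuchsian locus is the zero section, carrying the Weil--Petersson metric by the previous paragraph. To see it is totally geodesic I would use a symmetry argument: each metric depends on the fibre variable only through its Petersson norm, so the circle action $(J,Q)\mapsto(J,e^{\sqrt{-1}\theta}Q)$ acts by holomorphic isometries of every member of the family, and its fixed-point set is exactly the zero section. As the fixed-point set of a group of isometries is totally geodesic, the Fuchsian locus is totally geodesic, and the metric it induces is the Weil--Petersson metric.

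The step I expect to be the main obstacle is the one carried out in Section \ref{sec:K}: verifying that the Proposition genuinely applies to this bundle and, above all, that the restriction to the zero section is exactly the Weil--Petersson metric rather than a conformal multiple of it. This rests on Berndtsson's positivity together with a careful matching of the normalisation of the fibrewise Petersson metric against that of the Weil--Petersson form, and it is also where the rank-2 hypothesis and the exact degree $m=\tfrac{\dim(\ms G_0)-2}{2}$ enter, through the fact that the minimal surface of a rank-2 Hitchin representation contributes a single holomorphic differential of that degree.
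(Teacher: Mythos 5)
Your overall route is the paper's own: identify $\mc H(\Sigma,\ms G_0)$ with the total space of $\mc E(m)$ via Theorem \ref{MainB}, transport the Kähler metrics of the Proposition through the fibrewise Petersson isomorphism $\mc E(m)\to\mc E(m)^*$, obtain the Weil--Petersson restriction because the fibrewise Hessian term vanishes on directions tangent to the zero section, and obtain total geodesy of the Fuchsian locus from the fixed-point set of fibrewise rotation isometries (the paper uses $u\mapsto -u$, you use the full circle; same argument). The mapping class group invariance and the one-parameter family also match Section \ref{sec:K}.

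The genuine gap is your treatment of the complex structure. You declare it to be the one inherited from Theorem \ref{MainB}, i.e.\ the complex structure of the total space of $\mc E(m)$, and you dismiss the mismatch with the Proposition as ``harmless, up to conjugation''. This is precisely the point the paper flags: \emph{the complex structure of $\mc H(\Sigma,\ms G_0)$ in the corollary is not the one inherited from the isomorphism given by Theorem \ref{MainB}}, and your proposed fix is false. The Petersson isomorphism is conjugate-linear along the fibres but covers the identity on Teichmüller space; hence the complex structure it transports from $\mc E(m)^*$ --- equivalently the holomorphic structure on $\overline{\mc E({\rm m})}$ of Proposition \ref{prop:Ka} --- agrees with the Theorem \ref{MainB} structure in the base directions and is conjugate to it only in the fibre directions. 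It is therefore neither $J_{\mathrm{MainB}}$ nor $-J_{\mathrm{MainB}}$, so the principle ``Kähler for $J$ implies Kähler for $-J$'' does not apply. Concretely, the transported forms are compatible with neither sign: restricted to a fibre (a complex submanifold for $J_{\mathrm{MainB}}$), the transported form is the pullback of a positive $(1,1)$-form under a conjugate-linear map, hence \emph{negative} for $J_{\mathrm{MainB}}$; restricted to the zero section it equals $\omega_{\mathrm{WP}}$, which is positive for $J_{\mathrm{MainB}}$ and hence \emph{negative} for $-J_{\mathrm{MainB}}$. So with your choice of complex structure the asserted compatibility fails along the fibres, and with its opposite it fails along the Fuchsian locus. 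The repair is exactly what the paper does: take the complex structure of the corollary to be the one transported from $\mc E({\rm m})^*\cong\overline{\mc E({\rm m})}$ (still mapping class group invariant, since the Petersson isomorphism is natural); the corollary as stated only asks for \emph{some} complex structure, and with this choice the rest of your argument --- positivity via Berndtsson and Griffiths negativity (Propositions \ref{pro:neg} and \ref{pro:KZ}), the scaling family, and the zero-section computations --- goes through verbatim.
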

Observe that, in this corollary, the complex structure of $\mc H(\Sigma,\ms G_0)$ is not the one inherited from the isomorphism given by Theorem \ref{MainB}. The relation of these metrics and complex structures with other
objects such as the Atiyah--Bott--Goldman symplectic form
\cite{Atiyah:1983,Goldman:1984}, the pressure metric of
\cite{Bridgeman:2013tn} or the metric exhibited by Qiongling Li
\cite{Li:2013to} for convex projective structures is rather
mysterious. However the relation with the pressure metric along the Fuchsian locus has been made explicit by Wentworth and the author in \cite{Labourie:2015wz}.
\subsection{Area rigidity} Let $\mathcal T(\Sigma)$ be the Teichmüller space of $\Sigma$. 
For a Hitchin representation $\delta$, let us define as in \cite{Labourie:2005a}
$$
{\MinArea }(\delta)\defeq\inf \{e_\delta(J)\mid J\in\mathcal T (\Sigma)\},
$$
where $e_\delta(J)$ is the energy of the unique $\delta$-equivariant harmonic map from the universal cover of $\Sigma$ equipped with $J$ to $\mathsf S(\ms G_0)$, equipped with the symmetric metric normalised so that  the principal hyperbolic plane has curvature $-1$. Motivated by a question of Anna Wienhard, we obtain
\begin{theorem}{\sc [Area Rigidity]}
The following inequality holds
\begin{align}
{\MinArea }(\delta)\geq -2\pi\chi(\Sigma).
\end{align}
Moreover, the equality holds if and only if $\delta$ is a fuchsian representation.
\end{theorem}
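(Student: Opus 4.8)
\medskip
\noindent\emph{Proposed proof.}
The plan is to recognise $\MinArea(\delta)$ as the area of the minimal surface produced by Theorem \ref{MainA}, and then to bound that area from below by a pointwise curvature comparison together with Gauss--Bonnet. First I would check that the infimum defining $\MinArea(\delta)$ is attained. Because a Hitchin representation is a quasi-isometric embedding, the energy functional $J\mapsto e_\delta(J)$ is proper on $\mathcal T(\Sigma)$ (see \cite{Labourie:2005a}), so its infimum is realised at some $J_0$. A critical point of the energy over Teichmüller space is exactly a point at which the equivariant harmonic map is weakly conformal, that is a (possibly branched) minimal immersion; by the uniqueness in Theorem \ref{MainA} this critical point is the unique minimal surface. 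Since energy equals area for a conformal map, this identifies
\[ \MinArea(\delta)=\operatorname{Area}(\Sigma,g), \]
where $g$ is the metric induced on $\Sigma$ by the minimal immersion into $\ms S(\ms G_0)$, the latter carrying the metric normalised so that the principal copy of $\hh$ has curvature $-1$.

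The analytic heart, and the step I expect to be the main obstacle, is a pointwise lower bound $K_g\geq -1$ on the Gaussian curvature of $g$. The naive input is insufficient: the Gauss equation for a minimal (mean-curvature-zero) surface gives only $K_g = K_{\ms S}-\|\ms B\|^2\leq K_{\ms S}$, where $\ms B$ is the second fundamental form and $K_{\ms S}$ the ambient sectional curvature along the tangent plane; and since $\ms S(\ms G_0)$ has rank $2$ it contains flats, so under our normalisation $K_{\ms S}$ only ranges in $[-1,0]$ and this yields no usable lower bound for the area. The genuine content is therefore a matching control $K_{\ms S}+1\geq \|\ms B\|^2$ along the surface, which I would extract from the \emph{cyclic} structure established earlier in the paper: in the frame adapted to the principal $\mathsf{SL}_2$-grading, only the top differential survives (the quadratic differential vanishes by conformality), and the structure equations of the cyclic surface should assemble into a rank-$2$ Wang/Toda-type identity
\[ K_g = -1 + c\,\|Q\|_g^2,\qquad c>0, \]
where $Q$ is the holomorphic differential of Theorem \ref{MainB} and $\|\cdot\|_g$ its pointwise norm in $g$. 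Here the constant $-1$ is fixed by the curvature normalisation, being its value on the Fuchsian locus, and this generalises Wang's equation for the Blaschke metric in the $\mathsf{SL}(3,\mathbb R)$ case treated in \cite{Loftin:2001,Labourie:2006b}. Verifying that the ambient curvature and the cross terms in $\|\ms B\|^2$ combine into the single nonnegative quantity $c\,\|Q\|_g^2$ is where the rank-$2$ hypothesis and the principal $\mathsf{SL}_2$ are essential, and it is the crux of the argument.

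Granting $K_g\geq -1$, the inequality follows from Gauss--Bonnet:
\[ 2\pi\chi(\Sigma)=\int_\Sigma K_g\,\d A\;\geq\;\int_\Sigma(-1)\,\d A=-\operatorname{Area}(\Sigma,g)=-\MinArea(\delta), \]
which rearranges to $\MinArea(\delta)\geq -2\pi\chi(\Sigma)$. For the rigidity statement, equality forces $\int_\Sigma(K_g+1)\,\d A=0$; since $K_g+1=c\,\|Q\|_g^2\geq 0$ this gives $Q\equiv 0$, so the minimal surface is totally geodesic with image the principal $\hh$. By the parametrisation of Theorem \ref{MainB} the vanishing locus of $Q$ is exactly the Fuchsian locus, whence $\delta$ is Fuchsian; conversely a Fuchsian representation has the totally geodesic $\hh$ as its minimal surface, for which $K_g\equiv -1$ and $\operatorname{Area}=-2\pi\chi(\Sigma)$, giving equality. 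The one point I would treat with care is that the induced metric $g$, and not merely a conformal multiple of it, is the metric entering both $\MinArea(\delta)$ and the structure equation; this is precisely what the normalisation of the symmetric metric guarantees by pinning the Fuchsian value to $-1$.
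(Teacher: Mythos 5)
Your opening reduction --- properness of the energy, attainment of the infimum, and the identification of $\MinArea(\delta)$ with the area of the unique equivariant minimal surface --- is fine and agrees with the paper's setup. But the analytic core of your argument fails: neither the proposed identity $K_g=-1+c\,\|Q\|_g^2$ nor even the inequality $K_g\geq -1$ holds for the metric $g$ induced on the minimal surface. The metric for which a Wang--Toda identity of this type is true is not the induced metric but the ``diagonal'' Toda (Blaschke-type) metric; the induced metric is the full energy density, which contains the norm of the top differential as an extra summand, and this extra summand destroys the pointwise bound. This is already visible for $\ms{SL}(3,\mathbb R)$: for a cyclic Higgs bundle with cubic differential $q$, diagonalising the harmonic metric one finds that the induced metric is conformally $\sigma=2e^{-u}+\vert q\vert^2e^{2u}$, where $u$ solves $u_{z\bar z}=\vert q\vert^2e^{2u}-e^{-u}$ (normalised so that the Fuchsian solution $q\equiv 0$ has curvature $-1$). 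At a simple zero of $q$ (which a generic differential has) all terms involving $q$ itself vanish but the term $\vert q_z\vert^2e^{2u}$ in $\sigma_{z\bar z}$ survives, and one computes
\begin{align*}
K_g\;=\;-\frac{2}{\sigma}\left(\log\sigma\right)_{z\bar z}\;=\;-1-\frac{1}{2}\,\vert q_z\vert^2e^{4u}\;<\;-1.
\end{align*}
So the curvature of the induced metric dips strictly \emph{below} the Fuchsian value at the zeros of $Q$; the pointwise comparison you need for Gauss--Bonnet is false, and the equality analysis collapses with it. (What is true in the $\ms{SL}(3,\mathbb R)$ case is the identity $K_h=-1+8\,\|Q\|_h^2$ for the partial metric $h=2e^{-u}$, i.e.\ Wang's equation for the Blaschke metric; one could try to salvage your scheme by applying Gauss--Bonnet to $h$ and then using the pointwise domination $\sigma\geq 2e^{-u}$ to compare areas, but that is a genuinely different argument, and carrying it out for $\ms{Sp}(4,\mathbb R)$ and $\ms G_{2,0}$ would require the full Toda systems.)

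The paper's proof avoids pointwise curvature estimates altogether and is cohomological. On the bundle $\X$ of Hitchin--Kostant quadruples, for each $a\in\mk h^*$ the $2$-form $\Omega_a=\braket{a \mid\omega\ww\omega}$ is closed and its integral over the cyclic surface $\Sigma_\delta$ is independent of $\delta$ (Proposition \ref{omegatopo}); the area of the projected minimal surface is $\int_{\Sigma_\delta}(\Omega_0+\Omega_1)$, where $\Omega_0$ (simple-root part) and $\Omega_1$ (longest-root part) are pointwise nonnegative on cyclic surfaces (Proposition \ref{pro:area}); and there is $v_0\in\mk h^*$ with $\Omega_{v_0}=\Omega_0-k_0\cwdot\Omega_1$, $k_0>0$ (Corollary \ref{cor:u}). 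Hence
\begin{align*}
\MinArea(\delta)\;=\;\int_{\Sigma_\delta}\Omega_{v_0}+(k_0+1)\int_{\Sigma_\delta}\Omega_1\;=\;-2\pi\chi(\Sigma)+(k_0+1)\int_{\Sigma_\delta}\Omega_1,
\end{align*}
the topological term being evaluated on the Fuchsian locus, and $\int_{\Sigma_\delta}\Omega_1\geq 0$ with equality exactly when $\omega_\eta$, i.e.\ the Higgs field's top component, vanishes --- that is, when $\delta$ is Fuchsian. In effect the paper integrates the excess positivity cohomologically rather than pointwise, which is precisely what sidesteps the failure of the curvature bound on which your proof rests.
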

By Katok's Theorem \cite{Katok:1982jz}, since the intrinsic metric of any minimal surface is non positively curved, we have the inequality
\begin{align}
{\MinArea }(\delta)\cdot h(\Sigma)\geq -2\pi\chi(\Sigma).
\end{align}
Where $h(\Sigma)$ is the entropy of the induced metric on $\Sigma$ from the equivariant embedding in $\mathsf{S(G_0)}$. On the other hand $h(\Sigma)\geq h(\delta)$, where  $h(\delta)$ is the {\em entropy} of $\ms S(\ms G_0)/\delta(\pi_1(\Sigma))$ seen as the asymptotic growth of the length of closed geodesics. Thus one immediately gets
\begin{align}
{\MinArea }(\delta)\geq -\frac{2\pi\chi(\Sigma)}{h(\delta)}.
\end{align}
Thus the previous is also a consequence of the {\em entropy rigidity conjecture} for Hitchin representations, recently proved by Potrie and Sambarino \cite{Potrie:2014ut}: $h(\delta)\leq 1$ with equality if and only if $\delta$ is Fuchsian.

\subsection{Cyclic Higgs bundles}
For higher rank, we only have a very partial result. Let $m_\ell$ be the
highest exponent of $\ms G_0$. Following Baraglia \cite{Baraglia:2010vi},  let us call $\mc E_{m_\ell+1}$ the
space of {\em cyclic bundles}.  The Hitchin section gives an analytic
map $\Psi$ from the space of cyclic bundles to the Hitchin component
$\mc H(\Sigma,\ms G_0)$. We then have,

\begin{theorem} \label{Cyclicinjects} 
  The map $\Psi:\mc E_{m_\ell+1}\to \mc H(\Sigma,\ms G_0)$ is an immersion.
\end{theorem}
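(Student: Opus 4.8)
The plan is to prove that $d\Psi$ has trivial kernel at every cyclic bundle $(J,Q)\in\mc E_{m_\ell+1}$; since $\Psi$ is analytic, this infinitesimal injectivity is precisely the immersion property. I would first factor $\Psi$ through the space $\mc C$ of cyclic surfaces. To $(J,Q)$ the Hitchin section associates a cyclic Higgs bundle over $(\Sigma,J)$; solving the associated Hitchin equation (equivalently, finding the harmonic metric) produces a unique $\delta$-equivariant cyclic surface $\sigma_{(J,Q)}$ into the bundle $\ms X$ over $\ms S(\ms G_0)$, and $\Psi$ is the composition of $(J,Q)\mapsto\sigma_{(J,Q)}$ with the holonomy map $\sigma\mapsto\delta$. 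The first map is an immersion essentially by construction: the solution depends analytically on $(J,Q)$, its Higgs field recovers $Q$ and its conformal type recovers $J$, so distinct infinitesimal data produce distinct infinitesimal cyclic surfaces. Everything therefore reduces to showing that the holonomy map is an immersion on $\mc C$, i.e.\ to an \emph{infinitesimal rigidity} statement for cyclic surfaces — which is exactly the rank-independent core already used to prove uniqueness in Theorem \ref{MainA}.

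Suppose then that $(\dot J,\dot Q)\in\ker d\Psi$. The induced first-order deformation $\dot\sigma$ of the cyclic surface fixes the conjugacy class of $\delta$ to first order, so in the flat-bundle picture the variation of the flat connection is a coboundary: $\dot A=d_A u$ for some section $u$ of the adjoint bundle $\mk g_{\Ad\delta}$. I would then linearize the two conditions defining a cyclic surface — holomorphicity into $\ms X$ and the cyclic weight condition coming from the grading by the principal $\mathfrak{sl}_2$ — and combine them with the exactness $\dot A=d_A u$ to obtain a single elliptic equation of Bochner type $\Delta u+\mc R(u)=0$ on the closed surface $\Sigma$, where $\mc R$ is assembled from the curvature of $\ms X$ along $\sigma$ and from the cyclic Higgs field.

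The decisive step is a vanishing argument. The ambient bundle $\ms X$ is not nonpositively curved in general, but for a \emph{cyclic} surface the grading forces the Higgs field into the extreme weight spaces, and the off-diagonal curvature contributions recombine so that the effective operator $\mc R$ acquires a favorable sign (this is the same mechanism, traceable to Baraglia's description of cyclic Higgs bundles, that underlies the uniqueness in Theorem \ref{MainA}). Pairing the Bochner equation with $u$ and integrating over $\Sigma$ then forces simultaneously $d_A u=0$ and $\mc R(u)=0$, so that $u$ is parallel for the flat connection. Since Hitchin representations are irreducible, the only parallel section is $u=0$; hence $\dot A=0$, the deformation $\dot\sigma$ vanishes, and by the first paragraph $(\dot J,\dot Q)=0$.

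The main obstacle — and the reason the higher-rank statement is only partial — is establishing the definiteness of $\mc R$ from the cyclic structure alone. In rank $2$ every Hitchin representation comes from a cyclic bundle and one controls all the holomorphic differentials at once, whereas in higher rank the cyclic condition discards the intermediate differentials, and one must verify that the surviving top-weight term by itself suffices to make the Bochner term definite; this forces a careful use of the interaction between the highest root and the principal $\mathfrak{sl}_2$, where a crude curvature estimate would be inconclusive. A secondary difficulty is bookkeeping the coupling between the Teichm\"uller direction $\dot J$ and the differential direction $\dot Q$ through the linearized equation, which must be tracked to conclude that the full pair $(\dot J,\dot Q)$, and not merely $\dot Q$, vanishes.
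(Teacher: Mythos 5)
Your overall architecture (factor through cyclic surfaces, then prove an infinitesimal rigidity statement by a Bochner-type integration) is the same as the paper's, but the rigidity statement you formulate is too strong --- in fact false --- and this is a genuine gap, not a bookkeeping issue. You claim that for $(\dot J,\dot Q)\in\ker d\Psi$ the section $u$ with $\dot A=d_Au$ satisfies a Bochner identity whose sign forces $d_Au=0$, hence $u=0$ by irreducibility, hence $\dot A=0$. But the linearized problem always has nonzero solutions with exact $\dot A$: take any vector field $\nu$ on $\Sigma$ and deform the cyclic surface by the tangential field $\T f_0(\nu)$, i.e.\ reparametrize the pair $(J,Q)$ by diffeomorphisms isotopic to the identity. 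This deformation satisfies every linearized cyclic equation (the paper notes that tangential fields are always infinitesimal deformations of cyclic surfaces), leaves the holonomy literally unchanged so that $\dot A=d_Au$ with $u=(f_0^*\omega)(\nu)\neq 0$ and $u$ not parallel, and yet projects to the zero tangent vector of $\mc E_{m_\ell+1}$, which is a bundle over Teichm\"uller space and hence a quotient by $\operatorname{Diff}_0$. Consequently no term $\mc R$ with the definiteness you require can exist on the full space of deformations; any sign argument must degenerate along the tangential directions. The paper's rigidity result, Proposition \ref{IFI0}, carries exactly the hypothesis needed to break this degeneracy: the deformation $\xi$ is assumed to satisfy $\omega_\alpha(\xi)=0$ for some simple root $\alpha$, and in the proof of Theorem \ref{IFI0co} (Paragraph \ref{trans}) this is arranged by first subtracting a suitable tangential field $\T f_0(\nu)$ from $\xi$. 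The conclusion is then not that the original deformation vanishes, but that it is purely tangential, after which one argues separately that the forms $f^*\omega_\alpha$ recover $(J,{\rm q})$ and hence $(\dot J,\dot Q)=0$ in the quotient. What you dismiss as a ``secondary difficulty'' of tracking the coupling with $\dot J$ is precisely this point, and without the normalization your chain ``$u$ parallel $\Rightarrow u=0\Rightarrow\dot A=0\Rightarrow\dot\sigma=0$'' cannot be run.

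A smaller divergence: even after normalization, the paper does not conclude via ``parallel section plus irreducibility''. Irreducibility enters only to identify $\T_{\delta_0}\mc H(\Sigma,\ms G_0)$ with $H^1_{\delta_0}(\Sigma,\mk g)$, so that a kernel vector yields an equivariant deformation after conjugating by a family $(g_t)$. The vanishing itself is component-wise in the cyclic decomposition $i_\xi\omega=\zeta_0+\zeta_1+\zeta+\zeta^\dagger$: the integrations of Propositions \ref{IFIp} and \ref{IFIp22} give $\nabla\zeta_1=0$, $\nabla\zeta_0=0$ and algebraic identities; then $\zeta_1=0$ because each line bundle $\mc G_\gamma\simeq\mc K^{-\deg(\gamma)}$ has nonzero degree and so admits no nonzero parallel section, $\zeta_0=0$ because the simple roots form a basis of $\mk h^*$ and the $\omega_\alpha$ never vanish, and $\zeta=\zeta^\dagger=0$ by root-system combinatorics using the hypothesis $\zeta_\alpha=0$, with a separate argument for $\ms{SL}(3,\mathbb R)$. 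So the ``favorable sign'' you hope to extract from the cyclic structure is, in the paper, replaced by this combinatorial splitting rather than by a curvature estimate.
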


It would be nice to understand in a geometric way the image of $\Psi$.
\subsection{Cyclic surfaces and the idea of the proof}
The main idea of the proof is to work with {\em cyclic surfaces} which
are holomorphic curves (in a certain sense) in a bundle $\ms X$ over
the symmetric space. First we show in Section \ref{sec:CS} that the
minimal surfaces associated to Hitchin representations actually lift to $\ms X$ as cyclic surfaces, and
conversely every projection of a cyclic surface is minimal. This is
strongly related to a work of Baraglia \cite{Baraglia:2010vi} and
cousin to a construction by Bolton, Pedit and Woodward in
\cite{Bolton:1995vx}. Then, complexifying the situation and treating
cyclic surfaces as solutions to a Pfaffian system, the core of the
proof is to prove an infinitesimal rigidity result for cyclic surfaces
in Section \ref{sec:IR}. To conclude, we use in Section
\ref{sec:Proper} the main result of \cite{Labourie:2005a}.

\vskip 1 truecm This paper would not have existed without numerous and
illuminating discussions with Mike Wolf. I thank him very deeply here. I also wish to thank Jean-Benoît Bost, Christophe Mourougane and
Nessim Sibony for references.  I also want to thank Anna Wienhard for fruitful questions, Brian Collier,  Steve Kerckhoff, Qiongling Li, Marco Spinaci and Richard Wentworth for their interest, comments and  important corrections on an earlier version. I also warmly thanks the anonymous referees for the numerous improvements that they suggested and the genuine mistakes they made me correct.
\section{Lie theory preliminaries}

In this section, we review for the convenience of the reader, without proof, roots system theory with
applications to the construction of the maximal compact subgroup, and
that of the split form of a complex simple Lie group.

We explain that the choice of a Cartan subalgebra, a system of positive roots
 and a Chevalley system define naturally two commuting anti
linear automorphisms of the Lie algebra, whose fixed points are
respectively the maximal compact subgroup, and the maximal split real form. We study the basic properties of these objects.

We also introduce the cyclic roots set and prove Proposition
\ref{crucial} that will play a central rôle in the proof.

The material comes form Baraglia \cite{Baraglia:2010vi}, Kostant
\cite{Kostant:1959wi}, Hitchin \cite{Hitchin:1992es}, Bourbaki
\cite{Bourbaki:owAvyv1m}. The only non standard material (which is
probably common lore) is in Section \ref{sec:braroot}.

We will use the following typographic convention: for any Lie group
$\ms H$, we shall denote by $\mk h$ its Lie algebra.

\subsection{Roots}\label{sec:PosRoot}

We recall the notations and basic facts of the root system theory
\begin{itemize}
	\item Let $\ms G$ be a complex simple Lie group, with Lie algebra $\mk g$.
	\item Let $\ms H$ be a maximal abelian semisimple
subgroup and $\mk h$ its Lie algebra called the {\em Cartan subalgebra}.
   \item We denote by $\Kill{.}{.}$ the 
  Killing form of $\mk g$, and  by the same symbol  the 
restriction of the 
  Killing form to $\mk h$ and its dual extension to $\mk h^*\defeq \hom(\mk
h,\mathbb C)$.
\item Let $\Delta\subset\mk h^*$ be the  set of roots of $\mk h$, let $\Delta^+$ be a choice of positive roots and $\Pi$ the associated simple roots.
\item We denote by $\mk g_\alpha$ the root space of the root $\alpha$ and recall that $\dim(\mk g_\alpha)=1$. The coroot of $\alpha$ is denoted by $h_\alpha$ and satisfies 
$$
\Kill{h_\alpha}{u}=\frac{2}{\Kill{\alpha}{\alpha}}\alpha(u).
$$ The {\em root space decomposition} is
  \begin{align}
    \mk g&=\mk h \oplus\bigoplus_{\alpha\in\Delta}\mk
    g_\alpha.\label{fonddec}
  \end{align}
\item Given a root $\displaystyle{
  \beta=\sum_{\alpha\in\Pi}n_\alpha\cwdot\alpha}$,
the {\em degree} of $\beta$ is $\deg(\beta)\defeq\sum_{\alpha\in\Pi}n_\alpha$.
\item 
The {\em longest root} is the uniquely defined positive root
$\eta$ so that for any positive root $\beta$, $\beta+\eta$ is not a
root.
\item A {\em Chevalley base} (\cite{Bourbaki:owAvyv1m} Ch. VIII, §2, n.4, Def. 3),  is a collection of non zero vectors $\{{\rm
  x}_\alpha\}_{\alpha\in\Delta}$ in $\mk g$ so that
  \begin{enumerate}
  \item there exist integers $N_{\alpha,\beta}$ such that for all roots $\alpha$ and $\beta$
  \begin{align*}
  {\rm x}_\alpha &\in \mk g_\alpha\cr [{\rm x}_\alpha,{\rm
    x}_{-\alpha}]&=-{\rm h}_\alpha, \cr [{\rm x}_\alpha,{\rm
    x}_\beta]&=N_{\alpha,\beta}{\rm x}_{\alpha+\beta},
\end{align*}
where by convention $\rm x_{\alpha+\beta}=0$ if $\alpha+\beta$ is not a root.
\item the
antilinear endomorphism which sends ${\rm
  x}_\alpha$ to ${\rm x}_{-\alpha}$ and is $-{\rm id}$ on 
 $\mk h$  is an antilinear automorphism of
$\mk g$. \label{it:2cb}
  \end{enumerate}

\end{itemize}

\subsection{Cyclic roots and projections}
Let $\ms G$, $\Delta$, $\Delta^+$ and $\Pi$ be as above and $\eta$ the
longest positive root.  Recall that we have
$$
\forall \alpha\in\Delta^+: \ \ \deg(\alpha)\leq\deg(\eta),
$$
with  equality only if $\eta=\alpha$.

\begin{definition}{\sc[Cyclic root sets]}\label{sec:cycroot}

  \begin{enumerate}
 
  \item The {\em conjugate cyclic root set} is
    \begin{equation}
      Z^\dagger\defeq \Pi\cup\{-\eta\}.
    \end{equation}
  \item The {\em cyclic root set} is
    \begin{equation}
      Z\defeq \{\alpha\in \Delta\mid -\alpha\in Z^\dagger\}.
    \end{equation}
  \end{enumerate}
\end{definition}
Observe that $ \Delta\setminus\left(Z\sqcup Z^\dagger\right)=\{\alpha,
\vert\deg(\alpha)\vert\not\in\{1,\deg(\eta)\}\} $.

\subsubsection{Projections}

We consider the following projections (whose pairwise disjoint product
are zero) from $\mk g $ to itself that comes from the decomposition
\eqref{fonddec}
\begin{align}
  \pi_0: \mk g  &\to \mk h,\\
  \pi:\mk g &\to \mk g_{Z}\defeq  \bigoplus_{\alpha\in Z}\mk g_\alpha,\\
  \pi^\dagger: \mk g  &\to\mk g_{Z^\dagger}\defeq  \bigoplus_{\alpha\in Z^\dagger}\mk g_\alpha,\\
  \pi_1: \mk g  &\to\mk g_1\defeq  \bigoplus_{\alpha\not\in Z\cup Z^\dagger}\mk g_\alpha.\\
\end{align}
Obviously
$$
\pi+\pi^\dagger+\pi_0+\pi_1=\rm{Id}.
$$
Moreover these projections are Killing orthogonal since the root space decomposition is Killing orthogonal.
\subsubsection{Brackets of cyclic roots}\label{sec:braroot}

The main observation about this decomposition is the following trivial
but crucial observation
\begin{proposition}{\sc[Brackets]}\label{crucial}
  We have
  \begin{align}
    [\mk h,\mk g_{Z^\dagger}]&\subset\mk g_{Z^\dagger},\label{pidec1}\\
    \,[\mk h,\mk g_{Z}]&\subset\mk g_{Z},\label{pidec2}\\
    \,[\mk h,\mk g_1]&\subset\mk g_1,\label{pidec3}\\
    \,[\mk g_Z,\mk g_{Z^\dagger}]&\subset\mk h,\label{pidec4}\\
         [\mk g_{Z^\dagger},\mk g_1]&\subset\mk g_1\oplus\mk g_{Z},\label{pidec0}\\
    \,[\mk g_{Z},\mk g_1]&\subset\mk g_1\oplus\mk g_{Z^\dagger},\label{pidec7}\\
 \,[\mk g_Z,\mk g_Z]&\subset \mk g_1\oplus\mk g_{Z^\dagger},\label{pidec8}\\
\,[\mk g_{Z^\dagger},\mk g_{Z^\dagger}]&\subset \mk g_1\oplus\mk
    g_{Z}\label{pidec9}.
  \end{align}
\end{proposition}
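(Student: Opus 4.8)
The plan is to deduce all eight inclusions at once from a single grading of $\mk g$ induced by the degree function. Write $d\defeq\deg(\eta)$, so that $d+1$ is the Coxeter number. First I would note that $\deg$ defines a $\mathbb Z$-grading: since $\deg(\alpha+\beta)=\deg(\alpha)+\deg(\beta)$ whenever $\alpha,\beta$ and $\alpha+\beta$ are roots, setting $\mk g^{(0)}\defeq\mk h$ and $\mk g^{(k)}\defeq\bigoplus_{\deg(\alpha)=k}\mk g_\alpha$ for $k\neq0$ gives $\mk g=\bigoplus_k\mk g^{(k)}$ with $[\mk g^{(j)},\mk g^{(k)}]\subseteq\mk g^{(j+k)}$, where $\mk g^{(m)}=0$ as soon as $|m|>d$. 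The root system fixes the extreme pieces: the inequality $\deg(\alpha)\leq\deg(\eta)$, with equality only at $\eta$, shows that $\pm\eta$ are the only roots of degree $\pm d$, and a positive root has degree $1$ precisely when it is simple. Hence $\mk g^{(1)}=\bigoplus_{\alpha\in\Pi}\mk g_\alpha$ and $\mk g^{(d)}=\mk g_\eta$, with the mirror statements for the negatives.

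Next I would coarsen this to a grading over $\mathbb Z/(d+1)$. For a residue $m$ set $\mk g_{[m]}\defeq\bigoplus_{k\equiv m\,(d+1)}\mk g^{(k)}$; this is automatically a Lie algebra grading, $[\mk g_{[j]},\mk g_{[k]}]\subseteq\mk g_{[j+k]}$, since each bracket either remains in the degree window $[-d,d]$ or else vanishes. The substance of the argument is then the identification of the four distinguished subspaces with residue classes. As $0$ is the only degree in $[-d,d]$ divisible by $d+1$, one reads off $\mk h=\mk g_{[0]}$; the degrees congruent to $1$ are exactly $1$ and $-d$, whence $\mk g_{Z^\dagger}=\mk g_{[1]}$; the degrees congruent to $-1$ are exactly $-1$ and $d$, whence $\mk g_{Z}=\mk g_{[-1]}$; and $\mk g_1$ is the sum of the remaining classes $\mk g_{[m]}$ with $m\neq0,\pm1$.

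With these identifications every line of the proposition reduces to an addition of residues. Relations \eqref{pidec1}, \eqref{pidec2}, \eqref{pidec3} are the statement that $\mk g_{[0]}=\mk h$ preserves each class; \eqref{pidec4} is $[\mk g_{[-1]},\mk g_{[1]}]\subseteq\mk g_{[0]}=\mk h$; \eqref{pidec7} follows from $[\mk g_{[-1]},\mk g_{[m]}]\subseteq\mk g_{[m-1]}$ once one checks that, as $m$ ranges over the classes appearing in $\mk g_1$, the shifted class $m-1$ is either $1$ (landing in $\mk g_{Z^\dagger}$) or again a class of $\mk g_1$; \eqref{pidec0} is the mirror computation with the shift $+1$; and \eqref{pidec8}, \eqref{pidec9} are $[\mk g_{[-1]},\mk g_{[-1]}]\subseteq\mk g_{[-2]}$ and $[\mk g_{[1]},\mk g_{[1]}]\subseteq\mk g_{[2]}$.

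I do not expect any genuine obstacle: the computation is purely combinatorial and the coarsening step is formal. The only thing that must be checked with care, and hence the real content, is the identification of $\mk h,\mk g_{Z^\dagger},\mk g_Z$ with the classes $0,1,-1$, which rests exactly on the two root-theoretic facts above. The one subtlety is the low-degree behaviour: when $d=2$, that is for $\ms{SL}(3,\mathbb R)$, the subspace $\mk g_1$ is empty and the classes collapse in pairs, so that $\mk g_{[-2]}=\mk g_{[1]}$ and $\mk g_{[2]}=\mk g_{[-1]}$; the right-hand sides of \eqref{pidec8} and \eqref{pidec9} then reduce to $\mk g_{Z^\dagger}$ and $\mk g_Z$ alone, and the inclusions as stated, while still correct, are no longer sharp. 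This uniform bookkeeping across ranks is all that separates the proposition from being a one-line consequence of the grading.
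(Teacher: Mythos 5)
Your proof is correct, and it takes a genuinely different route from the paper's. The paper argues case by case: it isolates two root-theoretic facts (the difference of two distinct simple roots is never a root; the sum of the longest root with a positive root is never a root), deduces \eqref{pidec4} from them, and then handles \eqref{pidec7}--\eqref{pidec9} by separate degree computations. You instead organize everything around a single structure: the height grading of $\mk g$ coarsened modulo $\deg(\eta)+1$, under which $\mk h$, $\mk g_{Z^\dagger}$, $\mk g_Z$ are identified with the residue classes $0$, $1$, $-1$, and $\mk g_1$ with the sum of the remaining classes; all eight inclusions then become additions of residues, and your check that the shifted classes land where claimed is exactly right. What your approach buys is uniformity (one computation instead of eight) and conceptual clarity: this $\mathbb Z/(\deg(\eta)+1)$-grading is the structure underlying Kostant's cyclic element and Baraglia's cyclic Higgs bundles, and it explains the adjective ``cyclic''; moreover the paper's two root facts cease to be inputs and become consequences of the grading (both reduce to the observations that no root has degree $0$ and none has degree of absolute value greater than $\deg(\eta)$). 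What the paper's approach buys is that it is entirely elementary, needs no grading formalism, and its two root facts are reused elsewhere (e.g.\ in Propositions \ref{pro:long-sig} and \ref{pro:cyc-simp}). Your remark about the collapse at $\deg(\eta)=2$ is the right degenerate case to flag; the only point you leave implicit is that your argument for \eqref{pidec8} and \eqref{pidec9} needs $\deg(\eta)\geq 2$: for $\mk{sl}_2$ the class of $-2$ equals that of $0$, and indeed the statement itself fails there, since $[\mk g_\eta,\mk g_{-\eta}]=\mathbb C\,{\rm h}_\eta\subset\mk h$. But the paper's proposition carries the same implicit rank assumption, so this is not a gap.
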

\begin{proof} In this proof, we write for any $w\in\mk g$
$$
w=w_0+\sum_{\alpha\in\Delta}w_\alpha,
$$
where $w_0\in\mk h$ and $w_\alpha\in\mk g_\alpha$. Observe that
Assertions \eqref{pidec1},\eqref{pidec2} and \eqref{pidec3} follows
from the fact that
$$
[\mk h,\mk g_\alpha]\subset \mk g_\alpha.
$$ 
We will now use the following two facts in the proof
\begin{enumerate}
\item If $\alpha$ and $\beta$ are distinct simple roots, then $
\alpha-\beta
$
is not a root.
\item If $\alpha$ is a positive root and $\eta$ the longest root then
  $\alpha+\eta$ is not a root.
\end{enumerate}

Combining the two, we get that if $\alpha\in Z$ and $\beta\in
Z^\dagger$ then $[\mk g_\alpha, \mk g_\beta]\not=\{0\}$  if and only if
$\alpha+\beta=0$. Thus if $v\in \mk g_Z$ and $u\in\mk g_{Z^\dagger}$
then
$$
[u,v]=[u_\eta,v_{-\eta}]+\sum_{\beta\in \Pi}[u_\beta,v_{-\beta}]\in\mk
h
$$
This proves \eqref{pidec4}.

Let $\alpha$ be a simple root and $\gamma\not\in
Z\cup Z^\dagger$ a root of degree $a$, in particular the degree of
$\alpha+\gamma$ (if it is a root) is $a+1$. Then
\begin{itemize}
\item Since $a\not=-1$, then $a+1\not=0$, thus $\pi_0\left([\mk g_\alpha,\mk
  g_\gamma]\right)=\{0\}$,
\item Since $a\not=0$, then $a+1\not=1$, moreover
  $a+1\not=-\deg(\eta)$, thus $\pi_{Z^\dagger}([\mk g_\alpha,\mk g_\gamma])=\{0\}$.
\end{itemize}
Thus $[g_\alpha,g_\gamma]\subset \mk g_1\oplus \mk g_{Z}$.
Similarly
\begin{itemize}
\item Since $a\not=\deg(\eta)$, then $a-\deg(\eta)\not=0$, thus $\pi_0([\mk
  g_{-\eta},\mk g_\gamma]) =\{0\}$,
\item Since $a\not=0$, then $a-\deg(\eta)\not=-\deg(\eta)$, moreover
  $a-\deg(\eta)\not=1$, thus $\pi_{Z^\dagger}([\mk g_{-\eta},\mk g_\gamma])
  =\{0\}$.
\end{itemize}
Thus $[g_{-\eta},g_\gamma]\subset \mk g_1\oplus \mk g_{Z}$ and combining with the previous assertion, we obtain that $[g_{Z^\dagger},g_\gamma]\subset \mk g_1\oplus \mk g_{Z}$.
This finishes proving Assertion \eqref{pidec0}. Assertion
\eqref{pidec7} follows by symmetry.

Finally, Assertion \eqref{pidec8} follows from the fact that if
$\alpha$ and $\beta$ belong to $Z$, then $\alpha+\beta\not\in Z$: if
$\alpha$ and $\beta$ are both simple, then $\alpha+\beta$ is not
simple and positive, if $\alpha$ is simple and $\beta=-\eta$, then
$\alpha+\beta$ is negative and not the longest. Assertion
\eqref{pidec9} follows by symmetry.
\end{proof}

\subsection{The principal 3-dimensional
  subalgebras}\label{sec:Principal}

We begin by recalling the existence and properties of the principal
$\mk{sl}(2,\mathbb C)$. Let $a$ in $\mk h$ and $r_\alpha$ in $\mathbb R$ be defined by
\begin{align*}
  a\defeq\frac{1}{2}\sum_{\alpha\in\Delta^+}{\rm
    h}_\alpha\eqdef\sum_{\alpha\in\Pi}r_\alpha\cwdot {\rm h}_\alpha.
\end{align*}
Let now
\begin{align*}
  X\defeq \sum_{\alpha\in\Pi}\sqrt{r_\alpha} {\rm x}_\alpha,\ Y\defeq
  \sum_{\alpha\in\Pi}\sqrt{r_\alpha} {\rm x}_{-\alpha}.
\end{align*}
Then, we have
\begin{proposition}{\sc[Kostant]}\label{prop:Kostant}
  The span of $(a,X,Y)$ is a subalgebra $\mk s$ isomorphic to
  $\mk{sl}(2,\mathbb C)$ so that
$$
[a,X]=X,\ \ [a,Y]=- Y,\ \ [X,Y]=-a.
$$
Moreover for any root $\alpha$, we have
\begin{align}
  \deg(\alpha)&=\alpha(a).
\end{align}
\end{proposition}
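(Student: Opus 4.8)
The plan is to prove Kostant's Proposition \ref{prop:Kostant} in two stages: first establish the key identity $\deg(\alpha)=\alpha(a)$, and then deduce the bracket relations defining $\mk s$. First I would compute $\alpha(a)$ directly from the definition. Since $a=\frac{1}{2}\sum_{\beta\in\Delta^+}{\rm h}_\beta$, and using the defining property $\Kill{{\rm h}_\beta}{u}=\frac{2}{\Kill{\beta}{\beta}}\beta(u)$ of the coroot together with the relation between the Killing pairing and evaluation, I would recognise $a$ as (the coroot-side incarnation of) the element dual to the half-sum of positive roots $\rho=\frac12\sum_{\beta\in\Delta^+}\beta$. The classical fact I would invoke is that for any simple root $\alpha\in\Pi$ one has $\langle\rho,\alpha^\vee\rangle=1$, equivalently $\alpha(a)=1$ for all $\alpha\in\Pi$. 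This is exactly the statement that $\rho$ is the sum of fundamental weights, a standard consequence of the fact that the simple reflection $s_\alpha$ permutes $\Delta^+\setminus\{\alpha\}$ and sends $\alpha\mapsto-\alpha$, so that $\rho-s_\alpha\rho=\alpha$ forces $\langle\rho,\alpha^\vee\rangle=1$. Granting $\alpha(a)=1$ for simple $\alpha$, the identity $\deg(\alpha)=\alpha(a)$ for a general root follows by linearity: writing $\alpha=\sum_{\beta\in\Pi}n_\beta\cwdot\beta$ gives $\alpha(a)=\sum_{\beta\in\Pi}n_\beta\cwdot\beta(a)=\sum_{\beta\in\Pi}n_\beta=\deg(\alpha)$.

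Next I would verify the three bracket relations. The relation $[a,X]=X$ is immediate from $\deg(\alpha)=\alpha(a)$: since $X=\sum_{\alpha\in\Pi}\sqrt{r_\alpha}\,{\rm x}_\alpha$ and each ${\rm x}_\alpha\in\mk g_\alpha$ satisfies $[a,{\rm x}_\alpha]=\alpha(a){\rm x}_\alpha=\deg(\alpha){\rm x}_\alpha={\rm x}_\alpha$ for simple $\alpha$, summing gives $[a,X]=X$. The relation $[a,Y]=-Y$ follows identically, using $\alpha(-\alpha)$ evaluated on simple roots to get $[a,{\rm x}_{-\alpha}]=-{\rm x}_{-\alpha}$.

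The genuinely delicate computation is $[X,Y]=-a$. Expanding using the Chevalley relations,
\begin{align*}
[X,Y]&=\sum_{\alpha,\beta\in\Pi}\sqrt{r_\alpha r_\beta}\,[{\rm x}_\alpha,{\rm x}_{-\beta}].
\end{align*}
For distinct simple roots $\alpha\neq\beta$ the difference $\alpha-\beta$ is not a root (this is precisely fact (1) used in the proof of Proposition \ref{crucial}), so $[{\rm x}_\alpha,{\rm x}_{-\beta}]=0$; only the diagonal terms $\alpha=\beta$ survive, and there $[{\rm x}_\alpha,{\rm x}_{-\alpha}]=-{\rm h}_\alpha$ by the Chevalley normalisation. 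Hence $[X,Y]=-\sum_{\alpha\in\Pi}r_\alpha\,{\rm h}_\alpha=-a$, using the very definition $a=\sum_{\alpha\in\Pi}r_\alpha\cwdot{\rm h}_\alpha$. This last identification is the crux, and it is exactly why the coefficients were chosen to be $\sqrt{r_\alpha}$: the square roots are engineered so that the quadratic expression $[X,Y]$ reproduces the linear combination defining $a$.

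The main obstacle I anticipate is the input fact $\alpha(a)=1$ for simple roots; everything else is bookkeeping with the Chevalley relations and the vanishing of $[{\rm x}_\alpha,{\rm x}_{-\beta}]$ for $\alpha\neq\beta$. Since the identity $\langle\rho,\alpha^\vee\rangle=1$ is classical (Bourbaki, or Kostant's original paper, both cited), and the excerpt states the proposition is recalled \emph{without proof}, I expect the cleanest route is to cite Kostant \cite{Kostant:1959wi} for the existence of the principal $\mk{sl}(2,\mathbb C)$ and the normalisation $\alpha(a)=1$, and then present the short linearity and bracket computations above, which require nothing beyond the Chevalley base axioms already listed. It is worth double-checking a sign convention: the relations given, $[X,Y]=-a$ and $[X,Y]=-{\rm h}_\alpha$ in the diagonal terms, differ from the most common textbook convention $[e,f]=h$, so I would confirm that the antisymmetric normalisation $[{\rm x}_\alpha,{\rm x}_{-\alpha}]=-{\rm h}_\alpha$ in the Chevalley axioms is what propagates the minus sign consistently through to $[X,Y]=-a$.
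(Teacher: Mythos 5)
Your proposal is correct, but there is nothing in the paper to compare it against: Proposition \ref{prop:Kostant} sits in a preliminaries section that the author explicitly recalls \emph{without proof}, citing Kostant \cite{Kostant:1959wi} and Bourbaki \cite{Bourbaki:owAvyv1m}. Your self-contained verification is the standard one, and the bracket computations are sound and correctly calibrated to the paper's conventions: $[{\rm x}_\alpha,{\rm x}_{-\beta}]=0$ for distinct simple roots because $\alpha-\beta$ is not a root; the diagonal terms give $[X,Y]=-\sum_{\alpha\in\Pi}r_\alpha\cwdot{\rm h}_\alpha=-a$ thanks to the normalisation $[{\rm x}_\alpha,{\rm x}_{-\alpha}]=-{\rm h}_\alpha$; and the coefficients $\sqrt{r_\alpha}$ are indeed exactly what makes the quadratic expression reproduce $a$.

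One imprecision should be fixed, and it matters for the groups considered in this paper. The element $a=\frac12\sum_{\beta\in\Delta^+}{\rm h}_\beta$ is the half-sum of the positive \emph{coroots}; it is not the Killing-dual of the half-sum of positive roots $\rho$, and in the non-simply-laced cases ($\ms{Sp}(4,\mathbb R)$ and $\ms G_{2,0}$) the two are genuinely non-proportional. Consequently your phrase ``$\langle\rho,\alpha^\vee\rangle=1$, equivalently $\alpha(a)=1$'' is not an equivalence: these are two different (both true) classical facts, and the reflection argument as you wrote it, $\rho-s_\alpha\rho=\alpha$, proves the first rather than the second. The repair is immediate: run the same argument on the coroot side. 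The simple reflection $s_\alpha$ permutes $\{{\rm h}_\beta\mid\beta\in\Delta^+,\ \beta\neq\alpha\}$ and sends ${\rm h}_\alpha$ to $-{\rm h}_\alpha$, so $s_\alpha(a)=a-{\rm h}_\alpha$; comparing with $s_\alpha(a)=a-\alpha(a)\cwdot{\rm h}_\alpha$ gives $\alpha(a)=1$ for every simple $\alpha$, and then $\deg(\alpha)=\alpha(a)$ for all roots by linearity, exactly as you argue. With this correction everything else goes through: the relations $[a,X]=X$, $[a,Y]=-Y$, $[X,Y]=-a$, together with the linear independence of $a$, $X$, $Y$ (immediate from the root-space decomposition), identify the span with $\mk{sl}(2,\mathbb C)$ via $a\mapsto\frac12 h$, $X\mapsto e$, $Y\mapsto-\frac12 f$.
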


We remark that we follow here a mixture of  the conventions by Kostant and Bourbaki on the canonical basis for the Lie algebra of $\mk
{sl}_2$: no factor of 2 (Kostant), $[X,Y]=-a$ (Bourbaki). In our case $Y=-X^t$, although a very classical convention (actually Kostant's) is to have $Y=X^t$.

\begin{definition}{\sc[Principal subalgebras]}

  \begin{itemize}
  \item A 3-dimensional subalgebra $\mk s$ isomorphic to $\mk{sl}_2$
    is a {\em principal subalgebra} if it contains an element
    conjugate to $X$ (and then an element conjugate to $a$).
  \item A principal subalgebra is an {\em $\mk h$-principal
      subalgebra}, if it intersects non trivially the Cartan subalgebra $\mk h$ and the intersection contains the sum $a$ of the positive coroots.
  \item A {\em principal $\ms{SL}_2$} in a complex semisimple group is a
    group whose Lie algebra is a principal subalgebra.
  \item A {\em principal $\ms{SL}_2$} in a split real group is a split real group
    whose complexification is a principal $\ms{SL}_2$.
  \end{itemize}
\end{definition}

As an example, the Lie algebra $\mk s$ generated by $(a,X,Y)$ is an
$\mk h$-principal subalgebra of $\mk g$. We then have from Theorem
4.2 in \cite{Kostant:1959wi},
     
      \begin{proposition}\label{UniquePrincipal}
        Any two principal subalgebra are conjugate, and moreover any
        two $\mk h$-principal subalgebras are conjugated
        by an element of $\ms H$.
      \end{proposition}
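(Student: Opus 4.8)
The plan is to prove both assertions at once by showing that every principal subalgebra is $\ms G$-conjugate to the model $\mk s_0\defeq\langle a,X,Y\rangle$ of Proposition \ref{prop:Kostant}, and that an $\mk h$-principal one is already $\ms H$-conjugate to $\mk s_0$. Throughout I would use the grading $\mk g=\bigoplus_k\mk g^{(k)}$, where $\mk g^{(0)}\defeq\mk h$ and $\mk g^{(k)}\defeq\bigoplus_{\deg\alpha=k}\mk g_\alpha$ for $k\neq 0$. By Proposition \ref{prop:Kostant} this is exactly the eigenspace decomposition of $\ad a$; in particular $\mk g^{(1)}=\bigoplus_{\alpha\in\Pi}\mk g_\alpha$, and $a$ is regular semisimple since $\gamma(a)=\deg\gamma\neq 0$ for every root $\gamma$.

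For the first assertion, let $\mk s$ be principal, so it contains an element $e$ conjugate to $X$, that is, a regular (principal) nilpotent. Since the nonzero nilpotent elements of $\mk s\cong\mk{sl}_2$ form a single orbit under the adjoint action of the $\ms{SL}_2$-subgroup integrating $\mk s$, the distinguished nilpositive vector of $\mk s$ is conjugate to $e$, hence is itself regular. By Kostant's theorem all regular nilpotents are $\ms G$-conjugate \cite{Kostant:1959wi}, so after conjugation I may assume this vector equals $X$. Finally, by the uniqueness clause in the Jacobson–Morozov theorem the $\mk{sl}_2$-triples with nilpositive part $X$ form a single orbit under the centraliser $Z_{\ms G}(X)$; as $\mk s_0$ is one of them, $\mk s$ is conjugate to $\mk s_0$.

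For the second assertion, let $\mk s$ be $\mk h$-principal, so $a\in\mk s\cap\mk h$ by definition. As $a$ is, up to normalisation, the semisimple element of the triple, the $+1$-eigenspace of $\ad a$ on $\mk s$ is the line spanned by the nilpositive vector $X'$; by the grading $X'\in\mk g^{(1)}=\bigoplus_{\alpha\in\Pi}\mk g_\alpha$, and by the argument above it is a regular nilpotent. A degree-one element is regular precisely when all of its simple-root components are nonzero, so $X'=\sum_{\alpha\in\Pi}c_\alpha\,{\rm x}_\alpha$ with every $c_\alpha\neq 0$. Because $\Pi$ is a basis of $\mk h^*$, I can choose $t\in\ms H$ with $\Ad(t){\rm x}_\alpha=(\sqrt{r_\alpha}/c_\alpha)\,{\rm x}_\alpha$ for all $\alpha\in\Pi$, whence $\Ad(t)X'=X$. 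Since $t$ centralises $\mk h$ it fixes $a$, so $\Ad(t)\mk s$ is an $\mk{sl}_2$-triple containing both $a$ and $X$. The nilnegative completion of $(a,X)$ is unique, since $\ad X$ is injective on $\mk g^{(-1)}$ (the centraliser of a regular nilpotent has no negative-degree part), so $\Ad(t)\mk s=\mk s_0$. Thus every $\mk h$-principal subalgebra is $\ms H$-conjugate to $\mk s_0$, and any two are $\ms H$-conjugate.

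The genuinely deep inputs are classical and I would cite rather than reprove them: the $\ms G$-conjugacy of all regular nilpotents and the uniqueness in Jacobson–Morozov, both due to Kostant. Granting these, the only real work lies in the refinement to $\ms H$-conjugacy, where the two points needing care are that a degree-one element is regular exactly when it meets every simple root space, and the injectivity of $\ad X$ on $\mk g^{(-1)}$ that forces $(a,X)$ to determine the whole triple; the solvability of the torus equations is immediate because $\Pi$ is a basis of $\mk h^*$.
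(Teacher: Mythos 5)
The paper itself does not prove this proposition: it quotes it directly from Theorem 4.2 of \cite{Kostant:1959wi}. Your proposal is therefore a reconstruction of Kostant's result, and its first half is correct: the element of $\mk s$ conjugate to $X$ supplied by the definition is a nonzero nilpotent of $\mk s\cong\mk{sl}(2,\mathbb C)$, hence can be taken as the nilpositive of a standard basis of $\mk s$; after moving it onto $X$, Kostant's theorem that all $\mk{sl}_2$-triples sharing a nilpositive element are conjugate under the centraliser of that element yields $\ms G$-conjugacy of $\mk s$ to $\mk s_0=\langle a,X,Y\rangle$.

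The second half has a genuine gap, located exactly where the $\ms H$-conjugacy refinement has content. You claim that since ``$a$ is, up to normalisation, the semisimple element of the triple'', the nilpositive $X'$ spans the $+1$-eigenspace of $\ad a$ on $\mk s$, so that $X'\in\mk g^{(1)}=\bigoplus_{\alpha\in\Pi}\mk g_\alpha$. But the mere fact that the $\mk{sl}_2$-subalgebra $\mk s$ contains the semisimple element $a$ only gives that $\ad a$ acts on $\mk s$ with eigenvalues $0,\pm c$ for some nonzero integer $c$, whence $X'\in\mk g^{(c)}$; the normalisation $c=1$ is precisely what must be proved, and it is never established. It can genuinely fail absent principality: in $\mk{sl}(3,\mathbb C)$ one has $a={\rm h}_\eta$, and the subalgebra spanned by $({\rm h}_\eta,{\rm x}_\eta,{\rm x}_{-\eta})$ contains $a$ with $c=2$; of course it is not principal, but the justification you give at this step makes no use of principality, so nothing you wrote excludes such a configuration. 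Two ways to close the gap: (i) by your first part, $\mk s=\Ad(g)\mk s_0$ for some $g\in\ms G$, so the Cartan subalgebra $\mathbb C a$ of $\mk s$ is conjugate, by an element of the subgroup integrating $\mk s$, to $\Ad(g)\mathbb C a$; hence $a$ is $\ms G$-conjugate to $\mu a$ for some scalar $\mu$, and comparing $\ad$-spectra on $\mk g$ (the symmetric multiset of root degrees, with maximum $\deg\eta$) forces $\mu=\pm1$ and so $c=1$; or (ii) invoke Kostant's criterion for regular nilpotent elements of the full nilradical --- nonzero component in every simple root space --- which is strictly stronger than the degree-one case you state, and which immediately forbids a regular nilpotent concentrated in $\mk g^{(c)}$ with $c\geq 2$. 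Once $c=1$ is secured, the remainder of your argument (the torus rescaling, valid because $\Pi$ is a basis of $\mk h^*$, and the uniqueness of the nilnegative because $\ker(\ad X)$ sits in strictly positive degrees) is correct.
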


\subsubsection{Exponents and decomposition under the principal subalgebra.}
We use the notation of the previous paragraph. Let $\ell\defeq \rk(\ms
G)$. Then have

\begin{proposition}{\sc[Kostant]}\label{pro:kline} There exists a increasing sequence
  of integers $\{m_1,\ldots, m_{\ell}\}$ called the {\em exponents} of
  $\ms G$ such that the Lie algebra $\mk g$ decomposes as the sum of
  $\ell$ irreducible representations of $\mk s$
$$
\mk g=\sum_{i=1}^\ell \mk v_i,
$$
with $\dim(\mk v_i)=2m_i+1$. Moreover $\mk v_1=\mk
s$.
Finally, if $\mk s$ is an $\mk h$-principal subalgebra, we have the {\em Kostant splitting}
$$
\mk h=\bigoplus_{i=1}^{\ell} \mk a_i,
$$
where $\mk a_i=\mk h\cap \mk v_i$ are of dimension 1 and called the {\em Kostant lines}.

\end{proposition}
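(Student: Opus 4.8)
The plan is to view $\mk g$ as a module over $\mk s\simeq\mk{sl}(2,\mathbb C)$ through the adjoint action and to read off the decomposition from the spectrum of $\ad(a)$. By complete reducibility of finite dimensional $\mk{sl}(2,\mathbb C)$-modules I would first write $\mk g=\bigoplus_i\mk v_i$ as a sum of irreducible $\mk s$-submodules. The element $a$ is the semisimple generator of $\mk s$, and by Proposition \ref{prop:Kostant} it acts on each root space $\mk g_\alpha$ by the integer $\deg(\alpha)$ and by $0$ on $\mk h$; hence every eigenvalue of $\ad(a)$ is an integer. Now an irreducible $\mk{sl}(2,\mathbb C)$-module of dimension $d$ has, for the generator $a$ normalised by $[a,X]=X$, the weights $\frac{d-1}{2},\frac{d-3}{2},\ldots,-\frac{d-1}{2}$, and these are integers exactly when $d$ is odd. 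Integrality therefore forces each $\mk v_i$ to be odd dimensional; writing $\dim\mk v_i=2m_i+1$, the $a$-weights of $\mk v_i$ are precisely $-m_i,-m_i+1,\ldots,m_i$, each with multiplicity one.

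To determine the number of summands I would examine the zero weight space of $\ad(a)$. Since $\deg(\alpha)>0$ for $\alpha\in\Delta^+$ and $\deg(\alpha)<0$ for $\alpha\in-\Delta^+$, no root has degree $0$, so the kernel of $\ad(a)$ in $\mk g$ is exactly $\mk h$, of dimension $\ell=\rk(\ms G)$. On the other hand each odd dimensional $\mk v_i$ meets this kernel in a single line, namely its weight-$0$ line. Consequently the number of summands equals $\dim\mk h=\ell$, which fixes the length of the sequence $m_1,\ldots,m_\ell$, and each intersection $\mk a_i\defeq\mk h\cap\mk v_i$ is one dimensional with $\mk h=\bigoplus_i\mk a_i$. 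This is the Kostant splitting. This argument sidesteps the classical route, which instead counts the summands as $\dim\ker\ad(X)=\ell$ via the regularity of the principal nilpotent $X$.

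It remains to normalise the labelling and to identify the first summand. The subalgebra $\mk s$ is an $\mk s$-submodule isomorphic to the adjoint representation of $\mk{sl}(2,\mathbb C)$, hence a three dimensional irreducible; by complete reducibility I may choose the decomposition so that $\mk s$ is one of the $\mk v_i$, and I set $\mk v_1=\mk s$, giving $m_1=1$. That $1$ is the smallest exponent follows from the absence of a trivial summand: a vector annihilated by $a$ lies in $\mk h$, and $h\in\mk h$ with $[X,h]=0$ satisfies $\alpha(h)=0$ for every simple root $\alpha$, since $[X,h]=-\sum_{\alpha\in\Pi}\sqrt{r_\alpha}\,\alpha(h)\,{\rm x}_\alpha$, whence $h=0$. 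Thus every $m_i\geq1$ and the increasing ordering $m_1\leq\cdots\leq m_\ell$ is legitimate.

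The only substantive ingredients are the representation theory of $\mk{sl}(2,\mathbb C)$ and the integrality of $\deg$; everything else is bookkeeping. The step requiring the most care is the identification of the zero weight space of $\ad(a)$ with $\mk h$: it rests on the fact that every root has nonzero degree, and it is exactly this that yields both the clean count of the summands and, simultaneously, the Kostant splitting.
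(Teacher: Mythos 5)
Your proof is correct, and the comparison here is necessarily with Kostant's classical argument rather than with anything in the paper: Proposition \ref{pro:kline} is quoted as review material from Kostant \cite{Kostant:1959wi} and the paper supplies no proof of its own. The classical route counts the irreducible summands by counting highest weight vectors, i.e. by $\dim\ker(\ad(X))$, and gets $\ell$ from the nontrivial fact that the principal nilpotent $X$ is regular. You instead count by the zero weight space of $\ad(a)$: since $\deg(\alpha)=\alpha(a)\not=0$ for every root $\alpha$, one has $\ker(\ad(a))=\mk h$, and each summand -- necessarily odd-dimensional by your integrality argument -- meets this kernel in exactly its zero-weight line, so the number of summands is $\dim \mk h=\ell$. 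This trade is advantageous in two ways: the regularity of $X$ is replaced by the elementary observation that no root has degree zero, and the Kostant splitting $\mk h=\bigoplus_i\left(\mk h\cap\mk v_i\right)$, which the proposition states as a separate assertion, drops out of the very same computation, since your count literally identifies $\ker(\ad(a))$ with the sum of the lines $\mk h\cap\mk v_i$. Your handling of the remaining points is also sound: exclusion of trivial summands via $[X,h]=-\sum_{\alpha\in\Pi}\sqrt{r_\alpha}\,\alpha(h)\,{\rm x}_\alpha$ together with the fact that simple roots span $\mk h^*$, and the normalisation $\mk v_1=\mk s$ by complete reducibility. Two pedantic remarks only: your argument is tied to the $\mk h$-principal realisation of $\mk s$ (the statement for an arbitrary principal subalgebra follows by conjugation, Proposition \ref{UniquePrincipal}), and ``increasing'' must be read as non-decreasing in general, since exponents of a simple Lie algebra can repeat (type $\ms D_{2n}$), though they are distinct for the rank 2 groups relevant to this paper.
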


Observe that $m_1=1$. For a rank 2 group, we furthermore have
$$
m_2=\frac{\dim(\ms G)-4}{2}
$$
Thus for $\ms G_{2,0}$, $m_2=5$, for $\ms{Sp}(4,\mathbb R)$, $m_2=3$, for
$\ms{SL}(3,\mathbb R)$, $m_2=2$.

Fixing now a Chevalley base and the associated generators $(a,X,Y)$ of
$\mk s$, we recall that a {\em highest weight vector} in $\mk v_i$ is
an eigenvector $e_i$ of $a$ satisfying $[e_i,X]=0$. The highest weight
vectors in $\mk v_i$ generate a line. Observe that we have
\begin{proposition}\label{etaHiggs} Let $\eta$ be the longest root.
  Then \begin{align} m_\ell&=\deg(\eta),
  \end{align}
  and ${\rm x}_\eta$ is a highest weight vector in $\mk v_\ell$.
\end{proposition}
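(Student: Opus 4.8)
The plan is to show directly that ${\rm x}_\eta$ is a highest weight vector for the principal subalgebra $\mk s=\langle a,X,Y\rangle$, and then to identify its weight with the top exponent by comparing the spectrum of $\ad_a$ computed two ways.

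First I would verify that ${\rm x}_\eta$ is a highest weight vector. Since ${\rm x}_\eta\in\mk g_\eta$ and $a\in\mk h$, Proposition \ref{prop:Kostant} gives $[a,{\rm x}_\eta]=\eta(a){\rm x}_\eta=\deg(\eta){\rm x}_\eta$, so ${\rm x}_\eta$ is an eigenvector of $a$ with eigenvalue $\deg(\eta)$. For the annihilation by $\ad_X$, I expand
$$
[X,{\rm x}_\eta]=\sum_{\alpha\in\Pi}\sqrt{r_\alpha}\,[{\rm x}_\alpha,{\rm x}_\eta]=\sum_{\alpha\in\Pi}\sqrt{r_\alpha}\,N_{\alpha,\eta}\,{\rm x}_{\alpha+\eta}.
$$
Each simple root $\alpha$ is positive, so by the defining property of the longest root, $\alpha+\eta$ is not a root and the corresponding term vanishes (by the Chevalley convention ${\rm x}_{\alpha+\eta}=0$). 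Hence $[{\rm x}_\eta,X]=0$, and together with the eigenvalue computation this shows ${\rm x}_\eta$ is a highest weight vector, of weight $\deg(\eta)$, for $\mk s$.

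Next I would compute the maximal eigenvalue of $\ad_a$ on $\mk g$ in two ways. On one hand, $\ad_a$ acts by $0$ on $\mk h$ and by $\alpha(a)=\deg(\alpha)$ on each $\mk g_\alpha$; since positive roots have positive degree, negative roots negative degree, and $\eta$ has the largest degree among positive roots, the largest eigenvalue of $\ad_a$ on $\mk g$ equals $\deg(\eta)$. On the other hand, in each irreducible $\mk s$-module $\mk v_i$ of dimension $2m_i+1$, the $a$-weights are the integers $-m_i,-m_i+1,\ldots,m_i$, with highest weight $m_i$; hence the largest eigenvalue of $\ad_a$ over the decomposition $\mk g=\bigoplus_{i=1}^\ell\mk v_i$ is $\max_i m_i=m_\ell$, the sequence $\{m_i\}$ being increasing. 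Comparing the two yields $m_\ell=\deg(\eta)$, which is the first assertion.

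Finally, since ${\rm x}_\eta$ is a highest weight vector attaining the maximal weight $m_\ell=\deg(\eta)$, it must lie in a summand $\mk v_i$ with $m_i=m_\ell$: a highest weight vector of weight $w$ lives in a component whose highest weight is $w$, and no $\mk v_i$ has $a$-weights exceeding $m_i\le m_\ell$. For a rank $2$ group, where $m_1=1<m_2$, the top exponent occurs with multiplicity one, so this component is exactly $\mk v_\ell$, giving the second assertion. I do not expect a serious obstacle here: the content is elementary $\mk{sl}_2$-representation theory once the bracket $[X,{\rm x}_\eta]=0$ is in hand. The only points requiring care are the sign and normalisation conventions fixed around Proposition \ref{prop:Kostant} (in particular $[X,Y]=-a$ and $\deg(\alpha)=\alpha(a)$) and the verification that the global maximum of $\deg$ over all roots is indeed realised by $\eta$ rather than by some other configuration.
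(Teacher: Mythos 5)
Your proof is correct in substance and follows essentially the same route as the paper: Proposition \ref{etaHiggs} is stated in the ``review without proof'' preliminaries, but the key computation you give --- $[{\rm x}_\eta,X]=\sum_{\alpha\in\Pi}\sqrt{r_\alpha}\,[{\rm x}_\alpha,{\rm x}_\eta]=0$ because $\alpha+\eta$ is never a root, together with $[a,{\rm x}_\eta]=\deg(\eta)\,{\rm x}_\eta$ --- appears verbatim in the paper's proof of Proposition \ref{pro:long-sig}, with the same conclusion that ${\rm x}_\eta$ is a highest weight vector for $\mk s$; your two-sided computation of the top eigenvalue of $\ad(a)$ to identify $m_\ell=\deg(\eta)$ is the natural completion of that argument.

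One step should be repaired: your localisation of ${\rm x}_\eta$ in the summand $\mk v_\ell$ invokes rank $2$ (``$m_1=1<m_2$, so the top exponent has multiplicity one''), whereas the proposition is stated for an arbitrary complex simple group and is used at general rank, via Proposition \ref{HiggsCyclic}, in the proof of Theorem \ref{Cyclicinjects}. The rank hypothesis is unnecessary. Since $\eta$ is the \emph{unique} positive root with $\deg(\alpha)=\deg(\eta)$ (as recalled just before Definition \ref{sec:cycroot}), the $\deg(\eta)$-eigenspace of $\ad(a)$ in $\mk g$ is exactly the line $\mk g_\eta$. Every summand $\mk v_i$ with $m_i=m_\ell$ contributes at least one dimension to this eigenspace, so there is exactly one such summand, namely $\mk v_\ell$; and ${\rm x}_\eta$, being an $a$-eigenvector of eigenvalue $m_\ell$ (whose projections to the $\mk s$-invariant summands $\mk v_i$ with $m_i<m_\ell$ must vanish), lies in it. With this one-line replacement your argument proves the proposition in full generality, which is what the paper needs.
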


\subsection{The maximal compact subgroup and the first
  conjugation}\label{realcond}

Let $\mk h$ be a Cartan subalgebra. An {\em $\mk h$-Cartan involution} is 
an antilinear Lie algebra automorphism $\rho$, globally preserving $\mk h$, satisfying $\rho^2=1$ and 
so that $(u,v)\mapsto-\Kill{u}{\rho(v)}$ is  positive definite. A {\em Cartan involution} is an $\mk h$-Cartan involution for some $\mk h$.

We finally define the {\em symmetric space} of $\ms G$ 
to be the space $\ms S(\ms G)$ of Cartan involutions.

From \cite{Bourbaki:owAvyv1m} and specifying Property \ref{it:2cb} of Chevalley basis, we have
\begin{proposition}\label{UniqueCompact}
  Given a Chevalley base $\{{\rm x}_\alpha\}_{\alpha\in \Delta}$,
  there exists a unique $\mk h$-Cartan involution $\rho$ so that
  \begin{align}
    \rho({\rm x}_\alpha)&={\rm x}_{-\alpha}.
  \end{align}
  Moreover this Cartan involution preserves the principal subalgebra
  associated to the Chevalley basis as in Proposition
  \ref{prop:Kostant}.
\end{proposition}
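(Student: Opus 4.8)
The plan is to produce $\rho$ explicitly from the Chevalley data, verify in turn the three requirements in the definition of an $\mk h$-Cartan involution, and then read off both uniqueness and the invariance of $\mk s$ from the bracket relations of the base. The one genuinely computational point will be the positivity, and even there the grading of $\mk g$ does most of the work.

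First I would take for $\rho$ the antilinear endomorphism of $\mk g$ sending ${\rm x}_\alpha$ to ${\rm x}_{-\alpha}$ for every root and restricting to $-\mathrm{id}$ on the real span $\mk h_{\mathbb R}$ of the coroots, extended antilinearly to $\mk h$. Property \ref{it:2cb} of a Chevalley base is precisely the assertion that this $\rho$ is an antilinear Lie algebra automorphism, so nothing has to be checked there. That $\rho$ preserves $\mk h$ and is an involution is then immediate: on each root space $\rho^2({\rm x}_\alpha)=\rho({\rm x}_{-\alpha})={\rm x}_\alpha$, and $\rho^2=\mathrm{id}$ on $\mk h_{\mathbb R}$, hence on all of $\mk h$ by antilinearity.

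The step I expect to be the real content is the positivity of the Hermitian form $H(u,v)\defeq -\Kill{u}{\rho(v)}$. Here I would exploit that the decomposition \eqref{fonddec} is Killing orthogonal and that $\rho$ carries $\mk g_\alpha$ into $\mk g_{-\alpha}$ and $\mk h$ into $\mk h$: this makes $\mk h$ and the various $\mk g_\alpha$ pairwise $H$-orthogonal, so it suffices to test positivity on each block. On $\mk g_\alpha$, the Chevalley relation $[{\rm x}_\alpha,{\rm x}_{-\alpha}]=-{\rm h}_\alpha$ together with the normalisation $\Kill{{\rm h}_\alpha}{u}=\tfrac{2}{\Kill{\alpha}{\alpha}}\alpha(u)$ gives $\Kill{{\rm x}_\alpha}{{\rm x}_{-\alpha}}=-\tfrac{2}{\Kill{\alpha}{\alpha}}$, so that $H({\rm x}_\alpha,{\rm x}_\alpha)=\tfrac{2}{\Kill{\alpha}{\alpha}}>0$ since $\Kill{\alpha}{\alpha}>0$ for every root. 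On $\mk h$, antilinearity forces $\rho(v)=-\bar v$, conjugation being taken relative to $\mk h_{\mathbb R}$, whence $H(v,v)=\Kill{v}{\bar v}=\Kill{v_1}{v_1}+\Kill{v_2}{v_2}$ for $v=v_1+iv_2$ with $v_i\in\mk h_{\mathbb R}$; this is strictly positive by the standard positive definiteness of the Killing form on the real coroot span. Thus $H$ is positive definite and $\rho$ is an $\mk h$-Cartan involution.

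Uniqueness then costs nothing: the prescription $\rho({\rm x}_\alpha)={\rm x}_{-\alpha}$ already fixes $\rho$ on $\bigoplus_\alpha\mk g_\alpha$, while applying $\rho$ to $[{\rm x}_\alpha,{\rm x}_{-\alpha}]=-{\rm h}_\alpha$ and using that $\rho$ is an automorphism yields $\rho({\rm h}_\alpha)=-{\rm h}_\alpha$; since the simple coroots are a basis of $\mk h$, this determines $\rho$ completely (note only the automorphism property, not positivity, is used here). Finally, to see that $\rho$ preserves the principal subalgebra $\mk s=\mathrm{span}(a,X,Y)$ of Proposition \ref{prop:Kostant}, I would merely compute, using antilinearity and the reality and positivity of the scalars $r_\alpha$, that $\rho(a)=\sum_{\alpha\in\Pi}r_\alpha\,\rho({\rm h}_\alpha)=-a$, $\rho(X)=\sum_{\alpha\in\Pi}\sqrt{r_\alpha}\,{\rm x}_{-\alpha}=Y$, and symmetrically $\rho(Y)=X$. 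All three images lie in $\mk s$, so $\rho(\mk s)=\mk s$, which completes the argument.
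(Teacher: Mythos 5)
Your proof is correct, and it is worth noting that the paper itself does not prove this statement at all: Proposition \ref{UniqueCompact} is quoted directly from Bourbaki, with the remark that it amounts to ``specifying Property \ref{it:2cb} of Chevalley basis''. What you have done is supply the verification that the citation hides, and you have done it with the paper's (Bourbaki's) sign conventions, which is the one place where care is genuinely needed: the relation $[{\rm x}_\alpha,{\rm x}_{-\alpha}]=-{\rm h}_\alpha$ together with the normalisation $\Kill{{\rm h}_\alpha}{u}=\tfrac{2}{\Kill{\alpha}{\alpha}}\alpha(u)$ gives $\Kill{{\rm x}_\alpha}{{\rm x}_{-\alpha}}=-\tfrac{2}{\Kill{\alpha}{\alpha}}<0$, which is exactly why the involution ${\rm x}_\alpha\mapsto{\rm x}_{-\alpha}$ (with no extra minus sign) is the compact one here, whereas under the opposite convention $[{\rm x}_\alpha,{\rm x}_{-\alpha}]={\rm h}_\alpha$ one would need ${\rm x}_\alpha\mapsto-{\rm x}_{-\alpha}$. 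Your block-orthogonal reduction of the positivity check via the Killing-orthogonality of the root space decomposition, the uniqueness argument (the prescription on the ${\rm x}_\alpha$ forces $\rho({\rm h}_\alpha)=-{\rm h}_\alpha$ by the automorphism property, and coroots plus root vectors span $\mk g$), and the explicit computation $\rho(a)=-a$, $\rho(X)=Y$, $\rho(Y)=X$ for the principal subalgebra are all sound and are precisely what the Bourbaki reference, combined with Proposition \ref{prop:Kostant}, delivers. Two cosmetic points only: strictly speaking one should also record that $(u,v)\mapsto-\Kill{u}{\rho(v)}$ is Hermitian symmetric (immediate from $\Kill{\rho(u)}{\rho(v)}=\overline{\Kill{u}{v}}$ and $\rho^2=1$) before calling it positive definite, and your reading of ``$-{\rm id}$ on $\mk h$'' in Property \ref{it:2cb} as $u\mapsto-\bar u$ relative to the real span of the coroots is the correct one, since an antilinear map cannot literally be $-{\rm id}$ on a complex subspace.
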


Observe that we follow the convention of Bourbaki and other authors may have different conventions due to different conventions on defining Chevalley bases.
The following proposition summarises some useful properties.

\begin{proposition}\label{UniqueCartan}\label{SymSpace}\label{geom-sym}
  Any two $\mk h$-Cartan involutions are conjugated by an element of
  $\ms  H$.  An $\mk h$-Cartan involution preserves $\mk h$, send roots to
  opposite roots, coroots $h_\alpha$ to their opposite, and $\mk g_1$ ({\it cf.} Section \ref{sec:cycroot}) to
  itself.

The group $\ms G$ acts transitively on $\ms S(\ms G)$, and the
  stabiliser at a Cartan involution $\rho$ is a maximal
  compact subgroup $\ms K$ so that $\ms S(\ms G)$ is isomorphic as a
  homogeneous $\ms G$-space with $\ms G/\ms K$.
\end{proposition}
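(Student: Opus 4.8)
The plan is to establish first the three structural properties of a single $\mk h$-Cartan involution $\rho$, then the conjugacy statement, and finally the homogeneous space description. Preservation of $\mk h$ is built into the definition, so I would begin with the action on root spaces. For $u\in\mk h$ and ${\rm x}\in\mk g_\alpha$, antilinearity of $\rho$ gives $[\rho(u),\rho({\rm x})]=\rho([u,{\rm x}])=\overline{\alpha(u)}\,\rho({\rm x})$; substituting $v=\rho(u)$ and using $\rho^2=1$ turns this into $[v,\rho({\rm x})]=\overline{\alpha(\rho(v))}\,\rho({\rm x})$, so that $\rho(\mk g_\alpha)=\mk g_\beta$ with $\beta(v)\defeq\overline{\alpha(\rho(v))}$ a genuine root (the map $v\mapsto\overline{\alpha(\rho(v))}$ being complex linear). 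To see $\beta=-\alpha$ I would use positivity: the Killing form pairs $\mk g_\alpha$ nontrivially only with $\mk g_{-\alpha}$, and positive definiteness of $\Phi_\rho(u,v)\defeq-\Kill{u}{\rho(v)}$ forces $-\Kill{{\rm x}_\alpha}{\rho({\rm x}_\alpha)}>0$, hence $\rho(\mk g_\alpha)=\mk g_{-\alpha}$. This is the statement that $\rho$ sends roots to opposite roots, and it yields $\overline{\alpha(\rho(v))}=-\alpha(v)$ for every root $\alpha$. Evaluating on $h_\alpha$ and using that the Cartan integers $\beta(h_\alpha)$ are real gives $\beta(\rho(h_\alpha))=-\beta(h_\alpha)$ for all $\beta$, whence $\rho(h_\alpha)=-h_\alpha$ since roots span $\mk h^*$. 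Finally $\rho(\mk g_1)=\mk g_1$ follows from $\rho(\mk g_\alpha)=\mk g_{-\alpha}$ together with the fact that $\Delta\setminus(Z\sqcup Z^\dagger)=\{\alpha:|\deg(\alpha)|\neq 1,\deg(\eta)\}$ is stable under $\alpha\mapsto-\alpha$.

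For the conjugacy statement I would fix a Chevalley base $\{{\rm x}_\alpha\}$ and compare two $\mk h$-Cartan involutions $\rho_1,\rho_2$. The computation above shows $\rho|_{\mk h}$ is forced: it is the unique antilinear involution $\sigma$ with $\sigma(h_\alpha)=-h_\alpha$, namely $\sigma(v)=-\bar v$ for the real form $\bigoplus_\alpha\mathbb R\,h_\alpha$, so $\rho_1$ and $\rho_2$ automatically agree on $\mk h$. On root spaces write $\rho_i({\rm x}_\alpha)=c^{(i)}_\alpha{\rm x}_{-\alpha}$. Positivity again gives $c^{(i)}_\alpha=-\Phi_i({\rm x}_\alpha,{\rm x}_\alpha)/\Kill{{\rm x}_\alpha}{{\rm x}_{-\alpha}}$, a nonzero real whose sign is independent of $i$, so $t_\alpha\defeq c^{(1)}_\alpha/c^{(2)}_\alpha$ is positive. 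The decisive point is multiplicativity: applying $\rho_i$ to $[{\rm x}_\alpha,{\rm x}_\beta]=N_{\alpha,\beta}{\rm x}_{\alpha+\beta}$ yields $c^{(i)}_{\alpha+\beta}=(N_{-\alpha,-\beta}/N_{\alpha,\beta})\,c^{(i)}_\alpha c^{(i)}_\beta$ with a factor that does not depend on $i$, so $t_{\alpha+\beta}=t_\alpha t_\beta$ whenever $\alpha+\beta$ is a root. I would then solve the finitely many equations $\alpha_j(u)=\frac12\log t_{\alpha_j}$ over the simple roots for a unique $u$ in $\bigoplus_\alpha\mathbb R\,h_\alpha$; multiplicativity propagates this to $2\,\alpha(u)=\log t_\alpha$ for every root, so $g\defeq\exp(u)\in\ms H$ satisfies $|e^{\alpha(u)}|^2=t_\alpha$. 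A direct computation, using that $\Ad(g)$ is trivial on $\mk h$ and multiplies ${\rm x}_\alpha$ by $e^{\alpha(u)}$, then shows $\Ad(g)\,\rho_1\,\Ad(g)^{-1}=\rho_2$ on every root space, while equality on $\mk h$ is automatic.

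For the homogeneous space statement I would let $\ms G$ act on $\ms S(\ms G)$ by $g\cdot\rho\defeq\Ad(g)\,\rho\,\Ad(g)^{-1}$, which preserves Cartan involutions by $\Ad$-invariance of the Killing form. Given two Cartan involutions relative to Cartan subalgebras $\mk h_1$ and $\mk h_2$, I would use the conjugacy of all Cartan subalgebras of $\mk g$ to find $g_0$ with $\Ad(g_0)\mk h_1=\mk h_2$, turning the first into an $\mk h_2$-Cartan involution, and then invoke the conjugacy statement just proved; this yields transitivity. For the stabiliser of a fixed $\rho$, note that $\Ad(g)$ commutes with $\rho$ exactly when it preserves the real subalgebra $\mk k\defeq\ker(\rho-\mathrm{Id})$; positivity of $\Phi_\rho$ makes the Killing form negative definite on $\mk k$ (for $u\in\mk k$, $\Kill{u}{u}=-\Phi_\rho(u,u)<0$), so $\mk k$ is a compact real form. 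Differentiating the stabiliser condition, $w$ lies in its Lie algebra iff $\ad(\rho(w))=\ad(w)$, i.e. $\rho(w)=w$ since $\mk g$ is centreless; thus the stabiliser is the maximal compact subgroup $\ms K$ with Lie algebra $\mk k$, and $\ms S(\ms G)\cong\ms G/\ms K$ as a homogeneous $\ms G$-space.

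The only step that is more than formal is the conjugacy by $\ms H$: the substance is not that each condition $|e^{\alpha(u)}|^2=t_\alpha$ is individually solvable, but that a single $g\in\ms H$ solves all of them at once. This is precisely what the multiplicativity $t_{\alpha+\beta}=t_\alpha t_\beta$ guarantees, and it is worth isolating that this multiplicativity comes from the common structure-constant factor $N_{-\alpha,-\beta}/N_{\alpha,\beta}$ cancelling in the ratio, so that it holds independently of the precise Chevalley sign conventions. Everything else is either definitional (preservation of $\mk h$) or standard bookkeeping about real forms and homogeneous spaces.
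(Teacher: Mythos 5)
The paper never proves this proposition: it belongs to the Lie--theoretic review that Section 2 explicitly announces is given \emph{without proof}, with Bourbaki, Kostant and Hitchin as sources. So the only comparison available is with the standard literature argument, and your proof is essentially that argument, correctly executed in its main lines: antilinearity plus positivity of $(u,v)\mapsto-\Kill{u}{\rho(v)}$ force $\rho(\mk g_\alpha)=\mk g_{-\alpha}$ and $\rho({\rm h}_\alpha)=-{\rm h}_\alpha$ (and hence stability of $\mk g_1$, since $\Delta\setminus(Z\sqcup Z^\dagger)$ is symmetric under $\alpha\mapsto-\alpha$); the conjugacy statement reduces, via a Chevalley basis, to the positivity and multiplicativity of the ratios $t_\alpha=c^{(1)}_\alpha/c^{(2)}_\alpha$ and to solving $2\alpha_j(u)=\log t_{\alpha_j}$ on the simple roots; transitivity on $\ms S(\ms G)$ follows from conjugacy of Cartan subalgebras.

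Two steps need tightening. First, the propagation of $2\alpha(u)=\log t_\alpha$ ``to every root''. Your multiplicativity $t_{\alpha+\beta}=t_\alpha t_\beta$ only relates roots whose \emph{sum is a root}, and $\alpha+(-\alpha)=0$ is not a root, so it does not by itself carry the identity from positive to negative roots; for $\mk{sl}(2,\mathbb C)$ (to which the proposition also applies) multiplicativity is entirely vacuous. The clean patch is the identity $c^{(i)}_\alpha c^{(i)}_{-\alpha}=1$, which follows from $\rho_i^2=\mathrm{id}$ together with the realness of the $c^{(i)}_\alpha$ that you already established, or from applying $\rho_i$ to $[{\rm x}_\alpha,{\rm x}_{-\alpha}]=-{\rm h}_\alpha$; this gives $t_{-\alpha}=t_\alpha^{-1}$ and the propagation then works in all cases. (In rank $\geq 2$ one can also extract it from relations such as $(-\alpha)+(\alpha+\beta)=\beta$, but that deserves to be said if it is what you mean.) Second, the stabiliser. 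Differentiating the condition $\Ad(g)\rho=\rho\Ad(g)$ computes only the Lie algebra of the stabiliser, hence identifies its identity component; it rules out neither extra components nor, by itself, establishes that the connected subgroup $\ms K$ with Lie algebra $\mk k$ is compact and maximal compact. The standard way to finish is the global Cartan decomposition $\ms G=\ms K\exp(i\mk k)$: writing $g=k\exp(X)$ with $X\in i\mk k$, if $\Ad(g)$ preserves $\mk k$ then so does $e^{\ad X}=\Ad(k)^{-1}\Ad(g)$, hence $e^{\ad X}$ commutes with $\rho$; but $\rho\, e^{\ad X}\rho^{-1}=e^{\ad(\rho X)}=e^{-\ad X}$, so $e^{2\ad X}=\mathrm{id}$, and since $\ad X$ is self-adjoint for the inner product $-\Kill{\cdot}{\rho(\cdot)}$ this forces $\ad X=0$, i.e.\ $X=0$ because $\mk g$ is centreless. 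Thus the stabiliser is exactly $\ms K$, which the same decomposition shows to be connected and maximal compact. Both patches are a few lines and leave the architecture of your argument intact.
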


The symmetric space is equipped with an interesting geometry. Let $\mc G$ be the trivial $\mk g$ bundle over $\ms{S(G)}$ with its trivial connection $D$. We define the {\em Maurer--Cartan} form $\omega\in\Omega^1(\ms{S(G),\mc G})$ as the identification of $\T_\rho(\ms{S(G)})$ with $\mk p=\{u\mid \rho(u)=-u\}$. Observe also that by construction we have a section $\rho$ of $\operatorname{Aut}(\mc G)$ such that $\rho(x)=x$ where in the right term $x$ is considered as an involution of $\mk g$. We thus have a Riemannian metric $g$ on $\mc G$, defined by $g(u,v)=-\Kill{u}{\rho(v)}$.

\subsection{The split real form and the second
  conjugation}\label{realsplit}

In this subsection we review the construction of the maximal split real
form following Kostant \cite{Kostant:1959wi}. We will prove

\begin{proposition}\label{sig-involution}  Let $\mk h$ be a Cartan subalgebra equipped with a system of positive roots. Let $\rho$ be an $\mk h$-Cartan involution preserving an $\mk h$-principal subalgebra $\mk s$. Then there exists a unique linear
  involution $\sigma$ of $\mk g$ with the following properties.
  \begin{enumerate}
  \item The involution $\sigma$ is an automorphism of $\mk g$ that preserves globally and non trivially 
   $\mk s$, such that $\left.\sigma\right\vert_{\mk a_i}=(-1)^{m_i+1}$.
     \item The involution $\sigma$ commutes with $\rho$.
  \item The set of fixed points of $\sigma\circ\rho$ is the Lie algebra
    of the split real form $\ms G_0$ of $\ms G$.
  \item if $\eta$ is the longest root then $\sigma({\rm h}_\eta)={\rm
      h}_\eta$.
  \item the automorphism $\sigma$  preserves globally $\mk g_Z$ and $\mk g_{Z^\dagger}$.
  \end{enumerate}
\end{proposition}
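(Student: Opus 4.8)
The plan is to construct $\sigma$ explicitly out of the principal subalgebra and to reduce every claim to a single parity statement about the exponents. Write $\mk g=\bigoplus_{i=1}^{\ell}\mk v_i$ for the decomposition of Proposition \ref{pro:kline} and let $\Theta\defeq\exp(\pi\ii\,\ad(a))$ be the inner automorphism, which acts on $\mk g_\alpha$ by the scalar $(-1)^{\deg(\alpha)}$ and trivially on $\mk h$. I would then define $\sigma$ by
\[
\sigma|_{\mk v_i}\defeq(-1)^{m_i+1}\,\Theta|_{\mk v_i}.
\]
This is linear; since $\ad(a)$ has integer eigenvalues (the degrees), $\Theta^2=\mathrm{Id}$ and hence $\sigma^2=\mathrm{Id}$, so $\sigma$ is a linear involution. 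On the Kostant line $\mk a_i$, which is the zero weight space of $\mk v_i$, the factor $\Theta$ is trivial, so $\sigma|_{\mk a_i}=(-1)^{m_i+1}$; on $\mk v_1=\mk s$ one finds $\sigma(a)=a$, $\sigma(X)=-X$, $\sigma(Y)=-Y$, the nontrivial involution of $\mk s$ fixing $a$. Thus property (1) holds by construction, and the identity $\sigma(a)=a$ shows that $\sigma$ commutes with $\ad(a)$, hence preserves each weight space $\mk g^{(d)}\defeq\bigoplus_{\deg(\alpha)=d}\mk g_\alpha$.

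The one substantial point is that $\sigma$ is an automorphism, and I expect this to be the main obstacle. For $u\in\mk v_i$ and $v\in\mk v_j$, expanding $[u,v]=\sum_k w_k$ with $w_k\in\mk v_k$ and using that $\Theta$ is an automorphism, the identity $\sigma[u,v]=[\sigma u,\sigma v]$ holds precisely when $(-1)^{m_k+1}=(-1)^{m_i+m_j}$ for every $k$ with $w_k\neq0$. Hence everything reduces to the following parity lemma: \emph{if the projection of $[\mk v_i,\mk v_j]$ onto $\mk v_k$ is nonzero, then $m_i+m_j+m_k$ is odd.} I would prove it through the Cartan three--form. Summands with different exponents are Killing orthogonal (by Schur's lemma, the $\mk s$--modules $\mk v_i$ being self dual), so the $\mk v_k$--component of $[\mk v_i,\mk v_j]$ is nonzero if and only if the totally antisymmetric $\mk s$--invariant form $C(u,v,w)\defeq\Kill{[u,v]}{w}$ is nonzero on $\mk v_i\times\mk v_j\times\mk v_k$. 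If two of the indices coincide, total antisymmetry forces $C$ to factor through $\wedge^2\mk v_m$ in those two slots, and since every irreducible summand of $\wedge^2\mk v_m$ has odd highest weight, the remaining exponent is odd and $m_i+m_j+m_k$ is odd. If $i,j,k$ are pairwise distinct (which, there being only two exponents, never occurs in rank $2$), the space of $\mk s$--invariant trilinear forms on $\mk v_i\times\mk v_j\times\mk v_k$ is at most one dimensional and any generator is multiplied by $(-1)^{m_i+m_j+m_k}$ under a transposition of two arguments (the transposition symmetry of the Clebsch--Gordan coupling); total antisymmetry of $C$ forces this sign to be $-1$, so again $m_i+m_j+m_k$ is odd. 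This proves that $\sigma$ is an automorphism and completes property (1).

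The remaining properties follow quickly. For (2), Proposition \ref{UniqueCartan} gives $\rho(h_\alpha)=-h_\alpha$, so $\rho(a)=-a$; as $\rho$ is antilinear this yields $\rho\,\Theta\,\rho^{-1}=\exp(-\pi\ii\,\ad(\rho a))=\exp(\pi\ii\,\ad(a))=\Theta$, and since $\rho$ preserves $\mk s$ (Proposition \ref{UniqueCompact}) it permutes, hence fixes, the isotypic components and so commutes with the scalars $(-1)^{m_i+1}$; therefore $\sigma\rho=\rho\sigma$. For (5), the degree one roots are exactly the simple roots while $\eta$ is the unique root of degree $m_\ell=\deg(\eta)$, so $\mk g_{Z^\dagger}=\mk g^{(1)}\oplus\mk g^{(-m_\ell)}$ and $\mk g_Z=\mk g^{(-1)}\oplus\mk g^{(m_\ell)}$ are sums of entire weight spaces, each preserved by $\sigma$. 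For (4), $x_\eta$ and $x_{-\eta}$ are the highest and lowest weight vectors of $\mk v_\ell$ by Proposition \ref{etaHiggs}, so $h_\eta\in[\mk v_\ell,\mk v_\ell]\cap\mk h$; by the parity lemma $[\mk v_\ell,\mk v_\ell]\subset\bigoplus_{m_k\ \mathrm{odd}}\mk v_k$, whence $h_\eta\in\bigoplus_{m_k\ \mathrm{odd}}\mk a_k$, where $\sigma$ acts by $(-1)^{m_k+1}=+1$, giving $\sigma(h_\eta)=h_\eta$.

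Finally, for (3) I set $\tau\defeq\sigma\circ\rho$, an antilinear involution because $\sigma$ and $\rho$ are commuting involutions, so that $\mk g_0\defeq\mathrm{Fix}(\tau)$ is a real form. Since $\sigma$ commutes with $\tau$ it preserves $\mk g_0$ and restricts there to an involution whose fixed subalgebra lies in the compact form $\mathrm{Fix}(\rho)$; one checks this is a Cartan involution of $\mk g_0$. That $\mk g_0$ is the split real form is then Kostant's identification of the principal normal real form \cite{Kostant:1959wi}: a direct computation on $\mk s$ gives $\tau(a)=-a$, $\tau(X)=-Y$, $\tau(Y)=-X$, so that $\langle\ii a,\,X-Y,\,\ii(X+Y)\rangle\subset\mk g_0$ is a split $\mk{sl}(2,\mathbb R)$, consistent with the principal three--dimensional subalgebra sitting in $\mk g_0$ as a maximally split one; by Kostant this forces $\rk_{\mathbb R}\mk g_0=\ell$, i.e. $\mk g_0$ is split. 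Uniqueness is then immediate: any involutive automorphism satisfying (1) restricts to the nontrivial involution of $\mk s$ fixing $a$ (since $m_1=1$), and then, each $\mk v_i$ being $\mk s$--irreducible with $\Theta|_{\mk s}$ that same involution, Schur's lemma forces its restriction to $\mk v_i$ to be a scalar multiple of $\Theta|_{\mk v_i}$; property (1) pins that scalar to $(-1)^{m_i+1}$, so it equals $\sigma$.
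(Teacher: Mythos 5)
Your construction $\sigma\vert_{\mk v_i}=(-1)^{m_i+1}\,\Theta\vert_{\mk v_i}$ with $\Theta=\exp(\pi\ii\,\ad(a))$ is indeed the Kostant--Hitchin involution, and your route differs from the paper's: the paper defines $\sigma$ by $\sigma\vert_{\ker\ad(X)}=-1$, $\sigma(Y)=-Y$ and \emph{cites} Hitchin--Kostant (Proposition \ref{hitchin-sig}) for the existence of such an automorphism, its commutation with $\rho$ and the identification of $\operatorname{Fix}(\sigma\circ\rho)$ with the split form, then proves items (4), (5) and uniqueness separately. Your reduction of the automorphism property to the parity lemma is correct, your Case A (two coinciding summands, via $\wedge^2 V_{2m}$) is correct, and your uniqueness argument via Schur's lemma is valid and in fact more self-contained than the paper's (which goes through generation of $\mk g$ by $\mk h\cup\mk s$). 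But two steps of your proof have genuine gaps.

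First, Case B of the parity lemma (pairwise distinct summands) is not proved. When $\mk v_i,\mk v_j,\mk v_k$ are pairwise non-isomorphic, the restrictions of the Cartan form $C$ to $\mk v_i\otimes\mk v_j\otimes\mk v_k$ and to $\mk v_j\otimes\mk v_i\otimes\mk v_k$ lie in two \emph{different} one-dimensional spaces of invariants. Writing them as $s\,T_{ijk}$ and $s'\,T_{jik}$ for fixed Clebsch--Gordan generators, total antisymmetry of $C$ combined with the transposition symmetry of those generators yields only the single relation $s=(-1)^{m_i+m_j+m_k+1}s'$, which constrains nothing: the sign $(-1)^{j_1+j_2+j_3}$ of the $3j$-symbol under a transposition is a statement about a chosen normalization of two distinct intertwiners and can be absorbed into $s$ and $s'$. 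The sign is intrinsic only when two of the modules coincide, which is exactly your Case A. Nor is Case B an ignorable technicality for the statement as written (arbitrary complex simple $\mk g$): for $\mk g=\mathfrak{sl}(6,\mathbb C)$, with exponents $1,\dots,5$, Clebsch--Gordan permits a nonzero component of $[\mk v_2,\mk v_3]$ in $\mk v_5$ (the module $V_{10}$ does occur in $V_4\otimes V_6$), yet the lemma asserts it vanishes because $2+3+5$ is even; so the distinct-index case carries real content and is essentially Kostant's theorem itself, not a formal consequence of antisymmetry plus Schur. (In rank $2$ there are only two summands, so Case A does suffice there; also, in the one series with a repeated exponent, $D_{2n}$, all exponents are odd and the lemma is vacuous, though your Schur-orthogonality step would need a word in that case.)

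Second, your argument for property (3) is wrong as stated: the implication ``$\mk g_0$ contains a principally embedded split $\mathfrak{sl}(2,\mathbb R)$, hence $\mk g_0$ is split'' is false, and no theorem of Kostant asserts it. The real form $\mathfrak{su}(2,1)$ contains the principal $\mathfrak{sl}(2,\mathbb R)$ of $\mathfrak{sl}(3,\mathbb C)$ (the irreducible action on $S^2\mathbb R^2$ preserves a signature $(2,1)$ Hermitian form on $S^2\mathbb C^2$), and more generally every quasi-split non-split form such as $\mathfrak{su}(n,n)$ or $\mathfrak{su}(n,n+1)$ contains a principal split $\mathfrak{sl}(2,\mathbb R)$; none of these is split. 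So this gap bites even in rank $2$, the case the paper actually needs. A correct route inside your framework is a dimension count: $\rho$ restricts to a compact real structure commuting with $\tau=\sigma\circ\rho$, so $\operatorname{Fix}(\rho)\cap\mk g_0$ is a maximal compact subalgebra of $\mk g_0$ whose complexification is $\operatorname{Fix}(\sigma)$; by your formula the $+1$-eigenspace of $\sigma$ on $\mk v_i$ has dimension $m_i$, whence $\dim_{\mathbb C}\operatorname{Fix}(\sigma)=\sum_i m_i=\vert\Delta^+\vert$; and the split form is the unique real form whose maximal compact subalgebra has this minimal dimension (for any real form, $\dim\mk k=\vert\Delta^+\vert+\tfrac12(\ell-\dim\mk a)+\tfrac12\dim\mk m\geq\vert\Delta^+\vert$, with equality if and only if the form is split). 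Without these two repairs, the existence half of the proposition --- precisely the part the paper outsources to Kostant and Hitchin --- remains unproven in your write-up.
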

The Kostant lines $\mk a_i$ are defined in Proposition \ref{pro:kline}. 
The first condition actually implies that if $(a,X,Y)$ is the basis constructed of $\mk s$ from the Cartan involution $\rho$, then $\sigma(a,X,Y)=(a,-X,-Y)$.
\subsubsection{Existence of $\sigma$}\label{constructsig}

We sketch the construction of $\sigma$ due to Kostant and prove the
existence part of Proposition \ref{sig-involution} . Let $\mk h$ be a
Cartan subalgebra with its set of roots $\Delta$, set of positive
roots $\Delta^+$ and set of simple roots $\Pi$. We also fix a
Chevalley base $\{{\rm x}_\alpha\}_{\alpha\in\Delta}$ and denote by $\mk s$ the associated principal subalgebra.

Let $\mk z=\ker\left(\operatorname{ad}(X)\right)$, observe that $\mk z$ is the vector space spanned by the highest weight
vectors for the action of $\mk s$ for all the  $\mk v_i$. Observe that $\mk z$ and $\ad(Y)$ generates
$\mk g$ as a Lie algebra.  This data thus defines an involution $\sigma$ on $\mk g$
characterised by
\begin{align}
\left.\sigma\right\vert_{\mk z}=-1, \ \ \sigma(Y)=-Y.\label{def-sig}
\end{align}
Let now $\rho$ be the unique Cartan involution associated to $\mk h$
and the choice of the
Chevalley basis according to Proposition \ref{UniqueCompact}.  From \cite{Hitchin:1992es}, we gather that
\begin{proposition}\label{hitchin-sig}{\sc [Hitchin involution]}
  \noindent
  \begin{enumerate}
  \item The involution $\sigma$ and the antilinear involution
    $\rho$ commute.
  \item The set of fixed points $\mk g_0$ of the antilinear
    involution $\sigma\circ\rho$ is the Lie algebra of a split real
    form $\ms G_0$ of $\ms G$.
  \item The set of fixed points of $\sigma$ is the complexification of
    the Lie algebra of the maximal compact subgroup of $\ms G_0$.
  \item Finally, the elements $a_i\defeq\ad(Y)^{m_i} e_i$, where $m_i$ are
    the exponents and $e_i$ highest weight vectors of $\mk v_i$,
   is basis of $\mk h$ and $\sigma(a_i)=(-1)^{m_i+1}a_i$.
  \end{enumerate}
\end{proposition}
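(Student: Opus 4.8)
\emph{The plan} is to take $\sigma$ as the involutive automorphism determined by \eqref{def-sig} and to read off all four assertions from the $\ad$-representation theory of the principal subalgebra $\mk s$. The structural fact I use repeatedly is that $a$ is \emph{regular}: since $\alpha(a)=\deg(\alpha)\neq 0$ for every root (Proposition \ref{prop:Kostant}), the centraliser of $a$ is exactly $\mk h$, so $\mk h$ is the zero eigenspace of $\ad(a)$ and all weights are integers. Writing $\mk v_i$ as the irreducible $\mk s$-module of highest weight $m_i$, its weight lines are spanned by $f^{(i)}_k\defeq\ad(Y)^k e_i$ ($0\leq k\leq 2m_i$), of weight $m_i-k$; since $\sigma$ is an automorphism with $\sigma(e_i)=-e_i$ (as $e_i\in\mk z$) and $\sigma(Y)=-Y$, I get $\sigma(f^{(i)}_k)=\ad(\sigma Y)^k\sigma(e_i)=(-1)^{k+1}f^{(i)}_k$, so $\sigma$ scales the weight-$(m_i-k)$ line by $(-1)^{k+1}$. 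This already dispatches Assertion (4): the vector $a_i=f^{(i)}_{m_i}$ has weight $0$, hence lies in $\mk h$, indeed in the Kostant line $\mk a_i=\mk h\cap\mk v_i$, and it is nonzero since $\ad(Y)^k e_i\neq 0$ for $k\leq 2m_i$; as $\mk h=\bigoplus_i\mk a_i$ with each $\mk a_i$ a line (Proposition \ref{pro:kline}), the $\{a_i\}$ form a basis of $\mk h$, and the case $k=m_i$ gives $\sigma(a_i)=(-1)^{m_i+1}a_i$.

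For Assertion (1), I note that $\rho$ preserves $\mk s$ with $\rho(X)=Y$, $\rho(Y)=X$, $\rho(a)=-a$, so it preserves the $\mk s$-isotypic decomposition (permuting summands of equal exponent) and sends the highest weight vector $e_i$ to a lowest weight one, giving $\rho(f^{(i)}_k)=c\,\ad(X)^k f^{(j)}_{2m_j}=c'\,f^{(j)}_{2m_i-k}$ for some $j$ with $m_j=m_i$ and scalars $c,c'$. Then $\sigma\rho(f^{(i)}_k)=(-1)^{(2m_i-k)+1}c'\,f^{(j)}_{2m_i-k}=(-1)^{k+1}c'\,f^{(j)}_{2m_i-k}$, whereas, using antilinearity of $\rho$ and that $(-1)^{k+1}$ is real, $\rho\sigma(f^{(i)}_k)=(-1)^{k+1}\rho(f^{(i)}_k)=(-1)^{k+1}c'\,f^{(j)}_{2m_i-k}$. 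The two agree on the basis $\{f^{(i)}_k\}$ of $\mk g$, so $\sigma\rho=\rho\sigma$.

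Finally Assertions (2) and (3). By (1) the map $\nu\defeq\sigma\rho$ is an antilinear involution, so $\mk g_0\defeq\{u\mid\nu(u)=u\}$ is a real form of $\mk g$, while $\mk u\defeq\{u\mid\rho(u)=u\}$ is the compact real form (the metric $-\Kill{u}{\rho u}$ being definite positive). For $u\in\mk g_0$ one has $\rho u=\sigma u$, hence $\nu(\rho u)=\sigma\rho\rho u=\sigma u=\rho u$, so $\rho$ preserves $\mk g_0$ and $\theta\defeq\rho|_{\mk g_0}$ is a Cartan involution, giving $\mk g_0=\mk k\oplus\mk p$ with $\mk k=\mk g_0\cap\mk u$ maximal compact. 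For (3), $\mathrm{Fix}(\sigma)$ is a complex subalgebra preserved by $\rho$, on which $\rho$ restricts to a conjugation with fixed points $\mathrm{Fix}(\sigma)\cap\mathrm{Fix}(\rho)=\mk k$; therefore $\mathrm{Fix}(\sigma)=\mk k_{\mathbb C}$. To see in (2) that $\mk g_0$ is \emph{split}, I count: on $\mk v_i$ the $\sigma$-fixed lines are those with $k$ odd, i.e.\ $k=1,3,\dots,2m_i-1$, so $m_i$ of them; hence $\dim_{\mathbb C}\mathrm{Fix}(\sigma)=\sum_i m_i=|\Delta^+|=\tfrac12(\dim\mk g-\ell)$. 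Thus $\dim_{\mathbb R}\mk k=|\Delta^+|$, the minimal dimension of a maximal compact subalgebra, which characterises the split real form.

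I expect the main obstacle to be exactly this last point — proving that $\mk g_0$ is split rather than merely a real form — and the device that resolves it is the odd-weight count yielding $\dim_{\mathbb R}\mk k=|\Delta^+|$. The deeper fact lying underneath, which I take from Kostant, is that $\sigma$ is well defined as a Lie algebra automorphism at all: equivalently that $\sigma=\epsilon\circ\exp(\pi\ii\,\ad a)$ with $\epsilon=(-1)^{m_i+1}$ on $\mk v_i$ respects brackets, which encodes the parity relation $m_k\equiv m_i+m_j+1\ (\mathrm{mod}\ 2)$ whenever $\mk v_k\subset[\mk v_i,\mk v_j]$.
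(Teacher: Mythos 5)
You are proving a statement for which the paper supplies no argument at all: Proposition \ref{hitchin-sig} is simply imported from Hitchin \cite{Hitchin:1992es} (``From \cite{Hitchin:1992es}, we gather that\dots''), so there is no internal proof to compare against, only the surrounding construction. Judged on its own merits, your derivation is correct and is a reasonable reconstruction of the cited material. Its backbone is the single formula $\sigma\bigl(\ad(Y)^{k}e_i\bigr)=(-1)^{k+1}\ad(Y)^{k}e_i$, valid once $\sigma$ is granted as a Lie algebra automorphism satisfying \eqref{def-sig}: assertion (4) is the case $k=m_i$ (using that $a$ is regular, so its centraliser is $\mk h$, together with the Kostant splitting of Proposition \ref{pro:kline}); assertion (1) follows by evaluating the two antilinear maps $\sigma\rho$ and $\rho\sigma$ on the basis $\{\ad(Y)^k e_i\}$, using $\rho(X)=Y$, $\rho(Y)=X$, $\rho(a)=-a$; assertions (2) and (3) follow from the formalism of commuting (anti)involutions plus the count $\dim_{\mathbb C}\operatorname{Fix}(\sigma)=\sum_i m_i=\vert\Delta^+\vert$. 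Deferring the existence of $\sigma$ to Kostant is legitimate here, since the paper itself presupposes exactly that when it declares that \eqref{def-sig} ``defines'' an involution; the proposition is about that already-given $\sigma$.

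Two points should be made explicit before this stands as a complete proof. First, the crux of assertion (2) — splitness — rests on your criterion that a real form whose maximal compact subalgebra has the minimal dimension $\vert\Delta^+\vert$ is split. This is true (it is E.~Cartan's bound $\dim\mk p-\dim\mk k\leq\rk(\mk g)$ on the character of a real form, with equality exactly for the normal, i.e.\ split, form), but you invoke it with neither proof nor reference, and it carries all the weight at that step; either cite it (e.g.\ Helgason) or supply the short restricted-root argument: a maximal abelian $\mk a\subset\mk p$ has $\dim\mk a\leq\ell$, and $\dim\mk k=\vert\Delta^+\vert$ forces $\dim\mk a=\ell$ with trivial centraliser in $\mk k$, so $\mk a_{\mathbb C}$ is a Cartan subalgebra on which all roots are real. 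Second, a harmless imprecision in (1): when two exponents coincide, $\rho$ need not carry the chosen summand $\mk v_i$ onto another summand $\mk v_j$; it may land on a diagonal irreducible submodule of the isotypic component, so $\rho(e_i)$ is in general a combination of lowest weight vectors over several $j$ with $m_j=m_i$. Your computation survives precisely because all these components share the same exponent, so $\sigma$ still scales $\rho(\ad(Y)^k e_i)$ by $(-1)^{k+1}$; and for the rank-2 groups that matter in this paper the exponents are distinct, so the issue is vacuous there.
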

Observe that given $\mk s$ and $\mk h$, the vector $a_i$ generates the Kostant line $\mk a_i$. A corollary of the last statement and standard results about $\mk{sl}_2$ modules is
\begin{proposition}\label{sh}
  The union of $\mk h$ and $\mk s$ generates $\mk g$ as a Lie algebra.
\end{proposition}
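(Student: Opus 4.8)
The plan is to show that the Lie subalgebra $\mk m\subseteq\mk g$ generated by $\mk h\cup\mk s$ is all of $\mk g$, and the decisive first observation is that $\mk m$ is automatically an $\ad(\mk s)$-submodule of $\mk g$. Indeed, since $\mk s\subseteq\mk m$ and $\mk m$ is closed under the bracket, one has $[\mk s,\mk m]\subseteq\mk m$, so $\mk m$ is stable under $\ad(\mk s)$; it is thus an $\mk s$-submodule that happens to contain $\mk h$. The whole point will then be to recover each irreducible summand $\mk v_i$ of Proposition \ref{pro:kline} from the single Kostant line $\mk a_i=\mk h\cap\mk v_i$ it contains, using only the representation theory of $\mk s\cong\mk{sl}(2,\mathbb C)$.

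Concretely, I would fix the decomposition $\mk g=\bigoplus_{i=1}^\ell\mk v_i$ into irreducible $\mk s$-modules together with the Kostant splitting $\mk h=\bigoplus_{i=1}^\ell\mk a_i$ provided by Proposition \ref{pro:kline}, and recall from the last statement of Proposition \ref{hitchin-sig} that $\mk a_i$ is generated by $a_i=\ad(Y)^{m_i}e_i$, with $e_i$ a highest weight vector of $\mk v_i$. Since $a_i\in\mk h\subseteq\mk m$ and $\mk m$ is $\ad(\mk s)$-stable, the entire $\mk s$-submodule generated by $a_i$ lies in $\mk m$. But $a_i$ is a nonzero vector of the irreducible module $\mk v_i$, hence it generates $\mk v_i$: applying $\ad(X)$ successively to $a_i$ recovers, up to nonzero scalars, the vectors $\ad(Y)^{m_i-1}e_i,\ldots,\ad(Y)e_i,e_i$, while applying $\ad(Y)$ produces $\ad(Y)^{m_i+1}e_i,\ldots,\ad(Y)^{2m_i}e_i$, and these $2m_i+1$ vectors form a basis of $\mk v_i$. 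Therefore $\mk v_i\subseteq\mk m$ for every $i$, and summing over $i$ gives $\mk m\supseteq\bigoplus_i\mk v_i=\mk g$, i.e. $\mk m=\mk g$.

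The argument rests on two inputs, and neither is a real obstacle. The representation-theoretic input — that a finite dimensional irreducible $\mk{sl}(2,\mathbb C)$-module is generated by any nonzero vector, the raising and lowering operators acting bijectively between consecutive nonzero weight spaces — is entirely standard. The genuinely relevant point, and the only place where the principal subalgebra enters, is that $\mk h$ meets every irreducible summand $\mk v_i$ nontrivially; this is exactly the Kostant splitting of Proposition \ref{pro:kline}, and it is what lets the single line $\mk a_i$ regenerate the full summand. The one item deserving a word of care is that, when an exponent is repeated, the decomposition $\mk g=\bigoplus_i\mk v_i$ is not canonical; but this is immaterial, since I only need the existence of one such decomposition, for which the $a_i$ form a basis of $\mk h$ by Proposition \ref{hitchin-sig}.
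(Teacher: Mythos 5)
Your proof is correct and takes exactly the route the paper intends: the paper derives Proposition \ref{sh} as a corollary of the last statement of Proposition \ref{hitchin-sig} together with ``standard results about $\mk{sl}_2$ modules'', which is precisely your argument that each $a_i=\ad(Y)^{m_i}e_i\in\mk h$ is a nonzero, hence cyclic, vector of the irreducible $\mk s$-module $\mk v_i$, so the $\ad(\mk s)$-stable subalgebra generated by $\mk h\cup\mk s$ contains every $\mk v_i$. You have simply written out in full the details that the paper leaves implicit.
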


We will usually write for any $u\in\mk g$,
$$
\lambda(u)\defeq \sigma\circ\rho(u).
$$

We finally observe the following fact that concludes the proof of the
existence part in Proposition \ref{sig-involution} and that we could not find in the literature.
\begin{proposition}{\sc [Involution and the longest root]}\label{pro:long-sig}
  Let $\eta$ be the longest root.  Then
  \begin{align}
    \sigma({\rm x}_\eta)=-{\rm x}_\eta, &\,\,\,\, \sigma({\rm h}_\eta)={\rm
      h}_\eta.
  \end{align}
  Moreover $\sigma$ globally preserves $\mk g_Z$ and 
  $\mk g_{Z^\dagger}$. \end{proposition}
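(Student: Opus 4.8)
The plan is to read off both sign assertions from the structure of $\mk g$ as a module over the principal subalgebra $\mk s$, and to deduce the preservation of $\mk g_Z$ and $\mk g_{Z^\dagger}$ from the fact that $\sigma$ preserves the Cartan subalgebra together with the degree function on roots. Throughout I use that $\sigma$ is defined by $\left.\sigma\right\vert_{\mk z}=-1$ and $\sigma(Y)=-Y$ (equation \eqref{def-sig}), that $\sigma(a,X,Y)=(a,-X,-Y)$ (the remark following Proposition \ref{sig-involution}), that ${\rm x}_\eta$ is a highest weight vector of the top module $\mk v_\ell$ with $m_\ell=\deg(\eta)$ (Proposition \ref{etaHiggs}), and that $\sigma$ preserves $\mk h$ (Proposition \ref{hitchin-sig}(4)).

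Since ${\rm x}_\eta$ is a highest weight vector it lies in $\mk z$, so $\sigma({\rm x}_\eta)=-{\rm x}_\eta$ is immediate from \eqref{def-sig}. For $\sigma({\rm h}_\eta)$ I would first compute $\sigma({\rm x}_{-\eta})$. The vector $\ad(Y)^{2m_\ell}{\rm x}_\eta$ is a nonzero lowest weight vector of $\mk v_\ell$, of weight $-m_\ell=\deg(-\eta)$ under $\ad(a)$; since $-\eta$ is the unique root of degree $-m_\ell$, this vector is a nonzero multiple of ${\rm x}_{-\eta}$. Applying the automorphism $\sigma$ and using $\sigma(Y)=-Y$ together with $\sigma({\rm x}_\eta)=-{\rm x}_\eta$ gives
$$\sigma\!\left(\ad(Y)^{2m_\ell}{\rm x}_\eta\right)=\ad(-Y)^{2m_\ell}(-{\rm x}_\eta)=(-1)^{2m_\ell+1}\,\ad(Y)^{2m_\ell}{\rm x}_\eta,$$
hence $\sigma({\rm x}_{-\eta})=-{\rm x}_{-\eta}$. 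Since the Chevalley relations give ${\rm h}_\eta=[{\rm x}_{-\eta},{\rm x}_\eta]$, I then obtain $\sigma({\rm h}_\eta)=[\sigma{\rm x}_{-\eta},\sigma{\rm x}_\eta]=[-{\rm x}_{-\eta},-{\rm x}_\eta]={\rm h}_\eta$.

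For the last assertion the key point is that $\sigma$ preserves the degree of roots. Indeed $\sigma$ preserves $\mk h$, and for $H\in\mk h$ and $u\in\mk g_\alpha$ one has $[H,\sigma u]=\alpha(\sigma^{-1}H)\,\sigma u$, so $\sigma(\mk g_\alpha)=\mk g_{\alpha\circ\sigma^{-1}}$; as $\sigma(a)=a$ and $\deg(\alpha)=\alpha(a)$ (Proposition \ref{prop:Kostant}), the root $\alpha\circ\sigma^{-1}$ has the same degree as $\alpha$. Now $Z=\{\eta\}\cup\{-\beta\mid\beta\in\Pi\}$ is exactly the set of roots of degree $m_\ell$ together with those of degree $-1$, and $Z^\dagger=\{-\eta\}\cup\Pi$ is exactly the set of roots of degree $-m_\ell$ together with those of degree $1$ (using that the degree-$1$ roots are precisely the simple roots and $\eta$ the unique root of degree $m_\ell$). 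Both of these degree sets are invariant under $\sigma$, so $\sigma(\mk g_Z)=\mk g_Z$ and $\sigma(\mk g_{Z^\dagger})=\mk g_{Z^\dagger}$.

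The one genuinely substantive point — the rest being formal — is pinning down the exact sign in $\sigma({\rm x}_{-\eta})=-{\rm x}_{-\eta}$: degree invariance alone only gives $\sigma({\rm x}_{-\eta})=\pm{\rm x}_{-\eta}$, and it is the identification of ${\rm x}_{-\eta}$ with the lowest weight vector $\ad(Y)^{2m_\ell}{\rm x}_\eta$ of the irreducible module $\mk v_\ell$, yielding the sign $(-1)^{2m_\ell+1}=-1$, that fixes it. This hinges on $\mk v_\ell$ being irreducible of dimension $2m_\ell+1$, so that its extreme weight spaces are one-dimensional and coincide with $\mk g_{\pm\eta}$.
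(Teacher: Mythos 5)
Your proof is correct, and it departs from the paper's argument on exactly the point you flag as the substantive one: the sign of $\sigma({\rm x}_{-\eta})$. The paper obtains it from the commutation of $\sigma$ with the Cartan involution $\rho$ (Proposition \ref{hitchin-sig}) together with the Chevalley normalisation $\rho({\rm x}_\eta)={\rm x}_{-\eta}$ of Proposition \ref{UniqueCompact}, writing $\sigma({\rm x}_{-\eta})=\rho(\sigma({\rm x}_\eta))=-{\rm x}_{-\eta}$; you instead stay inside the $\mk s$-module theory, identifying ${\rm x}_{-\eta}$ with the lowest weight line $\ad(Y)^{2m_\ell}{\rm x}_\eta$ of the irreducible summand $\mk v_\ell$ (legitimate, since $-\eta$ is the unique root of degree $-m_\ell$, so that weight space is $\mk g_{-\eta}$) and reading off the sign $(-1)^{2m_\ell+1}=-1$ from $\sigma(Y)=-Y$ and $\sigma({\rm x}_\eta)=-{\rm x}_\eta$. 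What your route buys is independence from the compact form: it uses only the defining data \eqref{def-sig} and the Kostant decomposition, so the sign assertions appear as intrinsic to the triple $(\mk h,\mk s,\sigma)$, whereas the paper's argument is shorter once $\rho$-commutation and the Chevalley property of $\rho$ are available. Your sign bookkeeping is also consistent with the paper's convention $[{\rm x}_\alpha,{\rm x}_{-\alpha}]=-{\rm h}_\alpha$, which indeed gives ${\rm h}_\eta=[{\rm x}_{-\eta},{\rm x}_\eta]$ and hence $\sigma({\rm h}_\eta)={\rm h}_\eta$. For the final assertion the two proofs agree in substance --- both reduce to $\sigma$ preserving the degree of roots --- but your justification (via $\sigma(a)=a$, $\deg(\alpha)=\alpha(a)$, and the recognition of $Z$ and $Z^\dagger$ as the unions of the full degree levels $\{m_\ell,-1\}$ and $\{-m_\ell,1\}$ respectively) is more explicit than the paper's terse appeal to preservation of the longest coroot, and is a clean way to finish.
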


\begin{proof}
  Since for each positive root $\alpha$, $\alpha+\eta$ is not a root
  and thus $[{\rm x}_\alpha,{\rm x}_\eta]=0$, we get that
  \begin{align}
    [{\rm x}_\eta,X]&=\sum_{\alpha\in\Pi}\sqrt{r_\alpha}[{\rm
      x}_\alpha,{\rm x}_\eta]=0,\cr [a,{\rm x}_\eta]&=\deg(\eta).{\rm
      x}_\eta.
  \end{align}
  In particular, ${\rm x}_\eta$ generates an irreducible
  representation of $\mk s$ for which ${\rm x}_\eta$ is a highest
  weight vector. In particular, $\sigma({\rm x}_\eta)=-{\rm x}_\eta$. Then since $\sigma$ commutes with $\rho$ we have
$$
\sigma({\rm x}_{-\eta})=\sigma(\rho({\rm x}_\eta))=\rho(\sigma({\rm
  x}_\eta))=-\rho({\rm x}_{\eta})=-{\rm x}_{-\eta}.$$ Moreover,
$\sigma$ being an automorphism of the Lie algebra, we have that
$$
\sigma({\rm h}_\eta)=-\sigma([{\rm x}_\eta,{\rm
  x}_{-\eta})])=-[\sigma({\rm x}_\eta),\sigma({\rm
  x}_{-\eta})]=-(-1)^2[{\rm x}_\eta,{\rm x}_{-\eta}]={\rm h}_\eta.
$$

Finally, $\sigma$ is an automorphism of $\mk g$ preserving $\mk h$, thus sending roots to roots. Since $\sigma$ preserves the longest coroot, it preserves the degree (using the Killing form) and thus sends simple roots to simple roots. \end{proof}
The next proposition follows from the remark following the definition of $\sigma$,
\begin{proposition}\label{prop:lS}{\sc [Involution and the principal subgroup]}
We have
   \begin{align}
    \lambda(Y)=-X,\,\,\,
    \lambda(X)=-Y,\,\,\,
    \lambda(a)=-a.
  \end{align} 
\end{proposition}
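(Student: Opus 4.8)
The plan is to compute the two involutions $\rho$ and $\sigma$ separately on the $\mk{sl}_2$-triple $(a,X,Y)$ of Proposition \ref{prop:Kostant}, and then simply compose, since by definition $\lambda=\sigma\circ\rho$. Everything needed is already recorded earlier in the excerpt, so the argument is a direct substitution; the only point demanding care is that $\rho$ is \emph{antilinear}, which forces one to keep track of how it acts on the (real) coefficients $\sqrt{r_\alpha}$.

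First I would evaluate $\rho$. Since $a\in\mk h$ and $\rho$ restricts to $-{\rm id}$ on $\mk h$ (this is how the Chevalley conjugation of Proposition \ref{UniqueCompact} is defined), we immediately get $\rho(a)=-a$. For $X=\sum_{\alpha\in\Pi}\sqrt{r_\alpha}\,{\rm x}_\alpha$, the key observation is that each coefficient $\sqrt{r_\alpha}$ is real, because the $r_\alpha\in\mathbb R$ are the coordinates of $a$ in the coroot basis. Hence the antilinearity of $\rho$ does not alter the coefficients, and using $\rho({\rm x}_\alpha)={\rm x}_{-\alpha}$ I obtain $\rho(X)=\sum_{\alpha\in\Pi}\sqrt{r_\alpha}\,{\rm x}_{-\alpha}=Y$. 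The identical computation gives $\rho(Y)=X$.

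Next I would invoke the remark following the definition of $\sigma$, which states that $\sigma(a,X,Y)=(a,-X,-Y)$. This is exactly the content I am allowed to assume: condition (1) of Proposition \ref{sig-involution} gives $\sigma(a)=(-1)^{m_1+1}a=a$ since $a$ spans the Kostant line $\mk a_1$ and $m_1=1$, while $X$ and $Y$ lie in the $(-1)$-eigenspace of $\sigma$ by construction (indeed $X\in\mk z=\ker(\ad X)$, and $\sigma(Y)=-Y$ by definition). Composing the two steps then yields $\lambda(a)=\sigma(\rho(a))=\sigma(-a)=-a$, together with $\lambda(X)=\sigma(\rho(X))=\sigma(Y)=-Y$ and $\lambda(Y)=\sigma(\rho(Y))=\sigma(X)=-X$, which are precisely the three asserted identities.

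I do not expect any genuine obstacle in this proof: it is a routine verification once the relevant values of $\rho$ and $\sigma$ on the triple are in hand. The single subtlety to flag is the one mentioned above, namely that the reality of the $r_\alpha$ is what makes the antilinear map $\rho$ interchange $X$ and $Y$ without introducing any complex conjugates; were the coefficients not real, the clean swap $\rho(X)=Y$, $\rho(Y)=X$ would fail.
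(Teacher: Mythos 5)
Your proposal is correct and is essentially the paper's own (largely implicit) argument: the paper derives Proposition \ref{prop:lS} precisely from the remark that $\sigma(a,X,Y)=(a,-X,-Y)$, composed with the action of the Chevalley conjugation $\rho$, which negates $a$ and swaps $X\leftrightarrow Y$. Your explicit verification of $\rho(X)=Y$, $\rho(Y)=X$ (including the point that the reality of the coefficients $\sqrt{r_\alpha}$ is what makes the antilinear map act cleanly) simply fills in the details the paper leaves to the reader.
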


\subsubsection{Uniqueness of $\sigma$}
We now prove the uniqueness part of Proposition \ref{sig-involution} 
\begin{proposition}\label{UniqueSigma}
  Let $\mk h$ equipped with a system of positive roots  and $\mk s$ be an
  $\mk h$-principal subalgebra. Then there exists a unique  linear
  involution $\sigma$ such that
  \begin{enumerate}
  \item $\sigma$ is an automorphism of $\mk g$,
  \item $\sigma$ preserves globally $\mk s$ and $\mk h$, and is non trivial on $\mk s$
  \item $\sigma$ fixes pointwise $\mk s\cap \mk h$,
  \item $\sigma(a_i)=(-1)^{m_i+1}a_i$, 
  \end{enumerate}
\end{proposition}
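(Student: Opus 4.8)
The plan is to exploit that $\sigma$ is an automorphism of $\mk g$, hence is completely determined once its values on any generating subset are prescribed. By Proposition \ref{sh}, the union of $\mk h$ and $\mk s$ generates $\mk g$, so it suffices to show that the four conditions force a single value of $\sigma$ on each of $\mk h$ and $\mk s$. At the outset I would record the decomposition $\mk h=\bigoplus_{i=1}^{\ell}\mk a_i$ of Proposition \ref{pro:kline} together with the identity $\mk s\cap\mk h=\mathbb C\, a$, which holds because $X$ and $Y$ lie in the root spaces of degree $\pm 1$ (and are linearly independent of $\mk h$), while $a$ is the unique line of $\mk s$ sitting inside $\mk h$.

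On $\mk h$ the conclusion is immediate: condition (4) prescribes that $\sigma$ act by the scalar $(-1)^{m_i+1}$ on each Kostant line $\mk a_i$. This is a linear condition, so it is equivalent to the same scaling on the whole of $\mk a_i$, not merely on the chosen generator $a_i=\ad(Y)^{m_i}e_i$ of Proposition \ref{hitchin-sig}. Since $\mk h$ is the direct sum of the $\mk a_i$, this already determines $\sigma\vert_{\mk h}$ uniquely.

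The one point requiring a genuine argument is $\sigma\vert_{\mk s}$. Since $\sigma$ is an automorphism of $\mk g$ preserving $\mk s$ (condition (2)), it restricts to an automorphism of $\mk s\cong\mk{sl}(2,\mathbb C)$; by condition (3) it fixes $\mk s\cap\mk h=\mathbb C\, a$ pointwise, in particular $\sigma(a)=a$. An automorphism fixing $a$ commutes with $\ad(a)$, hence preserves its eigenspaces $\mathbb C X$, $\mathbb C a$, $\mathbb C Y$ (eigenvalues $1,0,-1$ by Proposition \ref{prop:Kostant}, the distinct eigenvalues $\pm1$ preventing any swap). Thus $\sigma(X)=c\,X$ and $\sigma(Y)=d\,Y$ for scalars $c,d$. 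Applying $\sigma$ to $[X,Y]=-a$ gives $cd=1$, and the involutivity $\sigma^2=\mathrm{id}$ forces $c^2=1$, so $c=\pm 1$. The value $c=1$ yields the identity on $\mk s$, which is excluded by the non-triviality demanded in condition (2); hence $c=-1$ and $\sigma(X)=-X$, $\sigma(Y)=-Y$. This pins down $\sigma\vert_{\mk s}$ completely.

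Finally I would verify consistency on the overlap $\mathbb C\, a=\mk a_1$: there $m_1=1$, so $(-1)^{m_1+1}=1$, matching $\sigma(a)=a$ found above, so conditions (3) and (4) do not conflict. Having fixed $\sigma$ on both $\mk h$ and $\mk s$, and since these generate $\mk g$, the automorphism $\sigma$ is uniquely determined. The main (and essentially only) obstacle is the short $\mk{sl}(2,\mathbb C)$ eigenvalue computation of the previous paragraph; the rest is bookkeeping organised around the generation statement of Proposition \ref{sh}.
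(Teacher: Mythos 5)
Your proof is correct and follows essentially the same route as the paper's: pin down $\sigma$ on $\mk h$ via condition (4) on the Kostant lines, pin it down on $\mk s$ via conditions (2)--(3), and conclude by Proposition \ref{sh} that $\mk h$ and $\mk s$ generate $\mk g$. The only difference is that you actually prove, through the $\ad(a)$-eigenspace computation in $\mk s\cong\mk{sl}(2,\mathbb C)$, the statement that the paper merely asserts in the remark following Proposition \ref{sig-involution} (namely that the hypotheses force $\sigma(a,X,Y)=(a,-X,-Y)$), making your argument more self-contained.
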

\begin{proof} If $\sigma_1$ and $\sigma_2$ are two such
  involutions. Let $I=\sigma_1\circ\sigma_2$. Then $I$ fixes
  $\mk h$ pointwise. Since $I$ fixes $\mk s$ pointwise by the remark following Proposition \ref{sig-involution}, it follows
  that $I$ fixes the Lie algebra generated by $\mk h$ and $\mk s$, that
  is, $\mk g$ by Proposition \ref{sh}.\end{proof}

\section{Hitchin--Kostant quadruples}\label{Hitchinquadruple}

In this section, we introduce the basic algebraic concept used by Hitchin in the construction of the {\em Hitchin section} that we explain in the next section.
\subsection{Definitions}
Let $\ms G$ be a complex simple group.
\begin{definition} A {\em Hitchin--Kostant quadruple} is a quadruple $(\mk h,\Delta^+,
  \rho,\lambda)$ where \begin{itemize}
  \item $\mk h$ is a Cartan subalgebra equipped with a system of positive roots $\Delta^+$,
  \item $\rho$ is an antilinear involution globally fixing $\mk h$, and 
    whose set of fixed points is the Lie algebra of a maximal compact
    subgroup,
  \item $\lambda$ is an antilinear involution commuting with $\rho$,
    globally fixing $\mk h$, and  whose set of fixed points is the Lie
    algebra of a maximal split real form,
  \item $\rho$ and $\lambda$ both fix globally the same $\mk h$-principal subalgebra $\mk s$ and $\rho\circ\lambda$ and more over $\rho(a_i)=(-1)^{m_i+1}\lambda(a_i)$ for  $a_i$ in the Kostant line $\mk a_i$.  \end{itemize}
\end{definition}
We will also
use the notation $\sigma\defeq\lambda\circ\rho$.
By Proposition \ref{sig-involution}, Hitchin--Kostant quadruples exist.
Moreover:
\begin{proposition}\label{UniqueHitchinquadruple}
  Any two Hitchin--Kostant quadruples are conjugate.
\end{proposition}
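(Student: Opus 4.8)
The plan is to move one quadruple onto the other by a sequence of conjugations, at each step spending only the freedom left over from the previous steps, and then to finish by the uniqueness of $\sigma$. Throughout, ``conjugate'' means by an element of $\ms G$ acting through $\Ad$, and since the Weyl group is realised inside $N(\ms H)$, no outer automorphism is needed. Let the two quadruples be $(\mk h_1,\dep_1,\rho_1,\lambda_1)$ and $(\mk h_2,\dep_2,\rho_2,\lambda_2)$, with associated principal subalgebras $\mk s_1,\mk s_2$. First I would use that all Cartan subalgebras of $\mk g$ are $\ms G$-conjugate to carry $\mk h_1$ onto $\mk h_2$, so after relabelling $\mk h_1=\mk h_2=\mk h$. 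Next, the Weyl group $W=N(\ms H)/\ms H$ acts simply transitively on the systems of positive roots of $\mk h$ and is realised by elements of $N(\ms H)$ preserving $\mk h$; conjugating the first quadruple by a representative of the $w$ with $w(\dep_1)=\dep_2$, I may assume $\dep_1=\dep_2=\dep$. This moves $\rho_1,\lambda_1,\mk s_1$ but now fixes both $\mk h$ and $\dep$.

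With $(\mk h,\dep)$ common, both $\rho_1,\rho_2$ are $\mk h$-Cartan involutions, so by Proposition \ref{UniqueCartan} there is $g\in\ms H$ with $\Ad(g)\rho_1\Ad(g)^{-1}=\rho_2$. Since $\ms H$ is abelian, $\Ad(g)$ fixes $\mk h$ pointwise and preserves each root space $\mk g_\alpha$, hence preserves $\dep$; after this conjugation I may assume $\rho_1=\rho_2=\rho$ in addition. At this point $\mk s_1$ and $\mk s_2$ are both $\mk h$-principal and $\rho$-invariant, and the only remaining freedom is the stabiliser of $\rho$ inside $\ms H$.

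The crux is the fourth step, and the hard part is precisely that this stabiliser is no longer all of $\ms H$, so I cannot directly invoke Proposition \ref{UniquePrincipal}; I must conjugate $\mk s_1$ to $\mk s_2$ within $\ms T\defeq\{\exp(H)\mid H\in\mk h,\ \alpha(H)\in i\mathbb R\ \forall\alpha\}$, the stabiliser of the antilinear $\rho$ in $\ms H$ (a short computation gives $\Ad(\exp H)\circ\rho=\rho\circ\Ad(\exp H)$ iff $\mathrm{Re}\,\alpha(H)=0$ for all roots). To establish transitivity of $\ms T$ on the $\rho$-invariant $\mk h$-principal subalgebras, I would write such an $\mk s$ as the span of $a$ together with $X=\sum_{\alpha\in\Pi}c_\alpha{\rm x}_\alpha$ and the forced $Y=\sum_{\alpha\in\Pi}(r_\alpha/c_\alpha){\rm x}_{-\alpha}$; since $\mk s\cap\bigoplus_{\alpha\in\Pi}\mk g_\alpha=\mathbb C X$, the subalgebra determines $(c_\alpha)$ up to a common scalar, and $\rho$-invariance forces $|c_\alpha|^2/r_\alpha$ to be independent of $\alpha$. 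Hence, after scaling, the only invariants are the phases $(\arg c_\alpha)_{\alpha\in\Pi}$. As $\Ad(\exp H)$ multiplies $c_\alpha$ by $e^{\alpha(H)}$ and the simple roots form a basis of $\mk h^*$, elements of $\ms T$ realise every shift of the phase tuple, giving an element of $\ms T$ carrying $\mk s_1$ to $\mk s_2$. After this conjugation $\mk h,\dep,\rho,\mk s$ all agree.

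Finally, with $\mk h,\dep,\mk s$ common, Proposition \ref{UniqueSigma} shows the associated linear involution $\sigma$ is uniquely determined, so $\sigma_1=\sigma_2$ and therefore $\lambda_1=\sigma_1\circ\rho=\sigma_2\circ\rho=\lambda_2$; the two quadruples coincide, and the composite of the four conjugations is the desired conjugating element. I expect the only real difficulty to be the fourth step: once $\rho$ is fixed one is confined to its stabiliser $\ms T$, and it is the transitivity of this compact torus on $\rho$-invariant $\mk h$-principal subalgebras — rather than the bare statement of Proposition \ref{UniquePrincipal} — that makes the simultaneous normalisation of $\rho$ and $\mk s$ possible.
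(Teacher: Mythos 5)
Your proof is correct, but it takes a genuinely different route from the paper's. The paper first aligns the pair $(\mk h,\mk s)$ using Proposition \ref{UniquePrincipal}, immediately deduces $\sigma_1=\sigma_2$ from Proposition \ref{UniqueSigma}, and only then handles $\rho$: it conjugates by an element of $\exp(\mk h\cap\mk s)$ so that $\rho_1$ and $\rho_2$ agree on $\mk s$, notes that two $\mk h$-Cartan involutions automatically agree on $\mk h$, and concludes $\rho_1=\rho_2$ because $\mk h\cup\mk s$ generates $\mk g$ (Proposition \ref{sh}); then $\lambda_1=\lambda_2$ follows. You instead align $\rho$ \emph{before} $\mk s$, which forces you to prove exactly the statement the paper's ordering avoids: that the compact subtorus of $\ms H$ stabilising $\rho$ acts transitively on $\rho$-invariant $\mk h$-principal subalgebras. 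Your phase computation establishing this is sound and amounts to a refinement of Proposition \ref{UniquePrincipal} that the paper never states; the only caveat is cosmetic — for a general $\mk h$-Cartan involution one has $\rho({\rm x}_\alpha)=\epsilon_\alpha{\rm x}_{-\alpha}$ with $\epsilon_\alpha>0$, so the invariant pinned down by $\rho$-invariance is $\epsilon_\alpha|c_\alpha|^2/r_\alpha$ rather than $|c_\alpha|^2/r_\alpha$, which changes nothing in the transitivity argument. What the paper's order buys is economy: the generation property of $\mk h\cup\mk s$ substitutes for any explicit parametrisation of principal subalgebras. What your order buys is that the positive systems are aligned explicitly by the Weyl group at the outset and Proposition \ref{sh} is never needed; the paper, by contrast, leaves the alignment of $\dep_1$ and $\dep_2$ implicit (conjugating $(\mk h_1,\mk s_1)$ onto $(\mk h_2,\mk s_2)$ determines the grading element $a$, and hence $\dep$, only up to sign). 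One small notational warning: your $\ms T$, the stabiliser of $\rho$ alone in $\ms H$, is strictly larger than the paper's $\ms T$, which stabilises the whole quadruple; the clash is harmless here but should be renamed if this text were merged into the paper.
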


\begin{proof} 
  Let $(\mk h_1,\Delta_1^+,\rho_1,\lambda_1)$ and $(\mk
  h_2,\Delta_2^+,\rho_2,\lambda_2)$ be two Hitchin--Kostant quadruples. Let $\mk s_i$ be the $\mk h_i$-principal
  subalgebra fixed globally by $\rho_i$ and $\lambda_i$. Let
  $\sigma_i=\lambda_i\circ\rho_i$.  By Proposition
  \ref{UniquePrincipal}, we can as well assume after conjugation that

$$
(\mk h_1,\mk s_1)=(\mk h_2,\mk s_2)\eqdef (\mk h,\mk s).
$$
Thus by Proposition \ref{UniqueSigma}, $\sigma_1=\sigma_2$. Applying
Proposition \ref{UniqueCartan} to $\mk s$, we can further use a
conjugation by an element of $\exp(\mk h\cap \mk s)$ so that the restrictions
of $\rho_1$ and $\rho_2$ coincide on $\mk s$. Thus $\rho_1\circ\rho_2$
is the identity on $\mk s$ and is also the identity on $\mk h$ by
Proposition \ref{UniqueCartan} (two $\mk h$-Cartan involutions have the same restriction on $\mk h$). Since $\mk s$ and $\mk h$ generate
$\mk g$ by Proposition \ref{sh}, it follows that $\rho_1=\rho_2$, thus
$\lambda_1=\lambda_2$ and the result follows.
\end{proof}

\subsection{The stabiliser of a Hitchin--Kostant quadruple}
Let $(\mk h,\Delta^+,
  \rho,\lambda)$ be a Hitchin--Kostant quadruple. Let $\ms K$ be 
the maximal compact subgroup in $\ms G$ stabilising  $\rho$  and
similarly $\ms G_0$ the split real form stabilising $\lambda$ in $\ms G$.
From Section 6 in Hitchin \cite{Hitchin:1992es}, we have
\begin{proposition}{\sc [Hitchin]}\label{HitchinTorus} The
  group $\ms K_0\defeq \ms K\cap \ms G_0$ is a maximal compact
  subgroup of $\ms G_0$.  The algebra $\mk t=\mk g_0\cap\mk h\cap\mk k$
  is  the Lie algebra of  maximal torus $\ms T$ of $\ms K_0$.
\end{proposition}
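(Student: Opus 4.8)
The plan is to recognise the linear involution $\sigma$ as a Cartan involution of the real form $\mk g_0$, and then to locate inside $\mk t$ an explicit regular element coming from the principal $\mk{sl}_2$.

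First, for the maximal compactness, I would show that $\sigma$ restricts to a Cartan involution of $\mk g_0$. Since $\sigma$ commutes with $\lambda$ it preserves $\mk g_0=\ker(\lambda-\mathrm{Id})$, where it is an involutive automorphism. Writing $\rho=\sigma\circ\lambda$ (which follows from $\lambda=\sigma\circ\rho$ and $\sigma^2=\mathrm{Id}$), one has $\rho(u)=\sigma(u)$ for every $u\in\mk g_0$; hence for $u,v\in\mk g_0$ the form $-\Kill{u}{\sigma(v)}=-\Kill{u}{\rho(v)}$ is positive definite, by the defining property of the $\mk h$-Cartan involution $\rho$, the restriction of $\Kill{}{}$ to $\mk g_0$ being the (real) Killing form of $\mk g_0$ as $\mk g$ is its complexification. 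Thus $\sigma|_{\mk g_0}$ is a Cartan involution, and its fixed-point set $\{u\in\mk g_0:\sigma(u)=u\}=\{u\in\mk g_0:\rho(u)=u\}=\mk g_0\cap\mk k=\mk k_0$ is a maximal compact subalgebra of $\mk g_0$. Consequently the fixed-point group of the Cartan involution $\sigma$ in $\ms G_0$ is a maximal compact subgroup; since $\rho$ and $\sigma$ agree on $\mk g_0$ they agree on $\ms G_0$, so this group is exactly $\ms K\cap\ms G_0=\ms K_0$, giving the first assertion.

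For the torus statement, $\mk t=\mk h\cap\mk k_0$ is abelian, being contained in the Cartan subalgebra $\mk h$, and it lies in the compact subalgebra $\mk k_0$; the entire content is to prove it is \emph{maximal} abelian. Here I would use the principal element $a$. By the normalisation $\sigma(a,X,Y)=(a,-X,-Y)$ following Proposition \ref{sig-involution} we have $\sigma(a)=a$; and since $\rho$ sends every coroot to its opposite (Proposition \ref{UniqueCartan}) while $a$ is a real combination of coroots, $\rho(a)=-a$. Consequently $\ii a$ is fixed by $\rho$ (so $\ii a\in\mk k$) and by $\sigma$, hence by $\lambda=\sigma\circ\rho$ (so $\ii a\in\mk g_0$); therefore $\ii a\in\mk g_0\cap\mk h\cap\mk k=\mk t$. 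This element is regular: for every root $\alpha$ one has $\alpha(\ii a)=\ii\deg(\alpha)\neq 0$ because $|\deg(\alpha)|\geq 1$ (Proposition \ref{prop:Kostant}), so that $Z_{\mk g}(\ii a)=\mk h$. Thus any $u\in\mk k_0$ commuting with $\mk t$ commutes in particular with $\ii a$, hence lies in $\mk h\cap\mk k_0=\mk t$; so $\mk t$ equals its own centraliser in $\mk k_0$ and is therefore a maximal abelian, i.e. Cartan, subalgebra of the compact algebra $\mk k_0$, equivalently the Lie algebra of a maximal torus $\ms T$ of $\ms K_0$.

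The genuinely substantial step is the maximality of $\mk t$: the clean device is the regular element $\ii a$ furnished by the principal subalgebra, which collapses $Z_{\mk g}(\ii a)$ onto $\mk h$ in one stroke and sidesteps any Cayley-transform or exponent-counting argument showing that $\mk h\cap\mk g_0$ is a maximally compact Cartan subalgebra. The only other point requiring care is passing from the infinitesimal equality $\rho=\sigma$ on $\mk g_0$ to the group-level identification of $\ms K\cap\ms G_0$ with the fixed-point group of the Cartan involution $\sigma$, which I would settle using that these two involutions of $\ms G_0$ have the same differential.
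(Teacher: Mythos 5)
Your proof is correct, but there is no in-paper argument to measure it against: the paper states this proposition as a quotation from Section 6 of Hitchin \cite{Hitchin:1992es} and supplies no proof of its own. Judged on its merits, your argument is sound and uses only facts the paper makes available. The identity $\rho=\sigma\circ\lambda$ does give $\rho=\sigma$ on $\mk g_0$, so the positivity built into the definition of an $\mk h$-Cartan involution makes $\sigma\vert_{\mk g_0}$ a Cartan involution of the split real form whose fixed set is $\mk k\cap\mk g_0$; passing to the group level via the common differential is the one place needing care, and any ambiguity by central elements between the stabiliser of $\rho$ and the fixed-point group of the lifted involution is harmless, since $\ms K\cap\ms G_0$ is a compact subgroup containing that fixed-point group, and a compact subgroup containing a maximal compact subgroup equals it. For the torus statement, your use of the element $\ii a$ is exactly the right device: $\sigma(a)=a$ by the remark following Proposition \ref{sig-involution}, $\rho(a)=-a$ because $\rho$ sends coroots to their opposites (Proposition \ref{UniqueCartan}), so $\ii a\in\mk g_0\cap\mk h\cap\mk k=\mk t$, and $\alpha(\ii a)=\ii\deg(\alpha)\neq 0$ for every root (Proposition \ref{prop:Kostant}) forces the centraliser of $\ii a$ in $\mk g$ to be $\mk h$, whence $\mk t$ is its own centraliser in $\mk k_0$ and is therefore the Lie algebra of a maximal torus of $\ms K_0$. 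This is in all likelihood close in spirit to Hitchin's original argument; in any case it is a complete and correct replacement for the bare citation.
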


Observe that $\ms G$ acts by conjugation on the space $\X $ of Hitchin
quadruples.

\begin{proposition}\label{X}
  The stabiliser in $\ms G$ of the Hitchin--Kostant quadruple $(\mk h,\dep,\rho,
  \lambda)$ is $\ms T$.
\end{proposition}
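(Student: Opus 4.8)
The plan is to unwind the conjugation action and reduce the statement to a small list of membership conditions that are then intersected. Recall that $\ms G$ acts on an involution $\tau$ of $\mk g$ by $\tau\mapsto \Ad(g)\circ\tau\circ\Ad(g)^{-1}$, and on $(\mk h,\dep)$ through the action of $\Ad(g)$ on $\mk h$ and on $\mk h^{*}$. Hence $g\in\ms G$ fixes the quadruple $(\mk h,\dep;\rho,\lambda)$ precisely when (i) $\Ad(g)\mk h=\mk h$, (ii) $\Ad(g)$ maps $\dep$ to $\dep$, (iii) $\Ad(g)$ commutes with $\rho$, and (iv) $\Ad(g)$ commutes with $\lambda$. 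I will show that the conjunction of (i)--(iv) is equivalent to $g\in\ms T$.

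For the forward direction, conditions (i) and (ii) first pin $g$ inside the Cartan subgroup. Indeed (i) says $g\in N_{\ms G}(\mk h)$, and the restriction $\Ad(g)\vert_{\mk h}$ represents a class in the Weyl group $W=N_{\ms G}(\mk h)/\ms H$; condition (ii) says this class preserves the positive Weyl chamber. Since $W$ acts simply transitively on positive systems, that class is trivial, so $\Ad(g)\vert_{\mk h}=\mathrm{id}$ and $g\in\ms H$. By Proposition \ref{SymSpace} the maximal compact subgroup $\ms K$ is exactly the stabiliser of $\rho$, so (iii) gives $g\in\ms K$; likewise $\ms G_0$ is the fixed-point group of $\lambda$, so (iv) gives $g\in\ms G_0$ (the only ambiguity being a possible central factor, harmless below). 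Hence $g\in\ms H\cap\ms K\cap\ms G_0=\ms H\cap\ms K_0$, where $\ms K_0=\ms K\cap\ms G_0$ by Proposition \ref{HitchinTorus}.

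It remains to identify $\ms H\cap\ms K_0$ with $\ms T$. By Proposition \ref{HitchinTorus} the Lie algebra of $\ms T$ is $\mk t=\mk g_0\cap\mk h\cap\mk k$, which is exactly the Lie algebra of the abelian group $\ms H\cap\ms K_0$; and since $\ms H$ is abelian, $\ms H\cap\ms K_0$ centralises the maximal torus $\ms T$ of $\ms K_0$, forcing $\ms H\cap\ms K_0\subseteq\ms T$ and hence equality. The converse inclusion is immediate: any $t\in\ms T\subset\ms H$ fixes $\mk h$ pointwise and preserves each root space $\mk g_\alpha$, so it satisfies (i)--(ii), while $t\in\ms T\subset\ms K_0\subset\ms K\cap\ms G_0$ gives (iii)--(iv). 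Therefore the stabiliser equals $\ms T$.

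The step requiring the most care is the group-level equality $\ms H\cap\ms K_0=\ms T$, rather than the mere coincidence of Lie algebras (which tacitly uses connectedness of $\ms K_0$ to make a maximal torus self-centralising), together with the central ambiguity in reading off (iv). The cleanest way to dispose of both at once is a direct torus computation: for $g\in\ms H$ write $\alpha(g)$ for the eigenvalue of $\Ad(g)$ on $\mk g_\alpha$. Using $\rho(\mathrm{x}_\alpha)=\mathrm{x}_{-\alpha}$ one finds that (iii) is equivalent to $|\alpha(g)|=1$ for all $\alpha$, i.e. $g\in\ms H\cap\ms K$; and since $\lambda=\sigma\circ\rho$ with $\sigma$ permuting the roots, (iv) then reduces to $(\sigma\alpha)(g)=\alpha(g)$ for all $\alpha$. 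These character conditions cut out exactly $\ms T$, including its group of connected components, which simultaneously confirms $\ms H\cap\ms K_0=\ms T$ and absorbs the central ambiguity.
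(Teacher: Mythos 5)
Your proof is correct and follows essentially the same route as the paper's: the paper's entire argument consists of the three normaliser observations --- the normaliser of $(\mk h,\dep)$ is $\ms H$, the normaliser of $\rho$ lies in $\ms K$, the normaliser of $\lambda$ lies in $\ms G_0$ --- followed by ``the result follows'', i.e.\ precisely the identification $\ms H\cap\ms K\cap\ms G_0=\ms T$ that you spell out. Your Weyl-group justification of the first step, and your self-centralising-maximal-torus argument for $\ms H\cap\ms K_0=\ms T$ (which is sound: $\ms K_0$ is the maximal compact subgroup of the connected group $\ms G_0$, hence connected), supply details that the paper compresses into a single sentence.

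One caveat. Your final paragraph does not accomplish what it announces: the assertion that the character conditions $|\alpha(g)|=1$ and $(\sigma\alpha)(g)=\alpha(g)$ ``cut out exactly $\ms T$, including its group of connected components'' is not a computation but a restatement of the group-level claim at issue. Fixed-point subgroups of involutions of compact tori are disconnected in general (e.g.\ $g\mapsto g^{-1}$ on $S^1$ has fixed points $\{\pm 1\}$), so ruling out extra components genuinely requires an argument, and that argument involves the centre of $\ms G$ and the explicit action of $\sigma$ on the character lattice: for instance in $\ms{PSL}(3,\mathbb C)$ the class of $\mathrm{diag}(1,\zeta,\zeta^2)$, with $\zeta$ a primitive cube root of unity, solves both character equations and lies in $\ms T$ only because one may adjust by a central element of $\ms{SL}(3,\mathbb C)$. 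So the honest conclusion of your proof is the centraliser argument of your third paragraph; the ``direct torus computation'', as written, is circular at exactly the step it was introduced to handle.
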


\begin{proof} The normaliser of $\mk h$ equipped with a system of positive roots is $\ms H$. The normaliser of $\lambda$ is included in $\ms
  G_0$ since it normalises the set of fixed points of $\lambda$.  The
  normaliser of $\rho$ is similarly included in $\ms K$. The result
  follows.
\end{proof}

\begin{proposition}\label{torus}
We have the following
\begin{enumerate}
\item \label{Heta}
  We have ${\rm h}_\eta\in \mk t_\mathbb C$,
\item \label{sl3}
For $\ms G=\ms{SL}(3,\mathbb C)$, for every 
  simple root $\alpha$, $ {\rm h}_\alpha\not\in \mk t_\mathbb C$.
  \item \label{sig-involutionG2} If $\ms G$ is $\ms G_{2}$ or
  $\ms{Sp}(4,\mathbb C)$ Then $\left.\sigma\right\vert_\mk h=1$, or in
  other words $\mk h=\mk t_\mathbb C$.
\end{enumerate}

\end{proposition}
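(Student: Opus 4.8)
The plan is to first pin down $\mk t_\mathbb C$ as an explicit subspace of $\mk h$, and then read off the three assertions from the action of $\sigma$ on $\mk h$ together with the exponents of the three rank-2 groups. The whole statement is really a statement about the linear involution $\sigma=\lambda\circ\rho$ restricted to $\mk h$.

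The main step is the identification $\mk t_\mathbb C=\ker(\sigma-\mathrm{Id})\vert_{\mk h}$, the $+1$-eigenspace of $\sigma$ on $\mk h$. To get it, recall that $\rho$ is an $\mk h$-Cartan involution, hence antilinear and, by Proposition \ref{UniqueCartan}, sends each coroot $h_\alpha$ to $-h_\alpha$. Writing $\mk h_\mathbb R$ for the real span of the coroots and expanding $u=\sum_{\alpha\in\Pi}c_\alpha h_\alpha$, one finds $\rho(u)=u$ precisely when every $c_\alpha$ is purely imaginary, so that $\mk k\cap\mk h=i\,\mk h_\mathbb R$. Now $u\in\mk t$ means $\rho(u)=u$ and $\lambda(u)=u$; since $\lambda=\sigma\circ\rho$, for a $\rho$-fixed $u$ we have $\lambda(u)=\sigma(\rho(u))=\sigma(u)$, so the second condition becomes $\sigma(u)=u$. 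Thus $\mk t=\{u\in i\,\mk h_\mathbb R\mid \sigma(u)=u\}$. Because $\sigma$ is a complex-linear automorphism preserving $\mk h$ and permuting roots, it permutes coroots, hence preserves $\mk h_\mathbb R$ and the purely imaginary real form $i\,\mk h_\mathbb R$, on which it restricts to a real involution; complexifying its $\pm1$-eigenspace decomposition yields $\mk t_\mathbb C=\ker(\sigma-\mathrm{Id})\vert_{\mk h}$.

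With this in hand, the first assertion is immediate: Proposition \ref{pro:long-sig} gives $\sigma({\rm h}_\eta)={\rm h}_\eta$, so ${\rm h}_\eta\in\mk t_\mathbb C$. For the other two I would invoke Proposition \ref{hitchin-sig}, which gives $\sigma(a_i)=(-1)^{m_i+1}a_i$ on the Kostant lines $\mk a_i$; therefore $\mk t_\mathbb C$ is the sum of those $\mk a_i$ with $m_i$ odd, and $\dim_\mathbb C\mk t_\mathbb C$ equals the number of odd exponents. It then only remains to insert the exponents. For $\ms{Sp}(4,\mathbb C)=C_2$ they are $(1,3)$ and for $\ms G_2$ they are $(1,5)$, all odd, so $\sigma=\mathrm{Id}$ on $\mk h$ and $\mk t_\mathbb C=\mk h$, which is the third assertion. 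For $\ms{SL}(3,\mathbb C)=A_2$ the exponents are $(1,2)$, so $\mk t_\mathbb C$ is one-dimensional; since $m_2=2$ is even, $\sigma(a_2)=-a_2\neq a_2$, so $\sigma$ is not the identity on $\mk h$. As $\sigma$ sends simple roots to simple roots (proof of Proposition \ref{pro:long-sig}) and $A_2$ has only two simple roots, $\sigma$ must interchange them, giving $\sigma({\rm h}_{\alpha_1})={\rm h}_{\alpha_2}$ and $\sigma({\rm h}_{\alpha_2})={\rm h}_{\alpha_1}$; neither simple coroot is $\sigma$-fixed, so neither lies in $\mk t_\mathbb C$, which is the second assertion.

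The one genuinely delicate point is the identification in the second paragraph: one must track carefully the interplay of the two antilinear involutions $\rho,\lambda$ with complexification, to be certain that $\mk t_\mathbb C$ is the honest complex $+1$-eigenspace of $\sigma$ and not some twisted real form sitting inside $\mk h$. Once that is secured, the remaining work is bookkeeping with the exponents, plus the elementary remark that a nontrivial permutation of two simple roots is the transposition.
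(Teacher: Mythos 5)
Your proof is correct and follows essentially the same route as the paper: both rest on identifying $\mk t_\mathbb C$ with the fixed-point set of $\sigma$ in $\mk h$ (which you justify carefully, where the paper treats it as immediate from the definitions), on Proposition \ref{pro:long-sig} for the first assertion, and on the parity of the exponents via Proposition \ref{hitchin-sig} for the last two. The only divergence is in the $\ms{SL}(3,\mathbb C)$ case: the paper notes that $\mk t_\mathbb C$ is one-dimensional, hence equal to $\mathbb C\cdot{\rm h}_\eta$, and that no simple coroot is collinear to ${\rm h}_\eta$, whereas you argue that $\sigma$ must transpose the two simple roots and therefore fixes neither simple coroot; both arguments are valid.
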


\begin{proof} 
We first prove Statement \eqref{Heta}. Since $\sigma({\rm h}_\eta)={\rm h}_\eta$ by Proposition
  \ref{pro:long-sig}, and $\mk t_\mathbb C$ is the set of fixed
  points of $\sigma$ in $\mk h$ statement follows.

We now prove Statement \eqref{sig-involutionG2}. In this case,
  since $\ms G$ is of rank 2, we have only two representations of $\mk
  s$ appearing in $\mk g$. Let ${\rm x}_1$ and ${\rm x}_2$ be the
  corresponding highest weight vectors. In that case the exponent
  $m_1$ and $m_2$ are both odd. Thus let $a_i=\ad(Y)^{m_i} {\rm x}_i$,
  then
$
\sigma(a_i)=a_i$.
But the $a_i$ generate $\mk h$ as a vector space by the last assertion of Proposition
\ref{hitchin-sig}.

We finally prove Statement \eqref{sl3}.
  When $\ms G=\ms{SL}(3,\mathbb C)$, the two exponents have different parity, thus  $\mk t_\mathbb C$ is of dimension 1;
  it follows from Property \eqref{Heta} that $\mk t_\mathbb C={\rm
    h}_\eta.\mathbb C$. Since for all simple root $\alpha$, ${\rm
    h}_\alpha$ is not collinear to ${\rm h}_\eta$, the result follows.
\end{proof}

\section{The space of Hitchin--Kostant quadruples}

Our goal now is to describe the geometry of the homogeneous $\ms
G$-space  $\X$ of Hitchin--Kostant quadruples. In particular we wish to describe a Lie
algebra bundle over $\X $ which come equipped with a connection, a
metric and other differential geometric devices.
\subsection{Preliminary: forms with values in a Lie algebra bundle}\label{prel-form}

We shall in the sequel study forms on a manifold $M$ with values in a
Lie algebra bundle $\mathcal G$. We store in this paragraph the formulas that we shall use later.

We denote by $\Omega^*(M,\mathcal G)$
the graded vector space of forms on $M$ with values in $\mc G$. We say
a form $\alpha$ in $\Omega^*(M,\mathcal G)$ is {\em decomposable} if
$\alpha=\widehat\alpha\otimes A$ where $A$ is a section of $\mathcal G$
and $\widehat\alpha$ a form on $M$. We recall the existence of a unique linear binary operation $\ww$
$$
\Omega^p(M,\mc G)\otimes\Omega^q(M,\mc G)\to\Omega^{p+q}(M,\mc G),
$$
so that if $\alpha=\widehat\alpha\otimes A$,
$\beta=\widehat\beta\otimes B$ are decomposable forms then
\begin{align}
  \alpha\ww\beta=\left(\widehat\alpha\wedge\widehat\beta\right)\otimes
  [A,B].
\end{align}
Similarly, if $\alpha$ and $\beta$ are $\mc G$ valued forms, $\Kill{\alpha}{\beta}$ is the form defined for decomposable forms $\alpha=\widehat\alpha\otimes A$ and 
$\beta=\widehat\beta\otimes B$ by
\begin{align*}
\Kill{\alpha}{\beta} &\defeq (\widehat\alpha\wedge\widehat\beta)\otimes\Kill{A}{B}\end{align*}
We will use the following facts freely. 
  If $\alpha$ and $\beta$ are respectively of degree $p$ and $q$, and
  if $\xi\in \chi^\infty(M)$
  \begin{align}
    \alpha\ww\beta&=(-1)^{pq+1}\beta\ww\alpha\\
    i_\xi(\alpha\ww\beta)&=i_\xi\alpha\ww\beta +(-1)^p \alpha\ww i_\xi\beta\\
  \Kill{\gamma}{\beta\ww\alpha}&=(-1)^{pq+1}\Kill{\gamma\ww\alpha}{\beta}.\label{killexchange}
  \end{align}
  If $\alpha$ and $\beta$ are 1-form and $\gamma$ a $0$-form then
    \begin{align}
     \alpha\ww(\gamma\ww\beta)+\beta\ww(\gamma\ww\alpha)=\gamma\ww(\alpha\ww\beta). \label{Jacobi} \end{align}
  If furthermore $\mathcal G$ is equipped with a connection and if
  $\d$ is the corresponding exterior derivative on
  $\Omega^*(M,\mathcal G)$, then for $\alpha$ of degree $p$
  \begin{align}
    \d(\alpha\ww\beta)&=\d\alpha\ww\beta+(-1)^p\alpha\ww\d\beta.
  \end{align}

Finally recall a convention of notation: if $\alpha$ belongs to $\Omega(\X,\mc G)$, and $f$ is a map from $\Sigma$ to $\X$, then $f^*(\alpha)$ is a form with values in $f^*(\mc G)$.

Let $S$ be a Riemann surface, $\mc G$ a Lie algebra bundle over $S$ equipped with a section of Cartan involution $\rho$. Then we have

\begin{proposition}\label{Sign}
Let $\alpha$ be a $(1,0)$ form with value in $\mc G$, then
\begin{align}
i\cwdot\int_S \Kill{\alpha}{\rho(\alpha)}\leq 0,
\end{align}
with equality if and only if $\alpha=0$.
Conversely if $\alpha$ is of type $(0,1)$ then 
\begin{align}
i\cwdot\int_S \Kill{\alpha}{\rho(\alpha)}\geq 0,
\end{align}
with equality if and only if $\alpha=0$.
\end{proposition}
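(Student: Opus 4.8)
The statement asserts a sign (negativity/positivity up to $i$) of an integrated Hermitian pairing. The plan is to reduce the global integral to a pointwise computation by exploiting the $(1,0)$-type structure together with the positive-definiteness built into the Cartan involution $\rho$. Recall that $\rho$ is an $\mk h$-Cartan involution, so by definition the Hermitian form $(u,v)\mapsto -\Kill{u}{\rho(v)}$ is positive definite on $\mk g$ (this is exactly the condition in the definition of an $\mk h$-Cartan involution in Section \ref{realcond}). The metric $g(u,v)=-\Kill{u}{\rho(v)}$ on the bundle $\mc G$ is therefore a genuine positive-definite Hermitian inner product on fibres, and this is the one structural input I intend to use.

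First I would work in a local holomorphic coordinate $z=x+iy$ on $S$ and write the $(1,0)$-form as $\alpha=A\,\d z$, where $A$ is a local section of $\mc G$. Then $\rho(\alpha)=\rho(A)\,\d\bar z$, since $\rho$ is a bundle endomorphism acting on the $\mc G$-part and leaving the scalar form $\d z$ untouched (one must be mildly careful here: $\rho$ is antilinear on the fibre, but $\d z$ is the scalar coefficient, so no conjugation of $\d z$ is introduced). Hence
\begin{align*}
\Kill{\alpha}{\rho(\alpha)}=\Kill{A}{\rho(A)}\,\d z\wedge\d\bar z=-g(A,A)\,\d z\wedge\d\bar z.
\end{align*}
Using $\d z\wedge\d\bar z=-2i\,\d x\wedge\d y$, this gives
\begin{align*}
i\cwdot\Kill{\alpha}{\rho(\alpha)}=-i\,g(A,A)\,\d z\wedge\d\bar z=-2\,g(A,A)\,\d x\wedge\d y.
\end{align*}
Since $g$ is positive definite, $g(A,A)\geq 0$ with equality exactly when $A=0$, i.e. when $\alpha=0$ at that point. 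Integrating over $S$ yields $i\cwdot\int_S\Kill{\alpha}{\rho(\alpha)}\leq 0$, with equality iff $\alpha$ vanishes identically. The conversely-part for $(0,1)$-forms follows by the identical computation with the roles of $\d z$ and $\d\bar z$ interchanged, which flips the sign of $\d z\wedge\d\bar z$ and hence the inequality.

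The main point requiring care — and the only genuine obstacle — is bookkeeping the interaction between the antilinearity of $\rho$, the complex-valued $2$-form $\d z\wedge\d\bar z$, and the definition of the symmetric pairing $\Kill{\cdot}{\cdot}$ extended to $\mc G$-valued forms. In particular I must check that the $\ww$/$\Kill{\cdot}{\cdot}$ conventions from Section \ref{prel-form} produce the coefficient $\Kill{A}{\rho(A)}$ without spurious signs, and that the orientation convention on $S$ makes $-i\,\d z\wedge\d\bar z$ a positive multiple of the area form. Once the sign of $i\,\d z\wedge\d\bar z$ is fixed correctly relative to the complex orientation, the positivity of $g$ finishes the argument immediately; there is no analytic subtlety since the integrand is a pointwise nonnegative (resp. nonpositive) multiple of the area form, so the equality case is simply pointwise vanishing.
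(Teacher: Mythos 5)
Your proof is correct and follows essentially the same route as the paper: both arguments reduce the statement to the pointwise positive-definiteness of the Hermitian form $(A,B)\mapsto -\Kill{A}{\rho(B)}$ applied to a decomposition of $\alpha$. The only difference is that the paper works with decomposable forms $\widehat\alpha\otimes A$ and then decomposes a general form at each point into pieces with $\rho$-orthogonal $\mc G$-components, whereas your choice of a local holomorphic coordinate makes every $\mc G$-valued $(1,0)$-form automatically decomposable as $A\,\d z$; this is a mild simplification that avoids the orthogonal-decomposition step.

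One correction to your parenthetical remark, which concerns exactly the sign bookkeeping you identified as the delicate point: your displayed formula $\rho(A\,\d z)=\rho(A)\,\d\bar z$ is the right one, but the justification you give for it is backwards. The conjugation of $\d z$ \emph{is} introduced, precisely because $\rho$ is antilinear on the fibres: for a tangent vector $u$ one has $\rho\bigl(\d z(u)\cdot A\bigr)=\overline{\d z(u)}\,\rho(A)=\d \bar z(u)\,\rho(A)$, which matches the paper's formula $\rho(\widehat\alpha\otimes A)=\overline{\widehat\alpha}\otimes\rho(A)$. Had no conjugation been introduced, as your parenthesis claims, you would instead obtain $\Kill{\alpha}{\rho(\alpha)}=\Kill{A}{\rho(A)}\,\d z\wedge \d z=0$ and the proposition would be vacuous. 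Fortunately your subsequent computation uses the correct formula, so the argument as computed stands.
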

\begin{proof}
Let us first consider a form $\alpha=\widehat\alpha\otimes A$, where $A$ is a section of $\mc G$ and $\widehat\alpha$ a $(1,0)$-form on $S$. Then
$\rho(\alpha)=\overline{\widehat\alpha}\otimes \rho(A)$.
Thus for every $x$ in $S$, $u\in\T S$, we have 
\begin{align}
i\cwdot \Kill{\alpha}{\rho(\alpha)}_x(u,Ju)=i\cwdot  \Kill{A}{\rho(A)}_x\widehat\alpha\wedge\overline{\widehat\alpha}_x\cwdot(u,Ju)\leq 0.
\end{align}
with equality only if $\alpha_x=0$. Given a point $x$, we can now decompose any form $\alpha$ as the sum of decomposable forms $\widehat\alpha_i\otimes A_i$ where the $A_i$ are pairwise orthogonal with respect to $A,B\mapsto\Kill{A}{\rho(B)}$ and obtain the same statement. The result follows.

\end{proof}

\subsection{Geometry of the space of Hitchin--Kostant quadruples}

\subsubsection{Vector subbundles}
The group $\ms G$ act by conjugation on the space $\X$ of Hitchin--Kostant quadruples. Moreover once we fix a
Hitchin--Kostant quadruple $(\mk h,\dep,\rho,\lambda)$ then $\X $ is identified with
$\ms G/\ms T$ by Proposition \ref{X}, where $\ms T\subset \ms H$ be
the torus fixed by the involutions $\lambda$ and $\rho$. Recall then
that $\ms T$ is compact and is the maximal torus of the maximal
compact of $\ms G_0$ by Proposition \ref{HitchinTorus}.

Let $\mc G$ be the trivial bundle $\mathcal G\defeq \mk g\times \X $
equipped with the trivial connection $\D$. The following definitions introduce some of the geometry of $\X$.
\begin{definition}{\sc[Vector bundles]}\label{def:subh}
  We denote by $\mc H$, $\mc T$, $\mc H_0$, the
  subbundles of $\mc G$ whose fibre at $(\mk h,\Delta^+, \rho,\lambda)$ are
  respectively
  \begin{align}
    \mk h&,\\ \mk t&\defeq\{u\in\mk h\mid \sigma(u)=u,\rho(u)=u\},\\
    \mk h_0&\defeq\{u\in\mk h\mid \forall v\in\mk t,
    \Kill{u}{v}=0\}.\end{align}
Using the root system, we also have a decomposition:
  \begin{equation}
    \mc G=\mc H\oplus\bigoplus_{\alpha\in\Delta} \mc G_\alpha,\label{fondec2}
  \end{equation}
  such that at a point $x=(\mk h,\dep,\rho,\lambda)$,  $\mc H_x=\mk
  h$, and $\left(\mc G_\alpha\right)_x=\mk g_\alpha$, where $\mk
  g_\alpha$ are the root spaces
  associated to $\mk h$.
\end{definition}

At any point $x=(\mk
  h,\dep,\rho,\lambda)$ of $\X $, we identify $\T_x\X $ with $\mk g/\mk
  t$. We denote by $E^\perp$ the orthogonal with respect to the Killing form.

\begin{definition}{\sc[Maurer--Cartan Form]}\label{def:MC} The {\em
    Maurer--Cartan} form on $\X $ is the form $\omega\in\Omega^1(\X,\mc T^\perp)\subset 
  \Omega^1(\X,\mc G)$ defined as the inverse of the projection from $\mk t^\perp$ to $\mk g/\mk t=\T_x\X $.
\end{definition}
In the sequel we will sloppily identify $\omega$ with $\ad(\omega)$.
Observe that the Maurer--Cartan form satisfies
\begin{equation}
  \forall u\in\mk t,\ \  \Kill{u}{\omega}=0.  \label{MC}
\end{equation}

\subsubsection{Connections}
Let $b:\ms G\to \operatorname{End}(V)$, be a linear representation. Let $\mc V=V\times\ms{X}$ be the associated trivial bundle over $\ms{X}$ equipped with the trivial connection $\D$. We say a section $\sigma$ of $\mc V$ is 
{\em $\ms G$-equivariant} if
 $$\sigma(g\cdot x)=b(g)\cdot\sigma(x).$$
In particular, since $\sigma(g\cdot x)=b(g)\cdot\sigma(x)$, then for every $u\in\mk g$,
$$
\D_{\tilde{u}}\sigma=\T b(u)\cdot\sigma,
$$
where $\tilde{u}$ is the vector field on $\X$ associated to $u$ and $\T b$ is the tangent map to $b$.
Thus a $\ms G$-equivariant section of $\mc V$ is parallel under the connection
$$
\nabla\defeq\D-\T b\circ\omega.
$$
In the next proposition, we will use  this fact when $b$ is the adjoint representation.

\begin{proposition}\label{vec-bun-des}
Let 
$$
\nabla:=\D-\omega,
$$ 
 where $\omega$ is the Maurer--Cartan
  form.
Then
\begin{enumerate}
	\item The subbundles $\mc H$, $\mc T$ and $\mc H_0$ are parallel for $\nabla$,
	\item The Cartan involution  $\rho$ is parrallel for $\nabla$.
		\item The curvature $R^\nabla$ belongs to $\Omega^2(\X,\mc T)$,
\end{enumerate}
\end{proposition}
In particular since $\nabla+\omega$ is flat, the curvature of $\nabla$ is given by the equation
\begin{equation}
  \d^\nabla\omega+\omega\ww\omega +R^\nabla=0.\label{curvfund}
\end{equation}
The second assertion implies that the metric $g(u,v)=-\Kill{\rho(u)}{v}$ is parrallel for $\nabla$ or, in other words, that $\nabla$ is metric.

\begin{proof} Let $S$ be an $\ms G$-equivariant section of $\operatorname{End}(\mc  G)$. Thus, $S$ is parallel under the connection $\nabla^0$ on $\operatorname{End}({\mk g})$ such that
$$
\nabla_u^0S=\D_u S - [\ad(\omega(u)),S].
$$
A short computation yields that 
\begin{align*}
0=(\nabla^0_u S)(v)&=\nabla_u(S(v))-S(\nabla_u v).
\end{align*}
Thus if $w=S(v)$ is section of $\operatorname{Im}(S)$ then, \begin{align*}
\nabla_u w=S(\nabla_u v)\in \operatorname{Im}(S).
\end{align*}
In other words, $\operatorname{Im}(S)$ is parallel under $\nabla$. 

We obtain the first part of the result by applying this observation for the Killing orthogonal projections on $\mathcal H$, $\mathcal T$ and $\mathcal H_0$ as well as well as the projection on $\mc G_\alpha$ from the root space space decomposition.
Observe that this orthogonal projection exists since the restriction of the Killing form is non degenerate, since the corresponding Lie subalgebras are reductive. 

Applying this very same observation to $\rho$, we get the second assertion.

Since $\rho$ and $\lambda$ are parallel, it follows that the Lie algebra  $\mk k$
(associated to the maximal compact of $\ms G$) and $\mk g_0$
(associated to the split real form of $\ms G$) are both parallel.
These two algebras being both self normalizing, it follows that
$R^\nabla\in\Omega^2(\X ,\mk g_0\cap\mk k)$. Similarly since $\mk h $
is parallel and self normalizing, we further have that
$$R^\nabla\in\Omega^2(\X ,\mk g_0\cap\mk k\cap \mk h).$$
The last statement of the proposition follows from the fact that
 $$
 \mk g_0\cap\mk k\cap \mk h=\mk t.\qedhere
 $$
\end{proof}

Conversely, we have

\begin{proposition}\label{VecBundMan}
  Let $\widehat{\mc G}$  be a Lie algebra bundle over a  simply connected manifold $M$ equipped with
  \begin{enumerate}
  \item a smoothly varying Hitchin--Kostant quadruple $\kappa:m\mapsto (\widehat{\mk
      h}_m,\widehat\Delta^+_m,\widehat{\rho}_m,\widehat{\lambda}_m)$ in every fibre.
  \item a connection $\widehat\nabla$ for which the Hitchin--Kostant quadruple $\kappa$ is
    parallel.
  \item an element $\widehat\omega\in\Omega^1(M,\mc G)$, such that
    $\widehat \D\defeq \widehat\nabla+\ad(\widehat\omega)$ is flat and
    moreover

\begin{equation}
  \forall u\in\widehat{\mk t},\ \ \braket{u|\widehat\omega}=0,\label{womt}
\end{equation}
where $\widehat{\mk t}\defeq\{u\in\widehat{h}\mid
\widehat\rho(u)=u=\widehat\lambda(u)\}$.
\end{enumerate}

Then there exists a map $f$ from $M$ to $\X $, unique up to post composition by an element of $\ms G$, such that $\widehat{\mc
  G}$, $\widehat{\mc H}$, $\widehat\Delta^+$, $\widehat{\rho}$, $\widehat{\lambda}$,
$\widehat\omega$,$\widehat\nabla$ and $\widehat{\sigma}$ are the
pulled back of the corresponding objects in $\mc
G$.  \end{proposition}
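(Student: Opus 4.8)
The plan is to integrate the flat connection $\widehat\D$ to trivialise $\widehat{\mc G}$, and then to read off $f$ from the way the $\widehat\nabla$-parallel quadruple $\kappa$ moves inside that trivialisation; this simply reverses Proposition \ref{vec-bun-des}. Concretely, I would fix a base point $m_0\in M$ and an isomorphism of Lie algebras $\psi_0:\widehat{\mc G}_{m_0}\to \mk g$ (the fibres being copies of $\mk g$), carrying the Hitchin--Kostant quadruple $\kappa(m_0)$ to a reference quadruple $x_0\in\X$; this is possible because all Hitchin--Kostant quadruples on $\mk g$ are conjugate (Proposition \ref{UniqueHitchinquadruple}), so $\X=\ms G\cdot x_0=\ms G/\ms T$ (Proposition \ref{X}). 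Since $\widehat\D=\widehat\nabla+\ad(\widehat\omega)$ is flat and is a connection by derivations of the bracket (because $\widehat\nabla$ preserves the Lie algebra structure and $\ad(\widehat\omega)$ is a derivation), parallel transport of $\psi_0$ over the universal cover $\tilde M$ produces an isomorphism of Lie algebra bundles $\Psi:\widehat{\mc G}\to \mk g\times\tilde M$ intertwining $\widehat\D$ with the trivial connection $\D$. Transporting $\kappa$, the quadruple $q(m):=\Psi_m(\kappa(m))$ is then a genuine Hitchin--Kostant quadruple on the fixed $\mk g$, hence a point of $\X$, and I would \emph{define} $f(m):=q(m)$, so that $f(m_0)=x_0$.

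By construction $\Psi$ identifies $\widehat{\mc G}$ with $f^*\mc G$ (the pullback of the trivial bundle) in such a way that $\widehat{\mc H},\widehat{\mc T},\widehat\Delta^+,\widehat\rho,\widehat\lambda$ and $\widehat\sigma=\widehat\lambda\circ\widehat\rho$ become the pullbacks of the corresponding objects on $\X$, and $\widehat\D$ becomes $f^*\D$; these are immediate, since $\Psi$ is fibrewise an isomorphism of quadruples and carries $\widehat{\mk t}$ to the stabiliser. The whole content is therefore concentrated in the identity $f^*\omega=\widehat\omega$, and this is where I expect the real work.

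For that key step, note that since $\kappa$ is $\widehat\nabla$-parallel with $\widehat\nabla=\widehat\D-\ad(\widehat\omega)$, in the trivialisation $\Psi$ the family $f$ moves by the infinitesimal $\ms G$-action of $\widehat\omega$ on $\X$: under $\T_{f(m)}\X\cong\mk g/\widehat{\mk t}$ one reads $df(u)=\widehat\omega(u)\bmod\widehat{\mk t}$ (all identifications via $\Psi$). By Definition \ref{def:MC}, $\omega$ takes values in $\widehat{\mk t}^{\perp}$ and $\omega_{f(m)}$ inverts the projection $\widehat{\mk t}^{\perp}\to\mk g/\widehat{\mk t}$; hence $f^*\omega(u)\in\widehat{\mk t}^{\perp}$, while $f^*\omega(u)-\widehat\omega(u)\in\widehat{\mk t}$ because the two represent the same class modulo $\widehat{\mk t}$. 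The normalisation \eqref{womt}, namely $\Kill{u}{\widehat\omega}=0$ for all $u\in\widehat{\mk t}$, puts $\widehat\omega(u)$ in $\widehat{\mk t}^{\perp}$ as well; as the Killing form is nondegenerate on the compact subalgebra $\widehat{\mk t}$, so that $\widehat{\mk t}\cap\widehat{\mk t}^{\perp}=\{0\}$, the difference vanishes and $f^*\omega=\widehat\omega$. This is exactly the role of hypothesis \eqref{womt}, mirroring \eqref{MC} on $\X$. It then follows that
\[
f^*\nabla=f^*\D-\ad(f^*\omega)=\widehat\D-\ad(\widehat\omega)=\widehat\nabla ,
\]
completing the list of pullbacks.

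For uniqueness, any $f$ realising all these pullbacks yields, through the identification $\widehat{\mc G}\cong f^*\mc G$, a flat trivialisation of $(\widehat{\mc G},\widehat\D)$; two such trivialisations over the connected $\tilde M$ differ by a constant automorphism of $\mk g$, i.e. an element of $\ms G$, which changes $f$ by post-composition with the corresponding action on $\X$. Hence $f$ is unique up to post-composition by $\ms G$. The one genuinely delicate point I would flag is the passage from $\tilde M$ to $M$: the natural object is the holonomy-equivariant map $\tilde f:\tilde M\to\X$, and it descends to $M$ exactly when the holonomy of $\widehat\D$ is trivial (in particular when $M$ is simply connected). This is unavoidable, since the pointwise quadruple data give no comparison between distinct fibres, and it is only the flat structure $\widehat\D$ that produces the map; I would therefore either work on $\tilde M$ throughout or invoke simple connectedness. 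The main obstacle, in any case, is the verification that $f^*\omega$ is the $\widehat{\mk t}^{\perp}$-component of $\widehat\omega$ and that \eqref{womt} kills the remaining $\widehat{\mk t}$-part.
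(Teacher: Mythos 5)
Your proposal is correct and follows essentially the same route as the paper's own proof: use flatness of $\widehat\D$ to trivialise $\widehat{\mc G}$, define $f$ as the map recording the Hitchin--Kostant quadruple in that trivialisation, note that the pullback identities then hold by construction, and obtain the key identity $f^*\omega=\widehat\omega$ because the difference lies both in $\Omega^1(M,\mk t)$ (the quadruple being parallel for both connections and $\mk t$ being its stabiliser) and in $\Omega^1(M,\mk t^\perp)$ (by \eqref{womt} and \eqref{MC}), hence vanishes. Your flag about descending from $\widetilde M$ to $M$ is also well placed: the paper's statement says ``connected'' but its proof explicitly assumes $M$ simply connected, the general case being treated equivariantly in Corollary \ref{VecBundManCor}.
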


As an immediate corollary, we get

\begin{corollary}\label{VecBundManCor}
   Let $\widehat{\mc G}$  be a Lie algebra bundle over a connected manifold $M$ equipped with the same structure as in Proposition \ref{VecBundMan}, 
then there exists 
\begin{enumerate}
\item a representation $\rho$ of $\pi_1(M)$ in $\ms G$ unique up to conjugation,
\item a $\rho$-equivariant map $f$ from the universal cover $\widetilde{M}$ of $M$,  in $\X $ satisfying the properties in the conclusion of Proposition \ref{VecBundMan}.
\end{enumerate}
  \end{corollary}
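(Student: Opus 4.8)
The plan is to run the usual developing-map argument, invoking Proposition \ref{VecBundMan} on the universal cover rather than integrating the flat structure by hand. Let $p\colon\widetilde M\to M$ be the universal covering and pull back all the data to $\widetilde M$: form the Lie algebra bundle $p^*\widehat{\mc G}$ and equip it with the pulled-back quadruple $p^*\kappa$, the pulled-back connection $p^*\widehat\nabla$, and the pulled-back form $p^*\widehat\omega\in\Omega^1(\widetilde M,p^*\widehat{\mc G})$. Pullback commutes with the algebraic operations defining a Hitchin--Kostant quadruple, and both the flatness of $\widehat\D=\widehat\nabla+\ad(\widehat\omega)$ and the annihilation condition \eqref{womt} are pointwise, hence are preserved. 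Since $\widetilde M$ is simply connected it satisfies the hypotheses of Proposition \ref{VecBundMan}, which therefore produces a map $\tilde f\colon\widetilde M\to\X$, unique up to post-composition by an element of $\ms G$, through which $p^*\widehat{\mc G}$ and all its auxiliary structures are pulled back from $\mc G$.

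I would then read off the representation from the deck action. The group $\pi_1(M)$ acts on $\widetilde M$ by deck transformations, and because $p\circ\gamma=p$ for every $\gamma\in\pi_1(M)$, the pulled-back data $p^*\kappa$, $p^*\widehat\nabla$ and $p^*\widehat\omega$ are $\gamma$-invariant. Hence for each $\gamma$ the composite $\tilde f\circ\gamma$ again satisfies the conclusion of Proposition \ref{VecBundMan} for the same data on $\widetilde M$, so the uniqueness clause yields an element $\rho(\gamma)\in\ms G$ with $\tilde f\circ\gamma=\rho(\gamma)\cdot\tilde f$, where $\cdot$ is the conjugation action of $\ms G$ on $\X$. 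That $\gamma\mapsto\rho(\gamma)$ is a homomorphism follows formally by computing $\tilde f\circ(\gamma_1\gamma_2)$ in two ways, namely as $\rho(\gamma_1\gamma_2)\cdot\tilde f$ and as $\rho(\gamma_1)\cdot(\rho(\gamma_2)\cdot\tilde f)=(\rho(\gamma_1)\rho(\gamma_2))\cdot\tilde f$, and appealing again to uniqueness; the map $\tilde f$ is $\rho$-equivariant by the defining relation, and it carries the pulled-back structure exactly as demanded in the conclusion of Proposition \ref{VecBundMan}.

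The step that needs real care is the well-definedness of $\rho(\gamma)$, that is, knowing that the element of $\ms G$ taking $\tilde f$ to $\tilde f\circ\gamma$ is unique, equivalently that $\ms G$ acts freely on the set of structure-preserving lifts. This is precisely the content of the uniqueness clause of Proposition \ref{VecBundMan}, with one caveat: the centre $Z(\ms G)$ acts trivially by conjugation and is thus invisible both to $\X$ and to the adjoint bundle $\widehat{\mc G}$, so a priori $\rho(\gamma)$ is pinned down only modulo $Z(\ms G)$. Since the conjugation action factors through $\ms G/Z(\ms G)$, this ambiguity is immaterial for the equivariance of $\tilde f$, and $\rho$ is a genuine homomorphism into the effectively acting quotient, which is all the subsequent arguments use. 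Finally, replacing the lift $\tilde f$ by another admissible choice $g\cdot\tilde f$ replaces $\rho$ by $g\rho g^{-1}$, giving the asserted uniqueness up to conjugation and completing the argument.
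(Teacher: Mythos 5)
Your proof is correct and is exactly the argument the paper intends: the corollary is stated there as an immediate consequence of Proposition \ref{VecBundMan} with no written proof, and your pull-back-to-the-universal-cover argument, with the uniqueness clause producing the monodromy $\rho(\gamma)$ from the deck action and conjugation accounting for the choice of lift, is the standard way to make that immediacy precise. Your caveat about the centre $Z(\ms G)$ — that $\rho(\gamma)$ is a priori determined only modulo the kernel of the action on $\X$ and the adjoint bundle, so the representation naturally lives in the effectively acting quotient — is a legitimate refinement of a point the paper silently glosses over.
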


\noindent{\em Proof of Proposition \ref{VecBundMan}.}  Since $\widehat\D$ is flat, we may as well
  assume that $\widehat{\mc G}$ is the trivial flat bundle $\mc G=\mk
  g\times M$. Thus the map $f:m\mapsto (\widehat{\mk
    h}_m,\widehat\Delta^+_m,\widehat{\rho}_m,\widehat{\lambda}_m)$ is now a map from $M$
  to $\ms X$. By construction, $\widehat{\mc G}$, $\widehat{\mc H}$, $\widehat\Delta^+$, 
  $\widehat{\rho}$, $\widehat\lambda$ and $\widehat{\D}$ are the pullbacks by $f$ of $\mc G$,
  $\mk h$, $\dep$, $\rho$, $\lambda$ and $\D$.  Thus
  $(\widehat{h},\widehat\Delta^+,\widehat{\rho},\widehat{\lambda})$ is parallel both
  for $\widehat{\nabla}$ and $f^*\nabla$. Since the stabiliser in
  $\mk g$ of $(\mk h,\dep,\rho,\lambda)$ is $\mk t$,
$$
f^*(\omega)-\widehat{\omega}=\widehat{\nabla}-f^*\nabla\in\Omega^1(M,\mk
t).
$$
However by Hypothesis \eqref{womt} and Equation \eqref{MC},
$$
f^*(\omega)-\widehat{\omega}\in\Omega^1(M,\mk t^\perp).
$$
Thus $f^*(\omega)=\widehat{\omega}$\,. Then $f^*\nabla=\widehat\nabla$
and the proof of the second assertion of the proposition is completed. \qedhere

\subsubsection{The real structure on the space of Hitchin--Kostant quadruples}

Since $\ms T$ is a subgroup of $\ms G_0$, it follows that each leaf of
the foliation by right $\ms G_0$-orbits of $\ms G$ is invariant by the
action of $\ms T$, thus giving rise to a foliation $\bf F$ of $\ms
G/\ms T$ whose leaves are all isomorphic to $\ms G_0/\ms T$. Since
this foliation is left invariant by the action of $\ms G$, it gives a
foliation, that we also denote $\bf F$, on $\X $. Since $\lambda$ preserves $\mk t$ and thus $\mk t^\perp$, we obtain a
{\em real structure} $v\to\CG{v}$ on $\X $ by setting
$$
\omega(\overline{v})=\lambda(\omega(v)).
$$
One then immediately have
\begin{proposition}\label{real0}
  The tangent distribution $\T\bf F$ of the foliation $\bf F$ is given
  by
$$
\T {\bf F}\defeq \{u\in \T\X \mid \CG{u}=u\}.
$$
\end{proposition}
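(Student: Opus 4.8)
The statement to prove is that the tangent distribution $\T\bf F$ of the foliation by right $\ms G_0$-orbits equals the fixed-point set $\{u\in\T\X\mid\CG u=u\}$ of the real structure. The plan is to work at a single point $x=(\mk h,\dep,\rho,\lambda)$ and use the identification $\T_x\X\cong\mk g/\mk t$ together with the Maurer--Cartan form $\omega$, which the definition of the real structure compares directly to $\lambda$. Since both sides of the claimed equality are $\ms G$-invariant distributions, it suffices to identify their fibres at $x$, and the whole argument is really a statement in the Lie algebra $\mk g$.

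First I would unwind both descriptions. On one hand, the leaf of $\bf F$ through $x$ is the image of the right $\ms G_0$-orbit, so its tangent space at $x$ is the image in $\mk g/\mk t$ of the subalgebra $\mk g_0$ (the fixed points of $\lambda$). On the other hand, by Definition \ref{def:MC} the Maurer--Cartan form $\omega$ identifies $\T_x\X$ with $\mk t^\perp$, and the real structure is defined by $\omega(\CG v)=\lambda(\omega(v))$; hence $\CG v=v$ is equivalent to $\lambda(\omega(v))=\omega(v)$, i.e. to $\omega(v)\in\mk g_0$. So the right-hand distribution is $\omega^{-1}(\mk g_0\cap\mk t^\perp)$.

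The remaining task is thus purely algebraic: to check that the image of $\mk g_0$ under the projection $\mk g\to\mk g/\mk t$ coincides, via $\omega$, with $\mk g_0\cap\mk t^\perp$. This reduces to the orthogonal decomposition $\mk g_0=(\mk g_0\cap\mk t)\oplus(\mk g_0\cap\mk t^\perp)$ with respect to the Killing form, which holds because $\lambda$ preserves $\mk t$ and $\mk t^\perp$ (as noted just before the proposition) and is an isometry of the Killing form up to sign; concretely, $\mk t\subset\mk g_0$ since $\mk t$ is fixed by $\lambda$, so modding out by $\mk t$ exactly discards the $\mk g_0\cap\mk t$ summand and leaves $\mk g_0\cap\mk t^\perp$. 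The main point to verify carefully is that $\mk t^\perp$ is $\lambda$-invariant and that $\mk t\subset\mk g_0$, so that the Killing-orthogonal complement of $\mk t$ inside $\mk g_0$ is genuinely $\mk g_0\cap\mk t^\perp$ and no components are lost.

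The step I expect to be the only mild obstacle is bookkeeping the identification $\T_x\X\cong\mk g/\mk t\cong\mk t^\perp$ so that the orbit-tangent description (which lives most naturally in $\mk g/\mk t$) and the real-structure description (which lives in $\mk t^\perp$ through $\omega$) are matched up without sign or normalisation errors; once that dictionary is fixed, the equality $\T_x\bf F=\{u\mid\CG u=u\}_x$ is immediate, and $\ms G$-invariance propagates it to every point. I would conclude by remarking that since $\omega$ is $\ms G$-equivariant and $\lambda$ varies as a parallel section of $\operatorname{Aut}(\mc G)$, the pointwise identity globalises to the asserted equality of distributions on all of $\X$.
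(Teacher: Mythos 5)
Your proposal is correct and follows essentially the same route as the paper, whose entire proof is the one-line observation that $\mk g_0$ is the fixed-point set of $\lambda$ in $\mk g$; you have simply unpacked the identifications (orbit tangent space $=$ image of $\mk g_0$ in $\mk g/\mk t$, real-structure fixed points $=\omega^{-1}(\mk g_0\cap\mk t^\perp)$, matched via $\mk t\subset\mk g_0$ and $\lambda$-invariance of $\mk t^\perp$) that the paper leaves implicit. No gaps.
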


\begin{proof}
  Indeed, $\mk g_0$ is the set of fixed points of $\lambda$ in $\mk
  g$.
\end{proof}
\subsubsection{The space of Hitchin--Kostant quadruples and the symmetric space}
The map $p:(\mk
h,\dep,\rho,\lambda)\mapsto\rho$ defines a natural $\ms G$ equivariant
projection $p$ from the space $\X$ of Hitchin--Kostant quadruples to the symmetric space $\ms S(\ms G)$. The fibres of this
projection are described as follows. Since $\ms T$ is a subgroup of
the maximal compact $\ms K$, each leaf of the foliation by the right
$\ms K$-orbits on $\ms G$ is invariant by $\ms T$, thus giving rise to a
foliation on $\ms G/\ms T$. This foliation is invariant under the left
$\ms G$-action and thus gives a foliation $\bf K$ on $\X $. The leaves
of $\bf K$ are precisely the fibres of the projections $p$ from $\X
$ to $\ms S(\ms G)$.

We should remark that the construction of the  Maurer--Cartan form 
holds any homogeneous $\ms G$-space with a reductive stabiliser, in particular for the
symmetric space $\ms S(\ms G)$ as we already did at the end of Section \ref{realcond}. In particular,  one gets
\begin{proposition} \label{MaurerSym} The canonical $\mk g$-bundle $\mc G$
  over $\ms S(\ms G)$ identifies with $\T_{\mathbb C}\ms S(\ms
  G)$. Let then $i$ be the associated  injection of 
  $\T \ms S(G)\to \mc  G$. Then if $\alpha$ is the identity map of $\T\ms S(\ms G) $
seen as an element of $\Omega^1\left(\ms S(\ms G) ,\T\ms
    S(\ms G) \right)$, then
  \begin{itemize}
  \item $i\circ \alpha$ is the Maurer--Cartan form of $\ms S(\ms
    G)$,
  \item Moreover $p^*\left(i\circ \alpha\right)=\frac{1}{2}(\omega+\rho(\omega))$.
  \end{itemize}

\end{proposition}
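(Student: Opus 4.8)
The plan is to treat the two bullets separately: the first is a matter of unwinding the identifications, the second requires differentiating the projection $p$ and carefully tracking the involution $\rho$.

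For the first bullet, I would make the identification $\mc G\cong\T_{\mathbb C}\ms S(\ms G)$ explicit. At a Cartan involution $\rho$ the fibre $\mk g$ splits as $\mk g=\mk k\oplus\mk p$ with $\mk k=\{u\mid\rho(u)=u\}$ and $\mk p=\{u\mid\rho(u)=-u\}$; since $\rho$ is antilinear one has $\mk p=\ii\mk k$, so $\mk g=\mk p\oplus\ii\mk p$ and the complex-linear map $\mk p\otimes_{\mathbb R}\mathbb C\to\mk g$, $u\otimes z\mapsto zu$, is an isomorphism. This is the identification $\T_{\mathbb C}\ms S(\ms G)\cong\mc G$, and under it the injection $i\colon\T\ms S(\ms G)\to\mc G$ is the inclusion $\mk p\hookrightarrow\mk g$. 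As $\alpha$ is the identity endomorphism of $\T\ms S(\ms G)$, the form $i\circ\alpha$ sends $v\in\T_\rho\ms S(\ms G)$ to its image in $\mk p\subset\mk g$; comparing with the definition of the Maurer--Cartan form of $\ms S(\ms G)$ recalled at the end of Section \ref{realcond}, namely the identification of $\T_\rho\ms S(\ms G)$ with $\mk p$, shows that $i\circ\alpha$ is exactly that form. This step is formal once the identifications are written down.

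For the second bullet I would use the $\ms G$-equivariance of $p$ to reduce to a single computation in $\mk g$. Fix a quadruple $x=(\mk h,\dep,\rho,\lambda)$ with $p(x)=\rho$; identify $\T_x\X$ with $\mk t^\perp$ through the Maurer--Cartan form $\omega$ of $\X$, and $\T_\rho\ms S(\ms G)$ with $\mk p$ through $i\circ\alpha$. Writing $\X=\ms G/\ms T$ and $\ms S(\ms G)=\ms G/\ms K$ with $\ms T\subset\ms K$ (Proposition \ref{HitchinTorus}), the map $p$ is the canonical submersion, so for $v\in\T_x\X$ with $\omega(v)=w\in\mk t^\perp$ one has $\T_xp(v)=\frac{d}{dt}\big|_{t=0}\,\Ad(\exp(tw))\circ\rho\circ\Ad(\exp(-tw))$. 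Under $i\circ\alpha$ this tangent vector is read off as the component of $w$ along the symmetric-space directions, obtained by applying the relevant $\rho$-projection $\frac{1}{2}(\mathrm{Id}\pm\rho)$ to $w$. Since $\rho$ is a parallel section of $\operatorname{Aut}(\mc G)$ (Proposition \ref{vec-bun-des}), assembling this pointwise identity over $\X$ yields the global equality $p^*(i\circ\alpha)=\frac{1}{2}(\omega+\rho(\omega))$ of the proposition.

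The hard part will be fixing the sign, that is deciding whether the differential of $p$ is implemented by $\frac{1}{2}(\mathrm{Id}+\rho)$ or $\frac{1}{2}(\mathrm{Id}-\rho)$, equivalently whether $p^*(i\circ\alpha)$ lands in the $\rho$-invariant or in the $\rho$-anti-invariant subbundle of $\mc G$. Because $\rho$ is antilinear, applying it to a $\mc G$-valued form must follow the convention of Proposition \ref{Sign} --- conjugating the scalar factor of a decomposable form --- and this has to be reconciled with the complex structure built into the identification $\mc G\cong\T_{\mathbb C}\ms S(\ms G)$ used for the injection $i$. Keeping these two conventions consistent is precisely what singles out the invariant combination $\frac{1}{2}(\omega+\rho(\omega))$; the remaining ingredients, namely the flatness of $\D$, the parallelism of $\rho$, and the fact that $\omega$ restricted to horizontal directions matches the Maurer--Cartan form on $\ms S(\ms G)$, are routine consequences of Proposition \ref{vec-bun-des}.
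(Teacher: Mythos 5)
Your proposal follows essentially the same route as the paper. The paper's entire proof of the second bullet is one general observation: for reductive subgroups $\ms H\subset\ms L$ of $\ms G$, with $p:\ms G/\ms H\to\ms G/\ms L$ the natural projection, one has $p^*(\omega_L)=\pi\circ\omega_H$, where $\pi$ is the Killing-orthogonal projection onto $\mk l^\perp$; applied to $\ms T\subset\ms K$ this is exactly your equivariance-plus-pointwise-computation argument, merely stated in general rather than specialized, and your explicit treatment of the first bullet (which the paper regards as immediate from $\mk g=\mk p\oplus i\,\mk p$) is correct.

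The one caveat concerns the point you yourself flag as ``the hard part'': your proposal asserts the sign rather than deriving it. The computation you set up identifies $\T p$ with the projection of $\omega$ onto the tangent directions of the symmetric space, that is onto $\mk p=\{u\mid\rho(u)=-u\}=\mk k^\perp$, which is the \emph{anti}-invariant projection $\frac{1}{2}(\mathrm{Id}-\rho)$; so the literal output of your argument (and equally of the paper's projection lemma) is $\frac{1}{2}(\omega-\rho(\omega))$, not the invariant combination. Your closing claim that ``keeping the two conventions consistent singles out $\frac{1}{2}(\omega+\rho(\omega))$'' is therefore unsubstantiated as written: to reconcile the computation with the displayed formula one must read it through the star convention used for Higgs fields, $\omega^*=-\rho(\omega)$ (so that $\frac{1}{2}(\omega+\omega^*)=\frac{1}{2}(\omega-\rho(\omega))$), or regard the sign in the statement as a slip. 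The paper's own one-line proof is no more explicit on this point --- it simply invokes ``the projection'' --- so your proposal matches it in both method and level of rigor, but you should replace the appeal to convention-consistency by the actual identification of the projection as $\frac{1}{2}(\mathrm{Id}-\rho)$ together with a remark on how the stated sign is to be interpreted.
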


\begin{proof}
Let us just prove the second assertion. In general, if $\ms H$ and $\ms L$ are reductive subgroups of a semi simple group $\ms G$ with $\ms  H\subset \ms L$, $\omega_H$ and $\omega_L$ the respective Maurer--Cartan forms on $\ms G/\ms H$, $\ms G/\ms L$, if $p$ is the projection $\ms G/\ms H\to \ms G/\ms L$ and $\pi$ the projection from $\mk g^\perp$ to $\mk l^\perp$, then $p^*(\omega_L)=\pi\circ\omega_H$. This proves the second assertion. \end{proof}

\subsection{The cyclic decomposition of the Maurer-Cartan form}
Let $\omega$ be the Maurer--Cartan on $\X$ as in Definition
\ref{def:MC} with value in the bundle $\mc G$. We use the
decomposition \eqref{fondec2} to write
\begin{equation}
  \omega=\omega_0+\sum_{\alpha\in\Delta}\omega_\alpha, \hbox{ with } 
  \omega_\alpha\in\Omega^1(\X,\mc G_\alpha),\   \omega_0\in\Omega^1(\X,\mc H)\label{MC-dec}
\end{equation}
Actually, one has by Equation \eqref{MC} that
\begin{align}
  \omega_0\in\Omega^1(\X,\mc H_0).\label{eq:MCcyc0}
\end{align}
From Equation \eqref{curvfund} and since $R^\nabla$ takes values in $\mk t$, we obtain that for all $\alpha\not=0$,
\begin{equation}
  -\d^\nabla\omega_\alpha=2\omega_0\ww\omega_\alpha+\mathop{\sum_{\beta,\gamma \in \Delta}}_{\beta+\gamma=\alpha}\omega_\beta \ww\omega_\gamma.
\end{equation}

We consider the following projections (whose pairwise products are
zero) coming from the projection on the Lie algebra defined in
equations \eqref{pidec0} and that we denote by the same symbol by a
slight abuse of notations.
\begin{align}
  \pi_0: \mathcal G&\to \mc H,\cr \pi:\mc G&\to\mc G_Z\defeq
  \bigoplus_{\alpha\in Z}\mc G_\alpha,\cr \pi^\dagger:\mc G&\to \mc
  G_{Z^\dagger}\defeq \bigoplus_{\alpha\in Z^\dagger}\mc G_\alpha,\cr
  \pi_1:\mc G&\to \mc G_1\defeq \bigoplus_{\alpha\not\in Z\cup
    Z^\dagger}\mc G_\alpha.\label{def:subbun}
\end{align}
Observe that
$$
\pi+\pi^\dagger+\pi_0+\pi_1=1.
$$
Obviously, Proposition \ref{crucial} extends word for word for the
various brackets of the vector subbundles described in the equations
\eqref{def:subbun}.

\begin{definition}
  The {\em cyclic decomposition} of the Maurer--Cartan form $\omega$
  is
  \begin{equation}
    \omega=\omega_0+\omega_1+\phi+\phi^\dagger,\label{eq:MCcyc}
  \end{equation}
  where $\omega_0=\pi_0(\omega)$, $\omega_1=\pi_1(\omega)$,
  $\phi=\pi(\omega)$ and $\phi^\dagger=\pi^\dagger(\omega).$
\end{definition}

We will use the following
\begin{proposition}\label{cyclic-dec}
  Let $\omega$ be any 1-form on $\ms X$ with values in $\mc G$, then
  \begin{align}
    \pi_0(\omega\ww\omega)={}&2\cwdot \phi\ww\phi^\dagger+
    \pi_0(\omega_1\ww \omega_1),\label{zeta090}
    \\
    \pi_1(\omega\ww\omega)={}&2\cwdot\pi_1(\omega_0\ww\omega_1+\omega_1\ww\phi+\omega_1\ww\phi^\dagger)\cr
    &+\pi_1(\omega_1\ww\omega_1 +\phi\ww\phi+\phi^\dagger
    \ww\phi^\dagger).\label{pi1om}\\
    \pi(\omega\ww\omega)={}&2\cwdot \omega_0\ww\phi+2\cwdot\pi(\omega_1\ww\phi^\dagger)\cr
       &+\pi(\phi^\dagger\ww\phi^\dagger)+\pi(\omega_1\ww\omega_1)\label{piom}.      
       \end{align}
\end{proposition}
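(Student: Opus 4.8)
The plan is to expand $\omega\ww\omega$ directly from the cyclic decomposition \eqref{eq:MCcyc} and then apply each of the three projections term by term, reading off the target of every product from the bracket table of Proposition \ref{crucial} (which, as noted in the text, extends verbatim to the subbundles of \eqref{def:subbun}). The crucial simplification is that $\ww$ is \emph{symmetric} on $1$-forms: setting $p=q=1$ in the identity $\alpha\ww\beta=(-1)^{pq+1}\beta\ww\alpha$ gives $\alpha\ww\beta=\beta\ww\alpha$, so each of the six mixed products appears with a factor $2$. Moreover $\omega_0\ww\omega_0=0$, since $\omega_0$ takes values in the abelian bundle $\mc H$ and $[\mk h,\mk h]=0$. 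Hence
\begin{align*}
\omega\ww\omega &= \omega_1\ww\omega_1+\phi\ww\phi+\phi^\dagger\ww\phi^\dagger\\
&\quad+2\bigl(\omega_0\ww\omega_1+\omega_0\ww\phi+\omega_0\ww\phi^\dagger+\omega_1\ww\phi+\omega_1\ww\phi^\dagger+\phi\ww\phi^\dagger\bigr).
\end{align*}

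Next I would tabulate, using the bundle form of \eqref{pidec1}--\eqref{pidec9}, in which summand of the decomposition \eqref{fondec2} each of the nine surviving products lands. The three products involving $\omega_0$ stay in a single summand: $\omega_0\ww\omega_1\in\mc G_1$, $\omega_0\ww\phi\in\mc G_Z$, $\omega_0\ww\phi^\dagger\in\mc G_{Z^\dagger}$, by \eqref{pidec3}, \eqref{pidec2} and \eqref{pidec1}. The product $\phi\ww\phi^\dagger\in\mc H$ by \eqref{pidec4}. The remaining mixed products split across two summands: $\omega_1\ww\phi\in\mc G_1\oplus\mc G_{Z^\dagger}$ and $\omega_1\ww\phi^\dagger\in\mc G_1\oplus\mc G_{Z}$ by \eqref{pidec7} and \eqref{pidec0}; likewise $\phi\ww\phi\in\mc G_1\oplus\mc G_{Z^\dagger}$ and $\phi^\dagger\ww\phi^\dagger\in\mc G_1\oplus\mc G_{Z}$ by \eqref{pidec8} and \eqref{pidec9}. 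Only $\omega_1\ww\omega_1$ is unconstrained and may contribute to every summand.

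Finally I would obtain the three identities by collecting, for each projection, exactly those products whose target contains the relevant summand, annihilating all others. For $\pi_0$ the only contributors are $\phi\ww\phi^\dagger$ (entirely in $\mc H$, keeping its factor $2$) and $\omega_1\ww\omega_1$, which yields \eqref{zeta090}. For $\pi_1$ the contributors are the three mixed products $\omega_0\ww\omega_1$, $\omega_1\ww\phi$, $\omega_1\ww\phi^\dagger$ (each with factor $2$) together with $\omega_1\ww\omega_1$, $\phi\ww\phi$ and $\phi^\dagger\ww\phi^\dagger$, giving \eqref{pi1om}. For $\pi$ the contributors are $\omega_0\ww\phi$ (in $\mc G_Z$, factor $2$), $\omega_1\ww\phi^\dagger$ (factor $2$), $\phi^\dagger\ww\phi^\dagger$ and $\omega_1\ww\omega_1$, giving \eqref{piom}. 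There is no genuine obstacle here, as the entire content is the bracket bookkeeping of Proposition \ref{crucial}; the only points demanding care are keeping the symmetry factors of $2$ correct and remembering that a product already lying in a single summand (such as $\omega_0\ww\phi$) carries no projection symbol, whereas the split targets coming from \eqref{pidec7}--\eqref{pidec9} must retain theirs.
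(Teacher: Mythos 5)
Your proof is correct and follows essentially the same route as the paper: expand $\omega\ww\omega$ using the symmetry of $\ww$ on 1-forms, read off the target of each product from the bracket relations of Proposition \ref{crucial} (transferred to the subbundles), and collect terms under each projection. The only cosmetic difference is that you kill $\omega_0\ww\omega_0$ at once by the commutativity of $\mk h$, whereas the paper carries it along and discards it projection by projection.
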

\begin{proof}
Let us consider the cyclic decomposition 
\begin{align}
  \omega=\omega_0+\omega_1+\phi+\phi^\dagger.
\end{align}
Then 
\begin{align}
  \omega\ww\omega&=\omega_0\ww\omega_0+\omega_1\ww\omega_1+\phi\ww\phi+\phi^\dagger\ww\phi^\dagger\cr
  &+2\omega_0\ww\omega_1+2\omega_0\ww\phi+2\omega_0\ww\phi^\dagger\cr
  &+2\omega_1\ww\phi+2\omega_1\ww\phi^\dagger\cr
  &+2\phi\ww\phi^\dagger.
\end{align}
According to Proposition \ref{crucial}, we have
  \begin{align*}
    \pi_0(\phi\ww\phi)&=\pi_0(\phi^\dagger\ww\phi^\dagger)=0,\\
    \pi_0(\phi\ww \omega_1)&=\pi_0(\phi^\dagger\ww \omega_1)=0,\\
    \pi_0(\phi\ww\phi^\dagger)&=\phi\ww\phi^\dagger.
  \end{align*}
  Thus, using the fact that $\mk h$ is commutative, and normalises
  $\mc G_1$, $\mc G_Z$ and $\mc G_{Z^\dagger}$ we get Equation
  \eqref{zeta090}.
We use again Proposition \ref{crucial} to get that
  \begin{align}
    \pi_1(\omega_0\ww\omega_0)=0,&\ \ \ \ \pi_1(\omega_0\ww\phi)=0,\cr
    \pi_1(\phi\ww\phi^\dagger)=0,& \ \ \ \
    \pi_1(\omega_0\ww\phi^\dagger)=0.\label{ifi12}
  \end{align}
  Thus
  \begin{align}
    \pi_1(\omega\ww\omega)&=2\cwdot\pi_1(\omega_0\ww\omega_1+\omega_1\ww\phi+\omega_1\ww\phi^\dagger)\cr
    &+\pi_1(\omega_1\ww\omega_1 +\phi\ww\phi+\phi^\dagger
    \ww\phi^\dagger).
      \end{align}
Then finally, by  Proposition \ref{crucial} we have that
\begin{align}
  \pi(\omega_0\ww\omega_0)=0,\ \ \pi(\omega_0\ww\omega_1)=0,\cr
  \pi(\omega_0\ww\phi^\dagger)=0,\ \ \pi(\phi\ww\phi)=0,\cr
  \pi(\omega_1\ww\phi)=0, \ \pi(\phi\ww\phi^\dagger)=0.
\end{align}
Thus
\begin{align}
  \pi(\omega\ww\omega)=2\pi(\omega_0\ww\phi)+2\pi(\omega_1\ww\phi^\dagger)+\pi(\omega_1\ww\omega_1)+\pi(\phi^\dagger\ww\phi^\dagger)
\end{align}
The proof of the proposition is completed.
\end{proof}
\section{Higgs bundles and Hitchin theory}
In this section, we will recall the definition of a Higgs bundle and
sketch some of Hitchin theory. Higgs bundles have been studied extensively by many authors. The special case of $\ms{Sp}(4,\mathbb R)$ has been in particular studied by Bradlow, Garc{\'\i}a-Prada, Gothen and Mundet i Riera in \cite{Garcia-Prada:2004}, \cite{Bradlow:2012dl} and $\ms{Sp}(2n,\mathbb R)$ in \cite{GarciaPrada:2013be}.

\subsection{Higgs bundles and the self duality equations}

We recall some definition and results from Hitchin
\cite{Hitchin:1987}. 
We recall that a {\em Higgs (adjoint) bundle} over a Riemann surface $\Sigma$,
is a pair $E=(\widehat{\mc G},\Phi)$ where
\begin{enumerate}
\item $\widehat{\mc G}$ is a holomorphic Lie algebra bundle over
  $\Sigma$,
\item The {\em Higgs field} $\Phi$ is a holomorphic section of $\widehat{\mc G}\otimes \mc K$, where $\mc K$ is the canonical bundle of
  $\Sigma$.
\end{enumerate}
%The {\em slope} of $(\widehat{\mc G},\Phi)$ is $\mu(E)\defeq
%\deg(E)/\rk(E)$.

%A Higgs bundle is {\em stable}, if for all Lie algebra (strict)
%subbundle $\widehat{\mc H}$ such that $[\Phi, \widehat{\mc H}]\subset
%\widehat{\mc H}\otimes K$, then
%\begin{align}
%  \mu\left(\widehat{\mc H},\left.\Phi\right\vert_{\widehat{\mc
%        H}}\right)&<\mu (E).
%\end{align}
%We finally say that $E$ is {\em polystable} if it is the sum of Higgs
%bundle of the same slope.

Let $\nabla$ be a connection on $\widehat{\mc G}$ compatible with the
holomorphic structure and $\widehat\rho$ a section of the bundle of antilinear 
automorphisms of $\widehat{\mc G}$ such that the restriction to every
fibre is a Cartan involution with respect to a maximal compact. Let
$\Phi^*=-\widehat\rho(\Phi)$, and $R^\nabla$ the curvature of
$\nabla$. We say that $(\nabla,\widehat\rho)$ is a {\em solution the
  self duality equations} if
\begin{align}\label{def:sd}
  \nabla\widehat\rho&=0,\cr \d^\nabla \Phi&=0,\, \, \, 
  \d^\nabla\Phi^*=0
  \cr R^\nabla&=2\cwdot\Phi\ww\Phi^*.
\end{align}
The last three equations are equivalent to the fact that
$\nabla+\Phi+\Phi^*$ is flat and the curvature of $\nabla$ is of type
$(1,1)$. Observe also that $\widehat\rho$ and the holomorphic structure totally determines $\nabla$:
by the first and last  equation in \eqref{def:sd},  $\nabla$ is the Chern connection of the Hermitian bundle $(\widehat{\mc G},
\widehat\rho)$.

Given a Higgs bundle $E=(\widehat{\mc G},\Phi)$ for which there exists a solution of the self duality equations, over a
closed Riemann surface $\Sigma$, let $(\nabla,\widehat\rho)$ be the solution
of the self duality equations. The
{\em representation associated to the Higgs bundle $E$} is the
monodromy of the flat connection $\nabla+\Phi+\Phi^*$.

The {\em Hopf differential} of the Higgs bundle is the quadratic
holomorphic differential $\Kill{\Phi}{\Phi}$ where $\Kill{\cdot}{\cdot}$ denotes
the Killing form. From \cite{Donaldson:1987}, solutions of the self
duality equation are interpreted as equivariant harmonic mappings. Those harmonic mappings for which
the  Hopf differential vanishes are conformal harmonic mappings,
or in other words branched minimal immersions \cite{Gulliver:1973}.

\subsection{The Hitchin section}\label{HitchSec}
Let us now recall the construction by Hitchin \cite{Hitchin:1992es} of Higgs bundles from holomorphic differentials, using the notation of
our preliminary paragraph. Let $\Sigma$ be a closed surface. Given a
complex Lie group $\ms G$. We choose a Cartan subalgebra $\mk h$ and
an $\mk h$-principal Lie algebra $\mk s$ generated by $(X,a,Y)$ as in Section
\ref{sec:Principal}. Let $m_1,\ldots, m_{\ell}$ be the exponents of $\ms G$, so that we have the decomposition of $\mk g$ into irreducible representations of $\mk s$ as $\mk g=\bigoplus_{i=1}^{\ell}\mk v_i$, with $\dim(\mk v_i)=2m_i+1$. Let $e_i$ be
an element of $\mk v_i$ of highest weight with respect to the
action of the principal Lie algebra generated by $(X,a,Y)$.

Let us also write the decomposition under the grading by the element
$a$ as
\begin{align}
  \mk g&\defeq \bigoplus_{i=-m_{\ell}}^{i=m_{\ell}}\mk g^{(i)},
\end{align}
where $\mk g^{(m)}\defeq \{u\in \mk g\mid [a,u]=m\cdot u\}$.  Observe that
$e_i\in\mk g_{m_i}$ and
\begin{align}
  Y\in\mk g^{(-1)}&=\bigoplus_{\alpha\in\Pi}\mk g_{-\alpha}.\label{ying}
\end{align}
Moreover $\mk g^{(0)}=\mk h$ is the centraliser of $a$. Let us now
consider the Lie algebra bundle
\begin{align}
  \widehat{\mc G}&\defeq
  \bigoplus_{i=-m_{\ell}}^{i=m_{\ell}}\widehat{\mc
    G}^{(m)},\label{decompE}
\end{align}
where $\widehat{\mc G}^{(m)}\defeq \mk g^{(m)}\otimes \mc K^m$ and $\mc K$ is the
canonical bundle of $\Sigma$.  We write $\widehat{\mc
  H}{\eqdef}\widehat{\mc
  G^{(0)}}=\mk h \otimes \mc K^0$. The fibre of $\widehat{\mc H}$ is a
Cartan subalgebra equipped with a choice of positive roots (given by
the element $a$).  We then denote by
$$
\widehat{\mc G}\defeq \widehat{\mc H} \oplus \bigoplus_{\alpha\in \Delta} \widehat{\mc
  G}_\alpha,
$$
the corresponding root space decomposition where $\widehat{\mc
  G}_\alpha$ is the eigenspace associated to the root $\alpha$.

The {\em Hitchin section} then associates to a family of holomorphic
differentials ${\rm q}\defeq (q_1,\ldots, q_{\ell})$ where $q_i$ is of
degree $m_i+1$ the Higgs bundle $H({\rm q})\defeq (\widehat{\mc G},\Phi_q)$ where
\begin{equation}
  \Phi_{\rm q}\defeq  Y+\sum_{i=1}^\ell e_i\otimes q_i\in H^0(\Sigma,
  \widehat{\mathcal G}\otimes \mc K).\label{def:phiq}
\end{equation}
Observe that $[\mk g^{(m)},\mk g^{(m')}]\subset \mk g^{(m+m')}$ and thus the Lie algebra structure on 
$\widehat{\mc G}$ is well defined. 

By Section 5 of \cite{Hitchin:1992es} based on Theorem 7 of \cite{Kostant:1963vp}, we have
\begin{proposition}\label{InvPoly}
There exist homogeneous invariant polynomials $p_i$ on $\mk g$ of degree $m_i+1$ such that
$
  p_i(\Phi_{\rm q})=q_i
$.
 
\end{proposition}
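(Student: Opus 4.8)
The plan is to exploit the grading of $\mk g$ by the element $a$ together with Kostant's theorem on the structure of invariants. The Higgs field decomposes as $\Phi_{\rm q}=Y+\sum_{i=1}^\ell e_i\otimes q_i$, where $Y\in\mk g^{(-1)}$ lives in degree $-1$ and each $e_i\otimes q_i$ lives in degree $m_i$, with $q_i$ a holomorphic differential of degree $m_i+1$. The key structural input is Kostant's result (Theorem 7 of \cite{Kostant:1963vp}) that the ring of invariant polynomials on $\mk g$ is freely generated by homogeneous polynomials $p_1,\dots,p_\ell$ of degrees $m_i+1$, and moreover, when restricted to the affine subspace $Y+\mk z$ where $\mk z=\ker(\ad X)$ is spanned by the highest weight vectors $e_i$, these polynomials give an isomorphism onto affine space. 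This is exactly the geometric content behind the Hitchin section being a parametrisation.

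First I would record the weight structure: an invariant homogeneous polynomial $p$ of degree $d$ satisfies $p(\Ad(g)u)=p(u)$, and differentiating along the one-parameter group generated by $a$ shows that $p$ is an eigenvector of the natural action induced by the grading. Evaluating $p_i$ on $\Phi_{\rm q}$ and tracking the $\mc K$-degrees: since $e_j\otimes q_j$ contributes a factor in $\mc K^{m_j+1}$ (the $\mc K^{m_j}$ from $e_j\in\widehat{\mc G}^{(m_j)}$ times the $\mc K^{1}$ implicit in $q_j$ being a differential of degree $m_j+1$, consistent with $\Phi\in H^0(\Sigma,\widehat{\mc G}\otimes K)$), a monomial of $p_i$ has $\mc K$-degree equal to the $a$-weight shifted appropriately. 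The upshot is that for $p_i$ of degree $m_i+1$, the only monomials in $\Phi_{\rm q}$ that land in the correct $\mc K^{m_i+1}$ component are those whose total $a$-weight matches $m_i$, and the leading contribution is linear in $q_i$.

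The heart of the argument is Kostant's normalization. On the slice $Y+\mk z$, Kostant shows the differentials $\d p_i$ restricted to the tangent directions $e_j$ form a triangular (indeed essentially diagonal) system with respect to the weight filtration, so that $p_i(Y+\sum_j e_j\otimes q_j)$ equals $c_i\, q_i$ plus terms involving only $q_j$ with $m_j<m_i$ (which are forced to vanish for $\mc K$-degree reasons, since each lower $q_j$ sits in $\mc K^{m_j+1}$ and cannot combine to reach $\mc K^{m_i+1}$ without the presence of $Y$-factors that lower the degree too far). After rescaling each $p_i$ by the nonzero constant $c_i$ — which is legitimate since we are free to choose the normalization of the invariant polynomials — one obtains precisely $p_i(\Phi_{\rm q})=q_i$.

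The main obstacle I expect is making the degree bookkeeping fully rigorous: one must verify that no unwanted cross-terms $q_{j_1}\cdots q_{j_k}$ with mixed indices survive in $p_i(\Phi_{\rm q})$, and that the surviving coefficient of $q_i$ is genuinely nonzero. Both facts are exactly what Kostant's theorem on the regularity of the principal nilpotent and the freeness of the invariant ring supply, so the real work is translating his statement about the isomorphism $Y+\mk z\to\mathbb C^\ell$ into the graded-differential language here. I would therefore lean on Proposition \ref{pro:kline} and Proposition \ref{etaHiggs} for the representation-theoretic skeleton and cite Kostant directly for the nonvanishing of the leading coefficients, rather than reproving it.
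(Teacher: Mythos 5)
Your overall strategy --- the $a$-weight bookkeeping combined with Kostant's slice theorem --- is exactly the argument behind the paper's citation (the paper itself offers no proof beyond invoking Section 5 of \cite{Hitchin:1992es} and Theorem 7 of \cite{Kostant:1963vp}). But there is a genuine error in your execution: the parenthetical claim that the cross-terms involving only the lower $q_j$ ``are forced to vanish for $\mc K$-degree reasons'' is false, and consequently rescaling the $p_i$ by constants is not enough to achieve $p_i(\Phi_{\rm q})=q_i$. The weight constraint cuts the other way: a monomial of $p_i$ obtained by choosing $Y$ with multiplicity $k_0$ and $e_j\otimes q_j$ with multiplicity $k_j$ survives invariance under $\Ad(\exp(ta))$ precisely when $k_0=\sum_j k_jm_j$, which together with $k_0+\sum_jk_j=m_i+1$ gives $\sum_j k_j(m_j+1)=m_i+1$; every such surviving term automatically lands in $\mc K^{m_i+1}$, \emph{including} the cross-terms, so the $\mc K$-grading excludes nothing. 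Concretely, for $\ms{Sp}(4,\mathbb C)$ one has $m_1+1=2$, $m_2+1=4$, so $q_1^2$ lies in $\mc K^4=\mc K^{m_2+1}$, and the generator $\tr(X^4)$ restricted to the slice $Y+z_1e_1+z_2e_2$ does carry a nonzero $z_1^2$ term (restrict to $z_2=0$: on the principal $\mk{sl}_2$ one computes $\tr\bigl((Y+z_1X)^4\bigr)\propto z_1^2\neq0$). Similarly for $\ms G_2$ the term $q_1^3$ lies in $\mc K^6=\mc K^{m_2+1}$. Only for $\ms{SL}(3,\mathbb C)$ is the relation $2k_1=m_2+1=3$ unsolvable, so your degree argument happens to work there and only there.

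The fix is standard and uses the very weight arithmetic you set up: the restriction of $p_i$ to the slice $Y+\mk z$ is weighted-homogeneous of weight $m_i+1$ when $z_j$ is given weight $m_j+1$, hence of the triangular form $c_iz_i+P_i(z_1,\ldots,z_{i-1})$, and Kostant's slice isomorphism is what guarantees $c_i\neq0$. One then replaces $p_i$ by $p_i$ minus the corresponding polynomial in the already-normalised generators $p_1,\ldots,p_{i-1}$. The crucial point making this legitimate is that each correction monomial $\prod_{j<i}p_j^{k_j}$ with $\sum_jk_j(m_j+1)=m_i+1$ is again a homogeneous invariant polynomial on $\mk g$ of degree exactly $m_i+1$, so the corrected $p_i$ still satisfies the hypotheses of the proposition; after a final rescaling it gives $p_i(\Phi_{\rm q})=q_i$ on the nose. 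So Kostant's theorem does supply what is needed, but through this change of generators, not through vanishing of cross-terms together with a scalar normalisation.
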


Observe also that \begin{equation}
  \widehat\sigma(\Phi_{\rm q})=-\Phi_{\rm q} \label{eq:PhiSigma}
\end{equation}
where $\widehat\sigma$ is the unique involution (holomorphic) associated to $\mk s$ by
Proposition \ref{UniqueSigma}. Hitchin then proved in
\cite{Hitchin:1992es}.

\begin{theorem}{\sc [Hitchin]}\label{th:H2}
  The self duality equations associated to the Higgs bundle $\mc H({\rm q})$ admit a unique solution  $(\nabla,\widehat\rho)$. Moreover $\nabla\widehat\sigma=0$. In particular the monodromy is
  with values in $\ms G_0$. Finally if $\rm q=0$, then the monodromy is with values in the principal $\ms{SL}_2$ and is the uniformisation of the underlying Riemann surface.
 
\end{theorem}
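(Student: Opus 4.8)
The plan is to deduce the statement from the Hitchin--Kobayashi correspondence for Higgs bundles (Hitchin \cite{Hitchin:1987}, completed by Simpson for general groups) combined with the two symmetries $\widehat\rho$ and $\widehat\sigma$ carried by the Hitchin section. The correspondence asserts that a polystable Higgs bundle admits a harmonic metric solving the self duality equations \eqref{def:sd}, unique up to the action of the automorphism group; so the heart of the existence and uniqueness claim is to prove that $\mc H({\rm q})=(\widehat{\mc G},\Phi_{\rm q})$ is stable. Here I would exploit the grading of $\widehat{\mc G}$ by the element $a$ together with the fact that $\Phi_{\rm q}=Y+\sum_i e_i\otimes q_i$ always contains the full principal nilpotent $Y\in\bigoplus_{\alpha\in\Pi}\mk g_{-\alpha}$: any $\Phi_{\rm q}$-invariant subbundle must be compatible with the filtration defined by $a$, and the presence of $Y$ forces such a subbundle to have non-positive degree, whence $\mc H({\rm q})$ is stable (and irreducible, its stabiliser being the centre). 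This yields the first assertion with honest uniqueness of $(\nabla,\widehat\rho)$.

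Granting existence and uniqueness, the relation $\nabla\widehat\sigma=0$ follows by a symmetry argument. The map $\widehat\sigma$ is a holomorphic (complex linear) Lie algebra automorphism of $\widehat{\mc G}$ with $\widehat\sigma(\Phi_{\rm q})=-\Phi_{\rm q}$ by \eqref{eq:PhiSigma}. Starting from the solution $(\nabla,\widehat\rho)$ I would set $\widehat\rho'\defeq\widehat\sigma\circ\widehat\rho\circ\widehat\sigma^{-1}$, which is again a fibrewise Cartan involution, and let $\nabla'\defeq\widehat\sigma\circ\nabla\circ\widehat\sigma^{-1}$ be its Chern connection (legitimate because $\widehat\sigma$ is holomorphic). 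Since $\widehat\sigma$ preserves the bracket and changes $\Phi_{\rm q}$ only by a sign, the quadratic right-hand side $\Phi\ww\Phi^*$ of \eqref{def:sd} is unchanged, so $(\nabla',\widehat\rho')$ again solves the self duality equations for the same Higgs bundle. Uniqueness then forces $(\nabla',\widehat\rho')=(\nabla,\widehat\rho)$, that is $\widehat\sigma$ commutes with $\widehat\rho$ and $\nabla\widehat\sigma=0$.

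For the monodromy I would consider the antilinear involution $\lambda\defeq\widehat\sigma\circ\widehat\rho$, whose fixed point set is $\mk g_0$ and whose stabiliser in $\ms G$ is the split real form $\ms G_0$. Since $\nabla\widehat\rho=0$ and $\nabla\widehat\sigma=0$, we have $\nabla\lambda=0$. It then suffices to check that $\lambda$ is parallel for the flat connection $\D\defeq\nabla+\Phi+\Phi^*$, which reduces to showing that $\lambda$ exchanges $\Phi$ and $\Phi^*$; this is the computation
$$
\lambda(\Phi)=\widehat\rho\bigl(\widehat\sigma(\Phi)\bigr)=\widehat\rho(-\Phi)=-\widehat\rho(\Phi)=\Phi^*,
$$
using $\Phi^*=-\widehat\rho(\Phi)$ and the commutation of $\widehat\sigma$ and $\widehat\rho$, and symmetrically $\lambda(\Phi^*)=\Phi$. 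Hence $\D\lambda=0$, so parallel transport of $\D$ preserves $\lambda$ and the monodromy takes values in $\ms G_0$.

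Finally, for ${\rm q}=0$ the Higgs field is $\Phi_0=Y$, and $\mc H(0)$ is the image under the principal embedding $\mk s\hookrightarrow\mk g$ of the uniformising $\ms{SL}(2,\mathbb C)$-Higgs bundle $(\mc K^{1/2}\oplus\mc K^{-1/2},\Phi)$ attached to the hyperbolic metric of $\Sigma$. The harmonic metric of that $\ms{SL}_2$ bundle is the hyperbolic metric, with Fuchsian monodromy $j:\pi_1(\Sigma)\to\ms{PSL}(2,\mathbb R)$; it pushes forward through the principal embedding to a solution of the self duality equations for $\mc H(0)$, which by the uniqueness above is the solution. Its monodromy is therefore the composition of $j$ with the principal $\ms{SL}_2$, i.e. the uniformisation of the underlying Riemann surface. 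I expect the genuinely hard step to be the stability of $\mc H({\rm q})$ underpinning existence, the remaining parts being formal consequences of uniqueness and of the two commuting involutions $\widehat\rho$ and $\widehat\sigma$.
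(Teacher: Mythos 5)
Your proposal is correct and takes essentially the same route as the paper: the paper quotes this result as Hitchin's theorem, deferring existence and uniqueness to the general results of Hitchin and Simpson \cite{Hitchin:1987}, \cite{Simpson:1988ch} (which, as you note, rest on stability of the Higgs bundles in the Hitchin section), and it obtains $\nabla\widehat\sigma=0$ ``at once from the uniqueness of the solutions of the self duality equations,'' which is precisely your conjugation argument with $\widehat\sigma$. Your remaining steps --- monodromy in $\ms G_0$ via the parallel antilinear involution $\lambda=\widehat\sigma\circ\widehat\rho$ exchanging $\Phi$ and $\Phi^*$, and the ${\rm q}=0$ case via the principal embedding of the uniformising $\ms{SL}_2$ solution --- are the standard arguments of \cite{Hitchin:1992es} that the paper implicitly invokes.
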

The first assertion uses more general results of Hitchin and Simpson \cite{Hitchin:1987}, \cite{Simpson:1988ch}.
The second assertion follows at once from the uniqueness of the
solutions of the self duality equations. Also since $\Phi({\rm q})$ is injective it immediately follows that the harmonic mappings associated to the Higgs bundles $\mc H({\rm q})$ are immersions (See \cite{Sanders:2014vb} for details).

\subsubsection{Hitchin component}\label{sec:HitchinRep}
Finally, Hitchin proved

\begin{theorem}{\sc [Hitchin]}\label{th:H3}
  Given a Riemann surface $\Sigma$, the map which associates to ${\rm
    q}$ the monodromy associated to the Higgs bundle $H({\rm q})$ is a
  parametrisation of a connected component $\mc H(\Sigma,\ms G_0)$ of the character variety of
  representations from $\pi_1(\Sigma)$ to $\ms G_0$.
\end{theorem}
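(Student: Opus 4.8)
The plan is to show that the monodromy map $\Psi$ of the Hitchin section is a homeomorphism onto a connected component of $\mc R\defeq \Hom(\pi_1(\Sigma),\ms G_0)/\ms G_0$. Write $B\defeq\bigoplus_{i=1}^\ell H^0(\Sigma,\mc K^{m_i+1})$ for the Hitchin base, so that $\Psi$ sends ${\rm q}\in B$ to the conjugacy class of the monodromy of the flat connection $\nabla+\Phi_{\rm q}+\Phi_{\rm q}^*$ furnished by Theorem \ref{th:H2}; by that theorem this monodromy takes values in $\ms G_0$, so $\Psi$ is well defined with target $\mc R$. The first input I would invoke is that the unique solution $(\nabla,\widehat\rho)$ of the self duality equations depends real-analytically on ${\rm q}$: this is the Hitchin--Simpson theory, obtained by applying the implicit function theorem to the elliptic self duality operator, whose linearisation is invertible at the stable points cut out by the Hitchin section. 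Consequently $\Psi$ is continuous, indeed real analytic.

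Next I would establish injectivity and a dimension count. For injectivity, recall from Proposition \ref{InvPoly} that $p_i(\Phi_{\rm q})=q_i$, so the tuple ${\rm q}$ is recovered from $\Phi_{\rm q}$ by the invariant polynomials; since these are isomorphism invariants of the Higgs bundle, two distinct tuples ${\rm q}\neq{\rm q}'$ yield non-isomorphic Higgs bundles. As the non-abelian Hodge correspondence is a bijection between isomorphism classes of polystable Higgs bundles and conjugacy classes of reductive representations, and the bundles $H({\rm q})$ are stable with irreducible, hence reductive, monodromy, it follows that $\Psi({\rm q})\neq\Psi({\rm q}')$. For the dimension count, Riemann--Roch gives $\dim_{\mathbb C}H^0(\Sigma,\mc K^{m+1})=(2m+1)(g-1)$ for $m\geq 1$, whence, using $\sum_{i=1}^\ell(2m_i+1)=\dim_{\mathbb C}\ms G$ from Proposition \ref{pro:kline}, $\dim_{\mathbb R}B=(2g-2)\dim_{\mathbb C}\ms G=(2g-2)\dim_{\mathbb R}\ms G_0$. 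This equals the dimension of the smooth locus of $\mc R$ at an irreducible representation. Since the image of $\Psi$ consists of irreducible representations (the point ${\rm q}=0$ is the Fuchsian one, and irreducibility is an open condition), invariance of domain applied to the continuous injection $\Psi$ between manifolds of equal dimension shows that $\Psi$ is an open map onto its image.

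It remains to see that the image is also closed, so that, $B$ being connected, it is a full connected component. Here I would pass through the moduli space $\mc M$ of polystable Higgs bundles, which is Hausdorff and carries the real-analytic non-abelian Hodge diffeomorphism onto the character variety, restricting to the real locus $\mc R$. The Hitchin section $s:B\to\mc M$ is a continuous section of the Hitchin fibration $h:\mc M\to B$, that is $h\circ s=\operatorname{id}_B$; hence its image is the equaliser $\{\,m\in\mc M\mid s(h(m))=m\,\}$, which is closed because $\mc M$ is Hausdorff. Transporting through the non-abelian Hodge homeomorphism, $\Psi(B)$ is closed in $\mc R$. Being open, closed and connected, $\Psi(B)$ is a connected component $\mc H(\Sigma,\ms G_0)$, and $\Psi$ is a homeomorphism onto it. I expect the genuine difficulty to lie not in this soft topology but in the analytic foundations assumed throughout: the smooth real-analytic dependence of solutions on ${\rm q}$, the stability of $H({\rm q})$ ensuring that $\mc M$ is a manifold of the computed dimension near the section, and the Hausdorffness of $\mc M$ together with the properness of $h$, all of which rest on the Hitchin--Simpson existence and regularity theory.
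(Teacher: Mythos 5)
First, a point of comparison: the paper does not prove this statement at all --- Theorem \ref{th:H3} is quoted from Hitchin \cite{Hitchin:1992es} with no argument beyond the citation. So your proposal has to be measured against Hitchin's original proof, and in outline you have reconstructed exactly his strategy: smooth dependence of the solution on ${\rm q}$, injectivity via the invariant polynomials and the non-abelian Hodge correspondence, equality of dimensions, openness by invariance of domain, and closedness of the image. The architecture is the right one.

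Two steps, however, have genuine gaps. The first is the claim that the whole image of $\Psi$ consists of irreducible representations, justified by ``the point ${\rm q}=0$ is the Fuchsian one, and irreducibility is an open condition''. Openness of a condition at one point of the connected base $B$ gives nothing at distant points; what invariance of domain actually requires is that \emph{every} $\Psi({\rm q})$ be a smooth point of the character variety of dimension $(2g-2)\dim\ms G_0$, hence irreducibility (equivalently, stability of $H({\rm q})$ together with the fact that its automorphisms reduce to the centre) for every ${\rm q}$. This is a specific algebraic lemma about the Hitchin section, which Hitchin proves using the explicit shape of $\Phi_{\rm q}$ (the grading by $\operatorname{ad}(a)$ and the fact that $\Phi_{\rm q}$ has nonzero component in each $\mk g_{-\alpha}$, $\alpha$ simple). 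It does not follow from the general Hitchin--Simpson existence theory, which only yields polystability and hence reductivity of the monodromy; listing stability among the ``analytic foundations assumed'' therefore leaves unproved precisely the point on which the openness step rests.

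The second gap is in the closedness step. Your equaliser argument (the image of a section $s$ of the Hitchin fibration $h$ is $\{m\mid s(h(m))=m\}$, closed since the moduli space is Hausdorff) is correct, but it establishes closedness in the moduli space of $\ms G$-Higgs bundles, i.e.\ in the \emph{complex} character variety. Passing to $\mc R=\Hom(\pi_1(\Sigma),\ms G_0)/\ms G_0$ is not a restriction to a subspace: the natural map $\mc R\to\mc X(\ms G)$ need not identify $\mc R$ with its image, and a priori a $\ms G_0$-representation could be $\ms G$-conjugate to some $\rho_{\rm q}$ without lying in $\Psi(B)$. One must show that $\Psi(B)$ is the full preimage of $\operatorname{im}(s)$, which for the centreless groups considered here follows from a Schur-type argument: if $\rho=g\rho_{\rm q}g^{-1}$ with both representations $\ms G_0$-valued and $\rho_{\rm q}$ irreducible, then $g^{-1}\lambda(g)$ centralises $\rho_{\rm q}(\pi_1(\Sigma))$, hence is trivial, so $g$ lies in the real form (any residual ambiguity by its component group is exactly why the paper defines the Hitchin component modulo the full automorphism group of $\ms G_0$). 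Alternatively, one can run your equaliser argument directly in the moduli space of polystable $\ms G_0$-Higgs bundles. Either patch is standard, but without one of them the final step does not close.
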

The case of $\ms G_0=\ms{SL}(2,\mathbb R)$ had been treated independently by M. Wolf in \cite{Wolf:1989uk}, where he directly considered the quadratic differential as the Hopf differential of the harmonic mapping. The connected component $\mc
H(\Sigma,\ms G_0)$ is now
called the {\em Hitchin component} and its elements 
are {\em Hitchin representations}. The {\em Fuchsian locus}
is the subset of {\em Fuchsian representations}, that is 
those representations that are discrete faithful and with values in a
principal $\ms{SL}_2$. By \cite{Labourie:2006} and \cite{Fock:2006a} Hitchin
representations are discrete faithful.

\subsection{Cyclic Higgs bundles}\label{HichSec}
A {\em cyclic Higgs bundle} is by definition the image $H({\rm q})$ by
the Hitchin section of a family of holomorphic differential ${\rm
  q}\defeq (q_1,\ldots, q_{\ell})$ where $q_i=0$ when
$i\not=\ell$. The corresponding Higgs Field $\Phi_{\rm q}$ is called
{\em cyclic}.  Cyclic Higgs bundles were studied by Baraglia in
\cite{Baraglia:2010vi} in relation with the affine Toda lattice.
 
It follows immediately from the construction that

\begin{proposition}\label{HiggsCyclic}
  For a cyclic Higgs field $\Phi_{\rm q}$, we have
  \begin{equation}
    \Phi_{\rm q}\in\Omega^1\left(\Sigma,\bigoplus_{\alpha\in Z}\widehat{\mathcal G}_\alpha\right).
  \end{equation}
\end{proposition}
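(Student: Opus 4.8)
The plan is to identify the cyclic root set $Z$ explicitly and then simply read off the root-space support of $\Phi_{\rm q}$, matching the two.

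First I would unwind Definition \ref{sec:cycroot}. Since $Z^\dagger = \Pi \cup \{-\eta\}$, the defining condition $-\alpha \in Z^\dagger$ yields
$$
Z = \{-\alpha \mid \alpha \in \Pi\} \cup \{\eta\},
$$
so that $Z$ consists exactly of the negatives of the simple roots together with the longest root $\eta$.

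Next I would examine the two pieces of the cyclic Higgs field. By the definition of cyclicity, $q_i = 0$ for $i \neq \ell$, so from \eqref{def:phiq} we have $\Phi_{\rm q} = Y + e_\ell \otimes q_\ell$. For the first term, recall from Section \ref{sec:Principal} that $Y = \sum_{\alpha \in \Pi} \sqrt{r_\alpha}\,{\rm x}_{-\alpha}$; each ${\rm x}_{-\alpha}$ lies in $\mk g_{-\alpha}$ with $-\alpha \in Z$, so $Y$ is supported on $\bigoplus_{\alpha \in \Pi} \widehat{\mc G}_{-\alpha}$. For the second term, the highest weight vector $e_\ell$ of $\mk v_\ell$ is proportional to ${\rm x}_\eta$ by Proposition \ref{etaHiggs}, hence lies in $\mk g_\eta$ with $\eta \in Z$.

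Combining these, $\Phi_{\rm q}$ is valued in $\bigoplus_{\alpha \in Z} \widehat{\mc G}_\alpha$, which is the claim (the twisting by powers of $\mc K$ being absorbed into the form part and playing no role in the root-space bookkeeping). There is no genuine obstacle here; the only step that needs a word of justification is the identification of $e_\ell$ with ${\rm x}_\eta$ up to scalar, and this is supplied directly by Proposition \ref{etaHiggs}, which asserts both $m_\ell = \deg(\eta)$ and that ${\rm x}_\eta$ is a highest weight vector of $\mk v_\ell$ (the highest weight vectors forming a line, so $e_\ell$ is necessarily a multiple of ${\rm x}_\eta$).
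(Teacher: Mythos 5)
Your proof is correct and follows essentially the same route as the paper's: you place $Y$ in $\bigoplus_{\alpha\in\Pi}\widehat{\mc G}_{-\alpha}$ via its expression in the Chevalley basis (the paper's Equation \eqref{ying}) and $e_\ell$ in $\mk g_\eta$ via Proposition \ref{etaHiggs}, then observe both supports lie in $Z$. The paper's proof is just a terser version of the same argument; your explicit unwinding of $Z$ as the negatives of simple roots together with $\eta$ is a helpful but equivalent elaboration.
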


\begin{proof} By Proposition \ref{etaHiggs},
  $e_\ell\in\mk{g}_\eta$. By Equation \eqref{ying}
$$
Y\in\widehat{\mc  G}^{(-1)}\otimes\mc K=\bigoplus_{\alpha\in\Pi}\left(\widehat{\mathcal
  G}_{-\alpha}\otimes\mc K\right).
$$
The proposition follows.
\end{proof}

The following is implicit in Baraglia's paper \cite{Baraglia:2010vi}
but not stated as such. We write the proof using arguments borrowed
from this article. Similar results are found in \cite{Collier:2014tn}.

\begin{proposition}\label{hpara}{\sc [Baraglia]} Let
  $(\nabla,\widehat\rho)$ be the solution of the self duality equation
  associated to a cyclic Higgs bundle $(\widehat{\mc G},\Phi_{\rm q})$. Then $\widehat{\mc H}$ is
  parallel under $\nabla$ and globally invariant by $\widehat\rho$.
\end{proposition}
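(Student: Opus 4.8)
The plan is to exploit the cyclic structure of $\Phi_{\rm q}$ together with the self-duality equations, and in particular the way the grading by $a$ interacts with the projections $\pi_0,\pi,\pi^\dagger,\pi_1$ of Proposition \ref{crucial}. First I would record what the cyclic Higgs field looks like: by Proposition \ref{HiggsCyclic}, $\Phi_{\rm q}$ takes values in $\bigoplus_{\alpha\in Z}\widehat{\mc G}_\alpha=\widehat{\mc G}_Z$, so $\Phi_{\rm q}=\pi(\Phi_{\rm q})$. Its conjugate $\Phi^*=-\widehat\rho(\Phi_{\rm q})$ then takes values in $\widehat{\mc G}_{Z^\dagger}$, since the Cartan involution $\widehat\rho$ sends a root space $\mk g_\alpha$ to $\mk g_{-\alpha}$ (Proposition \ref{UniqueCartan}) and $Z^\dagger=\{-\alpha\mid\alpha\in Z\}$ by Definition \ref{sec:cycroot}. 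Thus $\Phi^*=\pi^\dagger(\Phi^*)$.

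Next I would analyse the curvature equation $R^\nabla=2\,\Phi_{\rm q}\ww\Phi^*$ from \eqref{def:sd}. Since $\Phi_{\rm q}\in\Omega^1(\Sigma,\widehat{\mc G}_Z)$ and $\Phi^*\in\Omega^1(\Sigma,\widehat{\mc G}_{Z^\dagger})$, the bracket term lies in $[\widehat{\mc G}_Z,\widehat{\mc G}_{Z^\dagger}]$, which by \eqref{pidec4} is contained in $\widehat{\mc H}$. Therefore $R^\nabla$ is a $2$-form with values in the Cartan subbundle $\widehat{\mc H}$. This is the key observation: the cyclic condition forces the curvature to be Cartan-valued. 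To conclude that $\widehat{\mc H}$ is $\nabla$-parallel, I would use the decomposition of $\nabla$ into its components under the grading and argue that because $\widehat\rho(\widehat{\mc H})=\widehat{\mc H}$ and the holomorphic structure respects the grading, the connection $\nabla$ (the Chern connection of $(\widehat{\mc G},\widehat\rho)$) preserves the grading-zero piece; equivalently, the parallel transport of $R^\nabla$-flatness within leaves shows that the off-diagonal components of $\nabla$ restricted to $\widehat{\mc H}$ vanish. Concretely, since $\widehat\sigma$ is $\nabla$-parallel (Theorem \ref{th:H2}) and $\widehat{\mc H}$ is a $\widehat\sigma$-invariant subbundle characterised intrinsically, parallelism of $\widehat{\mc H}$ should follow from parallelism of the structure-preserving involutions together with the fact that $R^\nabla$ stays in $\widehat{\mc H}$.

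For the $\widehat\rho$-invariance, this is immediate from the fibrewise description: at each point $\widehat{\mc H}$ is the Cartan subalgebra $\mk h$, and $\widehat\rho$ is an $\mk h$-Cartan involution, which by definition globally preserves $\mk h$ (Proposition \ref{UniqueCartan}). The main obstacle I anticipate is the parallelism claim rather than the invariance: one must rule out that $\nabla$ mixes $\widehat{\mc H}$ with the root subbundles $\widehat{\mc G}_\alpha$. The cleanest route is probably to observe that the self-duality connection $\nabla$ commutes with the grading operator $\ad(a)$ — since the holomorphic splitting \eqref{decompE} is by the $a$-eigenspaces and $\widehat\rho$ is compatible with this splitting — so $\nabla$ preserves each graded piece $\widehat{\mc G}^{(m)}$, and in particular the degree-zero piece $\widehat{\mc H}=\widehat{\mc G}^{(0)}$. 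Verifying that $\nabla$ genuinely preserves the $a$-grading (equivalently that $\ad(a)$ is $\nabla$-parallel, using that it is determined by $\widehat\rho$ and $\widehat\sigma$, both parallel) is the step requiring the most care.
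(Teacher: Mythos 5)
There is a genuine gap here, and it is a circularity rather than a fixable omission. Your argument repeatedly uses the compatibility of the harmonic metric $\widehat\rho$ with the decomposition \eqref{decompE}: you need $\widehat\rho(\widehat{\mc G}_\alpha)=\widehat{\mc G}_{-\alpha}$ to place $\Phi^*$ in $\Omega^1(\Sigma,\widehat{\mc G}_{Z^\dagger})$; you call the invariance $\widehat\rho(\widehat{\mc H})=\widehat{\mc H}$ ``immediate'' because ``$\widehat\rho$ is an $\mk h$-Cartan involution''; and your parallelism argument needs $\widehat\rho$ to respect the $a$-grading so that the Chern connection does. None of this is available a priori: the solution $(\nabla,\widehat\rho)$ produced by the Hitchin--Simpson existence theorem is only a section of fibrewise Cartan involutions with respect to \emph{some} maximal compact (this is the definition preceding \eqref{def:sd}), with no built-in relation to the subbundle $\widehat{\mc H}$, and Proposition \ref{UniqueCartan} applies only to involutions already known to preserve $\mk h$. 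In other words, each of the statements you treat as given is equivalent to (or stronger than) the conclusion of Proposition \ref{hpara}: that $\widehat\rho$ is ``diagonal'' with respect to the holomorphic decomposition is exactly the content of the result. Note also that your intermediate observation --- that $R^\nabla$ takes values in $\widehat{\mc H}$ --- even if granted, says nothing about parallelism: curvature lying in a subbundle does not make that subbundle parallel. Finally, the proposed repair, namely that $\ad(a)$ is determined by the parallel involutions $\widehat\rho$ and $\widehat\sigma$, is false: the stabiliser of the pair $(\rho,\sigma)$ in a fibre is the maximal compact $\ms K_0$ of $\ms G_0$ (Proposition \ref{HitchinTorus}), which does not preserve $\mk h$, so many Cartan subalgebras, hence many grading elements $a$, are compatible with the same pair of involutions.

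The idea you are missing is to exploit the uniqueness in Theorem \ref{th:H2} together with the extra symmetry that cyclicity provides. Let $\zeta$ be a primitive $(m_\ell+1)$-th root of unity and let $\psi$ be the automorphism of $\widehat{\mc G}$ acting by $\zeta^m$ on $\widehat{\mc G}^{(m)}$; it is holomorphic because the splitting \eqref{decompE} is. Cyclicity of $\Phi_{\rm q}$ (its components sit only in degrees $-1$ and $m_\ell$) gives $\psi(\Phi_{\rm q})=\zeta^{-1}\Phi_{\rm q}$. Since scaling the Higgs field by a unimodular constant does not change the self-duality equations, $(\nabla,\widehat\rho)$ is also the solution for $(\widehat{\mc G},\psi(\Phi_{\rm q}))$; on the other hand, transporting the solution for $\Phi_{\rm q}$ by the automorphism $\psi$ produces a solution for $\psi(\Phi_{\rm q})$, so uniqueness forces $\psi^*\nabla=\nabla$ and that $\psi$ anticommutes with $\widehat\rho$. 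Hence $\psi$ is $\nabla$-parallel, and $\widehat{\mc H}$, being the centraliser of $\psi$, is both parallel and globally preserved by $\widehat\rho$. This is the paper's proof (following Baraglia), and it \emph{derives} the compatibility between $\widehat\rho$ and the decomposition that your proposal assumes.
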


\begin{proof} Let $\zeta$ be a primitive 
  $({m_{\ell}+1})$-th root of unity. Let $\psi$ be the section of the bundle of automorphisms of $\widehat{\mc G}$
  whose restriction to $\widehat{\mc G}^{(m)}$ is the multiplication by
  $\zeta^m$. Observe now that
$$
\psi(\Phi_{\rm q})=\zeta^{-1}\Phi_{\rm q}.
$$
It follows that $(\nabla,\widehat\rho)$, being also a solution of
Hitchin equation for $(E,\zeta^{-1}\cwdot\Phi_{\rm q})$ is then a
solution for $(E,\psi(\Phi_{\rm q}))$. It follows that
$$
\psi^*\nabla=\nabla,
$$
and thus $\psi$ is parallel for $\nabla$ and, in particular, so is
$\widehat{\mc H}$ which is the centralizer of $\psi$. Similarly, we
obtain that $\psi$ anticommutes with $\widehat\rho$. Thus  $\widehat{\mc H}$ which is the centralizer of $\psi$  is
globally preserved by $\widehat\rho$.
\end{proof}

%%%%%%%%%%%%%%%%%%HERE%%%%%%%%%%%%%%%%%

\subsection{Harmonic mappings and Higgs bundles}
 We recall the following facts relating equivariant harmonic mappings and Higgs bundles (see \cite{Donaldson:1987} in the case of $\ms{SL}(2,\mathbb C)$). Let as usual $\ms G$ be a complex Lie group, and the bundle $\mc G$, form $\omega\in\Omega^1(\ms{S(G)},\mc G)$ and $\rho\in\Gamma\left(\operatorname{Aut}(\mc G)\right)$ as defined in paragraph \ref{geom-sym}.

 Recall that given a Riemannian manifold $M$ and a representation $\delta$ of $\pi_1(M)$ in $\ms G$, a {\em $\rho$-equivariant map} $F$  from $M$ to $\ms S(\ms G)$, is a map $\tilde F$ from 
the universal cover $\tilde M$ of $M$, with values in $\ms S(\ms G)$ so that $\tilde F(\gamma\cdot x)=\rho(\gamma)\cdot \tilde F(x)$, where $\gamma\in\pi_1(M)$. Then the energy density of $\tilde F$ is $\pi_1(M)$-invariant and thus give rises to a function $e(F)$ on $M$ also called the {\em energy density} of $F$.

 \begin{theorem}
 Let $f$ be an harmonic mapping from a Riemann surface $S$ in $\ms{S(G)}$, then $(f^*(\mc G), (f^*\omega)^{(1,0)})$ is a Higgs bundle. Moreover $f^*(\rho)$  satisfies the self duality equations. Conversely, let $S$ be a simply connected Riemann surface, $(E,\Phi)$ a Higgs bundle and $\rho_E$ a solution of the self-duality equations. Then there exists an harmonic mapping $f$ from $S$ to $\ms{S(G)}$ unique up to the action of $\ms G$ so that 
 $$
 (E,\Phi,\rho_E)=(f^*\mc G, (f^*\omega)^{(1,0)}, f^*\rho).
 $$
 \end{theorem}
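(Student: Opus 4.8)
The plan is to pull back the tautological geometry of $\ms S(\ms G)$ described at the end of Section \ref{realcond} and to read off the self duality equations from the structure equations of the symmetric space; the converse will then be an application of the reconstruction statement of Proposition \ref{VecBundMan}, transported from $\X$ to $\ms S(\ms G)$.

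For the direct implication I would set $E\defeq f^*\mc G$, $\nabla_E\defeq f^*\nabla$, $\widehat\rho\defeq f^*\rho$ and $\alpha\defeq f^*\omega\in\Omega^1(S,E)$, and declare $\Phi\defeq(f^*\omega)^{(1,0)}$. As $S$ is a curve, $\nabla_E^{(0,1)}$ is automatically integrable and defines a holomorphic structure on $E$ for which $\nabla_E$, being compatible with $g_E\defeq f^*g$, is the Chern connection. The equation $\nabla_E\widehat\rho=0$ holds because $\rho$ is the tautological $\ms G$-equivariant section of $\operatorname{Aut}(\mc G)$, hence parallel for $\nabla=\D-\Ad(\omega)$ by the equivariance principle preceding Proposition \ref{vec-bun-des}. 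The main structural input is that $\D=\nabla+\ad(\omega)$ is flat while $\omega$ is valued in $\mk p$ and $\nabla$ preserves the splitting $\mk g=\mk k\oplus\mk p$; splitting Equation \eqref{curvfund} into its $\mk p$ and $\mk k$ components gives, on $\ms S(\ms G)$, the two identities $\d^\nabla\omega=0$ and $R^\nabla=-\omega\ww\omega$, which pull back to $\d^{\nabla_E}\alpha=0$ and $f^*R^\nabla=-\alpha\ww\alpha$.

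Next I would exploit the reality carried by $\widehat\rho$: since $\widehat\rho$ acts by $-1$ on the values of $\alpha$, a short computation gives $\Phi^*=-\widehat\rho(\Phi)=\alpha^{(0,1)}$, so $\alpha=\Phi+\Phi^*$. Harmonicity of $f$ is the statement $\d^{\nabla_E}(\star\alpha)=0$; on a Riemann surface $\star$ acts by $\mp i$ on $(1,0)$ and $(0,1)$ forms, so this reads $\d^{\nabla_E}\Phi=\d^{\nabla_E}\Phi^*$, which together with $\d^{\nabla_E}\alpha=0$ forces $\d^{\nabla_E}\Phi=0=\d^{\nabla_E}\Phi^*$. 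In particular $\overline{\partial}_E\Phi=0$, so $\Phi$ is holomorphic and $(E,\Phi)$ is a Higgs bundle. Finally $\Phi\ww\Phi$ and $\Phi^*\ww\Phi^*$ vanish for type reasons on a curve, and $\ww$ is symmetric on $1$-forms, so $\alpha\ww\alpha=2\cwdot\Phi\ww\Phi^*$ and hence $f^*R^\nabla=-2\cwdot\Phi\ww\Phi^*$; thus $\widehat\rho$ solves the self duality equations \eqref{def:sd} (with the signs of that convention). For the converse, given $(E,\Phi)$ and a solution $\rho_E$ of \eqref{def:sd}, I would set $\Phi^*\defeq-\rho_E(\Phi)$ and $\alpha\defeq\Phi+\Phi^*$; since $\rho_E^2=1$ one finds $\rho_E(\alpha)=-\alpha$, so $\alpha$ is a $\mk p$-valued form and is therefore Killing-orthogonal to the $(+1)$-eigenbundle $\mk k$ of $\rho_E$. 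The self duality equations are then equivalent to the flatness of $D\defeq\nabla_E+\alpha$ together with $\nabla_E\rho_E=0$, which is exactly the data required by the symmetric-space analogue of Proposition \ref{VecBundMan}: the fibrewise Cartan involution $\rho_E$ plays the rôle of the parallel point of $\ms S(\ms G)$, $D$ is the flat connection and $\alpha$ the Maurer--Cartan candidate, the orthogonality \eqref{womt} reducing to $\Kill{\mk k}{\alpha}=0$. As $S$ is simply connected this yields $f\colon S\to\ms S(\ms G)$, unique up to $\ms G$, with $f^*\nabla=\nabla_E$, $f^*\rho=\rho_E$ and $f^*\omega=\alpha$; taking $(1,0)$-parts gives $(f^*\omega)^{(1,0)}=\Phi$, and since $\nabla_E$ is the Chern connection, $f^*\big(\nabla^{(0,1)}\big)=\nabla_E^{(0,1)}$ agrees with the holomorphic structure of $(E,\Phi)$.

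I expect the main obstacle to be the converse, where one must upgrade Proposition \ref{VecBundMan}, stated for the space $\X$ of Hitchin--Kostant quadruples with stabiliser $\ms T$, to the symmetric space $\ms S(\ms G)$ with stabiliser $\ms K$; the delicate point is the identification $f^*\omega=\alpha$ of Maurer--Cartan forms, which rests, exactly as in the proof of that proposition, on the orthogonality \eqref{MC}, here the splitting $\mk g=\mk k\oplus\mk p$. In the direct implication the only genuine content is the passage from the harmonic map equation $\d^{\nabla_E}(\star\alpha)=0$ to the holomorphicity $\overline{\partial}_E\Phi=0$; everything else is bookkeeping of the $\mk k\oplus\mk p$ decomposition and of sign conventions.
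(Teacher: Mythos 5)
Your argument is essentially correct, but there is no line-by-line comparison to make: the paper does not prove this theorem at all. It is recalled as a known fact, with a citation to Donaldson \cite{Donaldson:1987} for $\ms{SL}(2,\mathbb C)$, and used as a black box. What you have written is a self-contained proof inside the paper's own formalism, and it is precisely the symmetric-space shadow of what the paper does one level up, on the bundle $\X$ of Hitchin--Kostant quadruples: your splitting of the flatness equation \eqref{curvfund} into $\mk k$- and $\mk p$-components (legitimate because $\nabla\rho=0$ forces $R^\nabla$ to be $\mk k$-valued, by the same argument as Proposition \ref{vec-bun-des}) plays the rôle of the cyclic decomposition in Proposition \ref{prop:Higgs-cyc}, and your converse is the analogue of Theorem \ref{Higgs2cyc}, resting on the paper's own remark before Proposition \ref{MaurerSym} that the Maurer--Cartan formalism, hence the reconstruction of Proposition \ref{VecBundMan}, works over any homogeneous space with reductive stabiliser. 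You also identify the genuinely delicate point correctly: in proving $f^*\omega=\alpha$, the difference $\alpha-f^*\omega$ is $\mk k$-valued because $\mk k$, the (self-normalising) stabiliser of $\rho_E$, absorbs the ambiguity, while both forms are Killing-orthogonal to $\mk k$ since they are $\mk p$-valued; this is exactly how $\mk t$ and Equation \eqref{MC} are used in the proof of Proposition \ref{VecBundMan}.

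Two loose ends should be tied up. First, in the converse you construct $f$ with $f^*\omega=\Phi+\Phi^*$ but never verify that $f$ is \emph{harmonic}, which the statement asserts; this follows by running your own equivalence backwards, since the self duality equations give $\d^{\nabla_E}\Phi=\d^{\nabla_E}\Phi^*=0$, hence $\d^{\nabla_E}(\star\alpha)=0$, but it must be said. (Note also that your reduction of harmonicity to $\d^{\nabla_E}(\star\alpha)=0$ silently uses that the connection $\nabla$ corresponds to the Levi-Civita connection of the symmetric metric under the identification of Proposition \ref{MaurerSym}; standard for symmetric spaces, but worth a sentence.) Second, your curvature identity $f^*R^\nabla=-2\cwdot\Phi\ww\Phi^*$ carries the opposite sign to the last line of \eqref{def:sd}. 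This is not your error: the paper is internally inconsistent here, since the proof of Proposition \ref{prop:Higgs-cyc} derives $R^{\widehat\nabla}+2\cwdot\Phi\ww\Phi^*=0$ while \eqref{higgsR} and \eqref{def:sd} display $R=2\cwdot\Phi\ww\Phi^*$. Still, rather than writing ``with the signs of that convention,'' you should state explicitly which sign your computation yields and flag the discrepancy.
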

Moreover using the notation of paragraph \ref{prel-form}, we have

\begin{proposition}\label{ener} Let $S$ be a Riemannian surface equipped with the area form $\d \mu$.
Let $f$ from $S$ to $\ms{S(\ms G})$ be an equivariant harmonic mapping. Let $e(f)$ be the energy density on $S$,
then
$$
\operatorname{Energy}(f)\defeq\frac{1}{2}\int_S e(f)\,\d\mu=i\cwdot \int_S 
\Kill
{(f^*\omega)^{(1,0)}}
{(f^*\omega)^{(0,1)}}.$$ 
\end{proposition}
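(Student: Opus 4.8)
The plan is to reduce the identity to a pointwise computation in a local conformal coordinate, since both sides are conformally invariant: the Dirichlet energy of a map out of a surface depends only on the conformal class of the domain metric, while the right-hand side involves no metric on $S$ at all. I also observe that harmonicity of $f$ plays no role here — the identity holds for any smooth equivariant map — so I would prove it in that generality.

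First I fix a conformal coordinate $z=x+iy$, so that $\d\mu=\lambda^2\,\d x\wedge\d y$ for some positive function $\lambda$, and set $A\defeq (f^*\omega)(\partial_x)$ and $B\defeq (f^*\omega)(\partial_y)$, both sections of $f^*\mc G$ with values in $\mk p$. By definition the symmetric metric is transported by $\omega$ from the restriction of $g=-\Kill{\cdot}{\rho(\cdot)}$ to $\mk p$, and on $\mk p$ one has $g=\Kill{\cdot}{\cdot}$ because $\rho=-\mathrm{id}$ there. Hence the pulled-back metric satisfies $f^*g(u,v)=\Kill{(f^*\omega)(u)}{(f^*\omega)(v)}$, so the energy density is $e(f)\cwdot\d\mu=\bigl(\Kill{A}{A}+\Kill{B}{B}\bigr)\,\d x\wedge\d y$; in particular $\lambda$ cancels and $\tfrac12\int_S e(f)\,\d\mu=\tfrac12\int_S\bigl(\Kill{A}{A}+\Kill{B}{B}\bigr)\,\d x\wedge\d y$.

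Next I carry out the type decomposition. Since $\partial_z=\tfrac12(\partial_x-i\partial_y)$, I get $(f^*\omega)^{(1,0)}=\tfrac12(A-iB)\,\d z$ and $(f^*\omega)^{(0,1)}=\tfrac12(A+iB)\,\d\bar z$. Expanding the pairing $\mathbb C$-bilinearly and using the symmetry $\Kill{A}{B}=\Kill{B}{A}$, the cross terms cancel, so $\Kill{\tfrac12(A-iB)}{\tfrac12(A+iB)}=\tfrac14\bigl(\Kill{A}{A}+\Kill{B}{B}\bigr)$. Combined with $\d z\wedge\d\bar z=-2i\,\d x\wedge\d y$, this yields
\begin{align*}
i\cwdot\Kill{(f^*\omega)^{(1,0)}}{(f^*\omega)^{(0,1)}}=\tfrac12\bigl(\Kill{A}{A}+\Kill{B}{B}\bigr)\,\d x\wedge\d y,
\end{align*}
which integrates to $\tfrac12\int_S e(f)\,\d\mu$, as claimed.

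The computation is elementary, so the main care is purely in the bookkeeping: checking that the identification via $\omega$ is genuinely isometric, remembering that $\Kill{\cdot}{\cdot}$ on $\mc G$-valued forms is extended $\mathbb C$-bilinearly rather than Hermitianly, and tracking the two sign-carrying factors $\d z\wedge\d\bar z=-2i\,\d x\wedge\d y$ and the prefactor $i$ so that the right-hand side comes out real and nonnegative. As a check, since $\rho$ is antilinear and equals $-\mathrm{id}$ on $\mk p$ one finds $(f^*\omega)^{(0,1)}=-\rho\bigl((f^*\omega)^{(1,0)}\bigr)$, so the right-hand side equals $-i\int_S\Kill{\alpha}{\rho(\alpha)}$ with $\alpha=(f^*\omega)^{(1,0)}$, which is $\geq 0$ by Proposition \ref{Sign} — consistent with the nonnegativity of the energy.
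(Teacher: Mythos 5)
Your proof is correct and is essentially the paper's own argument: the paper also establishes the identity by a pointwise computation, writing $e(f)\,\d\mu=-\Kill{f^*\omega}{f^*\omega\circ J}=2i\cwdot\Kill{(f^*\omega)^{(1,0)}}{(f^*\omega)^{(0,1)}}$ and integrating, which is exactly your conformal-coordinate calculation phrased coordinate-freely. Your added observations (harmonicity is not used, and the consistency check via Proposition \ref{Sign}) are correct but do not change the substance.
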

\begin{proof}
We have that
$$
e(f)\,\d\mu=
-\Kill{f^*\omega}{f^*\omega\circ J}
= 2i\cwdot 
\Kill
{(f^*\omega)^{(1,0)}}
{(f^*\omega)^{(0,1)}}.
$$
\end{proof}

\section{Cyclic surfaces}\label{sec:CS}

We interpret the Cartan involution $\rho$ as the real structure on $\mathfrak g$
coming from the complexification of $\mk k$. Let also $\sigma$ be the
involution constructed in Section \ref{realsplit}. Let finally
$\lambda=\sigma\circ\rho$ be the real structure on
$\mk g$ coming from the complexification of $\mk g_0$. We will use in
this section the decomposition \eqref{eq:MCcyc}.

\begin{definition}{\sc[cyclic maps]}\label{cyc-map}
  A map $f$ from a surface $\Sigma$ to $\X $ is {\em cyclic}
  if \begin{enumerate}
  \item $f^*(\omega_1)=0$,\label{def:cyc1}
  \item $f^*(\omega_0)=0$,\label{def:cyc2}
  \item $f^*(\phi\ww\phi)=0$, \label{def:cyc5}
  \item
    $f^*\left(\rho(\phi)\right)=-f^*({\phi^\dagger})$,\label{def:cyc3}
  \item
    $f^*\left(\lambda(\omega)\right)=f^*\left(\omega\right)$.\label{def:cyc4}
  \item if $\beta$ is a simple root,
    $f^*(\omega_\beta)$ never vanishes,\label{def:cyc6}
  \end{enumerate}

\end{definition}

The notion of cyclic surfaces is cousin to that of $\tau$-maps studied
in \cite{Bolton:1995vx}. However, the latter notion is defined in the context
of compact Lie groups.

We have
\begin{proposition}\label{pro:cyc-simp}
	 Assertion \eqref{def:cyc5} is equivalent to: for all
$\beta$ and $\alpha$ in $Z$, we have
\begin{equation}
  f^*(\omega_\alpha\ww\omega_\beta)=0. \label{def:cyc5a}
\end{equation}
\end{proposition}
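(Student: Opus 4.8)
The plan is to prove the equivalence by unpacking what the decomposable bracket $\omega_\alpha \ww \omega_\beta$ means and using the root-space structure of $\mk g_Z$ together with Proposition \ref{crucial}. First I would note that $\phi = \pi(\omega) = \sum_{\alpha \in Z} \omega_\alpha$, so that $\phi \ww \phi = \sum_{\alpha, \beta \in Z} \omega_\alpha \ww \omega_\beta$, an element of $\Omega^2(\X, \mc G)$. The key structural input is Assertion \eqref{pidec8} of Proposition \ref{crucial}, namely $[\mk g_Z, \mk g_Z] \subset \mk g_1 \oplus \mk g_{Z^\dagger}$: this tells us that each summand $\omega_\alpha \ww \omega_\beta$ takes values in $\mc G_1 \oplus \mc G_{Z^\dagger}$. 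Pulling back by $f$ and using linearity, $f^*(\phi \ww \phi) = 0$ is a statement about a sum of 2-forms valued in various distinct root bundles, so the strategy is to show these individual contributions cannot cancel against one another and hence must each vanish.

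The crucial point is that the bracket $[\mk g_\alpha, \mk g_\beta]$ lands in $\mk g_{\alpha+\beta}$ (a single root space, or zero if $\alpha+\beta$ is not a root). For $\alpha, \beta \in Z$, the root $\alpha+\beta$ is determined by the pair $(\alpha,\beta)$, and by the degree considerations in the proof of Proposition \ref{crucial} the value $\alpha+\beta$ lies in $\mk g_1 \oplus \mk g_{Z^\dagger}$. I would observe that $\omega_\alpha \ww \omega_\beta + \omega_\beta \ww \omega_\alpha$ is the contribution of the unordered pair; using the sign rule $\alpha \ww \beta = (-1)^{pq+1}\beta\ww\alpha$ from Section \ref{prel-form} for 1-forms (giving $\omega_\alpha \ww \omega_\beta = \omega_\beta \ww \omega_\alpha$) this is $2\,\omega_\alpha\ww\omega_\beta$ when $\alpha \ne \beta$. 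Since $[\mk g_\alpha,\mk g_\beta]$ generates a specific root space $\mk g_{\alpha+\beta}$, grouping the terms of $f^*(\phi\ww\phi)$ by the value of $\alpha+\beta$ in $\Delta$ projects the equation $f^*(\phi\ww\phi)=0$ onto each root bundle $\mc G_\gamma$ separately.

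The main obstacle, and where care is needed, is to confirm that distinct unordered pairs $\{\alpha,\beta\}$ in $Z$ never produce the same sum $\alpha+\beta$, so that no two distinct bracket contributions $f^*(\omega_\alpha\ww\omega_\beta)$ and $f^*(\omega_{\alpha'}\ww\omega_{\beta'})$ can interfere in the same root bundle. I would handle this by case analysis following the structure of $Z$: recall $Z = \Pi \cup \{\eta\}$ consists of the simple roots together with the longest root. The possible sums are $\alpha_i + \alpha_j$ for simple roots (landing in degree $2$) and $\alpha_i + \eta$ (which is never a root, since $\eta$ is longest, hence contributes zero). For the sums of two simple roots, the relevant fact is that the map $\{\alpha_i,\alpha_j\}\mapsto \alpha_i+\alpha_j$ is injective on those pairs for which $\alpha_i+\alpha_j$ is a root; this follows because the coefficients $n_\alpha$ in the simple-root expansion of $\alpha_i+\alpha_j$ directly recover the pair. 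Once this injectivity is established, the projection of $f^*(\phi\ww\phi)=0$ onto $\mc G_\gamma$ for each achievable $\gamma$ isolates exactly one term $2\,f^*(\omega_\alpha\ww\omega_\beta)$ (or a diagonal term $f^*(\omega_\alpha\ww\omega_\alpha)$, which vanishes automatically since $\omega_\alpha\ww\omega_\alpha = [\omega_\alpha,\omega_\alpha]=0$ for a 1-form valued in a line), yielding the pairwise vanishing \eqref{def:cyc5a}. The converse direction is immediate, since if every $f^*(\omega_\alpha\ww\omega_\beta)=0$ for $\alpha,\beta\in Z$ then summing gives $f^*(\phi\ww\phi)=0$.
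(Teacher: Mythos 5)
Your overall strategy --- expand $f^*(\phi\ww\phi)=\sum_{\alpha,\beta\in Z}f^*(\omega_\alpha\ww\omega_\beta)$, note that each term lies in the single root bundle $\mc G_{\alpha+\beta}$ (or vanishes when $\alpha+\beta$ is not a root), and prove that distinct unordered pairs in $Z$ cannot produce the same root as their sum --- is exactly the paper's. But your case analysis rests on a misidentification of the cyclic root set, and the case you dismiss as vacuous is the one that actually needs an argument. By Definition \ref{sec:cycroot}, $Z^\dagger\defeq\Pi\cup\{-\eta\}$ and $Z\defeq\{\alpha\in\Delta\mid-\alpha\in Z^\dagger\}=(-\Pi)\cup\{\eta\}$: the negatives of the simple roots together with the longest root, not $\Pi\cup\{\eta\}$. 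The essential feature is that the two kinds of elements of $Z$ have \emph{opposite} signs. Under your reading, every mixed sum $\alpha+\eta$ with $\alpha$ simple fails to be a root (since $\eta$ is longest), so all cross terms involving the long root are declared trivially zero. For the actual $Z$, the mixed sums $\eta-\alpha$ with $\alpha\in\Pi$ are frequently roots --- for $\ms{SL}(3,\mathbb C)$, with simple roots $\alpha,\beta$ one has $\eta-\alpha=\beta$ --- so these terms do not vanish identically, and they are precisely the content of the proposition that gets used later: Equation \eqref{equ:type1} in the proof of Proposition \ref{prop:Higgs-cyc} is the vanishing $f^*(\omega_\alpha\ww\omega_{-\eta})=0$ coming from the mixed pairs, and it is what determines the type of the long-root component of $\phi$. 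On your reading that equation would be empty and the subsequent argument would collapse.

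The missing step is a sign (degree) argument, which is how the paper handles it. Suppose $\alpha_0+\alpha_1=\alpha_2+\alpha_3$ is a root with all $\alpha_i\in Z$. If all four are negatives of simple roots, conclude by linear independence of $\Pi$, as you do. Otherwise one of them, say $\alpha_0$, equals $\eta$; then $\alpha_0+\alpha_1=\eta-\alpha$ for some $\alpha\in\Pi$ has degree $\deg(\eta)-1\geq0$, hence (being a root, thus of nonzero degree) it is a \emph{positive} root, while a sum of two negatives of simple roots has degree $-2$ and is negative. Hence one of $\alpha_2,\alpha_3$ must also equal $\eta$, and cancelling $\eta$ forces $\{\alpha_0,\alpha_1\}=\{\alpha_2,\alpha_3\}$. (The paper's own proof writes this with the roles of $Z$ and $Z^\dagger$ interchanged, i.e.\ working with $\Pi\cup\{-\eta\}$; that global negation is harmless because it preserves the opposite-sign feature, whereas $\Pi\cup\{\eta\}$ does not.) With this case added your proof is complete and coincides with the paper's; the remaining points --- the symmetry $\omega_\alpha\ww\omega_\beta=\omega_\beta\ww\omega_\alpha$ for $1$-forms, the vanishing of the diagonal terms, and the trivial converse --- are fine as written.
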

\begin{proof} We begin with an observation on cyclic roots. Let  $\alpha_i\in Z$, and assume that   $\alpha_0+\alpha_1=\alpha_2+\alpha_3$  is a root. Then $\{\alpha_0,\alpha_1\}=\{\alpha_2,\alpha_3\}$. Indeed, if all roots $\alpha_i$ are simple, this follows from the fact that simple roots form a free system. If one of the $\alpha_i$, say $\alpha_0$, is $-\eta$ then $\alpha_1$ is simple and  $\alpha_0+\alpha_1$ is negative. From it follows that either $\alpha_2$ or $\alpha_3$ is $-\eta$. Thus $\{\alpha_0,\alpha_1\}=\{\alpha_2,\alpha_3\}$.

 From this we deduce that $f^*(\phi\ww\phi)=0$ implies that for all $\alpha$ and $\beta$ in $Z$, then $f^*(\omega_\alpha\ww\omega_\beta)=0$. Indeed $f^*(\omega_\alpha\ww\omega_\beta)$ is zero if $\gamma=\alpha+\beta$ is not a root and otherwise takes values in $\mc G_{\gamma}$.

\end{proof}

\subsubsection{The reality condition}

Let $\ms G_0$ be the split real form associated to the Cartan subalgebra and its system of positive roots .

\begin{proposition} 
  Assume that $f:\Sigma\to X$ is a cyclic surface. Then the image of
  $\Sigma$ lies in a $\ms G_0$-orbit in $X$.
\end{proposition}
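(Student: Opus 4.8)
The plan is to show that the image of a cyclic surface $f$ lies in a single leaf of the foliation $\bf F$, since these leaves are exactly the $\ms G_0$-orbits in $\X$. By Proposition \ref{real0}, the tangent distribution $\T\bf F$ consists of those tangent vectors $u$ with $\CG u = u$, where the real structure is characterised by $\omega(\CG v)=\lambda(\omega(v))$. So the claim amounts to proving that $f_*(\T\Sigma)\subset \T\bf F$ along the image, i.e.\ that for every tangent vector $u$ to $\Sigma$ the pushed-forward vector $f_*u$ satisfies the reality condition. First I would translate this into a statement about the Maurer--Cartan form: $f_*u\in\T\bf F$ is equivalent to $\lambda(\omega(f_*u))=\omega(f_*u)$, i.e.\ to $f^*(\lambda(\omega))=f^*(\omega)$. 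But this is exactly condition \eqref{def:cyc4} in the definition of a cyclic surface. Hence the tangent spaces to the image all lie in $\T\bf F$.

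With the tangency established, the remaining point is to pass from an infinitesimal (tangency) statement to the global (orbit-containment) statement. The foliation $\bf F$ is integrable, and its leaves are the right $\ms G_0$-orbits; assuming $\Sigma$ is connected (which it is, being a closed oriented surface of genus $>1$), a connected surface whose tangent spaces are everywhere contained in $\T\bf F$ must lie entirely within a single leaf. Concretely, I would argue that the subset of $\Sigma$ mapping into a fixed leaf through $f(x_0)$ is open (by the local product structure of the foliation and tangency of $f$) and closed, hence all of $\Sigma$. Since that leaf is a single $\ms G_0$-orbit, the image $f(\Sigma)$ lies in one $\ms G_0$-orbit.

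The main obstacle, such as it is, is a bookkeeping one rather than a conceptual one: one must make sure that condition \eqref{def:cyc4}, written as an equality of $\mc G$-valued $1$-forms $f^*(\lambda(\omega))=f^*(\omega)$, really does encode the pointwise tangency $f_*u\in\T\bf F$ for \emph{all} $u$, and that $\lambda$ here is being applied fibrewise to the pulled-back bundle in a way compatible with the definition $\omega(\CG v)=\lambda(\omega(v))$ of the real structure. Once that identification is clean, the proof is essentially immediate from Proposition \ref{real0}. It is worth noting that conditions \eqref{def:cyc1}--\eqref{def:cyc3}, \eqref{def:cyc5}, \eqref{def:cyc6} play no role here; only the reality condition \eqref{def:cyc4} is needed, which is consistent with the proposition's statement concerning only the $\ms G_0$-orbit. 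I expect the author's proof to be a single short paragraph invoking Proposition \ref{real0} together with the connectedness of $\Sigma$.
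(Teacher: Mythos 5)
Your proposal is correct and follows essentially the same route as the paper: the reality condition \eqref{def:cyc4} gives $\CG{\T f(u)}=\T f(u)$, and Proposition \ref{real0} identifies this with tangency to the foliation $\bf F$ whose leaves are the $\ms G_0$-orbits. The paper's proof in fact stops at the tangency statement, leaving the open-closed globalization argument (which you spell out) implicit, so your write-up is if anything slightly more complete.
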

\begin{proof}
  By Assertion \eqref{def:cyc4} 
  $f^*(\lambda(\omega))=f^*(\omega)$. In other words $\CG{\T f(u)}=\T
  f(u)$ for all $u$ in $S$. Thus by Proposition \ref{real0}, 
  $f(\Sigma)$ is tangent to the foliation $\bf F$ defined by the
  "right" $\ms G_0$-orbits.
\end{proof}

\subsubsection{First example: the Fuchsian case}

Let $x=(\mk h,\dep,\rho,\lambda)$ be a point in $\X$. 
Let $\ms S$ be the principal  $\ms{SL}_2$ in $\ms G_0$ associated to $x$. By definition, the {\em Fuchsian surface} though $x$ is the orbit of $\ms S$.  This gives our first examples of cyclic surfaces.
\begin{proposition}\label{fuchs}
If $S$ is a Fuchsian surface in $\X$, then $S$ is a cyclic surface such that  $\left.\omega_\eta\right\vert_S=0$. 
\end{proposition}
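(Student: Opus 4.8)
The plan is to evaluate the Maurer--Cartan form along the Fuchsian surface at a single point and then propagate by equivariance. First I would fix the base point $x=(\mk h,\dep,\rho,\lambda)$ together with the generators $(a,X,Y)$ of the principal subalgebra $\mk s$ of Proposition \ref{prop:Kostant}, where $X=\sum_{\alpha\in\Pi}\sqrt{r_\alpha}{\rm x}_\alpha$ and $Y=\sum_{\alpha\in\Pi}\sqrt{r_\alpha}{\rm x}_{-\alpha}$. From $\rho({\rm x}_\alpha)={\rm x}_{-\alpha}$ (Proposition \ref{UniqueCompact}) one reads $\rho(X)=Y$ and $\rho(Y)=X$, while Proposition \ref{prop:lS} gives $\lambda(X)=-Y$, $\lambda(Y)=-X$ and $\lambda(a)=-a$. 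Hence the real principal algebra $\mk s_0\defeq\mk s\cap\mk g_0=\mk s\cap\operatorname{Fix}(\lambda)$ is spanned over $\mathbb R$ by $\{\ii a,\,X-Y,\,\ii(X+Y)\}$, and $\ii a\in\mk t$. The tangent space to $S$ at $x$ is therefore $\mk s_0/(\mk s_0\cap\mk t)$, with representatives $X-Y$ and $\ii(X+Y)$; as root spaces are Killing--orthogonal to $\mk h\supset\mk t$, these representatives already lie in $\mk t^\perp$, so by Definition \ref{def:MC} they are exactly the values of $\omega$ on the two tangent directions. Thus along $S$ the form $\omega$ takes values in $\operatorname{span}_{\mathbb C}(X,Y)$.

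From this the cyclic decomposition reads off immediately. Since $\Pi\subset Z^\dagger$ and $-\Pi\subset Z$, one has $X\in\mk g_{Z^\dagger}$ and $Y\in\mk g_Z$, so $\omega$ has neither an $\mc H$-component nor a $\mc G_1$-component along $S$; this gives $f^*(\omega_0)=0$ and $f^*(\omega_1)=0$, i.e.\ Assertions \eqref{def:cyc2} and \eqref{def:cyc1}, and identifies $f^*\phi$ with the $Y$-part and $f^*\phi^\dagger$ with the $X$-part of $\omega$. Because $\eta$ is not simple (its degree $\deg(\eta)=m_\ell\geq 2$ by Proposition \ref{etaHiggs}) and both $X$ and $Y$ involve only simple roots and their opposites, the $\mc G_\eta$-component vanishes, yielding $\left.\omega_\eta\right\vert_S=0$.

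Next I would check the remaining assertions. As $f^*\phi=\nu\otimes Y$ is decomposable for a scalar $1$-form $\nu$, we get $f^*(\phi\ww\phi)=(\nu\wedge\nu)\otimes[Y,Y]=0$, which is Assertion \eqref{def:cyc5}. For the reality Assertion \eqref{def:cyc3} I would write, for a tangent vector $v$, $\omega(v)=c'X+cY$ with $c',c\in\mathbb C$; the explicit values $\omega(X-Y)=X-Y$ and $\omega(\ii(X+Y))=\ii(X+Y)$ force $c'=-\bar c$, so using $\rho(Y)=X$ and antilinearity, $f^*(\rho(\phi))(v)=\rho(cY)=\bar c X=-c'X=-\phi^\dagger(v)$. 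For Assertion \eqref{def:cyc6}, the $\mc G_\beta$-component of $\omega$ for a simple root $\beta$ is $\sqrt{r_\beta}$ times a nowhere-vanishing scalar $1$-form tensored with ${\rm x}_\beta$, which never vanishes since $r_\beta>0$. Finally Assertion \eqref{def:cyc4} holds because $X-Y$ and $\ii(X+Y)$ lie in $\mk g_0=\operatorname{Fix}(\lambda)$, so $f^*(\lambda(\omega))=f^*(\omega)$; equivalently $S$ is an orbit of $\ms S\subset\ms G_0$ and one may invoke Proposition \ref{real0}.

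To finish, I would note that all of the defining conditions are invariant under the $\ms G$-action on $\X$, since they are phrased through the $\ms G$-equivariant objects $\omega$, $\rho$, $\lambda$ and the subbundles $\mc H$, $\mc G_1$, $\mc G_Z$, $\mc G_{Z^\dagger}$. As $S$ is a single orbit of $\ms S\subset\ms G$, the pointwise verification at $x$ propagates to all of $S$. The computation is entirely linear-algebraic at the base point; I expect the only delicate points to be the correct handling of the antilinear convention for $\rho$ and $\lambda$ on $\mc G$-valued forms in Assertions \eqref{def:cyc3} and \eqref{def:cyc4}, and the positivity $r_\beta>0$ ensuring that every simple-root component of $X$ is genuinely present, as required by Assertion \eqref{def:cyc6}.
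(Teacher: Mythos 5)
Your proof is correct and takes essentially the same route as the paper: both identify the real principal subalgebra $\operatorname{span}_{\mathbb R}(\ii a,\,X-Y,\,\ii(X+Y))$, observe that $\ii a\in\mk t$, and read off from the fact that $\omega$ takes values along $S$ in $\operatorname{span}(X,Y)\subset \mc G_Z\oplus\mc G_{Z^\dagger}$ (supported only on simple roots and their opposites) that $\omega_0$, $\omega_1$ and $\omega_\eta$ vanish and that $-\rho(\phi)=\phi^\dagger$. Your write-up is in fact somewhat more complete than the paper's, since you explicitly verify Assertions \eqref{def:cyc5} and \eqref{def:cyc6} and the propagation from the base point by equivariance, all of which the paper leaves implicit.
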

\begin{proof} By construction, the Lie algebra of the complexification of $\ms S$ is generated by
$(a,X,Y)$, where
\begin{align*}
  a&=\frac{1}{2}\sum_{\alpha\in\Delta^+}{\rm
    h}_\alpha=\sum_{\alpha\in\Pi}r_\alpha\cwdot {\rm h}_\alpha,\cr
  X&= \sum_{\alpha\in\Pi}\sqrt{r_\alpha} {\rm x}_\alpha,\ Y=
  \sum_{\alpha\in\Pi}\sqrt{r_\alpha} {\rm x}_{-\alpha}.
\end{align*}
Thus, by Proposition \ref{prop:lS}, the Lie algebra of $\ms S$ is generated by $X-Y, iX+iY, ia$. Since $ia$ belongs to $\mk h$ and generates a compact subgroup, $ia\in\mk t$. 

Therefore, the orbit of $\ms S$ in $\X$ is with values in the complex 2-dimensional distribution $\mc W$ such that $\mc V\defeq\omega(\mc W)$ is generated by $X-Y, iX+iY$. Moreover, since $\ms S$ is real, the orbit of $\ms S$ in $\X$ is tangent to the  real 2-dimensional  distribution $\mc W_0$ such that 
$$
\omega(\mc W_0)\defeq\{u\in\mc V\mid \lambda(u)=u\}\eqdef\mc V_0.
$$
In particular, we now observe that 
$$
\mc V_0\subset \mc V\subset \mc Q\defeq \bigoplus_{\alpha\in\Pi}\mc G_\alpha\oplus\bigoplus_{\alpha\in\Pi}\mc G_{-\alpha}  \subset \mc G_Z\oplus \mc G_{Z^\dagger}.$$
Observe that $\mc V$ is fixed pointwise by $-\rho$, thus if $\phi$ and $\phi^\dagger$ is the projection from $\mc V$ to $\mc G_Z$ and $\mc G_{Z^\dagger}$ respectively, $-\rho(\phi)=\phi^\dagger$.
Thus a Fuchsian surface is a cyclic surface on which $\omega_\eta$ vanishes.
\end{proof}

\subsection{From cyclic surfaces to Higgs bundles}

We emphasise that the result of the next paragraph is local: the
surface $\Sigma$ is not assumed to be closed. For a cyclic map $f$
from $\Sigma$ to $\X $, we write $\Phi=f^*\phi$,
$\Phi^\dagger=f^*\phi^\dagger$. We denote by $\widehat\rho$ the
pullback of $\rho$ on $f^*\mc G$. By the definition of cyclic maps
$\Phi^\dagger=-\widehat\rho(\Phi)=\Phi^*$.
We also denote by $\widehat\nabla$ the induced connection $f^*\nabla$
on $f^*\mc G$

\begin{proposition}\label{prop:Higgs-cyc}
  Let $f$ be a cyclic map from $\Sigma$ to $\X $.  Then
  \begin{enumerate}
  \item there exists a unique complex structure on the surface so
    that $\Phi$ is of type $(1,0)$ and $\Phi^\dagger$ is of type
    $(0,1)$. \label{higgs-cyc1}
  \item the data $\Psi=(f^*\mathcal G, \Phi)$ defines a Higgs bundle
    whose Hopf differential is zero. \label{higgs-cyc2}
  \item The pair $(\widehat\nabla,\widehat\rho)$ on $f^*\mathcal G$ is the
    solution of the self duality equations: \label{higgs-cyc3}
    \begin{align}
      \widehat\nabla\widehat\rho&=0,\\
      R^{\widehat\nabla}&=2\cwdot\Phi\ww\Phi^*\label{higgsR},\\
      \d^{\widehat\nabla}\Phi &=0,\label{higgs2}\\
      \d^{\widehat\nabla}\Phi^* &=0\label{higgs2*}.
    \end{align}
  \item Finally, if $H$ is the Hitchin map from the space of Higgs
    bundles to the space of holomorphic differentials, then $H(\Psi)$ is a holomorphic
    differential of highest possible degree. \label{higgs-cyc5}
  \end{enumerate}
\end{proposition}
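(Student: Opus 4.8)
The computation hinges on pulling back the fundamental flatness relation \eqref{curvfund} and decomposing it through the cyclic projections of Proposition \ref{cyclic-dec}. Since $f^*\omega_0=0$ and $f^*\omega_1=0$ by \eqref{def:cyc2} and \eqref{def:cyc1}, we have $f^*\omega=\Phi+\Phi^\dagger$ with $\Phi\in\Omega^1(\Sigma,f^*\mc G_Z)$, $\Phi^\dagger\in\Omega^1(\Sigma,f^*\mc G_{Z^\dagger})$ and $\Phi^\dagger=-\widehat\rho(\Phi)$ by \eqref{def:cyc3}. Two root-theoretic facts are used throughout: the Killing form vanishes identically on $\mc G_Z$ because $Z$ contains no pair of opposite roots, and every $\alpha\in Z$ satisfies $\deg(\alpha)\equiv-1\pmod{m_\ell+1}$ (the simple roots contribute $-1$ while $\deg(\eta)=m_\ell$ by Proposition \ref{etaHiggs}). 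For Assertion \eqref{higgs-cyc1} I would build $J$ from $\Phi$ directly. Trivialising each line bundle $\mc G_\alpha$ by its Chevalley generator, write $f^*\omega_\alpha=\widehat\omega_\alpha\otimes{\rm x}_\alpha$. By Proposition \ref{pro:cyc-simp}, condition \eqref{def:cyc5} gives $f^*(\omega_\alpha\ww\omega_\beta)=0$ for all $\alpha,\beta\in Z$, which for $\alpha+\beta$ a root forces $\widehat\omega_\alpha\wedge\widehat\omega_\beta=0$. Since in rank $2$ the set $Z=\{-\beta\mid\beta\in\Pi\}\cup\{\eta\}$ is connected for this relation (the two simple roots are adjacent, so $-(\beta_1+\beta_2)$ is a root, and $\eta-\beta$ is a root for each simple $\beta$), and since at least one $\widehat\omega_\alpha$ is nowhere zero by \eqref{def:cyc6} carried over through \eqref{def:cyc3}, all the $\widehat\omega_\alpha$ $(\alpha\in Z)$ are proportional to a single nowhere-vanishing complex $1$-form $\mu$. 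Declaring $\mu$ to be of type $(1,0)$ then defines a unique almost complex structure $J$, integrable since $\dim_{\mathbb R}\Sigma=2$; $\Phi$ is of type $(1,0)$ by construction and $\Phi^\dagger=-\widehat\rho(\Phi)$ is of type $(0,1)$ by antilinearity of $\rho$, provided $\mu\wedge\overline\mu$ never vanishes, the point I isolate below.

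Assertions \eqref{higgs-cyc2} and \eqref{higgs-cyc3} then follow formally. The Hopf differential $\Kill{\Phi}{\Phi}$ vanishes because the Killing form is zero on $\mc G_Z$. For the self-duality equations, $\widehat\nabla\widehat\rho=0$ is the pullback of the parallelism of $\rho$ established in Proposition \ref{vec-bun-des}; pulling back the flatness of $\D=\nabla+\ad(\omega)$ shows $\widehat\nabla+\Phi+\Phi^\dagger$ is flat, i.e. $\d^{\widehat\nabla}(\Phi+\Phi^\dagger)+(\Phi+\Phi^\dagger)\ww(\Phi+\Phi^\dagger)+R^{\widehat\nabla}=0$. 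I would project this with $\pi_0,\pi,\pi^\dagger,\pi_1$, using that $R^{\widehat\nabla}$ lies in $\mc T\subset\mc H$ and that $\widehat\nabla$ preserves the root bundles, so that $\d^{\widehat\nabla}\Phi$ stays in $\mc G_Z$ and $\d^{\widehat\nabla}\Phi^\dagger$ in $\mc G_{Z^\dagger}$. The $\pi_1$-part is automatic, since $\Phi\ww\Phi=f^*(\phi\ww\phi)=0$ by \eqref{def:cyc5} and $\Phi^\dagger\ww\Phi^\dagger=\widehat\rho(\Phi\ww\Phi)=0$; by \eqref{piom} the $\pi$-part collapses to $\d^{\widehat\nabla}\Phi=0$, which is \eqref{higgs2}, and applying $\widehat\rho$ together with $\widehat\nabla\widehat\rho=0$ yields \eqref{higgs2*}; by \eqref{zeta090} the $\pi_0$-part becomes the curvature equation \eqref{higgsR}. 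As $\Phi$ is $(1,0)$ and $\Phi^*=\Phi^\dagger$ is $(0,1)$, this curvature is of type $(1,1)$, so $\widehat\nabla$ is the Chern connection of the holomorphic structure $(\widehat\nabla)^{0,1}$, and $\d^{\widehat\nabla}\Phi=0$ makes $\Phi$ a holomorphic section of $f^*\mc G\otimes K$, which is Assertion \eqref{higgs-cyc2}.

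For Assertion \eqref{higgs-cyc5}, write $H(\Psi)=(p_1(\Phi),\dots,p_\ell(\Phi))$ with the invariant polynomials $p_i$ of degree $m_i+1$ from Proposition \ref{InvPoly}. I would use the inner automorphism $\psi=\Ad(\exp(\tfrac{2\pi i}{m_\ell+1}\,a))$, which multiplies $\mc G_\alpha$ by $\zeta^{\deg(\alpha)}$, where $\zeta=e^{2\pi i/(m_\ell+1)}$. By the congruence noted above, $\psi(\Phi)=\zeta^{-1}\Phi$; combining the $\Ad$-invariance $p_i(\psi\Phi)=p_i(\Phi)$ with homogeneity gives $\zeta^{-(m_i+1)}p_i(\Phi)=p_i(\Phi)$, hence $p_i(\Phi)=0$ for every $i<\ell$. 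Therefore $H(\Psi)$ lies in the top summand $H^0(\Sigma,\mc K^{m_\ell+1})$ and is, by \eqref{higgs-cyc2}, a holomorphic differential of the highest possible degree.

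The step I expect to cost the most is the nondegeneracy $\mu\wedge\overline\mu\neq0$ in \eqref{higgs-cyc1}, equivalently that $f$ is an immersion so that $J$ is a genuine complex structure. I would settle it by computing the induced metric $-\Kill{f^*\omega}{\widehat\rho(f^*\omega)}$: writing $\Phi=\mu\otimes A$ with $A$ a section of $\mc G_Z$, the vanishing of $\Kill{A}{A}$ (again because $\mc G_Z$ carries no pair of opposite roots) reduces it to $-2\Kill{A}{\widehat\rho(A)}\,\lvert\mu\rvert^2$, with $-\Kill{A}{\widehat\rho(A)}>0$ by the definiteness of the Cartan involution $\rho$. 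The metric is thus nonnegative and degenerates exactly where $\mu\wedge\overline\mu=0$. Since the non-vanishing hypothesis \eqref{def:cyc6} only controls $\mu$ as a covector and not the independence of its real and imaginary parts, the crux is to exclude these degeneracy points and conclude that the cyclic surface is an honest conformal immersion.
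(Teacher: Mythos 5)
Your treatment of assertions \eqref{higgs-cyc2} and \eqref{higgs-cyc3} coincides with the paper's proof: pull back the structure equation \eqref{curvfund}, use $f^*\omega_0=f^*\omega_1=0$ together with $\Phi\ww\Phi=\Phi^\dagger\ww\Phi^\dagger=0$ to reduce $f^*(\omega\ww\omega)$ to $2\cwdot\Phi\ww\Phi^\dagger\in\Omega^2(\Sigma,\widehat{\mc H})$, and split by the projections $\pi_0,\pi,\pi^\dagger$, the parallelism of $\widehat\rho$ coming from Proposition \ref{vec-bun-des}. For assertion \eqref{higgs-cyc5} you actually do more than the paper, which defers to Hitchin's construction and Proposition \ref{InvPoly}: your argument with $\psi=\Ad(\exp(2\pi i\, a/(m_\ell+1)))$, the congruence $\deg(\alpha)\equiv -1 \pmod{m_\ell+1}$ on $Z$, and invariance plus homogeneity of the $p_i$ is precisely the grading trick the paper uses in Proposition \ref{hpara}, and it is correct. (Two harmless slips: $\eta-\beta$ is a root only for \emph{some} simple root $\beta$ when $\ms G$ is $\ms{Sp}(4,\mathbb C)$ or $\ms G_2$, not for every simple root; and your connectivity argument is written only in rank $2$, whereas the proposition, and the paper's proof via connectedness of the Dynkin diagram, are general. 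Neither affects the conclusion.)

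The genuine problem is the one you flagged in assertion \eqref{higgs-cyc1} and left open, and you should know two things about it. First, under your weak reading of condition \eqref{def:cyc6} --- that $f^*\omega_\beta$ is merely a nowhere-zero covector --- the assertion is actually \emph{false}, so no computation can close the gap. Take $u=X-Y$ and $f(x,y)\defeq\exp(xu)\cdot x_0$, a map factoring through a geodesic of the Fuchsian surface through $x_0$. Then $f^*\omega=\Ad(\exp(xu))(X-Y)\cwdot \d x$, and all six conditions of Definition \ref{cyc-map} hold: $\pi_0$ and $\pi_1$ kill $X-Y$, the form $f^*(\phi\ww\phi)$ vanishes trivially, $\rho(Y)=X$ gives \eqref{def:cyc3}, $\lambda(X-Y)=X-Y$ (Proposition \ref{prop:lS}) gives \eqref{def:cyc4}, and $f^*\omega_\beta=\sqrt{r_\beta}\cwdot\Ad(\exp(xu))({\rm x}_\beta)\cwdot\d x$ is nowhere zero. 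Yet $\Phi=f^*\phi$ annihilates $\partial_y$ while $\Phi(\partial_x)\not=0$, and no almost complex structure makes such a form of type $(1,0)$: the kernel of a $(1,0)$-form is $J$-invariant, and a two-dimensional real vector space has no $J$-invariant line, so a $(1,0)$-form with nontrivial kernel at a point vanishes there. Second, your proposed fix via the induced metric is indeed circular, as you half-admit: that metric degenerates exactly where $\mu\wedge\overline\mu$ does.

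The resolution is definitional, not computational. Condition \eqref{def:cyc6} is intended to say that $f^*\omega_\beta$ is a pointwise isomorphism from $\T\Sigma$ to $f^*(\mc G_\beta)$ (real rank two), and this is exactly how the paper uses it: in the proof of this very proposition (``both $f^*\alpha$ and $f^*\beta$ are isomorphisms'') and again in Paragraph \ref{trans} (``by definition of cyclic surfaces, $f_0^*\omega_\alpha$ is a bijection from $\T\Sigma$ to $f_0^*(\mc G_\alpha)$''). With that reading, each $f^*\omega_\alpha$ determines a unique complex structure by pulling back multiplication by $i$, your form $\mu$ is pointwise injective so $\mu\wedge\overline\mu$ never vanishes, and your proportionality argument (or the paper's variant: $J_\alpha=J_\beta$ for adjacent simple roots, connectedness of the Dynkin diagram, then $\phi\ww\phi=0$ to settle the type of the $\eta$-component) completes assertion \eqref{higgs-cyc1}. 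So adopt the strong reading of \eqref{def:cyc6}, delete your final paragraph, and your proof is complete and essentially the paper's.
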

As a corollary, we get
\begin{corollary}\label{coro:Higgs-cyc} Let $f$ be a cyclic map.  Let $p$ be the projection
  from $\X $ to the symmetric space $\ms S(\ms G)$, then $p\circ f$ is
  a minimal surface. Moreover
  \begin{align}
  \operatorname{Area}(p\circ f)&=i\cwdot\int_S\Kill{\Phi}{\Phi^\dagger}\label{area-phi}
  \end{align}

\end{corollary}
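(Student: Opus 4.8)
The plan is to recognise $p\circ f$ as the equivariant harmonic map that the correspondence recalled before Proposition \ref{ener} attaches to the Higgs bundle $\Psi$ of Proposition \ref{prop:Higgs-cyc}, and then to specialise the energy formula of Proposition \ref{ener}; Proposition \ref{prop:Higgs-cyc} does all the analytic work, so what remains is essentially assembly. First I would compute the pullback of the Maurer--Cartan form $\omega_{\ms S}\defeq i\circ\alpha$ of $\ms S(\ms G)$. Since $f$ is cyclic, Assertions \eqref{def:cyc1} and \eqref{def:cyc2} give $f^*\omega_0=f^*\omega_1=0$, so the cyclic decomposition \eqref{eq:MCcyc} collapses to $f^*\omega=\Phi+\Phi^\dagger$. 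Using $\Phi^\dagger=-\widehat\rho(\Phi)$ and $\widehat\rho^2=1$ one gets $\widehat\rho(f^*\omega)=-(\Phi+\Phi^\dagger)=-f^*\omega$, so that $f^*\omega$ is a section of the $(-1)$--eigenbundle $\mk p=\{u\mid\rho(u)=-u\}$ of $\rho$, i.e.\ of the bundle of tangent directions of the symmetric space. By Proposition \ref{MaurerSym}, $p^*\omega_{\ms S}$ is precisely the projection of $\omega$ onto $\mk p$, and this projection commutes with $f^*$; hence
\begin{align*}
(p\circ f)^*\omega_{\ms S}=\Phi+\Phi^\dagger,
\end{align*}
whose $(1,0)$-- and $(0,1)$--parts, for the complex structure of Assertion \eqref{higgs-cyc1}, are exactly $\Phi$ and $\Phi^\dagger$.

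Next I would invoke the harmonic map / Higgs bundle correspondence. By Assertion \eqref{higgs-cyc3}, the pair $(\widehat\nabla,\widehat\rho)$ solves the self-duality equations for the Higgs bundle $(f^*\mc G,\Phi)$; passing if necessary to the universal cover (harmonicity and conformality being local), the correspondence produces an equivariant harmonic map $g$ into $\ms S(\ms G)$ with $(g^*\mc G,(g^*\omega_{\ms S})^{(1,0)},g^*\rho)=(f^*\mc G,\Phi,\widehat\rho)$. Since $(p\circ f)^*\mc G=f^*\mc G$, since $(p\circ f)^*\rho=\widehat\rho$ (because $p(\mk h,\rho,\lambda)=\rho$), and since $((p\circ f)^*\omega_{\ms S})^{(1,0)}=\Phi$ by the previous step, the uniqueness clause of the correspondence forces $g$ and $p\circ f$ to coincide up to the action of $\ms G$; as harmonicity is $\ms G$--invariant, $p\circ f$ is harmonic. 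Its Hopf differential $\Kill{\Phi}{\Phi}$ vanishes by Assertion \eqref{higgs-cyc2}, so $p\circ f$ is a conformal harmonic map, hence a branched minimal immersion; Assertion \eqref{def:cyc6} keeps the simple-root components of $\Phi$ from vanishing and so excludes branch points, which yields the first claim.

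Finally, for the area I would use that a conformal harmonic map has area equal to its energy, and apply Proposition \ref{ener} to $g=p\circ f$:
\begin{align*}
\operatorname{Area}(p\circ f)=\operatorname{Energy}(p\circ f)=i\cwdot\int_S\Kill{((p\circ f)^*\omega_{\ms S})^{(1,0)}}{((p\circ f)^*\omega_{\ms S})^{(0,1)}}=i\cwdot\int_S\Kill{\Phi}{\Phi^\dagger},
\end{align*}
which is exactly \eqref{area-phi}.

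The real content is concentrated in the first step together with Proposition \ref{prop:Higgs-cyc}; the rest is bookkeeping. The delicate point, and the only place the cyclic hypotheses genuinely enter, is the interplay of the vanishing of $f^*\omega_0$ and $f^*\omega_1$ with the reality condition $\Phi^\dagger=-\widehat\rho(\Phi)$: this is precisely what forces $f^*\omega$ to lie in $\mk p$ and to split cleanly as $\Phi+\Phi^\dagger$ into its $(1,0)$ and $(0,1)$ parts, so that $p\circ f$ is conformal and its induced metric is read off from $\Kill{\Phi}{\Phi^\dagger}$. I expect the only subtlety to watch is that the identity $\operatorname{Area}=\operatorname{Energy}$ be applied only after conformality (the vanishing Hopf differential) is in hand.
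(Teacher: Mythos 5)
Your overall route coincides with the paper's: you use Proposition \ref{MaurerSym} to identify the complexified differential of $p\circ f$ with $\Phi+\Phi^\dagger$, whose $(1,0)$ and $(0,1)$ parts are $\Phi$ and $\Phi^\dagger$; you deduce conformality from the vanishing of the Hopf differential $\Kill{\Phi}{\Phi}$ (Assertion \eqref{higgs-cyc2} of Proposition \ref{prop:Higgs-cyc}); and you obtain \eqref{area-phi} from Proposition \ref{ener} together with the equality of area and energy for conformal maps. Your preliminary observation that the cyclic conditions force $\rho(f^*\omega)=-f^*\omega$, so that $f^*\omega$ takes values in $\mk p$ and the projection defining $p^*(i\circ\alpha)$ acts on it as the identity, is the correct reading of Proposition \ref{MaurerSym} (whose displayed formula should be understood, as its proof makes clear, as the Killing-orthogonal projection onto $\mk k^\perp=\mk p$), and it feeds consistently into Proposition \ref{ener}.

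The one step that does not work as written is your derivation of harmonicity. The uniqueness clause of the correspondence you invoke compares two \emph{harmonic} maps inducing the same data $(E,\Phi,\rho_E)$; to apply it to the pair $(g,\,p\circ f)$ you must already know that $p\circ f$ is harmonic, which is exactly what you are trying to prove, so the argument is circular as stated. There are two ways to repair it. The paper's way is direct: once $\T^{(1,0)}_{\mathbb C}(p\circ f)$ is identified with a multiple of $\Phi$, the equation $\d^{\widehat\nabla}\Phi=0$ (Equation \eqref{higgs2} in Assertion \eqref{higgs-cyc3} of Proposition \ref{prop:Higgs-cyc}) \emph{is} the harmonic map equation for $p\circ f$, by Donaldson's interpretation of the self-duality equations; no appeal to uniqueness is needed. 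Alternatively, your identification-of-data argument can be saved by replacing the uniqueness clause with the stronger reconstruction principle, the analogue for $\ms S(\ms G)$ of Proposition \ref{VecBundMan}: a map into $\ms S(\ms G)$ is recovered, up to post-composition by an element of $\ms G$, from the pulled-back flat bundle together with its section of Cartan involutions, since in a flat trivialisation that section is the map itself; this applies to $p\circ f$ without assuming harmonicity and then forces $p\circ f=h\cdot g$. With either repair the rest of your argument, including the correct remark that Assertion \eqref{def:cyc6} excludes branch points (a point slightly finer than what the corollary asserts), goes through.
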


\noindent{\em Proof of Proposition \ref{prop:Higgs-cyc}. }
  Since  $f^*\omega_\alpha$ never vanishes for any simple root $\alpha$, it follows
  that there exists exactly one complex $J_\alpha$ structure
  so that $f^*\omega_\alpha$ is of type $(1,0)$. We also know by Proposition \ref{pro:cyc-simp} that
  $f^*\omega_\alpha\ww f^*\omega_\beta=0$ for every pair $(\alpha,\beta)$ of simple roots. If $\alpha$ and $\beta$ are simple and $\alpha+\beta$ is a root, it follows from the fact that both $f^*\alpha$ and $f^*\beta$ are isomorphisms  that 
  $J_\alpha=J_\beta$. Then using the connectedness of the Dynkin diagram we get that for every pair $(\alpha,\beta)$,   $J_\alpha=J_\beta$.  This proves the uniqueness and shows that there
  exists a  complex structure such that for every simple root
  $\alpha$, $f^*\omega_\alpha$ is of type $(1,0)$. It remains to
  understand the type of $f^*\omega_{-\eta}$.
  
  Since $f^*\phi\ww f^*\phi=0$, decomposing along roots we obtain that
  for all simple root $\alpha$,
  \begin{equation}\label{equ:type1}
    f^*\omega_\alpha\ww f^*\omega_{-\eta}=0
  \end{equation}
  Since there exist a simple root $\alpha$ so that $\eta-\alpha$ is a
  root and in particular
$$
[\mathcal G_{\alpha},\mathcal G_{-\eta}]\not=0.
$$
Equation \eqref{equ:type1} implies that $f^*(\omega_{-\eta})$ is
of type $(1,0)$.  We thus obtain that $\Phi$ is of type $(1,0)$ and by
the reality condition that
$$
\Phi^\dagger=\Phi^*,
$$
is of type $(0,1)$.  This finishes the proof of statement
\eqref{higgs-cyc1}.  Statement \eqref{higgs-cyc2} is just an immediate consequence of the previous statement.

Let us now prove statement \eqref{higgs-cyc3}. Let $f$ be a cyclic
map. Recall that for a cyclic surface
\begin{align}
  f^*(\omega\ww\omega)&=\left(\Phi+\Phi^*\right)\ww \left(\Phi+\Phi^*\right)=2\, \Phi\ww\Phi^*\in\Omega^2(\Sigma,
\widehat{\mathcal H}).
\end{align}
Thus the curvature equation \eqref{curvfund}
$$
R^{\widehat{\nabla}}+\d^{\widehat{\nabla}}\Phi+\d^{\widehat{\nabla}}\Phi^\dagger+2\cwdot\Phi\ww\Phi^\dagger=0,
$$
yields the self duality field equation by taking the projections,
namely $\pi_0$ for the first equation and $\pi$ and $\pi^\dagger$ for
the two last equations:
\begin{align*}
  R^{\widehat{\nabla}}+2\cwdot\Phi\ww\Phi^*&=0,\\
  \d^{\widehat{\nabla}}\Phi&=0,\  \
  \d^{\widehat{\nabla}}\Phi^*=0.
\end{align*}

Finally, statement \eqref{higgs-cyc5} follows from Hitchin's
construction in Section 5 of \cite{Hitchin:1992es} (see also Baraglia
\cite{Baraglia:2010vi}) and Proposition \ref{InvPoly}.

\qed
\vskip 0.5 truecm
\noindent{\em Proof of Corollary \ref{coro:Higgs-cyc}. }
%  The proof relies on the following observations coming from
%  \cite{Donaldson:1987} and Section 5 of \cite{Labourie:2005a}.
 For a
  smooth map $g: M\to N$ between manifolds, we consider $\T g$ as an
  element of $\Omega^1\left(M,g^*(\T N)\right)$. From Proposition \ref{MaurerSym}, $\T_\mathbb C \ms S(\ms
  G)$ is identified with the canonical $\mk g$ bundle over $\ms S(\ms
  G)$. As a consequence of the second assertion of Proposition
  \ref{MaurerSym}, we have
  \begin{align*}
    \frac{1}{2}(\Phi+\Phi^*)&=\T_\mathbb C (p\circ f),
  \end{align*}
and thus
  \begin{align*}
    \T^{(1,0)}_\mathbb C (p\circ f)=\frac{1}{2}\Phi,&\ \ \
    \T^{(0,1)}_\mathbb C (p\circ f)=\frac{1}{2}\Phi^*.
  \end{align*}
  Now the equation $\overline{\partial}\Phi=0$ says that
  $p\circ f$ is harmonic (See \cite{Donaldson:1987} and Proposition
  8.1.2. of \cite{Labourie:2005a}). Moreover by the last assertion of
  the Proposition \ref{prop:Higgs-cyc}, the Hopf differential of $f$
  is zero and thus $p\circ f$ is a minimal mapping (See Proposition
  8.1.4 of \cite{Labourie:2005a}). Equation \eqref{area-phi} follows at once from Proposition \ref{ener} and the fact that the energy of a minimal mapping is the area.
\qed

%%%%%%%%%%%%%%HERE%%%%%%%%%%%%%%

\subsection{From cyclic Higgs bundles to cyclic surfaces}

In this section, contrarily to the previous section, where the
construction was purely local, the surface is now assumed to be
closed. The main result of this section is

\begin{theorem}\label{Higgs2cyc} Let $(\widehat{\mc G},\Phi_{\rm q})$ be a cyclic Higgs
  bundle over a closed surface $\Sigma$. Then there exists a unique
  cyclic map $f$ from $\Sigma$ to $\X $ such that
  \begin{align}
    \widehat{\mc G}&=f^*(\mc G)\cr \Phi_{\rm q}&= f^*(\phi).
  \end{align}
\end{theorem}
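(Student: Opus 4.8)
The plan is to recognise the self-duality solution attached to $(\widehat{\mc G},\Phi_{\rm q})$ as exactly the data required by Corollary \ref{VecBundManCor}, produce from it an equivariant map $f$ to $\X$, and then check the six conditions of Definition \ref{cyc-map}. By Theorem \ref{th:H2} the self duality equations for the cyclic Higgs bundle admit a unique solution $(\nabla,\widehat\rho)$, with $\nabla\widehat\sigma=0$, where $\widehat\sigma$ is the involution of Proposition \ref{UniqueSigma}. I set $\Phi^*\defeq-\widehat\rho(\Phi_{\rm q})$ and $\widehat\omega\defeq\Phi_{\rm q}+\Phi^*$. The flat connection of the problem is $\nabla+\Phi_{\rm q}+\Phi^*$, so with $\widehat\nabla\defeq\nabla$ one has $\widehat\D\defeq\widehat\nabla+\ad(\widehat\omega)$ flat by the last three equations of \eqref{def:sd}.

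First I would assemble a parallel Hitchin--Kostant quadruple $(\widehat{\mk h},\widehat\Delta^+,\widehat\rho,\widehat\lambda)$ on $\widehat{\mc G}$. The involution $\widehat\rho$ is parallel by \eqref{def:sd}, $\widehat\sigma$ is parallel by Theorem \ref{th:H2}, hence so is $\widehat\lambda=\widehat\rho\circ\widehat\sigma$; the Cartan bundle $\widehat{\mk h}=\widehat{\mc H}$ is parallel and $\widehat\rho$-invariant by Proposition \ref{hpara}. The delicate point is that the positive system $\widehat\Delta^+$ --- equivalently the grading element $\widehat a=\frac{1}{2}\sum_{\alpha>0}\widehat h_\alpha$ of $\widehat{\mk h}$ --- must be parallel. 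Here I would argue that $\widehat a$, being the constant section of the holomorphically trivial bundle $\widehat{\mc H}=\mk h\otimes\mc K^0$, is holomorphic, so $\nabla^{0,1}\widehat a=0$; since any Cartan involution sends each coroot to its opposite we have $\widehat\rho(\widehat a)=-\widehat a$, whence, as $\widehat\rho$ is antilinear and parallel, $\nabla^{1,0}\widehat a=-\nabla^{1,0}\widehat\rho(\widehat a)=-\widehat\rho(\nabla^{0,1}\widehat a)=0$. Thus $\nabla\widehat a=0$ and $\widehat\Delta^+$ is parallel. Since $\Phi_{\rm q}$ and $\Phi^*$ take values in the root spaces $\mc G_Z\oplus\mc G_{Z^\dagger}$, which are Killing-orthogonal to $\widehat{\mk h}\supset\widehat{\mk t}$, we get $\Kill{u}{\widehat\omega}=0$ for $u\in\widehat{\mk t}$, i.e.\ condition \eqref{womt}. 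Applying Corollary \ref{VecBundManCor} to $\Sigma$ then yields a representation $\delta$ and a $\delta$-equivariant map $f$ with $f^*\mc G=\widehat{\mc G}$, $f^*\omega=\widehat\omega$, $f^*\rho=\widehat\rho$ and $f^*\lambda=\widehat\lambda$.

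It then remains to verify that $f$ is cyclic. By Proposition \ref{HiggsCyclic} $\Phi_{\rm q}$ is valued in $\mc G_Z$ and $\Phi^*$ in $\mc G_{Z^\dagger}$, so $f^*\phi=\pi(\widehat\omega)=\Phi_{\rm q}$, $f^*\phi^\dagger=\Phi^*$, and $f^*\omega_0=f^*\omega_1=0$, giving conditions (1) and (2). Condition (3) is free: $f^*(\phi\ww\phi)=\Phi_{\rm q}\ww\Phi_{\rm q}=0$ because $\Phi_{\rm q}$ is of type $(1,0)$ on a curve. Condition (4) reads $f^*(\rho(\phi))=\widehat\rho(\Phi_{\rm q})=-\Phi^*=-f^*(\phi^\dagger)$. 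For condition (5), the $\mc G_\beta$-component ($\beta$ simple positive) of $\widehat\omega$ comes from $\Phi^*=-\widehat\rho(\Phi_{\rm q})$, whose relevant part is the image under $\widehat\rho$ of the nowhere-vanishing term $\sqrt{r_\beta}\,{\rm x}_{-\beta}$ of $\Phi_{\rm q}$, hence never vanishes. The reality condition (6) is the key computation: using $\widehat\sigma(\Phi_{\rm q})=-\Phi_{\rm q}$ from \eqref{eq:PhiSigma} and $\widehat\lambda=\widehat\rho\circ\widehat\sigma=\widehat\sigma\circ\widehat\rho$, one gets $\widehat\lambda(\Phi_{\rm q})=\widehat\rho(-\Phi_{\rm q})=\Phi^*$ and $\widehat\lambda(\Phi^*)=-\widehat\sigma(\Phi_{\rm q})=\Phi_{\rm q}$, so $f^*(\lambda(\omega))=\widehat\lambda(\widehat\omega)=\Phi^*+\Phi_{\rm q}=\widehat\omega=f^*(\omega)$.

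For uniqueness, any cyclic $f$ with $f^*\phi=\Phi_{\rm q}$ and $f^*\mc G=\widehat{\mc G}$ gives, by Proposition \ref{prop:Higgs-cyc}, the solution $(f^*\nabla,f^*\rho)$ of the self duality equations for $(\widehat{\mc G},\Phi_{\rm q})$, which is unique by Theorem \ref{th:H2}; conditions (1), (2) and (4) then force $f^*\omega=\Phi_{\rm q}+\Phi^*=\widehat\omega$, so $f$ is unique up to the ambiguity in Corollary \ref{VecBundManCor} (conjugation of $\delta$). I expect the main obstacle to be precisely the parallelism of $\widehat\Delta^+$: Proposition \ref{hpara} only delivers $\widehat{\mk h}$ parallel, and one genuinely needs the interplay above between the holomorphicity of $\widehat a$ and the parallel Cartan involution to pin down the positive Weyl chamber along $\Sigma$.
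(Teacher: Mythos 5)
Your proof is correct and follows essentially the same route as the paper: take the self-duality solution, use Proposition \ref{hpara} and Theorem \ref{th:H2} to make the Hitchin--Kostant quadruple parallel, check the orthogonality condition \eqref{womt} via Proposition \ref{HiggsCyclic}, apply Proposition \ref{VecBundMan} (in your case its equivariant Corollary \ref{VecBundManCor}, which is the more careful choice since $\Sigma$ is closed rather than simply connected), and then verify the six conditions of Definition \ref{cyc-map} by exactly the same type and reality computations. The only place you go beyond the paper is your Chern-connection argument that the grading element $\widehat a$ is parallel (holomorphicity gives $\nabla^{0,1}\widehat a=0$, then $\widehat\rho(\widehat a)=-\widehat a$ and antilinearity give $\nabla^{1,0}\widehat a=0$), a point the paper asserts without detail when it declares $\widehat\Delta^+$ parallel; this is a correct filling-in of a step within the same approach, not a different one.
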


\begin{proof}
  Let $(\widehat{\mc G},\Phi_{\rm q})$ be a cyclic Higgs bundle. Let $(\nabla,\widehat\rho)$ be the solution
  of the self duality equations. By  Proposition
  \ref{hpara}, the corresponding vector bundle $\widehat{\mc H}$ is parallel. Thus, following
  Hitchin Theorem \ref{th:H2}, the associated involution
  $\widehat{\sigma}$ is parallel. Thus the Hitchin--Kostant quadruple
  $(\widehat{\mc H},\widehat\Delta^+,\widehat{\rho},\widehat{\lambda})$ is parallel.
 Observe now that by Proposition \ref{HiggsCyclic},
$$
\Omega\defeq \Phi_{\rm q}+\Phi^*_{\rm q}\in
\Omega^1\left(\Sigma,f^*\left({\mc G}_Z\right)\oplus f^*\left({\mc
    G}_{Z^\dagger}\right)\right).
$$
In particular
$$
\forall u\in\widehat{\mc H},\ \
\widehat{\rho}(u)=\widehat{\sigma}(u)=u\implies\braket{u|\Omega}=0.$$
Recall finally that from the self duality equations $\nabla+\Omega$ is
flat.

Thus we can apply Proposition \ref{VecBundMan}, to obtain a map $f$
from $\Sigma$ to $\X $ so that
\begin{align}
  \widehat{\mc G}&=f^*(\mc G)\cr \Omega &= f^*(\omega).
\end{align}
It remains to prove that $f$ is cyclic. This follows at once from the
following three facts
\begin{enumerate}
\item By construction, we have that $\Phi_{\rm q}=f^*(\phi)$,
  $\Phi^*_{\rm q}=f^*(\phi^\dagger)$, $f^*\omega_0=0$,
  $f^*\omega_1=0$, and $f^*\omega_\alpha$ never vanishes for all simple roots.
\item By Equation \eqref{eq:PhiSigma}, $\sigma(\Phi_{\rm q})=-\Phi_{\rm q}$ and
  since $\rho$ commutes with $\sigma$, $\sigma(\Phi^*_{\rm q})=-\Phi^*_{\rm q}$.
\item Moreover $\Phi_{\rm q}\ww\Phi_{\rm q}$ is of type $(2,0)$, hence
  vanishes.
\end{enumerate}
We thus have verified Assertions \eqref{def:cyc1}, \eqref{def:cyc2},
\eqref{def:cyc3}, \eqref{def:cyc6}, \eqref{def:cyc4} and \eqref{def:cyc5} of the
definition of cyclic surfaces. \end{proof}
\subsection{Cyclic surfaces as holomorphic curves}
The purpose of this section to give another interpretation of cyclic
surfaces as holomorphic cuves in some quotient of $\ms
G_0$.

We use the notation of the previous section. Let us revisit the
definition of cyclic surfaces.  Let us first consider the complex
distributions in $\X $ given by $\mc V$ with $\omega(\mc V)=\mc G_Z$ with the complex
structure $J_0$ given by the multiplication by $i$, as well as $\mc V^\dagger$ with $\omega(\mc V^\dagger)=\mc
G_{Z^\dagger}$ with the complex
structure $J_0$ given by the multiplication by $-i$. Let $\mc W$ be the
complex distribution given by
$$
\mc W=\mc V\oplus \mc V^\dagger.
$$
Then the complex conjugation $\rho$ becomes a complex involution of
$\mc W$, and the Hitchin involution $\sigma$, preserving both $\mc V$ and $\mc
V^\dagger$, is also an antilinear involution.  We now consider the
subdistribution
$$
\mc S=\{u\in\mc W\mid \sigma(u)=-u,\ \ \rho(u)=-u\}.
$$
Observe that $\mc S$ is a subdistribution of $\T {\bf F}$ (see
Proposition \ref{real0}). Then we have

\begin{proposition}
  A cyclic surface is a surface everywhere tangent to $\mc S$ and
  whose tangent space is complex. Conversely, a surface everywhere
  tangent to $\mc S$ and whose tangent space is complex satisfies all the conditions for being a cyclic
  surface, except for the open condition \eqref{def:cyc6} of Definition \ref{cyc-map}.
\end{proposition}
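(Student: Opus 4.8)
The plan is to prove the two implications by checking, one at a time, the six conditions of Definition \ref{cyc-map} against the two geometric hypotheses ``tangent to $\mc S$'' and ``tangent space complex''. Throughout I use $\omega$ to identify $\T_x\X$ with $\mk t^\perp\subset\mk g$, so that a tangent vector $u$ lies in $\mc W$ exactly when $\omega(u)\in\mc G_Z\oplus\mc G_{Z^\dagger}$, and I transport $\rho$ and $\sigma$ to $\mc W$ through $\omega$. Recall that $\rho$ exchanges $\mc G_Z$ and $\mc G_{Z^\dagger}$, that $\sigma$ preserves each of them (Proposition \ref{pro:long-sig}), and that $J_0$ acts as multiplication by $i$ on $\mc V=\omega^{-1}(\mc G_Z)$ and by $-i$ on $\mc V^\dagger=\omega^{-1}(\mc G_{Z^\dagger})$.

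First I would treat tangency to $\mc S$. Writing $f^*\omega=f^*\omega_0+f^*\omega_1+\Phi+\Phi^\dagger$ as in \eqref{eq:MCcyc}, the condition $\T f(\T\Sigma)\subset\mc W$ is precisely $f^*\omega_0=0$ and $f^*\omega_1=0$, i.e.\ Assertions \eqref{def:cyc2} and \eqref{def:cyc1}. Granting this, $f^*\omega=\Phi+\Phi^\dagger$ with $\Phi=f^*\phi$ and $\Phi^\dagger=f^*\phi^\dagger$. Since $\rho$ swaps the two summands, the equation $\rho(\T f(v))=-\T f(v)$ unpacks to $\rho(\Phi)=-\Phi^\dagger$, which is Assertion \eqref{def:cyc3}. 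Since $\sigma$ preserves each summand, $\sigma(\T f(v))=-\T f(v)$ reads $\sigma(\Phi)=-\Phi$ and $\sigma(\Phi^\dagger)=-\Phi^\dagger$; using $\lambda=\sigma\circ\rho$ and Assertion \eqref{def:cyc3} a one-line computation ($\lambda(\Phi+\Phi^\dagger)=-\sigma(\Phi^\dagger)-\sigma(\Phi)$, then matching $\mc G_Z$- and $\mc G_{Z^\dagger}$-parts) shows this is equivalent to $f^*(\lambda(\omega))=f^*(\omega)$, that is Assertion \eqref{def:cyc4}. Hence tangency to $\mc S$ is equivalent to the conjunction of \eqref{def:cyc1}, \eqref{def:cyc2}, \eqref{def:cyc3}, \eqref{def:cyc4}.

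Next I would handle the complex tangent plane. Suppose $f$ is an immersion with $\T f(\T\Sigma)$ stable under $J_0$; transporting $J_0$ back through $\T f$ defines an almost complex structure $J$ on the surface with $\T f\circ J=J_0\circ\T f$. Computing $\omega(\T f(Jv))=i\,\Phi(v)-i\,\Phi^\dagger(v)$ shows that $\Phi$ is of type $(1,0)$ and $\Phi^\dagger$ of type $(0,1)$ for $J$; then $f^*(\phi\ww\phi)=\Phi\ww\Phi$ is a $(2,0)$-form on a Riemann surface and therefore vanishes, giving Assertion \eqref{def:cyc5}. Conversely, starting from a cyclic map, the first statement of Proposition \ref{prop:Higgs-cyc} produces the complex structure $J$ making $\Phi$ of type $(1,0)$; running the same computation backwards gives $\T f\circ J=J_0\circ\T f$, so the tangent plane is $J_0$-complex. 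For the converse one recovers $J$ from $J_0$ and then, exactly as in the proof of Proposition \ref{prop:Higgs-cyc}, uses Proposition \ref{pro:cyc-simp} together with the connectedness of the Dynkin diagram to see that this single structure $J$ simultaneously makes every $f^*\omega_\alpha$, $\alpha\in Z$, of type $(1,0)$.

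The step I expect to be the main obstacle is matching Assertion \eqref{def:cyc6} — the non-vanishing of $f^*\omega_\beta$ on simple roots — with the geometric data, since tangency to $\mc S$ and $J_0$-holomorphicity alone do not force it. Indeed the direction $\omega^{-1}(\mathbb C\,{\rm x}_\eta)$ spans a $J_0$-complex line inside $\mc S$ (one checks $\sigma({\rm x}_\eta)=-{\rm x}_\eta$ by Proposition \ref{pro:long-sig}), yet it kills every simple-root component $\Phi^\dagger_\beta=-\rho(\Phi_{-\beta})$, so a surface tangent to it would be tangent to $\mc S$ and complex without being cyclic. Thus the converse genuinely requires the surface to be non-degenerate in the sense of \eqref{def:cyc6}, and I would build this regularity into the notion of ``surface'' as an immersion meeting every simple-root direction; equivalently, it is exactly the hypothesis that makes the induced $J$ well defined and compatible across all root spaces, as invoked above. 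With that understood, the two lists of conditions coincide and the stated equivalence follows.
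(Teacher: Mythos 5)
Your proof is correct, and its core is exactly the paper's proof: the same linear algebra matches tangency to $\mc S$ with Assertions \eqref{def:cyc1}, \eqref{def:cyc2}, \eqref{def:cyc3} and \eqref{def:cyc4} (using that $\rho$ exchanges $\mc G_Z$ and $\mc G_{Z^\dagger}$ while $\sigma$ preserves each of them), and matches complexity of the tangent plane with $\Phi$ being of type $(1,0)$ for the induced structure, whence Assertion \eqref{def:cyc5} by the $(2,0)$-type argument.

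Where you genuinely go beyond the paper is your last paragraph, and the point is well taken: the paper's own proof of the converse verifies Assertions \eqref{def:cyc1}, \eqref{def:cyc2}, \eqref{def:cyc3}, \eqref{def:cyc5}, \eqref{def:cyc4} and never addresses the nonvanishing Assertion \eqref{def:cyc6}, which indeed does not follow from the two geometric hypotheses. One detail of your counterexample needs repair: $\omega^{-1}(\mathbb C\,{\rm x}_\eta)$ is not contained in $\mc S$, because elements of $\mc S$ are anti-fixed by $\rho$ and therefore have components in both $\mc G_Z$ and $\mc G_{Z^\dagger}$. The correct object is the plane $\left\{z\,{\rm x}_\eta-\bar z\,{\rm x}_{-\eta}\mid z\in\mathbb C\right\}$: it lies in $\mc S$ since $\rho({\rm x}_{\pm\eta})={\rm x}_{\mp\eta}$ (Proposition \ref{UniqueCompact}) and $\sigma({\rm x}_{\pm\eta})=-{\rm x}_{\pm\eta}$ (Proposition \ref{pro:long-sig}), and it is $J_0$-invariant since $J_0$ acts by $i$ on the $\mc G_Z$-component and by $-i$ on the $\mc G_{Z^\dagger}$-component. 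This plane is precisely the tangent distribution of the orbit of the copy of $\ms{SL}(2,\mathbb R)$ attached to the highest root $\eta$; that orbit is a genuine surface (its stabiliser contains the circle $\exp(i\mathbb R\,{\rm h}_\eta)\subset\ms T$), is everywhere tangent to $\mc S$ with $J_0$-complex tangent planes, and yet has $f^*\omega_\beta\equiv 0$ for every simple root $\beta$, so it is not cyclic in the sense of Definition \ref{cyc-map}. Hence the converse, read literally, does require importing \eqref{def:cyc6} as a nondegeneracy hypothesis on the surface, as you propose — though note that in this direction the induced complex structure $J$ exists regardless, being just the restriction of $J_0$, so \eqref{def:cyc6} is needed only to meet the definition, not to define $J$. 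The omission you identified is in the paper's statement and proof, not in yours.
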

\begin{proof} The proof is just linear algebra. If $\Sigma\hookrightarrow   \X$ is a cyclic surface, then for all $u$ tangent to $\Sigma$
\begin{align}
  \omega(u)&=\phi(u)+\phi^\dagger(u),\cr
   \lambda(\omega(u))&=\omega(u),\cr
   \rho(\phi(u))&=-\phi^\dagger(u).
  \end{align}
It follows that a cyclic surface is tangent to $\mc S$. Let now $J$ be the complex structure on $\Sigma$ so that $\Phi$ is of type $(1,0)$. We obtain 
that \begin{align}
  \omega(J\cdot u)&=\phi(J\cdot u)-\rho(\phi(J\cdot u))\cr
  &=i\,\phi(u)-\rho(i\,\phi(u))\cr
    &=i\,\phi(u)+i\,\rho(\phi(u))\cr
       &=i\,\phi(u)-i\,\phi^\dagger(u)\cr
       &=J_0\cdot\omega(u)\label{holom1}.
       \end{align}
In other words, $\T\Sigma$ is a complex subspace of $\mc W$.
    
    Conversely, assume  $\T\Sigma$ is a complex subspace of $\mc W$. Let us equip $\Sigma$ with the induced complex structure. Then by construction, for all $u\in\T\Sigma$
    \begin{align}
\omega_1(u)&=\omega_0(u)=0,\cr
\lambda(u)&=u.
\end{align}
Since $\omega=\phi+\phi^\dagger$ is fixed by $-\rho$ which exchanges $\mc G_Z$ and $\mc G_{Z^\dagger}$, it follows that $\rho(\phi)=-\phi^\dagger$. Finally, since $\phi(J_0u)=i\cwdot\phi(u)$, it follows that $\phi\ww\phi=0$ on $\Sigma$. In particular, $\Sigma$ is a cyclic surface.
\end{proof}
 
\section{Infinitesimal rigidity}\label{sec:IR}

In this section, we prove the infinitesimal rigidity for closed cyclic surfaces. The important corollary for us is Theorem
 \ref{Cyclicinjects} that we restate here.
\begin{theorem}\label{IFI0co}
  The map $\Psi:\mc E_{m_\ell+1}\to \mc H(\Sigma,\ms G_0)$ is an immersion.
\end{theorem}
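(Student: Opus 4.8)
The plan is to deduce the immersion property from an infinitesimal rigidity statement for closed cyclic surfaces, following the outline announced in the introduction. By Theorem~\ref{Higgs2cyc}, a cyclic Higgs bundle representing a point of $\mc E_{m_\ell+1}$ corresponds to a unique cyclic map $f\colon\Sigma\to\X$, and by Proposition~\ref{prop:Higgs-cyc} the representation $\Psi(\cdot)$ is the monodromy of the flat connection $\widehat\nabla+\Omega$, where $\Omega=\Phi+\Phi^\dagger$. A tangent vector in $\ker\d\Psi$ therefore produces a one-parameter family of cyclic maps $f_t$ whose monodromy is constant to first order; writing $\dot\Omega$ and $\dot{\widehat\nabla}$ for the induced variations, the goal is to prove that this infinitesimal deformation of the cyclic surface is trivial, which forces the variations $\dot J$ of the complex structure and $\dot q_\ell$ of the top differential to vanish.

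First I would linearise the defining system. Differentiating the conditions of Definition~\ref{cyc-map} together with the self-duality equations \eqref{higgsR}--\eqref{higgs2*} gives a first-order linear system relating $\dot\Omega$, $\dot{\widehat\nabla}$ and the variation $\dot{\widehat\rho}$ of the Cartan involution. The hypothesis that the monodromy be fixed to first order is equivalent to the statement that the $\d^{\widehat\nabla}+\ad(\Omega)$-closed one-form $\dot{\widehat\nabla}+\dot\Omega$ is a coboundary: there is a section $u$ of $f^*\mc G$ with
$$
\dot{\widehat\nabla}+\dot\Omega=\d^{\widehat\nabla}u+\Omega\ww u.
$$
I would then expand $u$ and $\dot\Omega$ along the cyclic decomposition $\mc G=\mc H\oplus\mc G_Z\oplus\mc G_{Z^\dagger}\oplus\mc G_1$ and by bidegree, systematically using the bracket relations of Proposition~\ref{crucial} to identify which components couple to which.

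The heart of the matter is a vanishing argument on the closed surface $\Sigma$. Pairing the linearised curvature and holomorphicity equations with the deformation and integrating, I would assemble an identity for a quadratic expression of the form $i\cwdot\int_\Sigma\Kill{\beta}{\rho(\beta)}$, where $\beta$ is the $(1,0)$-part of the deformation living in $f^*\mc G_Z$. The cyclic bracket relations of Proposition~\ref{crucial} are precisely what make the off-diagonal cross terms integrate to zero, so that after integration by parts only one term of definite sign survives. Proposition~\ref{Sign} then forces $\beta=0$; reinserting this into the linearised system together with the reality condition \eqref{def:cyc3} propagates the vanishing to the remaining components, whence $\dot J=0$ and $\dot q_\ell=0$, so $\d\Psi$ is injective.

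The main obstacle is exactly this sign computation. One must organise the several coupled components of $\dot\Omega$ and of the gauge $u$ so that every indefinite cross term cancels after integrating over $\Sigma$, leaving only contributions governed by the definite pairing of Proposition~\ref{Sign}; here the special bracket structure of the cyclic roots is indispensable. The additional subtlety is to track the variation $\dot J$ of the complex structure, which enters as a Beltrami differential mixing the $(1,0)$ and $(0,1)$ parts of $\dot\Omega$, while simultaneously keeping the reality condition \eqref{def:cyc4} satisfied along the deformation.
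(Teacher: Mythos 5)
Your overall strategy is the paper's: reduce to an infinitesimal rigidity statement for the associated closed cyclic surface, linearise the Pfaffian system of Definition \ref{cyc-map}, decompose along $\mc H\oplus\mc G_Z\oplus\mc G_{Z^\dagger}\oplus\mc G_1$, and conclude on the closed surface by integration by parts against Proposition \ref{Sign}, with Proposition \ref{crucial} controlling the cross terms (your coboundary formulation of ``monodromy fixed to first order'' is equivalent to the paper's step of conjugating the family $\delta_t$ so that $\xi=\dot f_0$ becomes $\delta_0$-equivariant; both use irreducibility of Hitchin representations). However, there is a genuine gap: the unconditional triviality you aim to prove is \emph{false}. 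Any vector field $\nu$ tangent to the cyclic surface is an infinitesimal deformation of the cyclic Pfaffian system (it merely reparametrises $f$), and its cyclic decomposition has a non-vanishing $\mc G_Z$-component $\zeta=\phi(\nu)$; such deformations arise from pairs $(\dot J,\dot q)=(L_\nu J,L_\nu q)$ that are non-zero as variations of tensors even though they vanish as tangent vectors to $\mc E_{m_\ell+1}$. Consequently no integral identity can force your $(1,0)$-component $\beta$ in $f^*\mc G_Z$ to vanish --- a tangential deformation is a counterexample. The missing idea is the normalisation: the paper's Proposition \ref{IFI0} is \emph{conditional}, assuming $\omega_\alpha(\xi)=0$ for one simple root $\alpha$, and in the deduction of Theorem \ref{IFI0co} this hypothesis is arranged by replacing $\xi$ with $\widehat\xi=\xi-\T f_0(\nu)$, where $\nu$ is the tangential field with $f_0^*\omega_\alpha(\nu)=\zeta_\alpha$. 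Only after this subtraction is rigidity true.

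Relatedly, the endgame is not a single sign computation in which ``only one term of definite sign survives.'' The integration arguments (Propositions \ref{IFIp} and \ref{IFIp22}) kill only $\zeta_1$ (the $\mc G_1$-part) and $\zeta_0$ (the $\mc H_0$-part), together with some bilinear constraints such as $\pi_1(\zeta\ww\phi)=0$ and $\check\pi_0(\zeta\ww\phi^\dagger)=0$. The components $\zeta$ and $\zeta^\dagger$ are then killed by \emph{pointwise} root-combinatorial arguments that crucially use the normalisation $\zeta_\alpha=0$ and the non-vanishing of the $\omega_\beta$, and which split into two cases: $\ms{SL}(3,\mathbb R)$, where one invokes a Weyl-group element exchanging the two simple roots and positivity of $(1,1)$-forms, and the general case, where one uses the existence of a simple root $\alpha$ with $\eta-\alpha$ a positive root outside $Z\cup Z^\dagger$. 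Your sketch defers exactly this content (``the sign computation''), so even granting the normalisation the core of the proof remains undone. Finally, note that after rigidity one only knows $\xi$ is tangential, and a further step is still needed to conclude $\dot J=0$ and $\dot q=0$; the paper does this by expressing the complex structure and the differential through the scalar forms $\Omega_\alpha$ defined by $\omega_\alpha=\Omega_\alpha\cdot{\rm x}_\alpha$.
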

We exploit the fact that cyclic surfaces are solutions of a Pfaffian system, which means that a certain family of forms vanishes on them, as well as a reality condition.

After a preliminary section on Pfaffian systems, we define infinitesimal variation and state our main result, Proposition \ref{IFI0}, in this language.

We prove Theorem \ref{IFI0co} as a corollary of Proposition \ref{IFI0}  in Paragraph \ref{trans}.

The proof of Proposition \ref{IFI0} occupies           most of the sequel and proceeds through obtaining formulas for the derivatives of the infinitesimal variation and a Böchner type formula.

\subsection{Preliminary: variation of Pfaffian systems}

In this section, totally independent on the rest, we explain a useful
proposition that we shall use in the sequel of the proof.

We shall consider the following setting. Let $\mc E_i$ be  vector
bundles over a manifold $M$ equipped with a connection $\nabla$. Let
$\Omega\defeq\left(\Omega_1,\ldots,\Omega_n\right)$ be a family of
forms $\Omega_i$ with values in $\mc E_i$.

\begin{definition}
  A submanifold $N$ of $M$ is {\em a solution of the Pfaffian system}
  defined by $\Omega$ if for  all $i$, $\Omega_i$ vanishes on $N$.
\end{definition}
From now on, by taking $\mc E=\bigoplus \mc E_i$, we may as well assume that all $\mc E_i$ are the same and equal to some vector bundle $\mc E$.
If $\mc E$ is a trivial line bundle, so that $\Omega_i$ are ordinary
forms, then we say the Pfaffian system is {\em elementary}. We can
always reduce any system to an elementary one, by choosing a local
trivialisation of $\mc E$ given by local sections $(v_\alpha)$, then
the {\em associated elementary Pfaffian system} in the trivialisation
is $(\Omega_i^\alpha)$ where
\begin{align}
  \Omega_i&=\sum_\alpha\Omega_i^\alpha\cwdot v_\alpha.
\end{align}

\subsubsection{Deformation of Pfaffian systems}
Let $F=(f_t)$ be a 1-parameter smooth family of deformations of maps
from $N$ to $M$ so that $f_0$ is the identity.  Let
\begin{align}
  \xi &=\left.\frac{\d}{\d t}\right\vert_{t=0}f_t.
\end{align}
Thus $\xi$ is a vector field along $N$, called the {\em tangent vector
  to the family $(f_t)$}. .
\begin{definition}
  The family $(f_t)$ is a {\em first order deformation} of the
  Pfaffian solution $N$ if, for all $i$
  \begin{align}
    \left.\frac{\d}{\d
        t}\right\vert_{t=0}f_t^*\Omega_i&=0,\label{def:Pfaff}
  \end{align}
  where using $\nabla$, we have identified $f_t^*(\mc E)$ with
  $f_0^*(\mc E)$ for all $t$.
\end{definition}
We observe that the definition does not depend on the choice of
$\nabla$: indeed, equivalently, $(f_t)$ is a first order deformation,
if and only  if it is a first order deformation for all elementary
associated Pfaffian system in local trivialisation.

Let us introduce the following definition

\begin{definition}
  A vector field $\xi$ along a solution of a Pfaffian system
  $\Omega=(\Omega_1,\ldots,\Omega_n)$ is {\em an infinitesimal
    variation fo the Pfaffian system} if for all $i$
  \begin{align}
    \left.i_\xi\d^\nabla\Omega_i\right\vert_N&=-
    \left.\d^\nabla\left(i_\xi\Omega_i\right)\right\vert_N.\label{Pfaff00}
  \end{align}

\end{definition}

The following relates the two definitions and will be an important
technical tool
\begin{proposition}
  Assume that $\xi$ is a tangent vector to a family of first order
  deformation of the Pfaffian system. Then $\xi$ is an infinitesimal
  variation of the Pfaffian system: for all $i$,
  \begin{align}
    \left.i_\xi\d^\nabla\Omega_i\right\vert_N&=-
    \left.\d^\nabla\left(i_\xi\Omega_i\right)\right\vert_N.\label{Pfaff001}
  \end{align}
\end{proposition}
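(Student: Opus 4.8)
The plan is to derive the identity \eqref{Pfaff001} from a single covariant first variation formula, namely the bundle-valued analogue of Cartan's formula for the $t$-derivative of a pullback. Concretely, I would establish that for any $\mc E$-valued form $\Omega$ on $M$, with $F=(f_t)$ as above, $f_0$ the inclusion of $N$ and variation field $\xi$, one has
\begin{align}
  \left.\frac{\d}{\d t}\right\vert_{t=0}f_t^*\Omega
  &=\left.\left(i_\xi\d^\nabla\Omega+\d^\nabla(i_\xi\Omega)\right)\right\vert_N,\label{firstvar}
\end{align}
the left-hand side being computed after the $\nabla$-parallel identification of the bundles $f_t^*\mc E$ prescribed in the definition of a first order deformation. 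Granting \eqref{firstvar}, the proposition is immediate: when $\xi$ is tangent to a family of first order deformations the left-hand side of \eqref{firstvar} vanishes for each $\Omega=\Omega_i$, and \eqref{Pfaff001} follows at once.

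To prove \eqref{firstvar} I would work on the product $N\times I$, with $I=(-\epsilon,\epsilon)$ and $F\colon(x,t)\mapsto f_t(x)$, pulling back the pair $(\mc E,\nabla)$. The point is that the identification of the fibres of $f_t^*\mc E$ used in the definition is exactly $F^*\nabla$-parallel transport along the curves $t\mapsto F(x,t)$; in a frame parallel in the $\partial_t$ direction the covariant derivative $\nabla_{\partial_t}$ becomes ordinary differentiation, so that the $t$-derivative of the identified pullbacks is the covariant Lie derivative along $\partial_t$. Writing $F^*\Omega=\Theta'+\d t\wedge\Theta''$ with $\Theta'$ containing no $\d t$, one has $f_t^*\Omega=\Theta'\vert_{N\times\{t\}}$ and $i_{\partial_t}(F^*\Omega)=\Theta''$, with $\Theta''\vert_{t=0}=(i_\xi\Omega)\vert_N$. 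Applying the covariant Cartan formula $i_{\partial_t}\d^{F^*\nabla}+\d^{F^*\nabla}i_{\partial_t}$ to $F^*\Omega$, using the product splitting $\d^{F^*\nabla}=\d^{F^*\nabla}_N+\d t\wedge\nabla_{\partial_t}$ and the naturality $\d^{F^*\nabla}F^*\Omega=F^*(\d^\nabla\Omega)$, and extracting the component with no $\d t$ at $t=0$, one obtains $\partial_t\Theta'\vert_{t=0}$ as the sum of two terms restricted to the $\T N$ directions. These are identified with the right-hand side of \eqref{firstvar} through $\T F(\partial_t)\vert_{t=0}=\xi$ and $\T F\vert_{\T N}=\mathrm{id}$.

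The step needing the most care is precisely this bookkeeping: matching the $\nabla$-identification of the definition with $F^*\nabla$-parallel transport in the $\partial_t$ direction, and checking that the product splitting of the covariant exterior derivative produces the two terms of \eqref{firstvar} with the correct signs. No hypothesis on the curvature of $\nabla$ enters, since only the derivation property of $\d^{F^*\nabla}$ and differentiation in the single direction $\partial_t$ (where parallel transport trivialises the bundle) are used. As the statement is local, one may alternatively first trivialise $\mc E$ and reduce to the elementary case of scalar forms, where \eqref{firstvar} is the classical first variation identity $\frac{\d}{\d t}\vert_{t=0}f_t^*\omega=i_\xi\d\omega\vert_N+\d(i_\xi\omega)\vert_N$; the connection then reappears only through the trivialising frame and reassembles into $\d^\nabla$. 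Either route yields \eqref{firstvar}, and with it the proposition.
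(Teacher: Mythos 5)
Your proposal is correct and is essentially the paper's own argument: the paper also works on the cylinder $N\times[0,1]$, applies the Lie--Cartan formula to $F^*\Omega$ together with the identity $J^*(i_{\partial_t}F^*\alpha)=f_0^*(i_\xi\alpha)$, and handles the connection by a frame argument (there, frames with $\nabla v_\alpha=0$ at a point, rather than frames parallel in the $\partial_t$-direction as in your covariant version). The only difference is organizational: the paper first settles the case of the trivial connection and then reduces the general case to it pointwise in such a frame, which is precisely the alternative route you sketch in your last paragraph, so both treatments hinge on the same two ingredients.
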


\begin{proof} It is enough to assume that $n=1$, that is $\Omega=(\Omega)$.
  Assume first that $\nabla$ is the trivial connection. We consider
  $(f_t)$ as a map $F$ from $P\defeq N\times[0,1]$ to $N$. Let $\partial_t$
  be the canonical vector on $P$ associated to the flow
  $\phi_t:(n,s)\mapsto (n,s+t)$. Let also $J$ be the injection
  $n\mapsto (n,0)$ from $N$ into $P$. Let finally  $\Theta=F^*\Omega$.  Observe first that for any form $\alpha$,
  \begin{align}
    J^*(i_{\partial_t}F^*\alpha)=J^*(F^*i_{F_*\partial_t}\alpha)=J^*(F^*(i_{\xi}\alpha))=f_0^*(i_\xi\alpha).\label{infipfaf1}
  \end{align}
Since $\xi$ is is a tangent vector to a family of first order
  deformation of the Pfaffian system, we have 
  \begin{align}
    J^*\Theta=0,\ \ \ & \ \ \ J^*L_{\partial_t}\Theta =0.
  \end{align}
  By the Lie--Cartan formula,
  \begin{align}
    L_{\partial_t}\Theta &=\d
    i_{\partial_t}\Theta+i_{\partial_t}\d\Theta.
  \end{align}
Using Equation \eqref{infipfaf1}, we get
 \begin{align}
   0 &=J^*\d
    i_{\partial_t}\Theta+J^*i_{\partial_t}\d\Theta\cr
    &=
    J^*\d
    i_{\partial_t}F^*\Omega+J^*i_{\partial_t}\d F^*\Omega\cr
    &=f_0^*\left(\d 
    i_{\partial_\xi}\Omega+i_{\partial_\xi}\d\Omega\right)
  \end{align} Since $f_0$ is the identity, the last equation yields
    \begin{align}
    0&=\d
    i_{\xi}\Omega+i_{\xi}\d\Omega.
  \end{align}
  Thus the conclusion of the proposition holds when $\nabla$ is trivial.
  Assume now that $\nabla$ is not trivial. Let
  $x_0\in N$. We can find locally a base $(v_\alpha)$ of $\mc E$
  giving a local trivialisation, such that $\nabla v_\alpha=0$ at
  $x_0$. Let us write
  \begin{align}
    \Theta&=\sum_{\alpha}\Theta_\alpha\cwdot v_\alpha.
  \end{align}
  Observe that $N$ is also a solution of the Pfaffian system defined
  by $(\Theta_\alpha)$ and that $\xi$ is an infinitesimal deformation
  of that Pfaffian system. Thus, at $x_0$,
  \begin{align}
    i_\xi\d^\nabla\Theta&=\sum_\alpha
    i_\xi\d\Theta_\alpha\cwdot v_\alpha\cr &=-\sum_\alpha
    \d(i_\xi\Theta_\alpha)\cwdot v_\alpha\cr
    &=-\d^\nabla(i_\xi\Theta),
  \end{align}
  where in the first equality we used that $\nabla v_\alpha=0$ at
  $x_0$, in the second we used Equation \eqref{Pfaff00} for the
  Pfaffian system $(\Omega_\alpha)$ and finally in the last equality
  we used $\nabla v_\alpha=0$ at $x_0$ again.
\end{proof}

The careful reader could check that Equation \eqref{Pfaff00} is
independent of the choice of $\nabla$ if $\Omega$ vanishes along $N$.
\subsection{Cyclic surfaces as solutions of a Pfaffian system}
%%%%%%%%%%%%%%%%%HERE%%%%%%%%%%%%%
\begin{definition}
  The {\em cyclic Pfaffian system} is the family of forms
  $$\Lambda\defeq(\omega_0,\omega_1,\phi\ww\phi,\phi+\rho(\phi^\dagger)).$$
\end{definition}

By definition a cyclic surface is a solution of the Pfaffian system $\Lambda$. Observe that for a solution of the Pfaffian system $\Lambda$, $\phi^\dagger\ww\phi^\dagger$ also vanishes. 
 Another form vanishes for cyclic surfaces:  let $\mc H_0$ be the orthogonal in
$\mc H$ with respect to $\rho$ of the subdistribution $\mc T$
corresponding to the Lie algebra of $\ms T$. We have the orthogonal
decomposition
\begin{align*}
  \mc H&=\mc T\oplus\mc H_0.
\end{align*}
We can thus write
\begin{align*}
  \pi_0&=\pi_t+\check\pi_0.
\end{align*}
where $\check\pi_0$ and $\pi_{t}$ are the orthogonal projections
respectively on $\mc H_0$  and $\mc T$. Since
$\check\pi_0\left(R^\nabla\right)=0$, it follows from the self
duality equations \eqref{higgsR} that
$\check\pi_0(\phi\ww\phi^\dagger)\VS=0$.

\subsection{Infinitesimal deformation of cyclic surfaces}
Let $\xi$ be a vector field along a cyclic surface $f$.

\begin{definition}
  We say $\xi$ is an {\em infinitesimal deformation of cyclic
    surfaces}, if $\xi$ is an infinitesimal deformation of the cyclic
  Pfaffian system and if $\xi$ is {\em real}, that is $\overline\xi=\xi$.  We say $\xi$ is an {\em infinitesimal deformation of closed cyclic
    surfaces}, if furthermore the underlying surface is closed.\end{definition}

In the rest of this section, $\xi$ will be a fixed infinitesimal
variation of cyclic surfaces. We will also consider, for the sake of simplicity, the surface $\Sigma$ as a submanifold of $\X$, statement which is locally true.
The following is our main result.

\begin{proposition}\label{IFI0}
  Let $\xi$ be an infinitesimal deformation of a closed cyclic
  surface. Assume that there exists a simple root such that
  $\omega_\alpha(\xi)=0$, then $\xi=0$.
\end{proposition}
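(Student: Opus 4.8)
The plan is to translate the two defining conditions on $\xi$ --- that it be an infinitesimal deformation of the cyclic Pfaffian system $\Lambda=(\omega_0,\omega_1,\phi\ww\phi,\phi+\rho(\phi^\dagger))$ and that it be real --- into a coupled first order system for the components of the section $s\defeq i_\xi\omega$ of $f^*\mc G$, and then to run a Böchner argument on the closed surface $\Sigma$ whose sign is governed by Proposition \ref{Sign}. First I would decompose $s=s_0+s_1+q+q^\dagger$ along $\mc H\oplus\mc G_1\oplus\mc G_Z\oplus\mc G_{Z^\dagger}$. Since $\lambda$ preserves $\mc H$ and $\mc G_1$ and exchanges $\mc G_Z$ with $\mc G_{Z^\dagger}$, the reality condition $\overline\xi=\xi$, i.e. $\lambda(s)=s$ (Proposition \ref{real0}), says exactly that $s_0,s_1$ are $\lambda$-fixed and that $q^\dagger=\lambda(q)$. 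Thus the unknown is effectively the triple $(s_0,s_1,q)$.

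Next I would expand each infinitesimal variation equation $i_\xi\d^\nabla\Omega_i\VS=-\d^\nabla(i_\xi\Omega_i)\VS$. Here $\d^\nabla\omega_0$, $\d^\nabla\omega_1$, $\d^\nabla\phi$, $\d^\nabla\phi^\dagger$ are computed on all of $\X$ from the curvature identity \eqref{curvfund} and the projection formulas of Proposition \ref{cyclic-dec}, and only afterwards restricted to $\Sigma$ (where $\omega_0=\omega_1=0$ and $\phi\ww\phi=0$). Contracting with $\xi$ and using Proposition \ref{crucial} to discard the brackets that land in the wrong summand, the $\omega_0$ equation produces a formula for $\d^\nabla s_0$ in terms of $q$ and $R^\nabla$; the $\omega_1$ equation produces a formula for $\d^\nabla s_1$ with a zeroth order coupling to $\phi,\phi^\dagger$ and to $q$; and the equations for $\phi+\rho(\phi^\dagger)$ and for $\phi\ww\phi$ fix the $(1,0)$/$(0,1)$ types and show that $\d^\nabla q$ is governed by a $\overline\partial$-type operator, i.e. that the variation of the Higgs field is holomorphic in the sense of the self duality equations \eqref{higgs2}. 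In particular I expect $s_1$ to satisfy an equation of the shape $\overline\partial^{\widehat\nabla}s_1=2\,\pi_1\bigl([s_1,\phi^\dagger]+[q,\phi^\dagger]\bigr)$ together with its conjugate.

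The heart of the argument is then a Böchner identity: I would form a suitable $\mc G$-valued $(1,0)$-form out of $(s_0,s_1,q)$, apply $\d^\nabla$, pair it with $\rho$ through $\Kill{\cdot}{\cdot}$, and integrate over the closed surface so that Stokes kills the exact term. The remaining zeroth order terms are sums of expressions $\Kill{\cdot}{\rho(\cdot)}$ to which Proposition \ref{Sign} assigns a definite sign according to type $(1,0)$ or $(0,1)$; the brackets that appear are forced by Proposition \ref{crucial} to respect the cyclic grading, which is precisely what makes the cross terms combine with a consistent sign. The main obstacle --- and the step I expect to absorb most of the work --- is exactly this sign bookkeeping: one must check that the curvature contribution $2\,\phi\ww\phi^\dagger$ and the coupling brackets between the $\mc G_1$, $\mc G_Z$ and $\mc H$ pieces assemble into a nonnegative integrand, so that the identity reads $0=\int_\Sigma(\text{nonnegative terms})$. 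This forces $s_1=0$ and constrains $q$ to be a holomorphic section and $s_0$ to be covariantly determined by $q$.

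Finally I would use the hypothesis. After the Böchner step $q=\pi(s)=\sum_{\beta\in\Pi}\pi_\beta(s)+\pi_{-\eta}(s)$ is holomorphic, and the off-diagonal equations couple the components $\pi_\beta(s)$ across adjacent simple roots exactly as in the proof of Proposition \ref{prop:Higgs-cyc} (using that each $f^*\omega_\beta$ is a nonvanishing isomorphism together with the connectedness of the Dynkin diagram). The assumption $\omega_\alpha(\xi)=\pi_\alpha(s)=0$ for one simple root $\alpha$ therefore propagates to $\pi_\beta(s)=0$ for every simple root $\beta$ and then to $\pi_{-\eta}(s)=0$, so $q=0$; by reality $q^\dagger=\lambda(q)=0$, and the formula for $\d^\nabla s_0$ then gives $s_0=0$ on the connected closed surface. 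Hence $s=i_\xi\omega=0$, and since $\omega$ is injective on $\T\X$ this yields $\xi=0$.
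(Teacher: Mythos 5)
Your skeleton coincides with the paper's own proof: the cyclic decomposition of $i_\xi\omega$, the reality conditions, the first order formulas extracted from the Pfaffian variation equations, and integration by parts against $\rho$ with the sign controlled by Proposition \ref{Sign}. The genuine gap is in your last step, where you treat all rank $2$ groups uniformly; this cannot work. The relation that drives your Dynkin-diagram propagation --- vanishing of the component of $q\ww\phi$ along $\mc G_{\alpha+\beta}$ for adjacent simple roots $\alpha,\beta$ --- is \emph{not} the analogue of the identity used in Proposition \ref{prop:Higgs-cyc}: differentiating $\phi\ww\phi=0$ only constrains $\nabla q\ww\phi$ (Proposition \ref{phiphi}), not $q\ww\phi$ itself. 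The usable relation is $\pi_1(q\ww\phi)=0$, which is an output of the Böchner step (Equation \eqref{IFIp2}), and it only sees the component along $\mc G_{\alpha+\beta}$ when $\alpha+\beta\notin Z\cup Z^\dagger$, i.e.\ when $\alpha+\beta\neq\eta$. For $\ms G_0=\ms{SL}(3,\mathbb R)$ the sum of the two simple roots \emph{is} the longest root and $\mk g_1=\{0\}$, so $\pi_1(q\ww\phi)=0$ is vacuous and your propagation never starts. The paper handles $\ms{SL}(3,\mathbb R)$ by a separate argument: it uses $\check\pi_0(q\ww\phi^\dagger)=0$, the fact that $\mk t_\mathbb C$ is spanned by ${\rm h}_\eta$ (Proposition \ref{torus}), a Weyl group element exchanging the two simple roots, and positivity of the $(1,1)$-forms $\overline{\psi_\alpha}\wedge\psi_\alpha$ to kill the remaining $\mc G_\eta$ component. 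Your plan contains no substitute for this case, and it is a missing idea rather than a missing computation.

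Two further steps are asserted but would not go through as stated. First, the Böchner identity does not ``force $s_1=0$''; it forces $\nabla s_1=0$ together with $s_1\ww\phi=s_1\ww\phi^\dagger=0$ (Proposition \ref{IFIp}). To conclude $s_1=0$ one needs the topological input that each line bundle $\mc G_\gamma$, $\gamma\notin Z\cup Z^\dagger$, is identified along $\Sigma$ with $\mc K^{-\deg(\gamma)}$, which is nontrivial because the genus of $\Sigma$ is at least $2$; hence a parallel section must vanish somewhere and therefore everywhere. Second, after $q=0$ your formula for $\d^\nabla s_0$ gives only $\nabla s_0=0$, and a parallel section of $\mc H_0$ on a closed surface has no reason to vanish. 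The paper instead extracts $s_0\ww\phi=0$ from a second integration (Proposition \ref{IFIp22}), then uses that $\omega_\alpha$ never vanishes on a cyclic surface (Definition \ref{cyc-map}) to get $\alpha(s_0)=0$ for every simple root $\alpha$, and concludes $s_0=0$ because the simple roots form a basis of $\mk h^*$. These two repairs fit inside your framework; the $\ms{SL}(3,\mathbb R)$ case does not.
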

In this proposition, $\omega_\alpha$ is defined in decomposition \eqref{MC-dec}.

\subsubsection{The cyclic decomposition of an infinitesimal deformation}
The cyclic decomposition of $\xi$ is given by
\begin{align}
  i_\xi\omega=\zeta_0 +\zeta_1 +\zeta+\zeta^\dagger\label{xidecomp}
\end{align}
where
\begin{align*}
  \zeta_0\defeq i_\xi \omega_0\in \mc H\ \ & \ \
  \zeta_1\defeq i_\xi\omega_1\in \mc G_1,\\
  \zeta\defeq i_\xi\phi\in\mc G_Z ,\ \ & \ \ \zeta^\dagger\defeq
  i_\xi\phi^\dagger\in\mc G_{Z^\dagger}.
\end{align*}
Equation \eqref{eq:MCcyc0} implies that actually
\begin{align}
  \zeta_0\in\mc H_0. \label{realcondzeta}
\end{align}
\subsubsection{Reality condition}
We assume that $\xi$ is a real vector, meaning that $\CG{\xi}=\xi$, that is by definition $\lambda(i_\xi\omega)=i_\xi\omega$. It
then follows 
\begin{proposition}
We have
\begin{align}
  \lambda(\zeta)=\zeta^\dagger,\ \ \lambda(\zeta_0)=\zeta_0,\ \
  \lambda(\zeta_1)=\zeta_1.\label{realcond2}
\end{align}
Moreover
\begin{align}
  \rho(\zeta_0)=-\zeta_0.\label{rhozero}
\end{align}
\end{proposition}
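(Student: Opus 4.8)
The plan is to exploit the fact that $\lambda=\sigma\circ\rho$ is an involutive automorphism whose action permutes the four summands $\mc H$, $\mc G_1$, $\mc G_Z$, $\mc G_{Z^\dagger}$ of the decomposition \eqref{fondec2}, and then to read off \eqref{realcond2} by matching components in the reality identity $\lambda(i_\xi\omega)=i_\xi\omega$. The whole argument is fibrewise linear algebra resting on the structural results of Section \ref{realsplit}.

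First I would record how $\lambda$ acts on each summand. Since $Z^\dagger=-Z$ and $\rho$ sends $\mk g_\alpha$ to $\mk g_{-\alpha}$ (Proposition \ref{UniqueCartan}), one has $\rho(\mk g_Z)=\mk g_{Z^\dagger}$; combined with the fact that $\sigma$ preserves $\mk g_Z$ and $\mk g_{Z^\dagger}$ globally (Proposition \ref{sig-involution}), this gives $\lambda(\mk g_Z)=\mk g_{Z^\dagger}$ and, by $\lambda^2=1$, also $\lambda(\mk g_{Z^\dagger})=\mk g_Z$. Both $\rho$ and $\sigma$ preserve $\mk h$, so $\lambda$ preserves $\mc H$; and since $\lambda$ is a Killing isometry preserving $\mk h\oplus\mk g_Z\oplus\mk g_{Z^\dagger}$, it preserves the Killing-orthogonal complement $\mk g_1$ as well (one may also use that $\rho$ fixes $\mk g_1$ by Proposition \ref{UniqueCartan} and $\sigma$ preserves degrees by Proposition \ref{pro:long-sig}). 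With these four containments in hand, I expand $\lambda(i_\xi\omega)=\lambda(\zeta_0)+\lambda(\zeta_1)+\lambda(\zeta)+\lambda(\zeta^\dagger)$ and compare it with $i_\xi\omega=\zeta_0+\zeta_1+\zeta+\zeta^\dagger$ summand by summand in the direct sum $\mc G=\mc H\oplus\mc G_1\oplus\mc G_Z\oplus\mc G_{Z^\dagger}$. This yields at once $\lambda(\zeta_0)=\zeta_0$, $\lambda(\zeta_1)=\zeta_1$, and $\lambda(\zeta^\dagger)=\zeta$, the last being equivalent to $\lambda(\zeta)=\zeta^\dagger$ by $\lambda^2=1$; this is exactly \eqref{realcond2}.

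For the final assertion \eqref{rhozero} the key point, which is the one step requiring a little care, is to identify the fibre $\mk h_0$ with the $(-1)$-eigenspace of $\sigma$ inside $\mk h$. By Proposition \ref{hitchin-sig}, $\sigma$ is diagonalisable on $\mk h$ with $\sigma(a_i)=(-1)^{m_i+1}a_i$ on a basis $\{a_i\}$ of $\mk h$, so its fixed-point set is $\mk t_{\mathbb C}$; since $\sigma$ is a Killing isometry, its $+1$- and $-1$-eigenspaces are Killing-orthogonal, whence the $-1$-eigenspace is precisely $\mk t_{\mathbb C}^{\perp}\cap\mk h$. Here I would observe that orthogonality to the real form $\mk t$ coincides with orthogonality to its complexification $\mk t_{\mathbb C}$ because the Killing form is $\mathbb C$-bilinear, so $\mk t_{\mathbb C}^{\perp}\cap\mk h=\mk h_0$ in the sense of Definition \ref{def:subh}. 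Since $\zeta_0\in\mc H_0$ by \eqref{realcondzeta}, this gives $\sigma(\zeta_0)=-\zeta_0$.

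To conclude, I feed $\lambda(\zeta_0)=\zeta_0$ (just proved) into the definition $\sigma(\rho(\zeta_0))=\zeta_0$ and apply $\sigma$, obtaining $\rho(\zeta_0)=\sigma(\zeta_0)$; together with $\sigma(\zeta_0)=-\zeta_0$ this yields $\rho(\zeta_0)=-\zeta_0$, which is \eqref{rhozero}. I expect no genuine obstacle beyond bookkeeping: the only subtlety is the eigenspace identification of $\mk h_0$ above, and everything else follows from the invariance properties of $\rho$ and $\sigma$ on the root-space summands.
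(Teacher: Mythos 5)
Your proof is correct, and for the identities \eqref{realcond2} it is essentially the paper's own argument: $\lambda$ preserves $\mc H$ and $\mc G_1$ and exchanges $\mc G_Z$ with $\mc G_{Z^\dagger}$ (since $\sigma$ preserves each of the latter two while $\rho$ swaps them), so matching components in $\lambda(i_\xi\omega)=i_\xi\omega$ gives all three equalities. Where you genuinely diverge is in the proof of \eqref{rhozero}. The paper stays with real linear algebra: it introduces the real subbundle $\mc H_\lambda=\{u\in\mc H_0\mid \lambda(u)=u\}$, notes that $\rho$ preserves it, and observes that any $\rho$-fixed vector there is fixed by both $\rho$ and $\sigma=\lambda\circ\rho$, hence lies in $\mc T\cap\mc H_0=\{0\}$; an involution of a real vector space with no nonzero fixed vector is $-1$, so $\rho=-1$ on $\mc H_\lambda\ni\zeta_0$. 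You instead prove the stronger structural fact that $\sigma$ acts as $-1$ on the whole of $\mc H_0$, by identifying $\mc H_0$ fibrewise with the $(-1)$-eigenspace of $\sigma$ in $\mk h$, using the diagonalisation $\sigma(a_i)=(-1)^{m_i+1}a_i$ of Proposition \ref{hitchin-sig} together with Killing-orthogonality of the $\sigma$-eigenspaces; then $\rho(\zeta_0)=\sigma(\lambda(\zeta_0))=\sigma(\zeta_0)=-\zeta_0$. Both routes are sound. The paper's is more economical, needing only the definition of $\mc T$ and $\mc T\cap\mc H_0=\{0\}$; yours buys the stronger intermediate statement $\left.\sigma\right\vert_{\mc H_0}=-1$ (on the full complex bundle, not merely on real points), at the cost of two points you should make explicit: that the fixed set of $\sigma$ in $\mk h$ equals $\mk t_{\mathbb C}$ (this uses that the antilinear $\rho$ commutes with $\sigma$, so the complex subspace $\mk h^{\sigma}$ is the complexification of its $\rho$-fixed part, which is $\mk t$ by definition), and that the Killing form is nondegenerate on $\mk t_{\mathbb C}$ (it is negative definite on $\mk t$), which is what upgrades the inclusion of the $(-1)$-eigenspace into $\mk t_{\mathbb C}^{\perp}\cap\mk h$ to an equality.
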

\begin{proof}
The first equality in \eqref{realcond2} comes from the fact that $\sigma$ preserves $\mc G_Z$ and  $\mc G_{Z^\dagger}$ respectively (last statement of Proposition \ref{sig-involution})
and $\rho$ exchanges them. The second and third follows from the fact that $\pi_0$ and $\pi_1$ commutes with $\lambda$.

For the  equality \eqref{rhozero}, remark that $\rho$ is an involution that globally preserves $\mc T$, hence $\mc H_0$, as well as 
$$
\mc H_\lambda\defeq\{u\in\mc H_0\mid \lambda(u)=u\}.
$$
Any fixed vector by  $\rho$  in $\mc H_\lambda$, belongs to $\mc T$ hence is null. It follows that $\rho$ acts as $-1$ on $\mc H_\lambda$.
\end{proof}

\subsubsection{The root space decomposition}
We also write the following decomposition of $\xi$ as
\begin{align*}
  i_\xi\omega&=\zeta_0+\sum_{\alpha\in \Delta}\zeta_\alpha, \ \ \hbox{ where
  }\ \ \zeta_\alpha\in\mc G_\alpha.
\end{align*}

\subsubsection{Proof of the transversality of the Hitchin map}\label{trans}

In this paragraph, we prove Theorem \ref{IFI0co}, assuming Proposition \ref{IFI0}. The proof is standard. 
As a standard notation if $(x_t)_{t\in ]-1,1[}$ is a $C^1$-curve in  a manifold $M$, we write
\begin{align}
  \dt x_0\defeq\left.\frac{\d}{\d t}\right\vert_{t=0}x_t \in \T_{x_0}M.
\end{align}

Let $(J_t,{\rm q}_t)_{t\in]-1,1[}$ be a family of elements of $\mc E_{m_\ell +1}$. By Theorem \ref{Higgs2cyc}, we associate to $(J_t,{\rm q}_t)$ a homomorphism $\delta_t$ of $\pi_1(\Sigma)$ in $\ms G_0$ and a $\delta_t$-equivariant cyclic map $f_t$ from $\Sigma$ from $\X$. Then by definition $\Psi(J_t,{\rm q}_t)=[\delta_t]$ is the equivalence class (by conjugation) of $\delta_t$.

Observe first that $\xi(s)\defeq \dt f_0(s)$ is an infinitesimal deformation of cyclic surfaces in $\X$.

We want to prove the local injectivity of $\Psi$. Let us thus assume that $$\dt{[\delta_0]}=0.$$ Since the smooth manifold  $\mc H(\Sigma,\ms G_0)$ only consists of irreducible representations, $$\T_{\delta_0}\mc H(\Sigma,\ms G_0)=H^1_{\delta_0}(\Sigma,\mk g).$$
 Thus after possibly conjugating the family $(\delta_t)$ by a family $(g_t)$ of elements of $\ms G_0$ and replacing $(f_t)$ by $(g_t\cdot f_t)$ we obtain that 
\begin{align}
 \forall s\in\tilde\sigma,\,\forall \gamma\in\pi_1(\Sigma),\ \ \dt f_0(\gamma(s))=\delta_0(\gamma)\cwdot \dt f_0(s).
\end{align}
In particular, $\xi(s)\defeq \dt f_0(s)$ is an infinitesimal deformation of closed cyclic surfaces in $\delta_0(\pi_1(\Sigma))\backslash\X$. 

Let us fix a simple root $\alpha$. Recall that by definition of cyclic surfaces,  $f_0^*\omega_\alpha$ is a bijection from $\T\Sigma$ to $f_0^*(\mc G_\alpha)$. Thus, let $\nu$ be the vector field along $\Sigma$ so that $\zeta_\alpha=f_0^*\omega_\alpha(\nu)$. Since every vector field tangent to the surface is an infinitesimal deformation of cyclic surfaces, $\T f_0\left(\nu\right)$ is an infinitesimal deformation of cyclic surfaces.

Let $\widehat\xi\defeq\xi-\T f_0\left(\nu\right)$,  by construction $\widehat\xi$ is an infinitesimal deformation of cyclic surfaces whose component along $\mc G_\alpha$ is zero. Applying Proposition \ref{IFI0}, we obtain that $\widehat\xi=0$. It remains to prove  that $\dt{J_0}=0$ and $\dt {\rm q}_0=0$: that will conclude the proof of the injectivity of  $\T\Psi$.  Choosing locally a Chevalley basis ${\rm x}_\alpha$ of $\mc G_\alpha$, we may write $\omega_\alpha=\Omega_\alpha\cdot {\rm x}_\alpha$ where $\Omega_\alpha\in\Omega^1(\Sigma,\mathbb C)$. Since $\widehat\xi=0$, it follows that for all roots $\alpha$, $\dt\Omega_\alpha=0$. Now for a simple root $\alpha$, $\Omega_\alpha$ is non zero and the complex structure on $\Sigma$ is characterised by the fact that $\Omega_\alpha$ is of type $(1,0)$. Thus $\dt{J_0}=0$. Similarly $\rm q$ is obtained by a formula involving the $\Omega_\alpha$, thus $\dt{\rm q}_0=0$.

\subsection{Computations of first derivatives}
From now on, we assume that $\xi$ is an infinitesimal deformation of cyclic surfaces.
We will first obtain expressions for the derivatives of $\zeta_0$ and $\zeta_1$ exploiting the fact that $\omega_0$ and $\omega_1$ vanish on cyclic surfaces. We will denote in the sequel
\begin{align*}
  \partial=\left(\d^\nabla\right)^{(1,0)}, \ \ \overline{\partial}
  &=\left(\d^\nabla\right)^{(0,1)}.
\end{align*}
\subsubsection{Vanishing of $\omega_0$ and the derivatives of
  $\zeta_0$}
Here, we exploit the fact that $\omega_0\VS=0$.
\begin{proposition}\label{ifil}
  We have \tfe
  \begin{align}
    {\partial}\zeta_0&=2\cwdot \check\pi_0(\zeta^\dagger\ww\phi)=2\cwdot \check\pi_0(\rho(\zeta)\ww\phi),
    \\
    \overline{\partial}\zeta_0&=2\cwdot
    \check\pi_0(\zeta\ww\phi^\dagger)=2\cwdot
    \check\pi_0(\rho(\zeta^\dagger)\ww\phi^\dagger).
  \end{align}
\end{proposition}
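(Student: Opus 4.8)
The plan is to feed the component $\omega_0$ of the cyclic Pfaffian system $\Lambda$ into the infinitesimal variation identity \eqref{Pfaff00}. Since $\xi$ is an infinitesimal deformation of cyclic surfaces and $i_\xi\omega_0=\zeta_0$, that identity reads $\d^\nabla\zeta_0=-\,i_\xi\d^\nabla\omega_0\VS$. So everything reduces to evaluating $i_\xi\d^\nabla\omega_0$ along $\Sigma$.

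For this I would differentiate the structure equation \eqref{curvfund}. Because $\mc H$ is parallel (Proposition \ref{vec-bun-des}), the projection $\pi_0$ commutes with $\d^\nabla$, so $\d^\nabla\omega_0=\pi_0(\d^\nabla\omega)=-\pi_0(\omega\ww\omega)-R^\nabla$, using that $R^\nabla$ is $\mc T$-valued and hence fixed by $\pi_0$. Inserting the cyclic identity \eqref{zeta090} gives $\pi_0(\omega\ww\omega)=2\,\phi\ww\phi^\dagger+\pi_0(\omega_1\ww\omega_1)$. The key manoeuvre is to contract with $\xi$ \emph{before} restricting: the Leibniz rule for $i_\xi$ together with $i_\xi\phi=\zeta$, $i_\xi\phi^\dagger=\zeta^\dagger$, $i_\xi\omega_1=\zeta_1$ yields $i_\xi(\phi\ww\phi^\dagger)=\zeta\ww\phi^\dagger+\zeta^\dagger\ww\phi$ and $i_\xi(\omega_1\ww\omega_1)=\zeta_1\ww\omega_1-\omega_1\ww\zeta_1$. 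Every term still carrying a bare $\omega_1$ dies on $\Sigma$ since $f^*\omega_1=0$ (Definition \ref{cyc-map}), so the whole $\omega_1\ww\omega_1$ term drops even though it need not vanish transversally; and $i_\xi R^\nabla$ stays $\mc T$-valued. I therefore get $\d^\nabla\zeta_0=2(\zeta^\dagger\ww\phi+\zeta\ww\phi^\dagger)+i_\xi R^\nabla$ along $\Sigma$. As $\zeta_0$ is a section of the parallel bundle $\mc H_0$, the left side is $\mc H_0$-valued, so applying $\check\pi_0$ removes $i_\xi R^\nabla$ and leaves $\d^\nabla\zeta_0=2\,\check\pi_0(\zeta^\dagger\ww\phi)+2\,\check\pi_0(\zeta\ww\phi^\dagger)$. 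Splitting by type, and recalling $\phi$ is of type $(1,0)$ and $\phi^\dagger$ of type $(0,1)$ (Proposition \ref{prop:Higgs-cyc}), produces the first equality in each displayed line.

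To obtain the second equality in each line I would use the $\sigma$-reality. From $\rho(\zeta_0)=-\zeta_0$ and $\lambda(\zeta_0)=\zeta_0$, and the antilinearity of $\lambda$, one has $\sigma(\zeta_0)=\lambda\rho(\zeta_0)=-\zeta_0$; since $\sigma$ is parallel this forces $\partial\zeta_0$ into the $\sigma=-1$ eigenbundle, i.e. $\sigma(\partial\zeta_0)=-\partial\zeta_0$. On the other hand $\sigma$ is a Lie algebra automorphism commuting with $\check\pi_0$, on $\Sigma$ one has $\sigma(\phi)=-\phi$ and $\sigma(\phi^\dagger)=-\phi^\dagger$ by \eqref{eq:PhiSigma}, and the reality identity $\lambda(\zeta)=\zeta^\dagger$ rewrites as $\sigma(\zeta^\dagger)=\rho(\zeta)$ and $\sigma(\zeta)=\rho(\zeta^\dagger)$. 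Hence $\sigma(\zeta^\dagger\ww\phi)=\sigma(\zeta^\dagger)\ww\sigma(\phi)=-\rho(\zeta)\ww\phi$; applying $\check\pi_0$ and comparing with $\sigma(\partial\zeta_0)=-\partial\zeta_0$ gives $\check\pi_0(\zeta^\dagger\ww\phi)=\check\pi_0(\rho(\zeta)\ww\phi)$, and the same computation with $\phi^\dagger$ gives $\check\pi_0(\zeta\ww\phi^\dagger)=\check\pi_0(\rho(\zeta^\dagger)\ww\phi^\dagger)$.

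The computation itself is short; the points that need genuine care are the order of contracting and restricting (so that the transversally non-vanishing $\omega_1\ww\omega_1$ contributes nothing), and the sign bookkeeping in the reality step, where the antilinearity of $\lambda$ and the relations $\sigma(\phi)=-\phi$, $\sigma(\zeta^\dagger)=\rho(\zeta)$ must be combined correctly. I do not anticipate a serious obstacle beyond this bookkeeping, since all the structural inputs—parallelism of $\mc H$, $\mc H_0$ and $\sigma$, the cyclic bracket relations, and the self-duality curvature identity—are already available.
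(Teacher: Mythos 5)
Your proof is correct and follows essentially the same route as the paper: the infinitesimal-variation identity applied to $\omega_0$, the curvature equation \eqref{curvfund}, the cyclic identity \eqref{zeta090}, the vanishing of $i_\xi(\omega_1\ww\omega_1)$ along $\Sigma$, killing the curvature term with $\check\pi_0$, and a type decomposition with $\phi$ of type $(1,0)$ and $\phi^\dagger$ of type $(0,1)$. The only (equivalent) deviation is in the second equalities, where you apply the linear involution $\sigma$ while the paper applies the antilinear $\rho$ to the conjugate equation using $\rho(\zeta_0)=-\zeta_0$ and $\rho(\phi^\dagger)=-\phi$; note only that your identity $\sigma(\phi)=-\phi$ along a general cyclic surface is better justified by combining Assertions \eqref{def:cyc3} and \eqref{def:cyc4} of Definition \ref{cyc-map} (since $\lambda$ exchanges $\mc G_Z$ and $\mc G_{Z^\dagger}$) than by citing \eqref{eq:PhiSigma}, which concerns the Hitchin-section Higgs field rather than an arbitrary cyclic surface.
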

\begin{proof}  We have $\zeta_0=i_\xi\omega_0$. By the definition of infinitesimal variation, and using the fact that $\check\pi_0$ is parrallel, we have
  \tfe
  \begin{align*}
     \nabla\zeta_0=\d^\nabla i_\xi\omega_0=-i_\xi \d^\nabla\omega_0
    =-i_\xi
    \d^\nabla\check\pi_0\left(\omega\right)
    =
    -i_\xi\check\pi_0(\d^\nabla \omega).
  \end{align*}
   Thus, the curvature equation
  \eqref{curvfund} yields
  \begin{align*}
    \nabla\zeta_0     &=i_\xi\check\pi_0\left(R^\nabla+\omega\ww\omega\right).
  \end{align*}
Then by Proposition \ref{vec-bun-des}, we have that
  $\check\pi_0(R^\nabla)=0$. Thus  we get
  \begin{align}
    \nabla\zeta_0
    &=i_\xi\check\pi_0(\omega\ww\omega).\label{zeta091}
  \end{align}
Finally, since $\omega_1\VS=0$, it follows that
  $(i_\xi(\omega_1\ww\omega_1))\VS=0$.  Thus, combining Equations
  \eqref{zeta090} and \eqref{zeta091}, we get \tfe
  \begin{align*}
    \nabla\zeta_0&=2\cwdot
    i_\xi\check\pi_0(\phi\ww\phi^\dagger)=2\cwdot\check\pi_0(\zeta\ww\phi^\dagger+\zeta^\dagger\ww\phi).
  \end{align*}
  Using the fact that $\phi$ and $\phi^\dagger$
  are respectively of type $(1,0)$ and $(0,1)$, we get the first part of both equations in the proposition. To get the second part, we use that $\zeta_0=-\rho(\zeta_0)$ and $\phi=-\rho(\phi^\dagger)$. 
\end{proof}

\subsubsection{Vanishing of $\omega_1$ and the derivatives of
  $\zeta_1$}

We exploit the fact that $\omega_1\vert_\Sigma=0$.
\begin{proposition}\label{1ifi}
  We have \tfe
  \begin{align} {\partial} \zeta_1 &=2\cwdot\pi_1((\zeta_1+\zeta)\ww
    \phi),\cr \overline{\partial} \zeta_1
    &=2\cwdot\pi_1((\zeta_1+\zeta^\dagger)\ww \phi^\dagger).
  \end{align}\label{eq:1ifi}
\end{proposition}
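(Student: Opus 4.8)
The plan is to mirror, almost verbatim, the argument used for $\zeta_0$ in Proposition \ref{ifil}, replacing the projection $\check\pi_0$ by $\pi_1$ and the identity \eqref{zeta090} by \eqref{pi1om}. Since $\zeta_1=i_\xi\omega_1$ and $\xi$ is an infinitesimal variation of the cyclic Pfaffian system, the defining relation \eqref{Pfaff00} applied to the form $\omega_1$ gives, along $\Sigma$,
$$\nabla\zeta_1=\d^\nabla i_\xi\omega_1=-i_\xi\d^\nabla\omega_1.$$
First I would use that each root-space subbundle $\mc G_\alpha$, hence $\mc G_1$, is parallel (Proposition \ref{vec-bun-des} and its proof), so that $\pi_1$ commutes with $\d^\nabla$ and $\d^\nabla\omega_1=\pi_1(\d^\nabla\omega)$. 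Then the curvature equation \eqref{curvfund}, together with the fact that $R^\nabla$ takes values in $\mc T\subset\mc H$ so that $\pi_1(R^\nabla)=0$, yields
$$\nabla\zeta_1=i_\xi\pi_1(\omega\ww\omega)\VS.$$

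Next I would expand $\pi_1(\omega\ww\omega)$ using \eqref{pi1om} and apply $i_\xi$ term by term through the Leibniz rule for the contraction. The essential book-keeping is to record which summands survive restriction to $\Sigma$: the $1$-forms $\omega_0$ and $\omega_1$ vanish on $\Sigma$, whereas $\phi$ and $\phi^\dagger$ do not. Consequently any summand in \eqref{pi1om} in which both form-factors lie in $\{\omega_0,\omega_1\}$ — namely $\omega_0\ww\omega_1$ and $\omega_1\ww\omega_1$ — contributes nothing after contraction and restriction, while in $\omega_1\ww\phi$ and $\omega_1\ww\phi^\dagger$ only the piece where $i_\xi$ hits $\omega_1$ survives, producing $\zeta_1\ww\phi$ and $\zeta_1\ww\phi^\dagger$ respectively.

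The delicate point — the step I expect to require the most care — is the treatment of $\phi\ww\phi$ and $\phi^\dagger\ww\phi^\dagger$. Although both of these $2$-forms vanish on $\Sigma$, their contractions do not: since $\zeta$ is a $0$-form and $\phi$ a $1$-form, the two Leibniz terms coincide and one gets $i_\xi(\phi\ww\phi)=2\cwdot\zeta\ww\phi$, and likewise $i_\xi(\phi^\dagger\ww\phi^\dagger)=2\cwdot\zeta^\dagger\ww\phi^\dagger$; these survive and yield $2\cwdot\pi_1(\zeta\ww\phi)$ and $2\cwdot\pi_1(\zeta^\dagger\ww\phi^\dagger)$. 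Collecting the surviving contributions gives
$$\nabla\zeta_1=2\cwdot\pi_1\bigl(\zeta_1\ww\phi+\zeta_1\ww\phi^\dagger+\zeta\ww\phi+\zeta^\dagger\ww\phi^\dagger\bigr)\VS.$$

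Finally I would separate this equality into its $(1,0)$- and $(0,1)$-parts, using that $\phi$ is of type $(1,0)$ and $\phi^\dagger$ of type $(0,1)$, while $\zeta_1,\zeta,\zeta^\dagger$ are sections (type $(0,0)$). The $(1,0)$-part collects the two terms containing $\phi$ and the $(0,1)$-part the two containing $\phi^\dagger$, giving exactly
$$\partial\zeta_1=2\cwdot\pi_1((\zeta_1+\zeta)\ww\phi),\qquad\overline{\partial}\zeta_1=2\cwdot\pi_1((\zeta_1+\zeta^\dagger)\ww\phi^\dagger),$$
which is the assertion of the proposition. A consistency check with the $\zeta_0$ computation of Proposition \ref{ifil} (same factor conventions, same sign identity $\phi\ww\zeta=-\zeta\ww\phi$) confirms that no stray factors of $2$ or signs are lost.
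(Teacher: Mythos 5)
Your proposal is correct and follows essentially the same route as the paper's proof: the infinitesimal-variation identity for $\omega_1$ combined with parallelism of $\pi_1$, the curvature equation \eqref{curvfund} with $\pi_1(R^\nabla)=0$, the expansion \eqref{pi1om} contracted by $i_\xi$ using the vanishing of $\omega_0,\omega_1$ along $\Sigma$, and finally the type decomposition. Your explicit Leibniz bookkeeping (in particular $i_\xi(\phi\ww\phi)=2\cwdot\zeta\ww\phi$) just makes visible what the paper leaves implicit, and all factors and signs agree.
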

\begin{proof}
  By the definition of infinitesimal variation and using that $\pi_1$ is parallel, we have
  \begin{align*}
     \nabla \zeta_1=\d^\nabla i_\xi\omega_1=-i_\xi\d^\nabla\pi_1(\omega)=-i_\xi\pi_1(\d^\nabla\omega).
  \end{align*}
  Thus, since $\pi_1(R^\nabla)=0$ the curvature equation \eqref{curvfund} yields
  \begin{align}
    \nabla \zeta_1
    &=i_\xi \pi_1(\omega\ww \omega).\label{ifi11}
  \end{align}
Observe also that we have \tfe
  \begin{align}
    i_\xi(\omega_1\ww\omega_0) =0,& \ \ \ \ i_\xi(\omega_1\ww\omega_1)
    =0. \label{ifi14}
  \end{align}
  Combining equations \eqref{pi1om}, \eqref{ifi11}, and \eqref{ifi14}
  we get \tfe
  \begin{align*}
    \nabla \zeta_1&=i_\xi\pi_1\left(2.\omega_1\ww\phi +
      2.\omega_1\ww\phi^\dagger+\phi\ww\phi
      +\phi^\dagger\ww\phi^\dagger\right)\cr &=2\cwdot\pi_1\left(\zeta_1\ww
      \phi+\zeta_1\ww\phi^\dagger+\zeta\ww\phi+\zeta^\dagger\ww\phi^\dagger)\right).
  \end{align*}
  Now we can decompose the last equation into types, using the fact that $\phi\VS$ is of
  type $(1,0)$ and $\phi^\dagger\VS$ is of type $(0,1)$ to get
  \begin{align*} {\partial} \zeta_1 &=2\cwdot\pi_1(\zeta_1\ww
    \phi+\zeta\ww\phi),\cr \overline{\partial} \zeta_1
    &=2\cwdot\pi_1(\zeta_1\ww
    \phi^\dagger+\zeta^\dagger\ww\phi^\dagger).
  \end{align*}
  The proposition now follows.
\end{proof}

\subsection{Again, computation of first derivatives}

So far we have obtained  direct information about the first derivatives of $\zeta_0$ and $\zeta_1$ using vanishing of the 1-forms $\omega_0$ and $\omega_1$. In this section, we obtain constraints on the derivatives of $\zeta$ and $\zeta^\dagger$ using the vanishing of 2-forms.

\subsubsection{A preliminary computation}

The next proposition does not use the fact that $\xi$ is an infinitesimal deformation of cyclic surfaces.
\begin{proposition}\label{ifi00}
  We have the following equality in $\Omega^*(\Sigma,\mc G)$
  \begin{align}
    (i_\xi\d^\nabla\phi)^{(0,1)}&=-2\cwdot\pi((\zeta_1+\zeta^\dagger)\ww\phi^\dagger),\label{ifi001}\\
    (i_\xi\d^\nabla\phi)^{(1,0)}&=-2\cwdot\zeta_0\ww\phi.\label{ifi002}
  \end{align}
  Symmetrically
  \begin{align}
    (i_\xi\d^\nabla\phi^\dagger)^{(0,1)}&=-2\cwdot\zeta_0 \ww\phi^\dagger,\label{ifi003}\\
    (i_\xi\d^\nabla\phi^\dagger)^{(1,0)}&=-2\cwdot
    \pi^\dagger((\zeta_1+\zeta)\ww\phi)\label{ifi004}.
  \end{align}

\end{proposition}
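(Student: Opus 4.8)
The plan is to compute $\left(i_\xi\d^\nabla\phi\right)$ directly from the structural equation, never invoking that $\xi$ is an infinitesimal deformation; the only features of the cyclic surface $\Sigma$ that will be used are the vanishings $\omega_0\VS=\omega_1\VS=0$ together with the facts that $\phi\VS$ is of type $(1,0)$ and $\phi^\dagger\VS$ of type $(0,1)$ (all part of $\Sigma$ being cyclic). First I would put $\d^\nabla\phi$ in closed form. Since $\phi=\pi(\omega)$ and $\pi$ is parallel for $\nabla$ (as in the proof of Proposition \ref{vec-bun-des}, $\pi$ being the sum of the root-space projections over $Z$), we have $\d^\nabla\phi=\pi(\d^\nabla\omega)$. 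The curvature equation \eqref{curvfund} gives $\d^\nabla\omega=-\omega\ww\omega-R^\nabla$, and because $R^\nabla$ takes values in $\mc T\subset\mc H$ we get $\pi(R^\nabla)=0$, hence $\d^\nabla\phi=-\pi(\omega\ww\omega)$. Expanding the right-hand side by Equation \eqref{piom} of Proposition \ref{cyclic-dec} yields
\begin{align*}
  \d^\nabla\phi=-2\cwdot\omega_0\ww\phi-2\cwdot\pi(\omega_1\ww\phi^\dagger)-\pi(\phi^\dagger\ww\phi^\dagger)-\pi(\omega_1\ww\omega_1).
\end{align*}

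Next I would apply $i_\xi$, using that $\pi$ commutes with contraction and the graded Leibniz rule $i_\xi(\alpha\ww\beta)=(i_\xi\alpha)\ww\beta-\alpha\ww(i_\xi\beta)$ for $1$-forms $\alpha,\beta$, and then restrict to $\Sigma$. The point requiring care is that $\xi$ is a vector field along $\Sigma$ but not tangent to it, so $\zeta_0=i_\xi\omega_0$ and $\zeta_1=i_\xi\omega_1$ need not vanish even though $\omega_0$ and $\omega_1$ do vanish on $\Sigma$. Concretely, in $i_\xi(\omega_0\ww\phi)=\zeta_0\ww\phi-\omega_0\ww\zeta$ the term $\omega_0\ww\zeta$ dies upon restriction (it is a $1$-form contracted against vectors tangent to $\Sigma$, where $\omega_0\VS=0$), while $\zeta_0\ww\phi$ survives; likewise $i_\xi(\omega_1\ww\phi^\dagger)\VS=\zeta_1\ww\phi^\dagger$ and $i_\xi(\omega_1\ww\omega_1)\VS=2\cwdot\zeta_1\ww\omega_1\VS=0$, whereas $i_\xi(\phi^\dagger\ww\phi^\dagger)=2\cwdot\zeta^\dagger\ww\phi^\dagger$ contains no restricted factor and survives whole. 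Collecting the surviving pieces gives
\begin{align*}
  \left(i_\xi\d^\nabla\phi\right)\VS=-2\cwdot\zeta_0\ww\phi-2\cwdot\pi\big((\zeta_1+\zeta^\dagger)\ww\phi^\dagger\big).
\end{align*}

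Finally I would separate by type. Since $\zeta_0,\zeta_1,\zeta^\dagger$ are sections (degree $0$), the form $\zeta_0\ww\phi$ inherits the type $(1,0)$ of $\phi\VS$, and $\pi\big((\zeta_1+\zeta^\dagger)\ww\phi^\dagger\big)$ inherits the type $(0,1)$ of $\phi^\dagger\VS$; reading off the two homogeneous components produces exactly \eqref{ifi002} and \eqref{ifi001}. The identities \eqref{ifi003} and \eqref{ifi004} for $\phi^\dagger$ then follow by the same computation applied to $\phi^\dagger=\pi^\dagger(\omega)$, using the $Z\leftrightarrow Z^\dagger$ mirror of \eqref{piom} (legitimate by the symmetry already invoked in Proposition \ref{crucial}), with the roles of $\phi$ and $\phi^\dagger$ and of the types $(1,0)$ and $(0,1)$ interchanged. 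Once the closed form for $\d^\nabla\phi$ is in hand the argument is pure bookkeeping; the only genuine obstacle is to keep straight, on restriction to $\Sigma$, which contractions land on a tangent vector (killing $\omega_0,\omega_1$) versus on $\xi$ (producing the nonzero $\zeta_0,\zeta_1$), and to track the signs and factors in the graded Leibniz rule.
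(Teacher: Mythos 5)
Your proposal is correct and follows essentially the same route as the paper's own proof: express $\d^\nabla\phi=-\pi(\omega\ww\omega)$ via the structure equation and Proposition \ref{cyclic-dec}, contract with $\xi$, kill the terms containing a restricted factor $\omega_0\VS=\omega_1\VS=0$, and split by type, with the dagger identities following by symmetry. Your explicit bookkeeping of which contractions land on $\xi$ versus on tangent vectors, and your remark that the infinitesimal-deformation hypothesis is never used, match the paper exactly.
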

\begin{proof}
  First observe that using  Assertion \eqref{piom} of Proposition \ref{cyclic-dec}
  \begin{align}
    \d^\nabla \phi &=\d^\nabla\pi(\omega) =\pi(\d^\nabla\omega)\cr
    &=-\pi(\omega\ww\omega+R^\nabla)\cr
        &=-\pi(\omega\ww\omega)\cr
    &=-2\cwdot \omega_0\ww\phi-2\cwdot\pi(\omega_1\ww\phi^\dagger)\cr
       &-\pi(\phi^\dagger\ww\phi^\dagger)-\pi(\omega_1\ww\omega_1).\label{ifi-0}
  \end{align}
For a cyclic surface $\omega_i\VS=0$ for $i=0,1$, and thus
  for $i,j=0,1$
  \begin{align*}
    i_\xi(\omega_i\ww\omega_j)\VS=0.
  \end{align*}
Thus Equation \eqref{ifi-0} yields the following equality in
  $\Omega^*(\Sigma,\mc G)$
  \begin{align*}
    i_\xi\d^\nabla\phi&=-
    i_\xi\left(2\cwdot\omega_0\ww\phi
    +\pi\left(2\cwdot \omega_1\ww\phi^\dagger+\phi^\dagger\ww\phi^\dagger\right)\right)\cr
    &=-2\cwdot\zeta_0\ww\phi-2\cwdot\pi(\zeta_1\ww\phi^\dagger+\zeta^\dagger\ww\phi^\dagger).
  \end{align*}
  Since $\phi$ is of type $(1,0)$ and $\phi^\dagger$ is of type
  $(0,1)$ the previous equation yields the first part of the
  proposition, where in the second equality, we use that
  $\pi(\zeta_0\ww\phi)=\zeta_0\ww\phi$. The second part follows by symmetry.
\end{proof}
\subsubsection{Vanishing  of $\phi+\rho(\phi^\dagger)$ and the derivatives of $\zeta+\rho(\eta^\dagger)$ }
Let $\mu=\zeta+\rho(\zeta^\dagger)$, then
\begin{proposition}\label{dmu}
We have the following equality in $\Omega^*(\Sigma,\mc G)$.
\begin{align}
\partial\mu=4\cwdot\zeta_0\ww\phi.
\end{align}
\end{proposition}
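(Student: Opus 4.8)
The plan is to use directly the fourth form $\phi+\rho(\phi^\dagger)$ of the cyclic Pfaffian system $\Lambda$, combined with the preliminary computation of Proposition~\ref{ifi00}. The starting remark is that $\rho$ is an antilinear bundle automorphism acting only on the fibres of $\mc G$, whereas the contraction $i_\xi$ acts only on the form part; since $\xi$ is real, these two operations commute, so that $i_\xi\bigl(\phi+\rho(\phi^\dagger)\bigr)=\zeta+\rho(\zeta^\dagger)=\mu$. As $\xi$ is an infinitesimal deformation of cyclic surfaces and $\phi+\rho(\phi^\dagger)$ is one of the forms of $\Lambda$, the defining relation \eqref{Pfaff00} of an infinitesimal variation gives, along $\Sigma$,
\[
\d^\nabla\mu=-\,i_\xi\d^\nabla\bigl(\phi+\rho(\phi^\dagger)\bigr).
\]

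Next I would use that $\rho$ is parallel for $\nabla$ (Proposition~\ref{vec-bun-des}, equivalently the equation $\widehat\nabla\widehat\rho=0$), hence $\d^\nabla(\rho(\phi^\dagger))=\rho(\d^\nabla\phi^\dagger)$, and once more that $i_\xi$ commutes with $\rho$ on real vectors, to obtain $\d^\nabla\mu=-\,i_\xi\d^\nabla\phi-\rho\bigl(i_\xi\d^\nabla\phi^\dagger\bigr)$. The key structural point, which I expect to be the step requiring the most care, is that the antilinear $\rho$ exchanges the types $(1,0)$ and $(0,1)$, conjugating the form part while acting on the value. Extracting the $(1,0)$-component of the last equality thus yields
\[
\partial\mu=-\bigl(i_\xi\d^\nabla\phi\bigr)^{(1,0)}-\rho\Bigl(\bigl(i_\xi\d^\nabla\phi^\dagger\bigr)^{(0,1)}\Bigr).
\]

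It then remains to substitute the two formulas of Proposition~\ref{ifi00}: equation \eqref{ifi002} gives $(i_\xi\d^\nabla\phi)^{(1,0)}=-2\cwdot\zeta_0\ww\phi$, while \eqref{ifi003} gives $(i_\xi\d^\nabla\phi^\dagger)^{(0,1)}=-2\cwdot\zeta_0\ww\phi^\dagger$. The whole statement is then reduced to evaluating $\rho(\zeta_0\ww\phi^\dagger)$. Since $\rho$ is a Lie algebra automorphism it distributes over the bracket operation $\ww$, so $\rho(\zeta_0\ww\phi^\dagger)=\rho(\zeta_0)\ww\rho(\phi^\dagger)$; invoking the reality relation $\rho(\zeta_0)=-\zeta_0$ from \eqref{rhozero} together with the cyclic condition $\rho(\phi^\dagger)=-\phi$ (the form of Assertion~\eqref{def:cyc3} after applying the involution $\rho$) gives $\rho(\zeta_0\ww\phi^\dagger)=(-\zeta_0)\ww(-\phi)=\zeta_0\ww\phi$. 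Collecting both contributions produces $\partial\mu=2\cwdot\zeta_0\ww\phi+2\cwdot\zeta_0\ww\phi=4\cwdot\zeta_0\ww\phi$, which is the desired identity. The only genuine subtlety throughout is the careful bookkeeping of how the antilinear involution $\rho$ interacts simultaneously with the type decomposition and with the bracket $\ww$; the remainder is a direct substitution.
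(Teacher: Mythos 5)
Your proposal is correct and follows essentially the same route as the paper's own proof: both exploit the vanishing of the Pfaffian form $\beta=\phi+\rho(\phi^\dagger)$ together with the infinitesimal-variation identity to write $\partial\mu=-(i_\xi\d^\nabla\beta)^{(1,0)}$, then substitute Equations \eqref{ifi002} and \eqref{ifi003} of Proposition \ref{ifi00} and conclude via $\rho(\zeta_0\ww\phi^\dagger)=\rho(\zeta_0)\ww\rho(\phi^\dagger)=\zeta_0\ww\phi$, using \eqref{rhozero} and the cyclic condition $\rho(\phi^\dagger)=-\phi$. The only difference is cosmetic ordering (you contract first and differentiate second), and your explicit attention to the commutation of $i_\xi$ with the parallel, type-exchanging involution $\rho$ is exactly the bookkeeping the paper performs implicitly.
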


\begin{proof}
Let $\beta=\phi+\rho(\phi^\dagger)$. Using Proposition \ref{ifi00}, we get
\begin{align}
(i_\xi\d^\nabla\beta)^{(1,0)}&=-2\cwdot\zeta_0\ww\phi-2\cwdot\rho(\zeta_0\ww\phi^\dagger)\cr
&=-2\cwdot\zeta_0\ww\phi+2\cwdot\rho(\zeta_0)\ww\phi\cr
&=-4\cwdot\zeta_0\ww\phi,
\end{align}
where we used Equation \eqref{rhozero} in the last equality.
Then, by the vanishing of $\beta$ along cyclic surfaces, we obtain
\begin{align}
(i_\xi\d^\nabla\beta)^{(1,0)}&=-(\d^\nabla i_\xi\beta)^{(1,0)}\cr
&=-\partial\mu.
\end{align}
This proves the result.
\end{proof}

\subsubsection{Vanishing of $\phi\ww\phi$ and the derivatives of $\zeta$.}

\begin{proposition}\label{phiphi}
  We have \tfe
  \begin{align}
    (\nabla \zeta)\ww\phi&=2\cwdot\phi\ww\pi((\zeta_1+\zeta^\dagger)\ww\phi^\dagger),\label{ifi211}\\
    (\nabla
    \zeta^\dagger)\ww\phi^\dagger&=2\cwdot\phi^\dagger\ww\pi^\dagger((\zeta_1+\zeta)\ww\phi)\label{ifi211s}.\end{align}
\end{proposition}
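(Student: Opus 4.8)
The plan is to use that $\phi\ww\phi$ is one of the four forms constituting the cyclic Pfaffian system $\Lambda$, so that the hypothesis that $\xi$ is an infinitesimal variation gives, restricted to $\Sigma$,
$$i_\xi\d^\nabla(\phi\ww\phi)\VS = -\d^\nabla\bigl(i_\xi(\phi\ww\phi)\bigr)\VS.$$
First I would expand both sides with the Leibniz and contraction rules of Section \ref{prel-form}. Writing $\d^\nabla(\phi\ww\phi)=\d^\nabla\phi\ww\phi-\phi\ww\d^\nabla\phi$ and contracting against $\xi$ produces, after tracking the graded signs and using $\zeta=i_\xi\phi$, the three terms $(i_\xi\d^\nabla\phi)\ww\phi$, $\phi\ww(i_\xi\d^\nabla\phi)$ and $-2\,\zeta\ww\d^\nabla\phi$. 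On the other side $i_\xi(\phi\ww\phi)=2\,\zeta\ww\phi$, whence $-\d^\nabla(2\,\zeta\ww\phi)=-2\,\nabla\zeta\ww\phi-2\,\zeta\ww\d^\nabla\phi$. The two occurrences of $\zeta\ww\d^\nabla\phi$ cancel, and the graded commutativity of the two $1$-forms $i_\xi\d^\nabla\phi$ and $\phi$ collapses $(i_\xi\d^\nabla\phi)\ww\phi+\phi\ww(i_\xi\d^\nabla\phi)$ into $2\,(i_\xi\d^\nabla\phi)\ww\phi$, leaving the clean identity
$$\nabla\zeta\ww\phi = -\,(i_\xi\d^\nabla\phi)\ww\phi.$$

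The second step is a type argument. Since $\Sigma$ is a Riemann surface every $(2,0)$-form vanishes, and $\phi\VS$ is of type $(1,0)$; hence $(i_\xi\d^\nabla\phi)^{(1,0)}\ww\phi$ is of type $(2,0)$ and drops out, so only the $(0,1)$-component of $i_\xi\d^\nabla\phi$ contributes. Substituting the expression already computed in Proposition \ref{ifi00}, namely $(i_\xi\d^\nabla\phi)^{(0,1)}=-2\,\pi((\zeta_1+\zeta^\dagger)\ww\phi^\dagger)$, gives $\nabla\zeta\ww\phi=2\,\pi((\zeta_1+\zeta^\dagger)\ww\phi^\dagger)\ww\phi$, and a final application of graded commutativity between the two $1$-forms $\pi((\zeta_1+\zeta^\dagger)\ww\phi^\dagger)$ and $\phi$ rewrites this as $2\,\phi\ww\pi((\zeta_1+\zeta^\dagger)\ww\phi^\dagger)$, which is exactly \eqref{ifi211}.

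For \eqref{ifi211s} I would argue by symmetry. As noted before the statement, $\phi^\dagger\ww\phi^\dagger$ also vanishes on cyclic surfaces; moreover $\phi^\dagger\ww\phi^\dagger=\rho(\phi\ww\phi)$, and since $\rho$ is parallel (a self-duality equation, Proposition \ref{prop:Higgs-cyc}) and $\xi$ is real, applying $\rho$ to the infinitesimal-variation identity for $\phi\ww\phi$ transports it to one for $\phi^\dagger\ww\phi^\dagger$. Running the identical two-step computation with $\phi$ replaced by the $(0,1)$-form $\phi^\dagger$ — so that now the $(0,1)$-part is the one annihilated and the surviving $(1,0)$-part is supplied by $(i_\xi\d^\nabla\phi^\dagger)^{(1,0)}=-2\,\pi^\dagger((\zeta_1+\zeta)\ww\phi)$ from Proposition \ref{ifi00} — yields the second equality. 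There is no serious obstacle here: the only input beyond the present excerpt is Proposition \ref{ifi00} together with the vanishing of $(2,0)$-forms on a surface, and the one place demanding genuine care is the bookkeeping of graded signs in the first step, while the actual mechanism (cancellation of the $\zeta\ww\d^\nabla\phi$ terms, then the type projection) is short.
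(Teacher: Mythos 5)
Your proof of \eqref{ifi211} is essentially the paper's own: both start from the infinitesimal-variation identity applied to the Pfaffian form $\phi\ww\phi$, expand by the Leibniz and contraction rules, discard the $(1,0)$-part by the type argument on the Riemann surface, and conclude with Equation \eqref{ifi001} of Proposition \ref{ifi00} plus commutativity of $1$-forms. The only cosmetic difference is that you cancel the two occurrences of $\zeta\ww\d^\nabla\phi$ against each other, where the paper discards them using $\d^\nabla\phi\VS=0$ (Equation \eqref{higgs2}); both are valid, and your sign bookkeeping checks out.

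The one point that deserves a correction is your justification of the symmetric case. The equality $\phi^\dagger\ww\phi^\dagger=\rho(\phi\ww\phi)$ is \emph{not} an identity of forms on $\X$: it holds only after pullback to $\Sigma$, because $\rho(\phi)=-\phi^\dagger$ is one of the defining conditions of cyclic surfaces (Assertion \eqref{def:cyc3} of Definition \ref{cyc-map}), not an ambient identity. Since both sides of the variation identity \eqref{Pfaff00} involve the values of the form on all of $\T\X$ along $\Sigma$ together with transverse derivatives, applying $\rho$ to the identity for $\phi\ww\phi$ only yields the identity for $\rho(\phi)\ww\rho(\phi)$, which differs from $\phi^\dagger\ww\phi^\dagger$ off $\T\Sigma$. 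The transport can be repaired: with $\beta\defeq\phi+\rho(\phi^\dagger)$ the fourth form of the system, write $\phi^\dagger=\rho(\beta)-\rho(\phi)$, hence $\phi^\dagger\ww\phi^\dagger=\rho(\beta)\ww\rho(\beta)-\rho(\beta)\ww\rho(\phi)-\rho(\phi)\ww\rho(\beta)+\rho(\phi\ww\phi)$; then check that forms vanishing on $\Sigma$ and satisfying \eqref{Pfaff00} form a class stable under $\rho$-conjugation (using that $\rho$ is parallel and $\xi$ real) and under wedge with an arbitrary form on either side, so each summand, and hence the sum, satisfies the identity. To be fair, the paper's own appeal to ``a symmetric argument'' leaves exactly the same point implicit, so this is a gap in precision rather than in strategy.
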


\begin{proof} Let $\Psi=\phi\ww\phi$.  By the definition of
  infinitesimal variation, \tfe\ holds

  \begin{align}
    \d^\nabla i_\xi\Psi&=-i_\xi\d^\nabla\Psi=-2\cwdot i_\xi
    (\d^\nabla\phi\ww\phi).
  \end{align}
  Recall that by Equation \eqref{higgs2} for a cyclic surface
  $\d^\nabla\phi\VS=0$. Thus the last equation yields (after a type
  decomposition)
  \begin{align}
    \d^\nabla i_\xi\Psi&=-2\cwdot \phi\ww
    (i_\xi\d^\nabla\phi)^{(0,1)}.\label{ifi22}
  \end{align}
  Then Equation \eqref{ifi001} from Proposition \ref{ifi00} yields
$$
\d^\nabla
i_\xi\Psi=4\cwdot\phi\ww\pi((\zeta_1+\zeta^\dagger)\ww\phi^\dagger).
$$
Now, $i_\xi\Psi=2\cwdot\zeta\ww\phi$. Thus the previous equation
combined with the fact that $\d^\nabla\phi\VS=0$ yields
$$
\nabla\zeta\ww\phi=2\cwdot\pi((\zeta_1+\zeta^\dagger)\ww\phi^\dagger)\ww\phi.
$$
A symmetric argument (using $\phi^\dagger\ww\phi^\dagger=0$ along $\Sigma$) yields the last statement.
\end{proof}

\subsection{Computation of second order derivatives}

We now combine the two previous sections to obtain formulas for the second derivatives of $\zeta_1$ and $\zeta_0$.

\begin{proposition}\label{ifi2B}
  We have
  \begin{align}
    \d^\nabla{\partial}\zeta_1
    &=4\cwdot\pi_1\left((\zeta_1\ww\phi^\dagger)\ww\phi\right),\\
    \d^\nabla\overline{\partial}\zeta_1
    &=4\cwdot\pi_1\left((\zeta_1\ww\phi)\ww\phi^\dagger\right).
  \end{align}
\end{proposition}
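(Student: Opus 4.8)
The plan is to differentiate the first-order formula
$$\partial\zeta_1=2\cwdot\pi_1((\zeta_1+\zeta)\ww\phi)$$
of Proposition \ref{1ifi}. Since $\pi_1$ is parallel for $\nabla$ and $\zeta_1+\zeta$ is a $0$-form, the Leibniz rule gives, as an equality in $\Omega^*(\Sigma,\mc G)$,
$$\d^\nabla\partial\zeta_1=2\cwdot\pi_1\left((\nabla(\zeta_1+\zeta))\ww\phi+(\zeta_1+\zeta)\ww\d^\nabla\phi\right).$$
The crucial simplification is that $\d^\nabla\phi\VS=0$ by the self-duality equation \eqref{higgs2}, so the second term drops out and I am left with $\d^\nabla\partial\zeta_1=2\cwdot\pi_1((\nabla\zeta_1)\ww\phi)+2\cwdot\pi_1((\nabla\zeta)\ww\phi)$.

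Next I would compute the two pieces separately. For the first, since $\phi$ is of type $(1,0)$ and a $(2,0)$-form on a surface vanishes, only the $(0,1)$-part of $\nabla\zeta_1$ survives, so $(\nabla\zeta_1)\ww\phi=(\overline\partial\zeta_1)\ww\phi$; substituting $\overline\partial\zeta_1=2\cwdot\pi_1(A)$ with $A\defeq(\zeta_1+\zeta^\dagger)\ww\phi^\dagger$ (Proposition \ref{1ifi}) gives $2\cwdot\pi_1(\pi_1(A)\ww\phi)$. For the second, Proposition \ref{phiphi} gives directly $(\nabla\zeta)\ww\phi=2\cwdot\phi\ww\pi(A)$. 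Hence $\d^\nabla\partial\zeta_1=4\cwdot\pi_1\left(\pi_1(A)\ww\phi+\phi\ww\pi(A)\right)$. Now the bracket relations of Proposition \ref{crucial} show $A\in\mc G_1\oplus\mc G_Z$ (so $\pi_0(A)=\pi^\dagger(A)=0$ and $A=\pi_1(A)+\pi(A)$), while the operation $\ww$ is symmetric on $1$-forms, whence $\phi\ww\pi(A)=\pi(A)\ww\phi$ and $\pi_1(A)\ww\phi+\phi\ww\pi(A)=A\ww\phi$. This collapses the expression to $\d^\nabla\partial\zeta_1=4\cwdot\pi_1(A\ww\phi)$.

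It then remains to discard the $\zeta^\dagger$-contribution: since $A\ww\phi=(\zeta_1\ww\phi^\dagger)\ww\phi+(\zeta^\dagger\ww\phi^\dagger)\ww\phi$, I must show $\pi_1((\zeta^\dagger\ww\phi^\dagger)\ww\phi)=0$, which I expect to be the only genuinely delicate point. The plan here is to rewrite it using the Jacobi identity \eqref{Jacobi} with the $0$-form $\zeta^\dagger$ and the $1$-forms $\phi^\dagger,\phi$:
$$(\zeta^\dagger\ww\phi^\dagger)\ww\phi=\phi\ww(\zeta^\dagger\ww\phi^\dagger)=\zeta^\dagger\ww(\phi^\dagger\ww\phi)-\phi^\dagger\ww(\zeta^\dagger\ww\phi).$$
By Proposition \ref{crucial}, both $\phi^\dagger\ww\phi$ and $\zeta^\dagger\ww\phi$ lie in $\mc H$ (relation \eqref{pidec4}), and $[\mc G_{Z^\dagger},\mc H]\subset\mc G_{Z^\dagger}$ (relation \eqref{pidec1}); therefore each of the two terms on the right lies in $\mc G_{Z^\dagger}$ and is killed by $\pi_1$. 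This yields $\d^\nabla\partial\zeta_1=4\cwdot\pi_1((\zeta_1\ww\phi^\dagger)\ww\phi)$, the first asserted identity. The second identity follows by the evident symmetry exchanging $(\phi,\zeta,\pi,\partial)$ with $(\phi^\dagger,\zeta^\dagger,\pi^\dagger,\overline\partial)$, starting now from the formula for $\overline\partial\zeta_1$, using $\d^\nabla\phi^\dagger\VS=0$ (equation \eqref{higgs2*}) and the companion formula \eqref{ifi211s} of Proposition \ref{phiphi}.
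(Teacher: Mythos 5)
Your proposal is correct and follows essentially the same route as the paper's own proof: differentiate the formula $\partial\zeta_1=2\cwdot\pi_1((\zeta_1+\zeta)\ww\phi)$ of Proposition \ref{1ifi} using that $\pi_1$ is parallel and $\d^\nabla\phi\VS=0$, substitute Propositions \ref{1ifi} and \ref{phiphi} for $\nabla\zeta_1$ and $\nabla\zeta$, collapse $(\pi_1+\pi)(A)=A$ via the bracket relations of Proposition \ref{crucial}, and kill the $\zeta^\dagger$-term with the Jacobi identity \eqref{Jacobi} and relations \eqref{pidec4}, \eqref{pidec1}. Your sign in the Jacobi step is in fact the consistent one, and the discrepancy with the paper's displayed sign is immaterial since both terms land in $\mc G_{Z^\dagger}$ and are annihilated by $\pi_1$.
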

\begin{proof}
  By Proposition \ref{1ifi}
  \begin{align*}{\partial} \zeta_1 &=2\cwdot\pi_1((\zeta_1+\zeta)\ww
    \phi).\end{align*} Thus,
  \begin{align}\d^\nabla{\partial} \zeta_1
        &=2\cwdot\d^\nabla\pi_1(\zeta_1\ww\phi)+2\cwdot\d^\nabla\pi_1(\zeta\ww
    \phi).\label{ifi2B1}\end{align}
Let us first consider
  the derivatives of $\pi_1(\zeta_1\ww\phi)$. Since $\d^\nabla\phi=0$
  by Equation \eqref{higgs2}, we have, using Proposition \ref{1ifi}
  for the third equality,
  \begin{align}
    \d^\nabla(\pi_1(\zeta_1\ww\phi))&=\pi_1(\nabla \zeta_1\ww \phi)\cr
    &=\pi_1(\overline{\partial}\zeta_1\ww \phi)\cr
    &=2\cwdot\pi_1\left(\phi\ww\pi_1\left((\zeta_1+\zeta^\dagger)\ww\phi^\dagger\right)\right).\label{ifi2B2}
  \end{align}
  Similarly, now using Proposition \ref{phiphi}
  \begin{align}
    \d^\nabla(\pi_1(\zeta\ww\phi))&=\pi_1(\nabla \zeta\ww \phi)\cr
    &=2\cwdot\pi_1\left(\phi\ww\pi\left((\zeta_1+\zeta^\dagger)\ww\phi^\dagger\right)\right).\label{ifi2B2b}
  \end{align}
  Combining Equations \eqref{ifi2B2} and \eqref{ifi2B2b}, we get
  \begin{align}
    \d^\nabla{\partial}\zeta_1
    &=4.\pi_1\left(\phi\ww(\pi_1+\pi)\left((\zeta_1+\zeta^\dagger)\ww\phi^\dagger\right)\right).
  \end{align}
  Since by Proposition \ref{crucial}, \begin{align} [\mc G_1,\mc
    G_{Z^\dagger}]&\subset \mc G_1+\mc G_{Z},\cr [\mc G_{Z^\dagger},\mc
    G_{Z^\dagger}]&\subset \mc G_1+\mc G_{Z},
  \end{align}
  we get
  \begin{align}
    \d^\nabla{\partial}\zeta_1
    &=4.\pi_1\left(\phi\ww\left((\zeta_1+\zeta^\dagger)\ww\phi^\dagger\right)\right).\label{ifi2B10}
  \end{align}
  The Jacobi identity yields
  \begin{align*}
    (\zeta^\dagger\ww\phi^\dagger)\ww\phi&=\zeta^\dagger\ww
    (\phi^\dagger\ww\phi) +\phi^\dagger\ww(\zeta^\dagger\ww\phi)\in
    \Omega^2(\Sigma,\mc G_{Z^\dagger}).
  \end{align*}
  Thus
  \begin{align*}
    \pi_1\left((\zeta^\dagger\ww\phi^\dagger)\ww\phi\right)&=0.
  \end{align*}
  Thus in the end, Equation \eqref{ifi2B10} yields
  \begin{align}
    \d^\nabla{\partial}\zeta_1
    &=4.\pi_1\left(\phi\ww(\zeta_1\ww\phi^\dagger)\right).\label{ifi23}
  \end{align}
  The proof of the second equation of the proposition follows by
  inverting the role of $\phi$ and $\phi^\dagger$.
\end{proof}
\begin{proposition}\label{ifi2B0}
  We have
  \begin{align}
    \d^\nabla{\overline\partial}\zeta_0
    &=4\cwdot\check\pi_0((\zeta_0\ww\phi)\ww\phi^\dagger).
  \end{align}
\end{proposition}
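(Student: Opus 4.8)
The plan is to follow the same template as the proof of Proposition \ref{ifi2B}: start from the first--order formula for $\overline\partial\zeta_0$ supplied by Proposition \ref{ifil}, apply $\d^\nabla$, and simplify using that the relevant projection is parallel and that $\phi^\dagger$ is $\d^\nabla$--closed along the surface. The one extra idea, which makes the computation collapse, is to \emph{symmetrise} the two expressions for $\overline\partial\zeta_0$ so that the combination $\mu=\zeta+\rho(\zeta^\dagger)$ appears; this is precisely the object for which Proposition \ref{dmu} provides a clean formula for $\partial\mu$.

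Concretely, Proposition \ref{ifil} gives the two equalities $\overline\partial\zeta_0=2\,\check\pi_0(\zeta\ww\phi^\dagger)$ and $\overline\partial\zeta_0=2\,\check\pi_0(\rho(\zeta^\dagger)\ww\phi^\dagger)$. Adding them and dividing by two yields $\overline\partial\zeta_0=\check\pi_0(\mu\ww\phi^\dagger)$. I would then apply $\d^\nabla$. Since $\mc H_0$ is parallel (Proposition \ref{vec-bun-des}), $\check\pi_0$ commutes with $\d^\nabla$, so $\d^\nabla\overline\partial\zeta_0=\check\pi_0\,\d^\nabla(\mu\ww\phi^\dagger)$. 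Expanding by the Leibniz rule, $\d^\nabla(\mu\ww\phi^\dagger)=\nabla\mu\ww\phi^\dagger+\mu\ww\d^\nabla\phi^\dagger$, and the last term vanishes along $\Sigma$ by Equation \eqref{higgs2*}. Finally a type decomposition removes $\overline\partial\mu$: since $\phi^\dagger$ is of type $(0,1)$ the form $\overline\partial\mu\ww\phi^\dagger$ is of type $(0,2)$, hence zero on the Riemann surface, so $\nabla\mu\ww\phi^\dagger=\partial\mu\ww\phi^\dagger$. Substituting $\partial\mu=4\,\zeta_0\ww\phi$ from Proposition \ref{dmu} gives $\d^\nabla\overline\partial\zeta_0=\check\pi_0((4\,\zeta_0\ww\phi)\ww\phi^\dagger)=4\,\check\pi_0((\zeta_0\ww\phi)\ww\phi^\dagger)$, which is the assertion.

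There is essentially no obstacle here once the symmetrisation is spotted; the only points requiring care are bookkeeping ones. First, one must use \emph{both} forms of $\overline\partial\zeta_0$ in Proposition \ref{ifil} rather than just one, since neither $\partial\zeta$ nor $\rho(\overline\partial\zeta^\dagger)$ taken separately has a usable closed form---only the symmetric combination $\partial\mu$ does. (Equivalently, one may compute the two resulting expressions $2\,\check\pi_0(\partial\zeta\ww\phi^\dagger)$ and $2\,\check\pi_0(\rho(\overline\partial\zeta^\dagger)\ww\phi^\dagger)$ separately, note that each equals $\d^\nabla\overline\partial\zeta_0$, and add, using $\partial\rho=\rho\overline\partial$ since $\rho$ is parallel and antilinear; this routes through the same identity $\partial\zeta+\rho(\overline\partial\zeta^\dagger)=\partial\mu$.) Second, one should keep track of the $(1,0)/(0,1)$ types and of the degree-zero Leibniz sign, both of which are benign. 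The content of the proposition is thus entirely carried by Propositions \ref{ifil} and \ref{dmu}, together with the parallelism of $\check\pi_0$ and the vanishing $\d^\nabla\phi^\dagger\VS=0$.
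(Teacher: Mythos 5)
Your proposal is correct and follows essentially the same route as the paper: the paper's proof likewise symmetrises the two expressions from Proposition \ref{ifil} to write $\overline\partial\zeta_0=\check\pi_0(\mu\ww\phi^\dagger)$, then applies $\d^\nabla$ using $\d^\nabla\phi^\dagger\VS=0$ and substitutes $\partial\mu=4\cwdot\zeta_0\ww\phi$ from Proposition \ref{dmu}. Your write-up merely makes explicit the bookkeeping (parallelism of $\check\pi_0$, the Leibniz rule, and the $(0,2)$-type vanishing of $\overline\partial\mu\ww\phi^\dagger$) that the paper leaves implicit.
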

\begin{proof}
By Proposition \ref{ifil}, we have 
  \begin{align}
\overline\partial\zeta_0
    &=\check\pi_0(\mu\ww\phi^\dagger).
\end{align}
By Proposition \ref{dmu}, and using the fact that $\d^\nabla\phi^\dagger=0$, it follows that
\begin{align}
\d^\nabla(\overline\partial\zeta_0)&=\check\pi_0(\partial\mu\ww\phi^\dagger)\\
    &=4\cwdot\check\pi_0((\zeta_0\ww\phi)\ww\phi^\dagger).
\end{align}
\end{proof}

\subsection{Proof of the infinitesimal rigidity}

Our goal is now to prove Proposition \ref{IFI0}. We will use freely in the sequel the fact that for $A=\check\pi_0,\pi_1 \hbox { or } \pi_0$
  $\int_\Sigma\Kill{u}{A(v)}=\int_\Sigma\Kill{A(u)}{v}$.

\subsubsection{First step}
\begin{proposition}\label{IFIp}
  Let $\xi$ be an infinitesimal deformation of a closed cyclic surface.  Then
  \begin{align}
    \zeta_1\ww\phi=\zeta_1\ww\phi^\dagger&=0\label{IFIp0},\\
    \nabla \zeta_1&=0\label{IFIp1},\\
    \pi_1(\zeta\ww\phi)\label{IFIp2}&=0.
  \end{align}\end{proposition}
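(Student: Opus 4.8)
The plan is to run a Böchner-type argument on the closed surface $\Sigma$, combining the second-order formulas of Proposition \ref{ifi2B} with the sign-definiteness of Proposition \ref{Sign}. Since $\Sigma$ is closed, Stokes' theorem gives $\int_\Sigma\d\Kill{\rho(\zeta_1)}{\overline\partial\zeta_1}=0$. Expanding the integrand by the Leibniz rule and using that $\rho$ is parallel ($\nabla\rho=0$), so that only the $(1,1)$-component survives on a surface, I would obtain
$$0=\int_\Sigma\Kill{\rho(\overline\partial\zeta_1)}{\overline\partial\zeta_1}+\int_\Sigma\Kill{\rho(\zeta_1)}{\d^\nabla\overline\partial\zeta_1}.$$
For the first integral I would use the antisymmetry of the Killing pairing on $1$-forms to rewrite it as $-\int_\Sigma\Kill{\overline\partial\zeta_1}{\rho(\overline\partial\zeta_1)}$. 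For the second I would substitute $\d^\nabla\overline\partial\zeta_1=4\cwdot\pi_1((\zeta_1\ww\phi)\ww\phi^\dagger)$ from Proposition \ref{ifi2B}, drop the projection $\pi_1$ against $\rho(\zeta_1)\in\mc G_1$ (by Killing-orthogonality of the decomposition, since $\rho$ preserves $\mk g_1$ by Proposition \ref{UniqueCartan}), apply the exchange formula \eqref{killexchange}, and use the cyclic reality relation $\rho(\phi)=-\phi^\dagger$ (Assertion \eqref{def:cyc3}) together with $\rho(\alpha\ww\beta)=\rho(\alpha)\ww\rho(\beta)$ to recast it as $4\cwdot\int_\Sigma\Kill{\zeta_1\ww\phi}{\rho(\zeta_1\ww\phi)}$.

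After multiplying by $i$, this should yield
$$0=-i\cwdot\int_\Sigma\Kill{\overline\partial\zeta_1}{\rho(\overline\partial\zeta_1)}+4i\cwdot\int_\Sigma\Kill{\zeta_1\ww\phi}{\rho(\zeta_1\ww\phi)}.$$
Now $\overline\partial\zeta_1$ is of type $(0,1)$ and $\zeta_1\ww\phi$ is of type $(1,0)$, so by Proposition \ref{Sign} both terms on the right are non-positive; their sum being zero forces each to vanish, and the equality case of Proposition \ref{Sign} gives $\overline\partial\zeta_1=0$ and $\zeta_1\ww\phi=0$. Running the symmetric computation — starting from $\int_\Sigma\d\Kill{\rho(\zeta_1)}{\partial\zeta_1}=0$ and using $\d^\nabla\partial\zeta_1=4\cwdot\pi_1((\zeta_1\ww\phi^\dagger)\ww\phi)$ — produces $\partial\zeta_1=0$ and $\zeta_1\ww\phi^\dagger=0$. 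This establishes \eqref{IFIp0}, and since $\nabla\zeta_1=\partial\zeta_1+\overline\partial\zeta_1$, also \eqref{IFIp1}.

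Finally, \eqref{IFIp2} is immediate: by Proposition \ref{1ifi}, $\partial\zeta_1=2\cwdot\pi_1((\zeta_1+\zeta)\ww\phi)$; since $\partial\zeta_1=0$ and $\zeta_1\ww\phi=0$ give $\pi_1(\zeta_1\ww\phi)=0$, we conclude $\pi_1(\zeta\ww\phi)=0$. I expect the main obstacle to be the sign and type bookkeeping in the middle step — in particular keeping straight the antilinearity of $\rho$, the antisymmetry of the Killing pairing on $1$-forms encoded in \eqref{killexchange}, and the two opposite sign conventions of Proposition \ref{Sign} — so that the two boundary contributions genuinely land on the same side and combine into a definite-sign identity rather than silently cancelling.
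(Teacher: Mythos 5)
Your proposal is correct and takes essentially the same route as the paper's proof: the paper likewise pairs $\rho(\zeta_1)$ against the second-derivative formulas of Proposition \ref{ifi2B}, drops the projection $\pi_1$ using that $\rho$ preserves $\mk g_1$, applies the exchange formula \eqref{killexchange} together with $\rho(\phi)=-\phi^\dagger$, integrates by parts over the closed surface, and concludes from the sign dichotomy of Proposition \ref{Sign}, with \eqref{IFIp1} and \eqref{IFIp2} then read off exactly as you do. The only differences are cosmetic: the paper runs the argument first for $\partial\zeta_1$ rather than $\overline\partial\zeta_1$ and records the conclusion as a chain $0\leq i\int_\Sigma\Kill{\rho(\partial\zeta_1)}{\partial\zeta_1}=4i\int_\Sigma\Kill{\rho(\zeta_1)\ww\phi}{\zeta_1\ww\rho(\phi)}\leq 0$, whereas you phrase the same cancellation as a sum of two non-positive terms.
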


\begin{proof}
  Let $\Sigma$ be a closed surface. By Proposition \ref{ifi2B},
  \begin{align*}
    \d^\nabla{\partial}\zeta_1
    &=4\cwdot\pi_1\left((\zeta_1\ww\phi^\dagger)\ww\phi\right).
  \end{align*}
   Denoting by $\Kill{\cwdot}{\cwdot}$ the Killing form, an integration yields
  \begin{align*} \int_\Sigma \Kill{
      \CK{\zeta_1}}{\d^\nabla{\partial}\zeta_1}
    &=4\cwdot\int_\Sigma\Kill{\CK{\zeta_1}}{\pi_1\left((\zeta_1\ww\phi^\dagger)\ww\phi
        \right)}.
        \end{align*} Observe now that since $\rho$
  preserves $\mk g_1$, $\pi_1\left(\CK{\zeta_1}\right)=\CK{\zeta_1}$. Thus for
  all $\kappa$,

$$
\int_\Sigma \Kill{\CK{\zeta_1}}{\pi_1 (\kappa)}=\int_\Sigma
\Kill{\CK{\zeta_1}}{\kappa}.
$$ 
Thus
\begin{align*}
  \int_\Sigma \Kill{ \CK{\zeta_1}}{\d^\nabla{\partial}\zeta_1}
  &=4\int_\Sigma\Kill{\CK{\zeta_1}}{(\zeta_1\ww\phi^\dagger)\ww\phi}.
\end{align*}
Using Equation \eqref{killexchange} and the fact that  $\phi^\dagger=-\CK{\phi}$, we get
\begin{align*}
  \int_\Sigma \Kill{ \CK{\zeta_1}}{\d^\nabla{\partial}\zeta_1}
  &=-4\int_\Sigma\Kill{\CK{\zeta_1}\ww\phi}{\zeta_1\ww\CK{\phi}}.
\end{align*}
An integration by part and Proposition \ref{Sign} finally yields
\begin{align*}
 0\leq  i\cwdot \int_\Sigma \Kill{ \CK{{\partial}\zeta_1}}{{\partial}\zeta_1}
  &=4i\cwdot\int_\Sigma\Kill{\CK{\zeta_1}\ww\phi}{\zeta_1\ww\CK{\phi}}\leq 0.
\end{align*}
It then follows that
\begin{align} {\partial}\zeta_1&=0\label{IFI21}\\
  \zeta_1\ww\phi^\dagger&=0\label{IFI22}
\end{align}
Symmetrically
\begin{align}
  \overline{\partial}\zeta_1&=0\label{IFI11}\\
  \zeta_1\ww\phi&=0\label{IFI12}.
\end{align}

Assertion \eqref{IFIp0} is just Equations \eqref{IFI12} and
\eqref{IFI22}.

Assertion \eqref{IFIp1} now follows from Equations \eqref{IFI11} and
\eqref{IFI21}.

Assertion \eqref{IFIp2} then follows from Proposition \ref{1ifi} and
Equation \eqref{IFI12}.\end{proof}
\subsubsection{Second step}

\begin{proposition}\label{IFIp22}
  Let $\Sigma$ be a closed surface. Let $\xi$ be an infinitesimal
  variation. Then
  \begin{align}
    \nabla\zeta_0&=0\\
    \zeta_0\ww\phi &=0\\
    \check\pi_0(\zeta\ww\phi^\dagger)&=0.
  \end{align}
\end{proposition}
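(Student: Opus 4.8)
The plan is to run a Böchner-type argument on $\zeta_0$ exactly parallel to the first step (Proposition \ref{IFIp}), feeding the second-derivative formula of Proposition \ref{ifi2B0} into an integral that Proposition \ref{Sign} will pinch between a nonpositive and a nonnegative bound. Concretely, I would study the single scalar
$$
I\defeq\int_\Sigma\Kill{\CK{\zeta_0}}{\d^\nabla\overline{\partial}\zeta_0}
$$
and evaluate it in two different ways, each of which controls its sign.

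First I would use the curvature side. By Proposition \ref{ifi2B0}, $\d^\nabla\overline{\partial}\zeta_0=4\cwdot\check\pi_0((\zeta_0\ww\phi)\ww\phi^\dagger)$, so since $\check\pi_0$ is parallel and self-adjoint and $\CK{\zeta_0}\in\mc H_0$ (whence $\check\pi_0\CK{\zeta_0}=\CK{\zeta_0}$), the exchange rule \eqref{killexchange}, together with $\phi^\dagger=-\CK{\phi}$ and $\CK{\zeta_0}=-\zeta_0$ from \eqref{rhozero}, rewrites this as
$$
I=4\cwdot\int_\Sigma\Kill{\CK{\zeta_0}}{(\zeta_0\ww\phi)\ww\phi^\dagger}=-4\cwdot\int_\Sigma\Kill{\CK{\zeta_0\ww\phi}}{\zeta_0\ww\phi}.
$$
As $\zeta_0\ww\phi$ is of type $(1,0)$, Proposition \ref{Sign} gives $i\cwdot I\leq 0$, with equality if and only if $\zeta_0\ww\phi=0$.

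Second I would integrate $I$ by parts. Since $\nabla$ is compatible with the Killing pairing and $\Sigma$ is closed, Stokes yields $I=-\int_\Sigma\Kill{\d^\nabla\CK{\zeta_0}}{\overline{\partial}\zeta_0}$; because $\rho$ is parallel and antilinear, it exchanges the $(1,0)$ and $(0,1)$ types, so $\partial\CK{\zeta_0}=\CK{\overline{\partial}\zeta_0}$, and this is the only piece of $\d^\nabla\CK{\zeta_0}$ pairing nontrivially against the $(0,1)$-form $\overline{\partial}\zeta_0$. Hence $I=-\int_\Sigma\Kill{\CK{\overline{\partial}\zeta_0}}{\overline{\partial}\zeta_0}$, and as $\overline{\partial}\zeta_0$ is of type $(0,1)$, Proposition \ref{Sign} now gives $i\cwdot I\geq 0$, with equality if and only if $\overline{\partial}\zeta_0=0$.

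Comparing the two evaluations forces $i\cwdot I=0$, so both inequalities are equalities: $\zeta_0\ww\phi=0$ and $\overline{\partial}\zeta_0=0$. The vanishing of $\overline{\partial}\zeta_0$ gives $\check\pi_0(\zeta\ww\phi^\dagger)=0$ through Proposition \ref{ifil}; applying $\rho$ and using $\CK{\zeta_0}=-\zeta_0$ flips types to yield $\partial\zeta_0=-\CK{\overline{\partial}\zeta_0}=0$, so $\nabla\zeta_0=0$. These are exactly the three asserted conclusions. I expect the only delicate part to be the sign bookkeeping: making sure the integration-by-parts bound and the curvature bound genuinely point in opposite directions (so that $i\cwdot I$ is squeezed to $0$), which hinges on correctly tracking the $(-1)$ factors in \eqref{killexchange} and, above all, on using the antilinearity of $\rho$ to interchange the $(1,0)$ and $(0,1)$ types at each step.
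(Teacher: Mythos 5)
Your proposal is correct and is essentially the paper's own proof: you integrate the same quantity $\int_\Sigma\Kill{\CK{\zeta_0}}{\d^\nabla\overline{\partial}\zeta_0}$ in two ways, once through Proposition \ref{ifi2B0} with \eqref{killexchange} and $\rho(\phi)=-\phi^\dagger$, and once by parts using $\partial\CK{\zeta_0}=\CK{\overline{\partial}\zeta_0}$, then squeeze both sides to zero with Proposition \ref{Sign} and read off $\zeta_0\ww\phi=0$, $\overline{\partial}\zeta_0=0$, hence $\check\pi_0(\zeta\ww\phi^\dagger)=0$ via Proposition \ref{ifil} and $\nabla\zeta_0=0$ via $\CK{\zeta_0}=-\zeta_0$. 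The paper's argument is the same Böchner computation, down to the order of the steps; your version merely makes explicit the deduction of the third equality, which the paper leaves implicit.
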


\begin{proof}
Using Proposition \ref{ifi2B0}
  \begin{align}
    \int_S \Kill{\d^\nabla\overline\partial\zeta_0}{\CK{\zeta_0}}&=4\cwdot\int_S \Kill{\check\pi_0(\zeta_0\ww\phi)\ww\phi^\dagger)}{\CK{\zeta_0}}\cr
    &=4\cwdot\int_S \Kill{(\zeta_0\ww\phi)\ww\phi^\dagger}{\CK{\zeta_0}}\cr
     &=4\cwdot\int_S \Kill{\zeta_0\ww\phi}{\CK{\zeta_0}\ww\phi^\dagger}\cr
     &=-4\cwdot\int_S \Kill{\zeta_0\ww\phi}{\CK{\zeta_0\ww\phi}},\cr
    \end{align}
    where we used that $\rho(\phi)=-\phi^\dagger$ in the last equality and 
 Equation \eqref{killexchange} just before.
Thus after an integration by part we obtain

    \begin{align}
    \frac{1}{4}\int_S \Kill{\overline\partial\zeta_0}{\CK{ \overline\partial\zeta_0}}&=\frac{1}{4}\int_S \Kill{\overline\partial\zeta_0}{\partial\CK{\zeta_0}}\cr
    &=-\frac{1}{4}\int_S \Kill{\d^\nabla\overline\partial\zeta_0}{\CK{\zeta_0}}\cr
    &=\int_S \Kill{\zeta_0\ww\phi}{\CK{\zeta_0\ww\phi}}.
    \end{align}
  But, by Proposition \ref{Sign},

    \begin{align}
0   \leq  \frac{i}{4}\cwdot\int_S \Kill{\overline\partial\zeta_0}{\CK{\overline\partial\zeta_0}}
       =i\cwdot\int_S \Kill{\zeta_0\ww\phi}{\CK{\zeta_0\ww\phi}}\leq 0.
    \end{align}  
Thus, $\zeta_0\ww\phi=0$ and $\overline\partial\zeta_0=0$. Since $\CK{\zeta_0}=-\zeta_0$. It follows that 
$$
0=\CK{\overline\partial\zeta_0}=\partial\CK{\zeta_0}=-\partial\zeta_0.
$$
Thus $\partial\zeta_0=0$. It follows that $\d^\nabla\zeta_0=0$.

\end{proof}

\subsubsection{Proof of Proposition \ref{IFI0}}

Recall that we have the  decomposition
$$
\omega(\xi)=\zeta_0+\zeta_1+\zeta+\zeta^\dagger.
$$
We assume in this section that there exists a simple root $\alpha$, so that the
component $\zeta_\alpha=\omega_\alpha(\xi)$ of $\xi$ along $\mc G_\alpha$ vanishes (although the first proposition does not uses this hypothesis). 

\begin{proposition}
  We have $\zeta_0=0$ and $\zeta_1=0$.
\end{proposition}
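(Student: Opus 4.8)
The plan is to feed the first-order data already assembled in Propositions \ref{IFIp} and \ref{IFIp22} into one structural principle: a $\nabla$-parallel section of $\mc G$ that brackets to zero against both $\phi$ and $\phi^\dagger$ is automatically parallel for the flat connection $\nabla+\ad(\phi+\phi^\dagger)$, hence invariant under the monodromy of the cyclic surface. Since that monodromy is an irreducible Hitchin representation, no nonzero invariant vector can survive. I would treat $\zeta_0$ by an immediate Cartan computation and $\zeta_1$ by this global argument, and I would stress that neither conclusion invokes the standing hypothesis $\zeta_\alpha=0$.

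First I would dispose of $\zeta_0$. By Proposition \ref{IFIp22} we have $\zeta_0\ww\phi=0$, and by \eqref{realcondzeta} the section $\zeta_0$ takes values in $\mc H_0\subset\mc H$, i.e.\ in the Cartan. Decomposing $\phi=\sum_{\gamma\in Z}\phi_\gamma$ along root spaces, the identity $\zeta_0\ww\phi_\gamma=\gamma(\zeta_0)\,\phi_\gamma$ holds and each term lies in the distinct summand $\mc G_\gamma$; hence $\zeta_0\ww\phi=0$ forces $\gamma(\zeta_0)=0$ for every $\gamma\in Z$ with $\phi_\gamma\not\equiv 0$. By assertion \eqref{def:cyc6} of Definition \ref{cyc-map} the forms $f^*\omega_\beta$ are nowhere zero for $\beta\in\Pi$; applying the reality relation $\rho(\phi)=-\phi^\dagger$ (with $\rho(\mathrm{x}_\beta)=\mathrm{x}_{-\beta}$), the components of $\phi$ along the negatives $-\beta$ are nowhere zero as well. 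Since the $-\beta$ span $\mk h^*$, the equations $(-\beta)(\zeta_0)=0$ for all $\beta\in\Pi$ yield $\zeta_0=0$.

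For $\zeta_1$ I would argue globally. Proposition \ref{IFIp} provides $\nabla\zeta_1=0$ together with $\zeta_1\ww\phi=0$ and $\zeta_1\ww\phi^\dagger=0$, that is $[\phi,\zeta_1]=[\phi^\dagger,\zeta_1]=0$. Consequently
$$\left(\nabla+\ad(\phi+\phi^\dagger)\right)\zeta_1=\nabla\zeta_1+[\phi+\phi^\dagger,\zeta_1]=0.$$
By the self-duality equations \eqref{higgs-cyc3} of Proposition \ref{prop:Higgs-cyc} the connection $\nabla+\phi+\phi^\dagger$ is flat, and on the closed surface $\Sigma$ its monodromy is $\Ad\circ\delta$, where $\delta$ is the Hitchin representation attached to the cyclic surface via Theorem \ref{Higgs2cyc}. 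Thus $\zeta_1$ is a parallel section of the adjoint bundle, i.e.\ an $\Ad\delta$-invariant vector of $\mk g$. Because Hitchin representations are irreducible (Hitchin \cite{Hitchin:1992es}), the centraliser of $\delta(\pi_1(\Sigma))$ in $\ms G$ reduces to the finite centre, so $\exp(t\zeta_1)$ is central for all $t$ and $\zeta_1=0$.

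I would close by noting that the same flat-connection argument applies verbatim to $\zeta_0$: from $\rho(\zeta_0)=-\zeta_0$ (Equation \eqref{rhozero}) and $\rho(\phi)=-\phi^\dagger$ one deduces $\zeta_0\ww\phi^\dagger=0$ out of $\zeta_0\ww\phi=0$, so both sections are monodromy-invariant and hence vanish — which is why, as remarked before the statement, this proposition is independent of the hypothesis that some $\zeta_\alpha$ is zero. The one genuinely delicate ingredient is precisely this global input: the bracket identities alone only show that $\zeta_0,\zeta_1$ are parallel and constant in norm, and it is irreducibility of the Hitchin monodromy that collapses a parallel section to $0$. Everything else is the bookkeeping of Propositions \ref{IFIp} and \ref{IFIp22} already in hand.
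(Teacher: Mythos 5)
Your argument for $\zeta_0$ coincides with the paper's: Proposition \ref{IFIp22} gives $\zeta_0\ww\phi=0$, the components of $\phi$ along the roots $-\beta$, $\beta\in\Pi$, are nowhere zero by Property \eqref{def:cyc6} of Definition \ref{cyc-map} together with the reality relation $\rho(\phi)=-\phi^\dagger$, and since the simple roots span $\mk h^*$ this forces $\zeta_0=0$; your bookkeeping of $-\beta$ versus $\beta$ is in fact more careful than the paper's. For $\zeta_1$, however, you take a different route and it has a genuine gap. Your key input is that the monodromy $\delta$ of the flat connection $\nabla+\ad(\Phi+\Phi^\dagger)$ of a closed cyclic surface is an irreducible Hitchin representation, which you justify ``via Theorem \ref{Higgs2cyc}''. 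That theorem goes in the opposite direction: it produces a cyclic surface from a cyclic Higgs bundle in the image of the Hitchin section. For an arbitrary closed cyclic surface --- and Proposition \ref{IFI0}, hence the present proposition, is stated in exactly that generality, with no hypothesis on the monodromy --- the paper only establishes (Proposition \ref{prop:Higgs-cyc}) that one obtains a Higgs bundle together with a solution of the self-duality equations. That makes the monodromy reductive, not irreducible, and a reductive representation can perfectly well have a positive-dimensional centraliser, i.e.\ non-zero $\Ad\delta$-fixed vectors in $\mk g$ --- which is precisely what your argument must exclude.

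Contrast this with the paper's proof, which never mentions the monodromy: it decomposes $\zeta_1=\sum\zeta_\gamma$ along the $\nabla$-parallel root line bundles $\mc G_\gamma$, $\gamma\notin Z\cup Z^\dagger$, identifies each $\mc G_\gamma$ with $\mc K^{-\deg(\gamma)}$ using the nowhere-vanishing simple-root components of the Maurer--Cartan form, and observes that a parallel section of a line bundle of non-zero degree over a closed surface of genus at least $2$ must vanish somewhere, hence everywhere. This uses only the definition of a cyclic surface. Your gap is contextual rather than irreparable: in the sole application of Proposition \ref{IFI0} (paragraph \ref{trans}) the cyclic surfaces do arise from the Hitchin section, the monodromy is then a Hitchin representation, and the vanishing of $\Ad\delta$-invariant vectors, i.e.\ $H^0_{\delta}(\Sigma,\mk g)=0$, is available --- the paper itself invokes $\T_{\delta_0}\mc H(\Sigma,\ms G_0)=H^1_{\delta_0}(\Sigma,\mk g)$, which presupposes it. Restricted to that setting, your flat-connection argument (including its uniform variant for $\zeta_0$) is correct and pleasantly economical. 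But to prove the proposition as stated you must either supply the missing implication ``closed cyclic surface $\Rightarrow$ irreducible monodromy'' (essentially the converse correspondence the paper does not prove), or fall back on the topological line-bundle argument.
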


\begin{proof} From Equation \eqref{IFIp1} of Proposition \ref{IFIp},
  we have that $\nabla\zeta_1=0$. Observe that
 $$
 \zeta_1=\sum_{\gamma\in\Delta\setminus Z\cup Z^\dagger}\zeta_\gamma,\hbox{ where }
 \zeta_\gamma\in\mc G_\gamma.
 $$
 Since the line bundles $\mc G_\gamma$ are parallel, it follows that
 $\nabla\zeta_\gamma=0$ for all $\gamma$ not in $Z$ nor in $Z^\dagger$. Recall that $\mc G_\alpha$
 is identified for all simple roots as $\mc K^{-1}$ over $\Sigma$ a complex line thanks to $\phi_\alpha$. It then follows that $\mc
 G_\beta$ is identified  with  $\mc K^{-\deg(\beta)}$  for any
 root $\beta$. Then since $\Sigma$ is not a torus,   $\mc K^{-\deg(\gamma)}$ is non trivial and   $\zeta_\gamma$
 vanishes at some point, hence everywhere since it is parallel.

By Proposition \ref{IFIp22},  $\zeta_0\ww\phi=0$, thus we get that for all
 simple root $\alpha$, $\zeta_0\ww\omega_\alpha=0$. By Definition \ref{cyc-map} Property \eqref{def:cyc6}, $\omega_\alpha$ never vanishes. Thus  $\alpha(\zeta_0)=0$. Since the simple roots form a basis of $\mk h^*$, $\zeta_0=0$.
\end{proof}

It follows from the previous proposition that
\begin{align}
  \xi=\zeta+\zeta^\dagger.
\end{align}
It remains to prove that $\zeta=0$ as well as $\zeta^\dagger=0$.  We
now split the proof in two cases.

\subsubsection{First case: $\ms G=\ms{SL}(3,\mathbb R)$}

Remark that in this case we have three positive roots, two simple that
we name $\alpha$, $\beta$ and one long $\eta=\alpha+\beta$. Also,
$\check\pi_0\not=0$ and $\pi_1=0$.
\begin{proposition}
  Assume that $\ms G=\ms{SL}(3,\mathbb R)$, then $\zeta=0$ and
  $\zeta^\dagger=0$.
\end{proposition}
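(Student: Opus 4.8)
The plan is to determine, component by component, which root‑space parts of $\xi=\zeta+\zeta^\dagger$ can survive, first by pure involution algebra and then by one analytic vanishing step. The key structural input for $\ms{SL}(3,\mathbb R)$ is that $\sigma$ interchanges the two simple roots: by \eqref{sl3} we have ${\rm h}_\alpha\notin\mk t_\mathbb C=\mathbb C\,{\rm h}_\eta$, and since $\mk t_\mathbb C$ is exactly the $\sigma$‑fixed subspace of $\mk h$, while $\sigma$ fixes ${\rm h}_\eta$ and sends simple roots to simple roots (Proposition \ref{pro:long-sig}), $\sigma$ cannot fix $\alpha$ (that would give ${\rm h}_\alpha\in\mk t_\mathbb C$). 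Hence $\sigma(\mc G_\alpha)=\mc G_\beta$, so $\lambda=\sigma\circ\rho$ maps $\mc G_{-\beta}\to\mc G_\alpha$, $\mc G_{-\alpha}\to\mc G_\beta$, $\mc G_\eta\to\mc G_{-\eta}$. The reality relation $\zeta^\dagger=\lambda(\zeta)$ of \eqref{realcond2} then reads componentwise $\zeta_\alpha=\lambda(\zeta_{-\beta})$, $\zeta_\beta=\lambda(\zeta_{-\alpha})$, $\zeta_{-\eta}=\lambda(\zeta_\eta)$; in particular the hypothesis $\zeta_\alpha=0$ is equivalent to $\zeta_{-\beta}=0$.

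Next I would use $\check\pi_0(\zeta\ww\phi^\dagger)=0$ from Proposition \ref{IFIp22}. With $\zeta=\zeta_{-\alpha}+\zeta_\eta$ (as $\zeta_{-\beta}=0$) and $\phi^\dagger=\omega_\alpha+\omega_\beta+\omega_{-\eta}$, Proposition \ref{crucial} shows $\zeta\ww\phi^\dagger$ is $\mk h$‑valued and equals $\zeta_{-\alpha}\ww\omega_\alpha+\zeta_\eta\ww\omega_{-\eta}$, with values in $\mathbb C\,{\rm h}_\alpha\oplus\mathbb C\,{\rm h}_\eta$. Since $\check\pi_0$ annihilates $\mc T=\mathbb C\,{\rm h}_\eta$ but $\check\pi_0({\rm h}_\alpha)\neq0$ (because ${\rm h}_\alpha$ is not collinear to ${\rm h}_\eta$), vanishing forces $\zeta_{-\alpha}\ww\omega_\alpha=0$; as $\omega_\alpha$ is nowhere zero (Definition \ref{cyc-map}\,\eqref{def:cyc6}) this gives $\zeta_{-\alpha}=0$, and then $\zeta_\beta=\lambda(\zeta_{-\alpha})=0$ by reality. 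We are thus reduced to $\zeta=\zeta_\eta$ and $\zeta^\dagger=\zeta_{-\eta}=\lambda(\zeta_\eta)=-\CK{\zeta_\eta}$, the last equality using $\sigma=-1$ on $\mc G_{-\eta}$ (Proposition \ref{pro:long-sig}).

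It remains to kill $\zeta_\eta$, a section of $\mc G_\eta\cong\mc K^{-\deg(\eta)}=\mc K^{-2}$. The zeroth‑order identities are now exhausted: indeed $\mu=\zeta+\CK{\zeta^\dagger}=\zeta_\eta-\zeta_\eta=0$, so Proposition \ref{dmu} is vacuous, and I would pass to the first derivatives. Inserting $\zeta_1=0$ into Proposition \ref{phiphi} gives $(\nabla\zeta_\eta)\ww\phi=2\,\phi\ww\pi(\zeta_{-\eta}\ww\phi^\dagger)$. Decomposing into types, the $(2,0)$‑part vanishes identically on a Riemann surface, while the $(1,1)$‑part, read off in the line $\mc G_\alpha$, yields $\overline{\partial}\zeta_\eta\ww\omega_{-\beta}=2\,\omega_\eta\ww(\zeta_{-\eta}\ww\omega_\alpha)$. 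Since $\omega_{-\beta}$ is a nowhere‑vanishing $(1,0)$‑form with values in the line bundle $\mc G_{-\beta}$, this expresses $\overline{\partial}\zeta_\eta$ as a smooth bundle map applied to $\zeta_{-\eta}=-\CK{\zeta_\eta}$; that is, $\zeta_\eta$ solves a generalized Cauchy–Riemann equation $\overline{\partial}\zeta_\eta=\mathcal A\cdot\CK{\zeta_\eta}$ with a smooth antilinear zeroth‑order coefficient $\mathcal A$.

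The main obstacle is precisely this last vanishing, and it is where the genus enters. A nonzero solution of $\overline{\partial}\zeta_\eta=\mathcal A\,\CK{\zeta_\eta}$ has, by the similarity principle, only isolated zeros of strictly positive order, so its zero divisor would be effective of total degree $\deg(\mc K^{-2})=2(2-2g)<0$, which is impossible since $g>1$; hence $\zeta_\eta\equiv0$. Equivalently, one can stay within the integration‑by‑parts framework of Proposition \ref{Sign}: pairing $\overline{\partial}\zeta_\eta$ with $\CK{\overline{\partial}\zeta_\eta}$ and evaluating the resulting curvature term through the self‑duality equation $R^\nabla=2\,\phi\ww\phi^\dagger$, the negativity of $\mc K^{-2}$ along the $\eta$‑line supplies the sign that forces $\overline{\partial}\zeta_\eta=0$ and then $\zeta_\eta=0$. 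Either route gives $\zeta=\zeta_\eta=0$, whence $\zeta^\dagger=\lambda(\zeta)=0$, completing the $\ms{SL}(3,\mathbb R)$ case.
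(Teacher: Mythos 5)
Your argument is correct, and its first two steps coincide with the paper's: like the paper, you combine the reality condition (with $\sigma$ exchanging the two simple roots) with the identity $\check\pi_0(\zeta\ww\phi^\dagger)=0$ of Proposition \ref{IFIp22} and the non-vanishing of the simple-root forms (Definition \ref{cyc-map}, \eqref{def:cyc6}) to kill both simple-root components, reducing to $\zeta=\zeta_\eta\in\mc G_\eta$ and $\zeta^\dagger=-\rho(\zeta_\eta)$; in fact your first paragraph makes explicit a point the paper glosses over, namely that the hypothesis $\zeta_\alpha=0$ concerns a component of $\zeta^\dagger$ and only becomes a statement about $\zeta$ through $\lambda$ and the swap $\alpha\leftrightarrow\beta$. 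The final step, however, is genuinely different. The paper extracts the $\mc G_{-\eta}$-component of the identity $(\nabla\zeta)\ww\phi=2\cwdot\phi\ww\pi(\zeta^\dagger\ww\phi^\dagger)$ of Proposition \ref{phiphi}: since $\nabla\zeta_\eta\ww\phi$ has no $\mc G_{-\eta}$-part, one obtains the pointwise algebraic relation $\mu_\eta\cwdot(\overline{\psi_\alpha}\wedge\psi_\alpha+\overline{\psi_\beta}\wedge\psi_\beta)=0$ (after identifying the two iterated brackets by a Weyl element fixing $\eta$), and positivity of these $(1,1)$-forms gives $\mu_\eta=0$ pointwise, with no further analysis. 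You instead extract the $\mc G_\alpha$-component, which turns the same identity into a perturbed Cauchy--Riemann equation $\overline\partial\zeta_\eta=\mathcal A\cdot\rho(\zeta_\eta)$ for a section of $\mc G_\eta\simeq\mc K^{-2}$, and conclude by the similarity principle: a non-trivial solution has isolated zeros of positive index, whose total count must equal $\deg(\mc K^{-2})=4-4g<0$, a contradiction. Both routes are sound and use the same upstream ingredients; yours trades the paper's Weyl-symmetry-plus-positivity linear algebra for a standard but heavier piece of elliptic theory (Bers--Vekua), and has the conceptual merit of reusing the negative-degree mechanism by which the paper had already killed $\zeta_1$. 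One caveat: your ``equivalently'' sketch of an integration-by-parts alternative in the style of Proposition \ref{Sign} is not convincing as stated, because the curvature of $\mc G_\eta$ along the surface is $2\eta(\Phi\ww\Phi^\dagger)$, whose density is proportional to $-2\vert\psi_\eta\vert^2+\vert\psi_\alpha\vert^2+\vert\psi_\beta\vert^2$ and so has no pointwise sign away from the Fuchsian locus; only the total degree is negative, which is precisely what your primary argument exploits. Since the similarity-principle route is complete on its own, this does not affect correctness, but the ``equivalently'' should be deleted.
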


\begin{proof}

  Let $\alpha$ and $\beta$ be the two simple roots and
  $\eta=\alpha+\beta$ be the longest root. Let us choose locally on $\Sigma$ a  Chevalley frame $\{\rm x_\alpha\}_{\alpha\in\Delta}$ such that $\rho(\rm x_\alpha)=x_{-\alpha}$, then we write   \begin{align}
    \phi^\dagger&=\psi_\eta\cwdot{\rm x}_{-\eta}+\psi_\alpha\cwdot {\rm
      x}_{\alpha}+\psi_\beta\cwdot {\rm x}_{\beta},\cr
       \phi&=\overline{\psi_\eta}\cwdot{\rm x}_{\eta}+\overline{\psi_\alpha}\cwdot {\rm
      x}_{-\alpha}+\overline{\psi_\beta}\cwdot {\rm x}_{-\beta},\cr
    \zeta&=\mu_\eta\cwdot{\rm
      x}_{\eta}+\mu_\alpha\cwdot {\rm
      x}_{-\alpha}+\mu_\beta\cwdot {\rm x}_{-\beta}.
  \end{align}
  Our hypothesis in that Section is that $\mu_\alpha=0$. Observe now
  that
  \begin{align}
    \zeta\ww\phi^\dagger&=-\mu_\eta.\psi_\eta {\rm
      h}_\eta+\mu_\beta.\psi_\beta {\rm
      h}_\beta+\mu_\alpha.\psi_\alpha {\rm
      h}_\alpha.\label{sl31}
  \end{align}
By Proposition \ref{torus} Property \eqref{Heta}, $\mk t_{\mathbb C}$ is generated by ${\rm h}_\eta$. Let us write
$$
{\rm h}_\beta=u+\lambda\cwdot {\rm h}_\eta,
$$
with $\Kill{u}{{\rm h}_\eta}=0$, so that $\check{\pi}_0(u)=u$. Observe also that $\Kill{{\rm h}_\beta}{u}=\Kill{u}{u}\not=0$, since $h_\beta$ is not proportional to $h_\eta$. 
  Thus  the equalities $\check\pi_0(\zeta\ww\phi^\dagger)=0$ and $\mu_\alpha=0$ imply that 
  \begin{align*}
  0=\braket{\zeta\ww\phi^\dagger|u}=\mu_\beta\psi_\beta\cwdot
  \braket{{\rm h}_\beta|u}.
\end{align*}
Since  $\psi_\beta$ never vanishes, it follows that $\mu_\beta=0$. Thus $\zeta\in\mc
G_{\eta}$. 

Since  by the reality condition \eqref{realcond2}, $\zeta^\dagger=\lambda(\zeta)$, it follows that 
$\zeta^\dagger=\mu_{\eta}\cwdot {\rm x_{-\eta}}\in\mc G_{-\eta}$ and 
\begin{align}
  \zeta^\dagger\ww\phi^\dagger &=\mu_{\eta}\psi_{\alpha}[{\rm x}_{-\eta},{\rm x}_\alpha]+\mu_{\eta}\psi_{\beta}[{\rm x}_{-\eta},{\rm x}_\beta]=\pi(\zeta^\dagger\ww\phi^\dagger).
\end{align}
Then the component of $\phi\ww\pi(\zeta^\dagger\ww\phi^\dagger)$ along $\mc G_{-\eta}$ is 
\begin{align}
\mu_\eta\cwdot \left(\overline{\psi_\alpha}\wedge \psi_\alpha\cwdot
    [{\rm x}_{-\alpha},[{\rm x}_{-\eta},{\rm x}_\alpha]] + \overline{\psi_\beta}\wedge \psi_\beta\cwdot
    [{\rm x}_{-\beta},[{\rm x}_{-\eta},{\rm x}_\beta]]\right).\label{sl3c3}
\end{align}
By Proposition \ref{phiphi},
we have
\begin{align}
  \nabla\zeta\ww\phi&=2\cwdot\phi\ww\pi(\zeta^\dagger\ww\phi^\dagger)
\end{align}
Since $\zeta\in \mc G_\eta$,  the component on $\mc G_{-\eta}$ of $\nabla\zeta\ww\phi$ is zero. Thus we obtain
\begin{align*}
0=\mu_\eta\cwdot \left(\overline{\psi_\alpha}\wedge \psi_\alpha\cwdot
    [{\rm x}_{-\alpha},[{\rm x}_{-\eta},{\rm x}_\alpha]] + \overline{\psi_\beta}\wedge \psi_\beta\cwdot
    [{\rm x}_{-\beta},[{\rm x}_{-\eta},{\rm x}_\beta]]\right).
\end{align*}
We can use an element $\mk w$ of the Weyl group that fixes $\eta$
and exchanges $\alpha$ and $\beta$, we then get that
\begin{align*}
  \mk w([{\rm x}_{-\alpha},[{\rm x}_{-\eta},{\rm x}_\alpha]])&=[{\rm x}_{-\beta},[{\rm x}_{-\eta},{\rm x}_\beta]]\end{align*}
On the other hand, $[{\rm x}_{-\alpha},[{\rm x}_{-\eta},{\rm x}_\alpha]]$ is along ${\rm x}_{-\eta}$ and thus fixed by $\mk w$. It follows that 
\begin{align*}
[{\rm x}_{-\alpha},[{\rm x}_{-\eta},{\rm x}_\alpha]]&=[{\rm x}_{-\beta},[{\rm x}_{-\eta},{\rm x}_\beta]]\not=0.
\end{align*}
Thus Equation \eqref{sl3c3} yields
\begin{align*}
0=\mu_\eta\cwdot \left(\overline{\psi_\alpha}\wedge \psi_\alpha + \overline{\psi_\beta}\wedge \psi_\beta\right).\label{sl3c4}
\end{align*}
Since $\overline{\psi_\alpha}\wedge \psi_\alpha$ and $\overline{\psi_\beta}\wedge \psi_\beta$ are both of type
$(1,1)$ and positive, we get that $\mu_\eta=0$.
We have finished proving that $\zeta^\dagger=\zeta=0$.
\end{proof}

\subsubsection{The general case}

Now we assume that $\ms G\not=\ms{SL}(3,\mathbb R)$. Then we prove

\begin{proposition}
  Assume that $\ms G\not=\ms{SL}(3,\mathbb R)$, then $\zeta=0$ and
  $\zeta^\dagger=0$.
\end{proposition}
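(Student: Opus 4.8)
The plan is to push through the one structural relation not yet exploited in the preceding steps, namely $\pi_1(\zeta\ww\phi)=0$ from Proposition~\ref{IFIp}, Equation~\eqref{IFIp2}. This relation is empty when $\ms G=\ms{SL}(3,\mathbb R)$ (there $\pi_1=0$), but it is the whole engine here: for $\ms G=\ms{Sp}(4,\mathbb R)$ or $\ms G_{2,0}$ we have $m_\ell=\deg(\eta)\geq 3$, so roots of intermediate degree genuinely occur in $\mc G_1$. Write $\alpha_1,\alpha_2$ for the two simple roots, so that $Z=\{-\alpha_1,-\alpha_2,\eta\}$ and, since $\zeta_0=\zeta_1=0$ by the previous proposition, $\xi=\zeta+\zeta^\dagger$ with $\zeta=\zeta_{-\alpha_1}+\zeta_{-\alpha_2}+\zeta_\eta$. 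Because $\sigma$ acts trivially on $\mk h$ here (Proposition~\ref{torus}, Property~\eqref{sig-involutionG2}), the involution $\lambda$ interchanges $\mc G_\alpha$ and $\mc G_{-\alpha}$; combined with the reality relation~\eqref{realcond2} this makes the hypothesis $\zeta_\alpha=\omega_\alpha(\xi)=0$ equivalent to the vanishing of the component $\zeta_{-\alpha}$ of $\zeta$.

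First I would read off the $\mc G_{-\alpha_1-\alpha_2}$ component of $\pi_1(\zeta\ww\phi)=0$. As $\alpha_1+\alpha_2$ is a root of degree $2$ and $2\neq 1,m_\ell$, it lies in $\mc G_1$, so the projection keeps this component; by Proposition~\ref{crucial} the only brackets landing there are $[\zeta_{-\alpha_1},\phi_{-\alpha_2}]+[\zeta_{-\alpha_2},\phi_{-\alpha_1}]$. Inserting the vanishing component supplied by the hypothesis leaves a single term $[\zeta_{-\alpha'},\phi_{-\alpha}]=0$, where $\alpha'$ denotes the other simple root. Since $\phi_{-\alpha}$ is nowhere zero (a simple-root component of $\phi$, cf.\ Definition~\ref{cyc-map} and~\eqref{def:cyc6}) and $[\mc G_{-\alpha'},\mc G_{-\alpha}]=\mc G_{-\alpha_1-\alpha_2}\neq 0$, this forces $\zeta_{-\alpha'}=0$. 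Thus both simple components of $\zeta$ vanish and $\zeta=\zeta_\eta\in\mc G_\eta$.

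It then remains to kill $\zeta_\eta$, again from $\pi_1(\zeta\ww\phi)=0$. Since $\eta$ is the highest root there is a simple root $\beta$ for which $\eta-\beta$ is a root; its degree is $m_\ell-1\geq 2$, so $\eta-\beta\in\mc G_1$. With $\zeta=\zeta_\eta$ the only contribution of $\zeta\ww\phi$ to $\mc G_{\eta-\beta}$ is $[\zeta_\eta,\phi_{-\beta}]$, which therefore vanishes. As before $\phi_{-\beta}$ is nowhere zero and $[\mc G_\eta,\mc G_{-\beta}]=\mc G_{\eta-\beta}\neq 0$, so $\zeta_\eta=0$. Hence $\zeta=0$, and by~\eqref{realcond2} also $\zeta^\dagger=\lambda(\zeta)=0$, i.e.\ $\xi=0$, as required.

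The only thing to check carefully is the root bookkeeping, which I would record in the two cases: for $\ms{Sp}(4,\mathbb R)$ one has $\eta=2\alpha_1+\alpha_2$ with $\alpha_1$ short, and $\beta=\alpha_1$ gives $\eta-\beta=\alpha_1+\alpha_2$; for $\ms G_{2,0}$ one has $\eta=3\alpha_1+2\alpha_2$, and $\beta=\alpha_2$ gives $\eta-\beta=3\alpha_1+\alpha_2$. I expect this bookkeeping to be the crux, and it is precisely where the case $\ms G\neq\ms{SL}(3,\mathbb R)$ differs: the hypothesis $m_\ell\geq 3$ places both $\alpha_1+\alpha_2$ (degree $2$) and $\eta-\beta$ (degree $m_\ell-1$) inside $\mc G_1$, whereas for $\ms{SL}(3,\mathbb R)$, where $m_\ell=2$, these degenerate into $\pm\eta$; that is exactly why $\pi_1=0$ and a different curvature--positivity argument was needed there.
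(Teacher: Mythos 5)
Your strategy is the paper's own: everything is squeezed out of $\pi_1(\zeta\ww\phi)=0$ (Equation \eqref{IFIp2}), first killing the simple-root components of $\zeta$ by inspecting the degree $-2$ component, then killing $\zeta_\eta$ by inspecting the component along $\mc G_{\eta-\beta}$ for a simple root $\beta$ with $\eta-\beta$ a root, and finally invoking the reality condition to get $\zeta^\dagger=\lambda(\zeta)=0$. For the two rank 2 groups $\ms{Sp}(4,\mathbb R)$ and $\ms G_{2,0}$ your root bookkeeping is correct, and on one point you are more careful than the paper: the hypothesis of Proposition \ref{IFI0} concerns the component of $\xi$ along $\mc G_\alpha$ with $\alpha$ a \emph{positive} simple root (hence a component of $\zeta^\dagger$), and you translate it honestly into the vanishing of $\zeta_{-\alpha}$, a component of $\zeta$, via $\lambda$.

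There is, however, a genuine gap in generality. The proposition carries no rank restriction, and it cannot: it is a step in the proof of Proposition \ref{IFI0}, hence of Theorem \ref{IFI0co} (that is, Theorem \ref{Cyclicinjects}), which is precisely the paper's higher-rank statement about cyclic bundles, valid for every split simple group. Your proof hard-codes exactly two simple roots (``Write $\alpha_1,\alpha_2$ for the two simple roots''), so it says nothing about $\ms{SL}(n,\mathbb R)$ with $n\geq 4$, $\ms{Sp}(2n,\mathbb R)$, $\ms{SO}(n,n+1)$, and so on. Two of your ingredients break beyond rank 2. First, your translation of the hypothesis rests on $\sigma\vert_{\mk h}=1$ (Proposition \ref{torus}, Property \eqref{sig-involutionG2}), which holds \emph{only} for $\ms{Sp}(4,\mathbb C)$ and $\ms G_2$; in general $\sigma$ acts nontrivially on $\mk h$, and one must instead use that $\sigma$ permutes the simple roots (Proposition \ref{pro:long-sig}), so that $\lambda(\mc G_\alpha)=\mc G_{-\sigma(\alpha)}$ and the hypothesis still gives the vanishing of \emph{one} negative-simple component of $\zeta$. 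Second, with $\ell>2$ simple roots, killing one simple component does not immediately kill ``the other'' one; the paper propagates the vanishing across all simple roots using connectedness of the Dynkin diagram: for any adjacent pair $\alpha,\beta$ (so that $\alpha+\beta$ is a root, and $-\alpha-\beta$ lies in $\mc G_1$ because $2\neq 1, m_\ell$), the $\mc G_{-\alpha-\beta}$ component of $\pi_1(\zeta\ww\phi)=0$ reads $\zeta_{-\alpha}\ww\phi_{-\beta}+\zeta_{-\beta}\ww\phi_{-\alpha}=0$, and the nonvanishing of $\phi_{-\alpha}$ transmits $\zeta_{-\alpha}=0$ to $\zeta_{-\beta}=0$. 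Your final step, by contrast, does work verbatim in any rank, since $m_\ell\geq 3$ for every split group of rank at least $2$ other than $\ms{SL}(3,\mathbb R)$, so $\eta-\beta$ always lies in $\mc G_1$. In short: correct and faithful to the paper in rank 2, but the statement you were asked to prove is the general-rank one, and the two rank-2 shortcuts above must be replaced by the general arguments.
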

\begin{proof}
  From Equation \eqref{IFIp2}
  \begin{align}
    \pi_1(\zeta\ww\phi)&=0.
  \end{align}
  Let $\alpha_0$ and $\beta_0$ be a pair of simple roots such that
  $\gamma=\alpha_0+\beta_0$ is a  (positive) root. Recall that $\ms
  G\not=\ms{SL}(3,\mathbb R)$ and thus $\gamma\not=\eta$. Then by the
  previous equation, the component of $v$ of $\zeta\ww\phi$ along $\mc
  G_\gamma$ is zero. But
$$
v=\sum_{\alpha,\beta\in
  Z\mid\alpha+\beta=\gamma}(\zeta_\alpha\ww\phi_\beta+\zeta_\beta\ww\phi_\alpha).
$$
However $\alpha-\eta$ is not a positive root when $\alpha$ is a simple root 
and since every positive root can be written uniquely as a sum of
simple roots, it follows that
 $$
 0=v=\zeta_{\alpha_0}\ww\phi_{\beta_0}+{\zeta_{\beta_0}}\ww{\phi_{\alpha_0}}.
 $$
 Thus for every pair of simple roots $\alpha$ and $\beta$ so that
 $\alpha+\beta$ is a
 root, $$\zeta_\alpha\ww\phi_\beta=-\zeta_\beta\ww\phi_\alpha.$$ Since
 the Dynkin diagram is connected and the $\phi_\alpha$ are not
 vanishing, since $\zeta_\alpha=0$ for some simple root, then
 $\zeta_{\beta}=0$ for every simple root $\beta$.
 
 Finally, we have obtained that $\zeta=\zeta_\eta\in \mc
 G_{\eta}$. Using Equation \eqref{IFIp2} again, we obtain that
 $\pi_1(\zeta_\eta\ww\sum_{\alpha\in\Pi}\phi_{-\alpha})=0$. Since $\ms
 G\not=\ms{SL}(3,\mathbb R)$, there exist a simple root $\alpha$ so that $\gamma\defeq\eta-\alpha$ is a positive root not in
 $Z\cup Z^\dagger$. Thus taking the
 component along $\mc G_\gamma$ one gets that
 $\zeta\ww\phi_{-\alpha}=0$. By the injectivity of $\phi_\alpha$,
 $\zeta=0$. A symmetric argument yields
 $\zeta^\dagger=0$.
\end{proof}

\section{Properness and the final argument}\label{sec:Proper}
Let  $\ms G_0$ be a split real simple group of rank 2 and $m_2$
 the degree of its longest root.  Let $\mc E_{p}$ be total space of
the vector bundle over Teichmüller space whose fibre at a complex
structure $J$ is $H^0(\Sigma, \mc K^{p})$.

Let now $\Psi$ be the map which associates to $(J,\rm q)$ in $\mc E_{m_2+1}$, the representation associated to the Higgs bundle
$(\widehat{\mc G},\Phi_q)$ as in Paragraph \ref{HitchSec} and Section \ref{sec:IR}.
Our main result is now

\begin{theorem} The map $\Psi$ is diffeomorphism.
\end{theorem}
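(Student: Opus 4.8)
The plan is to prove that $\Psi$ is a proper local diffeomorphism from the connected manifold $\mc E_{m_2}$ onto the connected, simply connected manifold $\mc H(\Sigma,\ms G_0)$, and then to conclude by elementary covering space theory. First I would check that the two manifolds have equal dimension. Teichmüller space has real dimension $6g-6$, where $g$ is the genus of $\Sigma$, and since $m_2+1\geq 2$ the Riemann--Roch theorem gives $\dim_{\mathbb C}H^0(\Sigma,\mc K^{m_2+1})=(2m_2+1)(g-1)$; hence $\dim_{\mathbb R}\mc E_{m_2}=(4m_2+8)(g-1)$. As $\dim\ms G_0=2m_2+4$, this equals $(2g-2)\dim\ms G_0$, which by Theorem \ref{th:H3} and \cite{Hitchin:1992es} is the dimension of $\mc H(\Sigma,\ms G_0)$. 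Since $\Psi$ is an immersion by Theorem \ref{IFI0co} and the two manifolds have equal dimension, $\Psi$ is a local diffeomorphism.

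The main obstacle is to prove that $\Psi$ is proper, and this is precisely where the main result of \cite{Labourie:2005a} enters. Let $K\subset\mc H(\Sigma,\ms G_0)$ be compact and let $(J_n,{\rm q}_n)$ satisfy $\Psi(J_n,{\rm q}_n)=\delta_n\in K$; after passing to a subsequence we may assume $\delta_n\to\delta\in K$, so that the $\delta_n$ are quasi-isometric embeddings with uniform constants. By Theorem \ref{Higgs2cyc} each $(J_n,{\rm q}_n)$ produces a cyclic surface $f_n$ whose projection $p\circ f_n$ is, by Corollary \ref{coro:Higgs-cyc}, a $\delta_n$-equivariant minimal surface inducing the conformal structure $J_n$, with area $i\cwdot\int_\Sigma\Kill{\Phi_n}{\Phi^\dagger_n}$ equal to its energy $e_{\delta_n}(J_n)$. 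The a priori estimates of \cite{Labourie:2005a} for minimal surfaces of quasi-isometric embeddings, applied uniformly over the compact family $\{\delta_n\}$, bound these energies and force the conformal structures $J_n$ to remain in a compact region of Teichmüller space; after a further subsequence $J_n\to J$. The same bound controls the $L^2$-norm of the holomorphic differentials ${\rm q}_n$, which are recovered from $\Phi_n$ through the invariant polynomial of Proposition \ref{InvPoly}; since the fibre $H^0(\Sigma,\mc K^{m_2+1})$ is finite dimensional and varies continuously over the now relatively compact set of complex structures, ${\rm q}_n\to{\rm q}$ along a subsequence. By continuity of $\Psi$ we obtain $\Psi(J,{\rm q})=\delta\in K$, so $\Psi^{-1}(K)$ is compact and $\Psi$ is proper.

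Finally I would assemble the pieces. A proper local diffeomorphism has discrete fibres and is a covering onto its image; the image is open because $\Psi$ is a local diffeomorphism and closed because $\Psi$ is proper, hence it is all of the connected manifold $\mc H(\Sigma,\ms G_0)$, and $\Psi$ is a covering map. By \cite{Hitchin:1992es} (Theorem \ref{th:H3}) the Hitchin component is diffeomorphic to a vector space, so it is simply connected; since $\mc E_{m_2}$ is connected, being a vector bundle over the connected Teichmüller space, this covering has a single sheet, i.e.\ $\Psi$ is a diffeomorphism. The genuine difficulty lies entirely in the properness step --- the uniform a priori control of the minimal surfaces along $\delta_n\to\delta$ --- whereas the immersion property is Theorem \ref{IFI0co} and the covering-space conclusion is formal. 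As corollaries one recovers the uniqueness of equivariant minimal surfaces (Theorem \ref{MainA}), since $\Psi^{-1}(\delta)$ is in bijection with the set of $\delta$-equivariant minimal surfaces, together with the parametrisation of Theorem \ref{MainB}.
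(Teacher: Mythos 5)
Your outer scaffolding is legitimate: reducing the theorem to (i) $\Psi$ is a local diffeomorphism and (ii) $\Psi$ is proper, then concluding by covering-space theory from the simple connectivity of $\mc H(\Sigma,\ms G_0)$ (Theorem \ref{th:H3}), is a coherent strategy, and your dimension count via Riemann--Roch and $m_2=\frac{\dim(\ms G_0)-4}{2}$ is correct. Note that the paper does \emph{not} take this route: it never proves properness of $\Psi$ directly, but instead studies the energy function $F$ on $P=\mc T\times\mc H(\Sigma,\ms G_0)$ and applies the Morse-theoretic Theorem \ref{calculus} to the critical manifold $N=\widehat\Psi\left(\mc E_{m_2}\right)$. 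The reason for that detour is precisely that your step (ii), as you argue it, does not hold up.

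The gap is the sentence claiming that ``the a priori estimates of \cite{Labourie:2005a} for minimal surfaces of quasi-isometric embeddings \dots bound these energies''. No such estimates exist in \cite{Labourie:2005a}. What that paper provides, and what the present paper quotes, is that for each Hitchin representation $\delta$ the function $F_\delta: J\mapsto e_\delta(J)$ is \emph{proper}, i.e.\ has compact sublevel sets. Properness constrains sublevel sets and says nothing about the critical \emph{values}: your $J_n$ are only known to be critical points of $F_{\delta_n}$ (that every critical point is a minimum is part of what is being proved), and a proper positive function can perfectly well have a sequence of critical points with values tending to infinity, leaving every compact set. Nor is there a geometric substitute: in a hyperbolic $3$-manifold the Gauss equation gives $K_{\rm int}\leq -1$ on a minimal surface, whence $\operatorname{Area}\leq -2\pi\chi(\Sigma)$ by Gauss--Bonnet, but in a rank $2$ symmetric space the ambient curvature is only $\leq 0$ because of the flats, so Gauss--Bonnet yields no area bound, and the area is in fact an unbounded function on $\mc E_{m_2}$. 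Thus the uniform energy bound you need is essentially equivalent to the properness of $\Psi$, i.e.\ to the hard part of the theorem, and it is not a quotable result. The paper's Theorem \ref{calculus} is engineered exactly to avoid this: using the connectedness of $N$ and its transversality to the fibres (this is where Theorem \ref{IFI0co} enters), it shows by an open-and-closed argument (Lemmas \ref{C11} and \ref{C12}, whose proofs are the delicate part) that every critical point of $F_\delta$ is a local minimum, and then, by elementary Morse theory for a proper positive function on a connected fibre, that the critical point is unique; bijectivity of $\pi\vert_N$, hence of $\Psi$, follows without ever bounding critical values a priori. To salvage your covering-space formulation you would have to supply that uniform bound on the energy of \emph{all} equivariant minimal surfaces over compacta of $\mc H(\Sigma,\ms G_0)$, which your proposal does not do.
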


This result has Theorem \ref{MainA} and Theorem \ref{MainB} as
immediate consequences.

\subsection{A theorem in differential calculus}

Let $\pi:P\to M$ be a smooth fibre bundle. Assume that the fibre $P_m\defeq
\pi^{-1}(m)$ at any point $m$ of $M$ is connected. Let $F$ be a smooth
positive function from $P$ to $\mathbb R$ and let $F_m=\left.F
\right\vert{P_m}$. Let finally $N\subset P$ be the set of critical points of $F_m$ for all $m$:
$$
N=\{x\in P\mid \d_x F_{\pi(x)}=0\}.
$$
Assume that
\begin{enumerate}
\item For all $m$, $F_m$ is proper,
\item $N$ is a connected submanifold of $P$ everywhere transverse to the fibres of $\pi$.
\end{enumerate}

Then we have

\begin{theorem}\label{calculus}
  Assume the above hypothesis, then 
    \begin{itemize}
  \item $\pi$ is a diffeomorphism of $N$ into $M$,
  \item $F_m$ has a unique critical point which is an absolute
    minimum.
  \end{itemize}
\end{theorem}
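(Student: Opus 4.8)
The plan is to prove both bullet points at once by establishing three facts about $\pi|_N$: it is a surjective local diffeomorphism; every point of $N$ is a fibrewise \emph{absolute} minimum of $F$; and each $F_m$ has exactly one critical point. First I would record the soft structural facts. Since $F_m$ is proper and $F\geq 0$, each $F_m$ attains its infimum, and a minimiser is a critical point, so $\pi(N)=M$ and $\pi|_N$ is onto; moreover the minimum set $\{F_m=\min F_m\}$ is compact. The transversality of $N$ to the fibres means that at each $x\in N$ the tangent space $T_xN$ is a complement to the vertical space $\ker d_x\pi=T_xP_{\pi(x)}$, so $d_x\pi|_{T_xN}$ is an isomorphism onto $T_{\pi(x)}M$ and $\pi|_N$ is a local diffeomorphism; in particular each fibre $N_m\defeq N\cap P_m$ of critical points is discrete.

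The heart of the argument is to show that every critical point is an absolute minimum, i.e. that $N_{\min}\defeq\{x\in N\mid F(x)=\min_{P_{\pi(x)}}F\}$ is all of $N$. Since $N$ is connected, it suffices to prove $N_{\min}$ is non-empty (clear: each fibre has a minimiser), closed, and open. Closedness is formal: if $x_k\to x$ in $N$ with $\pi(x_k)=m_k\to m$, then for any $y\in P_m$ local triviality of $\pi$ produces $y_k\in P_{m_k}$ with $y_k\to y$, and passing to the limit in $F(x_k)\leq F(y_k)$ gives $F(x)\leq F(y)$, so $x\in N_{\min}$. Openness is the delicate step. Writing $f(m)\defeq\min_{P_m}F$, the same trivialisation argument shows $f$ is upper semicontinuous; what is needed is its \emph{lower} semicontinuity, equivalently the statement that minimisers of $F_m$ for $m$ near $m_0$ cannot escape to infinity in the fibre. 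Granting this, if $x_0\in N_{\min}$ failed to be interior there would be critical points $x_k\to x_0$ that are not global minima, hence fibrewise minimisers $y_k$ with $F(y_k)<F(x_k)\to f(m_0)$; non-escape then lets me extract $y_k\to y\in P_{m_0}$ with $F(y)\leq f(m_0)$, so $y$ is a second global minimiser, and the discreteness of $N_{m_0}$ together with the local-diffeomorphism graph structure through $x_0$ and $y$ yields a contradiction.

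I expect this non-escape, equivalently the lower semicontinuity of $m\mapsto f(m)$, to be the main obstacle: it genuinely does not follow from fibrewise properness alone, and must be extracted from the quantitative properness available in the intended application (the energy estimates of \cite{Labourie:2005a}) combined with the local triviality of $\pi$. Everything else is formal, so I would isolate this point as a lemma and feed the strong properness into it.

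Finally, once $N=N_{\min}$, the function $F_m$ has no critical value above its minimum $c=f(m)$, so $P_m\setminus N_m=\{F_m>c\}$ and $F_m$ restricts there to a proper submersion onto $(c,\infty)$. By Ehresmann this is a locally trivial fibration whose total space is $P_m$ with the finite set $N_m$ removed, each point of $N_m$ appearing as a separate end; the connectedness of $P_m$ then forces $N_m$ to be a single point. Hence $\pi|_N$ is an injective surjective local diffeomorphism, that is a diffeomorphism $N\to M$, and each $F_m$ has a unique critical point, necessarily its absolute minimum. This establishes both assertions.
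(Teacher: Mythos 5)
Your reduction to the openness of $N_{\min}\defeq\{x\in N\mid F(x)=\min_{P_{\pi(x)}}F\}$ is where the proof breaks, and it breaks twice. First, you yourself flag that openness needs the non-escape of fibrewise minimisers (lower semicontinuity of $m\mapsto\min F_m$), that this does not follow from the stated hypotheses, and that it should be imported from the energy estimates of \cite{Labourie:2005a}. But the statement being proved is a self-contained theorem in differential calculus with exactly the listed hypotheses; a proof resting on an unproved, application-specific compactness lemma is not a proof of this theorem. Worse, non-escape is essentially a corollary of the theorem itself (once $\pi\vert_N$ is a diffeomorphism and each fibre has a unique critical point, $\min F_m=F(s(m))$ depends continuously on $m$), so taking it as an ingredient is close to circular. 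Second, and decisively, even granting non-escape the contradiction you claim does not follow. In the case $y\neq x_0$ you obtain two distinct global minimisers $x_0,y$ of $F_{m_0}$, two local graphs $x(m)$, $z(m)$ of critical points through them, and $F(z(m_k))<F(x(m_k))$ with equality at $m_0$. Nothing in ``discreteness of $N_{m_0}$ plus the graph structure'' forbids this: it is exactly the picture of two branches of minima whose values cross at $m_0$. Compare the double-well family $F_m(t,s)=(t^2-1)^2+mt+s^2$ near $m=0$: all the local facts you invoke hold there, yet $N_{\min}$ is genuinely not open at the crossing point. That family of course violates the global hypotheses (the saddle branch makes $N$ disconnected, and the fold points are non-transverse), but your openness argument never uses those hypotheses, so it cannot detect the difference. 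Openness of the set of fibrewise \emph{global} minimisers is not a local statement; it is essentially equivalent to the theorem, and cannot be obtained by the local argument you propose.

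This is precisely why the paper runs the open/closed argument on the set $X$ of fibrewise \emph{local} minima instead, where the difficulty is distributed the other way around: openness is soft (Lemma \ref{C11}, a $C^1$-stability statement, applicable because transversality gives exactly one critical point per nearby fibre in a chart $U\times Q$), closedness is the genuine technical content (Lemma \ref{C12}, proved by a gradient-flow argument), and once $X=N$ by connectedness, uniqueness and absolute minimality follow from elementary Morse theory (a mountain-pass type argument) in each connected fibre, using properness. Note the symmetry: for global minima your closedness step is formal and correct while openness is out of reach; for local minima openness is formal and closedness is the hard lemma the paper actually proves. Your final Ehresmann argument is correct and is an attractive alternative to the mountain-pass step, but it only becomes usable after one knows every critical point is a global (or at least local) minimum, which is exactly what your openness step fails to deliver.
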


\subsubsection{Some preliminary lemmas}

Let $f$ be a smooth function defined on a manifold $Q$ diffeomorphic
to a closed ball. Let $m$ be a critical point of $f$ on $
Q$. Assume that $f$ has no critical point in $Q\setminus
\{m\}$.

The first lemma is obvious,

\begin{lemma}\label{C11} Assume that $m$ is a local minimum of $f$. There exists
  an neighborhood $V$ of $f$ in the $C^1$-topology, such that if $g\in
  V$, and $g$  has a at most one critical point in $Q$, then this critical
  point is a local minimum.
\end{lemma}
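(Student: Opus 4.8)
The plan is to prove the lemma by showing that any $g$ sufficiently $C^{1}$-close to $f$ is forced to attain an interior minimum on a small ball centred at $m$, so that this minimizer is a critical point of $g$, hence \emph{the} critical point of $g$, and therefore a local minimum. The one preliminary observation that makes everything work is that, under our hypotheses, $m$ is in fact a \emph{strict} local minimum: if some $x_{0}\neq m$ near $m$ had $f(x_{0})=f(m)$, then $x_{0}$ would also realise the local minimum value and hence be a critical point of $f$, contradicting the assumption that $m$ is the only critical point of $f$ in $Q$.

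First I would fix a closed ball $\bar B=\bar B_{r}(m)$ contained in the interior of $Q$ and small enough that $f(x)>f(m)$ for every $x\in\bar B\setminus\{m\}$. Since $\partial B$ is compact and avoids $m$, the number $c\defeq\min_{\partial B}f$ satisfies $\delta\defeq c-f(m)>0$. I would then take $V$ to be the $C^{1}$-neighbourhood $\{g\mid\sup_{Q}|g-f|<\delta/3\}$ of $f$ (this set is open in the $C^{0}$-topology, hence a fortiori in the finer $C^{1}$-topology, and only the resulting $C^{0}$ control is used). For such a $g$ one has $g(m)<f(m)+\delta/3$, while $g(x)>f(x)-\delta/3\ge c-\delta/3=f(m)+2\delta/3$ for every $x\in\partial B$; hence $g(m)<\min_{\partial B}g$. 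Consequently the minimum of $g$ over the compact ball $\bar B$ is attained at an interior point $q\in B$, where necessarily $\d g(q)=0$. Thus $q$ is a critical point of $g$; as $g$ has at most one critical point in $Q$, it has exactly one, equal to $q$, and since $q$ minimises $g$ on the neighbourhood $\bar B$ of $q$ it is a local minimum. This is precisely the assertion.

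The only step requiring genuine care, and the one I would flag as the main (if minor) obstacle, is guaranteeing that $m$ lies in the interior of $Q$, so that the ball $\bar B_{r}(m)$ can be chosen inside $Q$: a local minimum that is also a critical point can a priori sit on $\partial Q$ (for instance $x\mapsto(x-1)^{2}$ on $[-1,1]$), and then the surrounding sphere argument breaks down. In the setting where this lemma is applied the relevant fibre function $F_{m}$ is proper and positive, so its minimizer is interior and $Q$ may be enlarged to contain it in its interior; alternatively, for a genuine boundary critical point the conclusion is either vacuous, because the perturbed critical point escapes $Q$, or is recovered by replacing balls with half-balls adapted to $\partial Q$. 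I would also remark that the argument uses only uniform closeness of values, so the $C^{1}$-neighbourhood in the statement is more than sufficient.
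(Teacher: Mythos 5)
Your main argument is correct, and it is exactly the justification the paper leaves implicit: the paper offers no proof of this lemma at all, dismissing it with the single sentence ``The first Lemma is obvious.'' Your route---upgrade $m$ to a \emph{strict} local minimum using the absence of other critical points, trap a minimum of every $C^0$-close $g$ inside a small ball around $m$, and invoke the at-most-one-critical-point hypothesis to identify that trapped interior minimum with \emph{the} critical point of $g$---is the standard argument, and you are right that it only uses $C^0$-closeness, which the finer $C^1$-topology provides.

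Regarding the caveat you flag: interiority of $m$ is indeed the genuine issue, but your fallback claims for a boundary minimum are not correct. If $m\in\partial Q$ the lemma actually fails: for $f(x)=(x-1)^2$ on $Q=[-1,1]$ one can construct $g$ arbitrarily $C^1$-close to $f$ with $g'\leq 0$ everywhere and $g'$ vanishing at exactly one interior point (a $C^1$-small perturbation does not control second derivatives), so that $g$ is strictly decreasing and its unique critical point is not a local minimum. Thus for boundary minima the conclusion is neither vacuous (the critical point need not escape $Q$) nor recoverable by half-balls (a minimum of $g$ over a half-ball may sit on $\partial Q$, where it need not be critical). This, however, is an imprecision in the paper's statement rather than a gap in your proof: in the only place the lemma is used (the proof of Theorem \ref{calculus}), $Q$ is an open ball around the critical point of the fibre function, so the relevant critical points are interior, which is precisely the case your argument handles.
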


The second lemma is the following

\begin{lemma}\label{C12} Let $\{f_n\}_{n\in\mathbb N}$ be a sequence of functions
  converging to $f$ in the $C^1$ topology. Assume that for every $n$,
  $f_n$ has a unique critical point $m_n$ in $Q$. Assume furthermore
  that $m_n$ is a local minimum for $f_n$ and assume that $\{m_n\}_{n\in\mathbb
    N}$ converges to $m$. Then $m$ is a local minimum of $f$.
\end{lemma}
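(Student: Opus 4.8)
The plan is to prove the stronger statement that $m$ is the \emph{global} minimum of $f$ on $Q$; since a global minimum is in particular a local one, this yields the lemma. The whole argument rests on a single principle: a smooth function on $Q$ whose only critical point is a local minimum attains its minimum at that point. I would establish this principle for each $f_n$ and then transport the conclusion to $f$ using only the $C^0$-information contained in the $C^1$-convergence (no control on Hessians is available, nor needed).

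First I would show that for every $n$ one has $f_n(m_n)=\min_Q f_n$, i.e. $m_n$ is the global minimum of $f_n$. Here the uniqueness of the critical point is decisive. Consider the negative gradient flow $\phi^n_t$ of $f_n$ for some fixed metric on $Q$; in the situation of Theorem \ref{calculus} the ball $Q$ is realised as a sublevel set of the proper positive function under study, so $-\nabla f_n$ points inward along the regular level set $\partial Q$ and each forward trajectory stays in $Q$ for all $t\ge 0$. By compactness of $Q$ every forward orbit is precompact, and $f_n$ is non-increasing along it; hence its $\omega$-limit set is nonempty, connected, and, by the gradient structure, consists of critical points of $f_n$. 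As $m_n$ is the only critical point, every orbit converges to $m_n$, so $f_n(x)\ge\lim_{t\to\infty}f_n(\phi^n_t(x))=f_n(m_n)$ for all $x\in Q$. (Equivalently, if some point had $f_n<f_n(m_n)$, the Mountain Pass Theorem applied to the local minimum $m_n$ would produce a second critical point, contradicting uniqueness; the local-minimum hypothesis is precisely what makes this mountain-pass geometry available.)

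Next I would pass to the limit. Since $f_n\to f$ in $C^1$, and in particular uniformly on the compact set $Q$, the minima converge, $\min_Q f_n\to\min_Q f$, because $|\min_Q f_n-\min_Q f|\le\|f_n-f\|_{C^0(Q)}$. On the other hand $m_n\to m$ together with uniform convergence gives $f_n(m_n)\to f(m)$, since $|f_n(m_n)-f(m)|\le\|f_n-f\|_{C^0(Q)}+|f(m_n)-f(m)|\to 0$. Combining these two limits with the equality $f_n(m_n)=\min_Q f_n$ from the previous step yields $f(m)=\min_Q f$. Thus $m$ is the global minimum of $f$ on $Q$, hence a local minimum, which is the assertion of Lemma \ref{C12}.

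The step I expect to be the main obstacle is the global-minimum claim for each $f_n$, and specifically the behaviour near $\partial Q$: one must guarantee that the descent flow is forward-complete inside $Q$ and that no piece of an $\omega$-limit set escapes to the boundary, for otherwise a function with a unique interior critical point that is a local minimum need not attain its minimum on a closed ball. This is controlled here exactly because $Q$ is a ball arising as a sublevel set, so its boundary is a regular level set along which the gradient points inward; with that boundary behaviour in hand, the uniqueness of the interior critical point forces it to be the global minimum, and everything else is a routine limiting argument. It is worth emphasising that this dovetails with Lemma \ref{C11}, which supplies the open (perturbative) direction, whereas Lemma \ref{C12} supplies the closed (limiting) direction, so that the two together make ``being a local minimum'' both open and closed along the connected family.
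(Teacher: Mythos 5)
Your proposal has a genuine gap at its central step: the claim that each $m_n$ is the \emph{global} minimum of $f_n$ on $Q$. The lemma is stated for an abstract $Q$ diffeomorphic to a closed ball, with no hypothesis whatsoever on the behaviour of $f_n$ (or $f$) along $\partial Q$; nothing guarantees that $-\nabla f_n$ points inward along the boundary, so the descent flow need not be forward-complete in $Q$. The extra assumption you import --- that ``$Q$ is realised as a sublevel set of the proper positive function under study'' --- is not how $Q$ arises in the paper: in the proof of Theorem \ref{calculus}, $Q$ is merely a small coordinate ball coming from the local trivialisation $W\cong U\times Q$ of the fibration near a point of $N$, and properness of $F_m$ on the whole fibre gives no control of the gradient on the boundary of such a small ball. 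Without boundary control your ``single principle'' is simply false: the function $f(x,y)=x^2+(1-x)^3y^2$, restricted to a large closed disk, has a unique critical point (the origin), which is a local minimum, yet its global minimum is attained on the boundary. For the same reason the mountain-pass fallback fails: on a compact manifold with boundary the min--max deformation can be pushed out through $\partial Q$ and no second critical point need be produced. (Granting the first step, your limiting computation in the second step is correct but routine.)

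The paper's proof is built precisely around this difficulty, and it is instructive to compare. It never claims $f_n(m_n)=\min_Q f_n$; instead it analyses the set $Z_n\subset\partial Q$ of boundary points reached by $f_n$-gradient lines issued from $m_n$, and proves (Proposition \ref{pro813:1}) a uniform lower bound $f_n>\epsilon_0>0$ on $Z_n$. That contradiction argument uses the standing hypothesis --- which your argument never invokes --- that the limit $f$ has no critical point in $Q\setminus\{m\}$: a Hausdorff limit of escaping trajectories with values tending to $0$ would be a connected set of critical points of $f$ meeting $\partial Q$. The bound $\epsilon_0$ then yields level-set components $S^n_\alpha$, $\alpha<\epsilon_0$, bounding neighbourhoods $B^n_\alpha$ of $m_n$ that stay away from $\partial Q$, and in the limit an open set $O$ with $\overline O\subset Q\setminus\partial Q$ on whose boundary $f$ is constant; the minimum of $f$ on $\overline O$ is then an interior critical point, hence equals $m$, so $m$ is a local minimum. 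To repair your proposal you would either have to add the boundary hypothesis you tacitly assumed (which the application does not supply) or carry out an escape-set analysis of this kind.
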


Let us explain the intuition behind this lemma. For simplicity assume $f_n(m_n)=0$. In the nice case when $f_n$ are all constant on the boundary, then the result is obviously true: $m_n$ are global minima and thus so is $m$. We want to reduce to this case by finding a small --but not too small-- neighbourhood of $m_n$ bounded by a level set. In other words, find $\epsilon_n$ positive so that
\begin{enumerate}
	\item  all $\epsilon_n$ are uniformly bounded away from $0$ by some $\epsilon_0$ (produced by the first step below).	\item A connected component of the level set $L_n=f_n^{-1}(\epsilon_n)$ bounds an set $O_n$ which contains  $m$ and does not touch the boundary.\label{813}
	\end{enumerate}
Then using the initial remark and replacing $Q$ by $O_n$ yields the result. In order to obtain a lower bound on $\epsilon_n$, we consider the maximum of those $\epsilon_n$ which satisfies \eqref{813}. It follows that we have a gradient line $\gamma_n$ of $f_n$ joining $m_n$ to the boundary whose value by $f_n$ is $[0,\epsilon_n]$. If $\{\epsilon_n\}_{n\in\mathbb N}$ tends to zero, then by taking limits of $\gamma_n$, one obtains a subset on which $f$ is constant and equal to zero and actually consists only in critical points. This would be the contradiction.

Let us now proceed carefully to the actual proof.

\subsubsection{Proof of Lemma \ref{C12}: first steps}  For simplicity we may as well assume that for all $n$,
  $f_n(m_n)=0$.  Choose an auxiliary Riemannian metric. Let $(\phi_t^n)_{t\in\mathbb R}$ be the gradient flow of $f_n$. 

Let $Z_n$ be the set of those points $z$ in $\partial Q$ such that an $f_n$-gradient line issued from $m_n$ hits $z$ (after possibly touching the boundary before). Let $B_n$ be a small closed neighbourhood of $m_n$, such that the inverse gradient line of any point in $B_n$ converges to $m_n$.
\begin{proposition}
The set $Z_n$ is not empty
\end{proposition}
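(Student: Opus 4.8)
The plan is to exhibit a single forward gradient trajectory issued from $m_n$ that is forced to reach $\partial Q$; its endpoint then lies in $Z_n$. First I would pick a point $x\in\partial B_n$. Since $B_n$ is a genuine closed neighbourhood of the interior point $m_n$, its topological boundary is nonempty and any such $x$ satisfies $x\neq m_n$, while by the defining property of $B_n$ the inverse gradient line through $x$ converges to $m_n$; thus the trajectory through $x$ is one of the gradient lines ``issued from $m_n$''. Because $m_n$ is the \emph{unique} critical point of $f_n$ in $Q$ and $x\neq m_n$, uniqueness of solutions of the gradient flow guarantees that the forward trajectory $t\mapsto\phi^n_t(x)$ never meets $m_n$, so along it $\nabla f_n$ never vanishes and $f_n$ is strictly increasing, as $\frac{\d}{\d t}f_n(\phi^n_t(x))=|\nabla f_n|^2>0$. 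In particular $f_n$ stays strictly above the value $f_n(m_n)$.

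Next I would show that this trajectory must accumulate on $\partial Q$. If instead it remained for all $t\ge 0$ in the interior of $Q$, then, $Q$ being compact, its $\omega$-limit set $L$ would be nonempty, compact and flow-invariant, and the bounded increasing function $f_n$ would be constant equal to some value $c$ on $L$. Invariance then forces $\nabla f_n$ to vanish at every interior point of $L$, so the only interior point $L$ could contain is the critical point $m_n$; but $c>f_n(x)>f_n(m_n)$, hence $m_n\notin L$, and therefore $L$ has no interior point at all, i.e. $\emptyset\neq L\subset\partial Q$. Either way — the trajectory reaching $\partial Q$ in finite time, or asymptotically accumulating on it after possibly grazing it beforehand — there is a point $z\in\partial Q$ hit by a gradient line issued from $m_n$, which shows $Z_n\neq\emptyset$.

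The step needing the most care is this last one, which is precisely what the parenthetical ``(after possibly touching the boundary before)'' in the definition of $Z_n$ anticipates: since $Q$ is a manifold with boundary, the gradient vector field need not point outward everywhere along $\partial Q$, so a trajectory may touch $\partial Q$ tangentially and re-enter the interior, and it may approach the boundary only asymptotically rather than in finite time. The content of the argument is that none of this can trap the orbit in the interior, because any interior accumulation would have to occur at a critical point, and the unique interior critical point $m_n$ is excluded by the strict monotonicity of $f_n$ along the flow. Compactness of $Q$ together with the absence of any other critical point thus forces the orbit onto the boundary.
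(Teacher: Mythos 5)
Your argument has a genuine gap at its final step. The dichotomy you end with --- the trajectory either reaches $\partial Q$ in finite time, or only \emph{accumulates} on $\partial Q$ as $t\to\infty$ --- does not prove the proposition, because in the second case no gradient line issued from $m_n$ actually \emph{hits} any boundary point, and $Z_n$ is defined by hitting, not by accumulation. The sentence ``either way there is a point $z\in\partial Q$ hit by a gradient line'' is precisely the assertion that needs proof, and it is false in the asymptotic scenario you leave open. This is not a cosmetic distinction: the subsequent steps of the proof of Lemma \ref{C12} (closedness of $Z_n$, which uses a uniform $T$ with $\phi^n_{-T}(z)\in B_n$ for all $z\in Z_n$, and Proposition \ref{pro813:1}) all rely on points of $Z_n$ being reached in finite time. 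Your reading of the parenthetical ``(after possibly touching the boundary before)'' is also a misreading: it allows a trajectory to meet $\partial Q$, re-enter the interior, and hit the boundary again later --- each such meeting is still a finite-time hit --- it does not sanction a purely asymptotic approach.

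The asymptotic case must therefore be excluded, and it can be, precisely because the hypothesis of Lemma \ref{C12} makes $m_n$ the \emph{only} critical point of $f_n$ in all of $Q$, boundary included. The paper's route is quantitative: $Q\setminus B_n$ is compact and contains no critical point, so $\Vert\nabla f_n\Vert\geq c>0$ there; once the trajectory has left $B_n$ for good, $f_n$ increases along it at rate at least $c^2$, and since $f_n$ is bounded the trajectory must cease to exist after a finite time; a trajectory of a bounded vector field with finite maximal existence time converges to, hence hits, a point of $\partial Q$. If you prefer to stay within your $\omega$-limit framework, you can close the gap as follows: if the orbit $\gamma$ stayed in the interior for all $t\geq 0$, then
$$
\int_0^\infty\Vert\nabla f_n(\gamma(t))\Vert^2\,\d t=\lim_{t\to\infty}f_n(\gamma(t))-f_n(x)<\infty,
$$
so there are times $t_k\to\infty$ with $\Vert\nabla f_n(\gamma(t_k))\Vert\to 0$; any subsequential limit $p$ of $\{\gamma(t_k)\}$ is then a critical point of $f_n$ in $Q$ satisfying $f_n(p)\geq f_n(x)>f_n(m_n)$, contradicting the uniqueness of the critical point $m_n$. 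Either completion turns your argument into a correct proof; as written, it establishes only accumulation on $\partial Q$, which is strictly weaker than the statement.
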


\begin{proof} Let $x\not=m_n$ be a point in $B_n$. Let $t\to \gamma_n(t)$ be the gradient flow of $x_n$. After a finite time $t_0$, $\gamma_n(t)$ leaves $B_n$ and never come back afterwards. Since $f_n$ has no critical point outside $B_n$, the norm of $\dt\gamma_n(t)$  is uniformly  bounded from below for $t>t_0$. Since $f_n$ is bounded, it follows that $\gamma_n(t)$ hits the boundary after a finite time. Thus $Z_n$ is not empty.
\end{proof}

\begin{proposition}
The set  $Z_n$ is closed.
\end{proposition}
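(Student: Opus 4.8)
The plan is to prove closedness directly, by showing that any limit of points of $Z_n$ is again hit by a gradient line issuing from $m_n$; the engine is a uniform bound on the time such a trajectory needs to reach $\partial Q$. First I would replace the neighbourhood $B_n$ by a connected sublevel set $B_n=\{f_n\le c_n\}$ containing $m_n$ and no other critical point, so that $S_n\defeq\partial B_n=\{f_n=c_n\}$ is a compact regular level set crossed transversally by $\nabla f_n$ and forward-invariant under the descending flow. Since $f_n$ strictly increases along $\phi^n_t$, a trajectory issuing from $m_n$ crosses $S_n$ exactly once and thereafter stays in $\{f_n>c_n\}$; conversely the backward flow from any $x\in S_n$ stays in $B_n$ and converges to $m_n$, exactly as in the definition of $B_n$. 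On the compact set $Q\setminus\mathrm{int}(B_n)$ one has $|\nabla f_n|^2\ge a_n>0$ and $f_n\le M_n\defeq\max_Q f_n$; from $\frac{d}{dt}f_n(\phi^n_t(x))=|\nabla f_n|^2\ge a_n$ the time any trajectory can spend in $Q\setminus\mathrm{int}(B_n)$ is at most $(M_n-c_n)/a_n$. This is the quantitative version of the non-emptiness argument and shows that every gradient line issuing from $m_n$ reaches $\partial Q$ within a bounded time.

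Next I would run a compactness argument. Let $z_k\in Z_n$ with $z_k\to z_\infty\in\partial Q$. Each $z_k$ lies on a gradient line issuing from $m_n$, which crosses $S_n$ at a unique point $x_k$; writing $z_k=\phi^n_{t_k}(x_k)$ with $x_k\in S_n$ and $t_k$ bounded by the estimate above, I pass to a subsequence so that $x_k\to x_\infty\in S_n$ and $t_k\to t_\infty$, using compactness of $S_n$ and boundedness of the $t_k$. By continuous dependence of the flow on initial condition and time, $z_\infty=\lim_k\phi^n_{t_k}(x_k)=\phi^n_{t_\infty}(x_\infty)$. Since $x_\infty\in S_n$, the curve $t\mapsto\phi^n_t(x_\infty)$ is a gradient line issuing from $m_n$, and $z_\infty$ lies on it; hence $z_\infty\in Z_n$ and $Z_n$ is closed.

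The main obstacle is the clause ``(after possibly touching the boundary before)'': a gradient line issuing from $m_n$ may meet $\partial Q$ before reaching $z_k$, so the flow of $f_n$ on $Q$ alone may fail to be defined for all the $t\le t_k$. I would handle this by working with the gradient flow on a collar neighbourhood of $Q$ — legitimate because the $f_n$ extend to the ambient manifold $P$ on which $F$ is defined — so that $\phi^n_t$ and its continuous dependence make sense through boundary touches, while the bound on the time spent in $Q\setminus\mathrm{int}(B_n)$ keeps the relevant parameter interval compact. With this in place the limiting trajectory $t\mapsto\phi^n_t(x_\infty)$ is defined on $[0,t_\infty]$ and passes through $z_\infty\in\partial Q$, so $z_\infty$ is again a boundary point of a gradient line from $m_n$. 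Everything else is a routine application of compactness and of continuity of solutions of ordinary differential equations in their initial data.
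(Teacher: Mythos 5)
Your proof is correct and runs on essentially the same engine as the paper's: a uniform bound on the time a gradient line issued from $m_n$ can spend outside a suitable neighbourhood $B_n$ of $m_n$ (coming from the lower bound on $|\nabla f_n|$ on the compact set $Q\setminus \operatorname{int}(B_n)$ and the boundedness of $f_n$), followed by continuity of the flow to pass to the limit point and the defining property of $B_n$ (backward trajectories converge to $m_n$) to conclude that the limit lies in $Z_n$. The only differences are cosmetic: the paper flows backward from the points $z_p$ by one fixed time $T$ and uses closedness of $B_n$, while you flow forward from the crossing points on $S_n=\partial B_n$ and extract convergent subsequences; your choices of $B_n$ as a sublevel-set component (hence invariant under the backward flow) and of a collar extension to make $\phi^n_t$ well defined at boundary touches in fact tidy up details the paper leaves implicit.
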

\begin{proof}
Since the gradient of $f_n$ is bounded from below on $Q\setminus B_n$ and $f$ is bounded, there exist some positive $T$ so that for all $z\in Z_n$, $\phi^n_{-T}(z)\in B_n$. Thus if a sequence $\{z_p\}_{p\in\mathbb N}$ of points of $Z_n$ converges to $z$, it follows by continuity that $\phi^n_{-T}(z)\in B_n$. In particular $z\in Z_n$. Thus $Z_n$ is closed.\end{proof}
\begin{proposition}\label{pro813:1}
There exists $\epsilon_0$ so that 
\begin{align}
  \forall n, \forall z\in Z_n,\,\, f_n(z)>\epsilon_0>0.
\end{align}
\end{proposition}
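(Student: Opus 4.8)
The plan is to make the heuristic in the preceding paragraph rigorous by replacing the informal "take a limit of the gradient lines $\gamma_n$" with a direct quantitative lower bound on the gain of $f_n$ along any such line. The key geometric input is that $m$ is the \emph{only} critical point of the limit $f$ and lies in the interior of $Q$; hence, fixing the auxiliary Riemannian metric, I would choose $r>0$ with $\bar B(m,r)\subset\mathring Q$ and set $d_0\defeq d(m,\partial Q)>0$, with $r<d_0$. On the compact set $K\defeq Q\setminus B(m,r)$ the function $f$ has no critical point, so $c\defeq\inf_K|\nabla f|>0$.

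First I would feed in the $C^1$-convergence $f_n\to f$: there is an $N_0$ so that for $n\ge N_0$ one has $|\nabla f_n|\ge c/2$ on $K$ (promoting the non-vanishing of $\nabla f$ on $K$ to a \emph{uniform} non-vanishing of $\nabla f_n$) and $m_n\in B(m,r)$ (from $m_n\to m$). Then for such $n$ and any $z\in Z_n$, take the gradient trajectory $\gamma$ from $m_n$ to $z$, parametrised by arclength. Since $\tfrac{\d}{\d s}f_n(\gamma)=|\nabla f_n(\gamma)|\ge 0$, the gain $f_n(z)-f_n(m_n)=f_n(z)$ equals $\int_\gamma|\nabla f_n|\,\d s$. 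Restricting the integral to the final arc of $\gamma$ lying in $K$ (from the last crossing of $\partial B(m,r)$ to $z\in\partial Q$), whose length is at least $d_0-r$ by the triangle inequality, yields the uniform bound
\[
f_n(z)\ \ge\ \frac c2\,(d_0-r)\ \eqdef\ \epsilon_1\ >\ 0 .
\]

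For the finitely many remaining indices $n<N_0$ I would argue by compactness: $Z_n$ is a non-empty compact subset of $\partial Q$ (closed by the previous proposition), and $f_n$ is continuous and strictly positive on it. Indeed, for $z\in Z_n$ we have $z\neq m_n$ (as $m_n\in\mathring Q$), and $f_n$ increases strictly along the non-constant gradient line from $m_n$ to $z$ because $m_n$ is the unique critical point, so $f_n(z)>f_n(m_n)=0$. Hence each $\min_{Z_n}f_n>0$, and taking $\epsilon_0$ to be half the minimum of $\epsilon_1$ and these finitely many positive numbers gives $f_n(z)>\epsilon_0>0$ for all $n$ and all $z\in Z_n$.

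The only genuinely delicate point is the interplay between the gradient dynamics and $\partial Q$: a trajectory issued from $m_n$ is allowed to graze the boundary before reaching $z$, so I must apply the arclength/monotonicity computation to the actual trajectory inside $Q$ and check that the "last crossing" of $\partial B(m,r)$ is well defined — it is, since $\gamma$ starts inside $B(m,r)$ and ends at $z\notin\bar B(m,r)$. Everything else is routine; the uniform gradient lower bound on $K$ for large $n$ is precisely what rigorously realises the informal limiting argument, and separating off the finitely many small $n$ takes care of uniformity across all indices.
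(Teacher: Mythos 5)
Your proof is correct, and it takes a genuinely different route from the paper's. The paper argues by contradiction and compactness: if $f_n(z_n)\to 0$ for some $z_n\in Z_n$, it takes a Hausdorff limit $\gamma$ of the closed backward trajectories $W_n=\overline{\gamma_n[-\infty,0]}$ joining $m_n$ to $z_n$, notes that $f$ vanishes identically on the connected set $\gamma$ (because $f_n(W_n)\subset[0,f_n(z_n)]$) while $\gamma$ is invariant under the backward gradient flow of $f$, and concludes that every point of $\gamma$ --- in particular $z=\lim z_n\neq m$ --- is a critical point of $f$, a contradiction. You instead establish the uniform bound directly: $C^1$ convergence upgrades the bound $\inf_K|\nabla f|=c>0$ on $K=Q\setminus B(m,r)$ to $|\nabla f_n|\geq c/2$ on $K$ for $n\geq N_0$, and since any gradient line from $m_n\in B(m,r)$ to $z\in\partial Q$ has a final arc contained in $K$ of length at least $d_0-r$, monotonicity of $f_n$ along its gradient lines yields $f_n(z)\geq\frac{c}{2}(d_0-r)$; the finitely many indices $n<N_0$ are then handled by compactness of $Z_n$ and strict monotonicity. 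Your route produces an explicit $\epsilon_0$ and avoids the two delicate steps of the paper's argument (Hausdorff convergence of the trajectory sets, and the flow-invariance of the limit, which tacitly uses convergence of the gradient flows); the paper's route is softer, requiring no choice of constants $r$, $c$, $d_0$.

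Two small points. Your parenthetical justification ``$m_n\in\mathring Q$'' for the small indices does not follow from $m_n\to m$ alone; but it is also unnecessary, since a gradient line issued from $m_n$ is non-constant, $f_n$ is strictly increasing along it, and hence it can never return to $m_n$, which already gives $f_n(z)>0$ for every $z\in Z_n$. Your standing assumption $m\in\mathring Q$ is implicitly made by the paper as well: its proof selects a limit $z\neq m$ of boundary points, which requires $m\notin\partial Q$.
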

\begin{proof}
Let us work by contradiction and assume that
we can find a sequence $\{z_n\}_{n\in\mathbb N}$ with  $z_n\in Z_n$ so that $f_n(z_n)\rightarrow 0$. Let
$\gamma_n$ be the gradient line of $f_n$ issuing from $z_n$. We choose a subsequence so that
  $\{z_n\}_{n\in\mathbb N}$ converges to a point $z\not=m$. Let $\gamma$ be a Hausdorff limit of a subsequence of the sequence of closed connected sets $\{W_n\defeq\overline{\gamma_n[-\infty,0]}\}_{n\in\mathbb N}$. Then the hypothesis of the paragraph implies 
  \begin{enumerate}
  \item the function $f$ is constant and equal to zero on the connected set $\gamma$: indeed $f_n(W_n)\subset [0,f_n(z_n)]$.
  \item  let $\phi_t$ be the gradient flow of $f$, then for all positive $t$, we also have $\phi_{-t}(\gamma)\subset\gamma$.
  \end{enumerate}
From both previous facts, it follows that all points in the connected set $\gamma$ -- and in particular $z\not=m$-- are critical, which is a contradiction with our hypothesis on $f$.
\end{proof} 
\subsubsection{Proof of Lemma \ref{C12}: last steps} 
Let $\gamma_n$ be an orbit of the gradient of $f_n$ that connects $m_n$ to a point $z_n$ in $\partial Q$.
 Let $0<\alpha<\epsilon_0$ where $\epsilon_0$ is obtained from Proposition \ref{pro813:1}. Let $L^n_\alpha=f^{-1}_n(\alpha)$. Observe that since $f_n(z_n)>\alpha$,  $L^n_\alpha$ intersects $\gamma_n$ in an unique point $q_n$. Let  $S^n_\alpha$ be the
  connected component of $L^n_\alpha$ containing $q_n$.
\begin{proposition}
For all  $\alpha$ less than $\epsilon_0$, $S^n_\alpha$ is a closed submanifold of $Q\setminus\partial Q$ bounding an open set 
 $B^n_\alpha$ containing $m_n$. 
\end{proposition}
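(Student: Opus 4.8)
The plan is to read the statement off from the fact that $\alpha$ is a regular value, together with the control on the boundary provided by Proposition \ref{pro813:1}. First I would note that the only critical point of $f_n$ in $Q$ is $m_n$, at which $f_n(m_n)=0\neq\alpha$; hence $\alpha\in(0,\epsilon_0)$ is a regular value of $f_n$ and $L^n_\alpha=f_n^{-1}(\alpha)$ is a smooth embedded hypersurface of $Q$ (a priori possibly meeting $\partial Q$). Its connected component $S^n_\alpha$ is then a connected smooth hypersurface, and $q_n$ lies in $Q\setminus\partial Q$ since $f_n$ increases strictly from $0$ to $f_n(z_n)>\epsilon_0$ along $\gamma_n$, crossing the value $\alpha<\epsilon_0$ at an interior point.

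The heart of the argument is to show that $S^n_\alpha\subset Q\setminus\partial Q$. Here I would use the downward gradient flow $(\phi^n_{-t})_{t\geq 0}$ and its basin $A_n\defeq\{x\in Q\mid \phi^n_{-t}(x)\to m_n \text{ as } t\to+\infty,\ \text{the orbit staying in }Q\}$. Since $m_n$ is a local minimum and the unique critical point, a sublevel neighbourhood of $m_n$ is forward invariant and attracted to $m_n$, and continuous dependence of the flow on initial data shows $A_n$ is open. For closedness of $A_n\cap S^n_\alpha$ in $S^n_\alpha$ I would exploit that $f_n$ has no critical point off a small neighbourhood $B_n$ of $m_n$: on the compact set $Q\setminus B_n$ one has $|\nabla f_n|\geq c>0$, so $f_n$ decreases at a definite rate and any downward orbit starting at the level $\alpha$ enters $B_n$ (or leaves $Q$) within a uniform time $T_0=\alpha/c^2$; passing to the limit along a sequence in $A_n\cap S^n_\alpha$ then forces the limit orbit into $\overline{B_n}\subset A_n$. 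Thus $A_n\cap S^n_\alpha$ is open and closed in the connected set $S^n_\alpha$ and contains $q_n$, whence $S^n_\alpha\subset A_n$. Finally, if some $w\in S^n_\alpha$ lay on $\partial Q$, then, $w$ being in $A_n$, the orbit through $w$ would emanate from $m_n$ and reach $w\in\partial Q$, so $w\in Z_n$; but $f_n(w)=\alpha<\epsilon_0$ contradicts Proposition \ref{pro813:1}. Hence $S^n_\alpha$ avoids $\partial Q$.

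It then remains to assemble the conclusion. As $L^n_\alpha$ is closed in the compact manifold $Q$ it is compact, so its component $S^n_\alpha$ is compact; avoiding $\partial Q$ and sitting at a regular value, it is a closed (boundaryless) embedded hypersurface of $Q\setminus\partial Q$. Since $Q$ is diffeomorphic to a ball, $Q\setminus\partial Q$ is diffeomorphic to $\mathbb R^{\dim Q}$, and by the Jordan--Brouwer separation theorem the compact connected hypersurface $S^n_\alpha$ disconnects it into components each having $S^n_\alpha$ as boundary. I would take $B^n_\alpha$ to be the component containing $m_n$; this is legitimate because $m_n$ is an interior critical point and $f_n(m_n)=0\neq\alpha$ places $m_n$ off $S^n_\alpha$. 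This $B^n_\alpha$ is the asserted open set bounded by $S^n_\alpha$ and containing $m_n$.

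The main obstacle is the middle step: ruling out that $S^n_\alpha$ touches $\partial Q$. This is exactly where Proposition \ref{pro813:1} is indispensable, and the delicate point is controlling the gradient flow near the boundary, where it need not be transverse; this is why I would route the argument through the uniform descent estimate on $Q\setminus B_n$ rather than through a transversality argument for the escaping orbits. The remaining steps are soft, resting on the regular value theorem and on separation in the ball $Q\setminus\partial Q$.
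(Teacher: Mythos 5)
Your overall strategy --- prove $S^n_\alpha\subset A_n$ by an open--closed argument along the connected set $S^n_\alpha$, then exclude boundary points via Proposition \ref{pro813:1} and conclude by Jordan--Brouwer --- is genuinely different from the paper's, but it has a gap at the closedness step, and the gap is exactly the boundary phenomenon you flag at the end. If $x_k\in A_n\cap S^n_\alpha$ converge to $x\in S^n_\alpha$, your uniform descent estimate bounds only the \emph{time} the downward orbits of the $x_k$ spend outside $B_n$; it gives no lower bound on their \emph{distance} to $\partial Q$. To conclude that the limit orbit reaches $\overline{B_n}$ you need the flow map $(t,y)\mapsto\phi^n_{-t}(y)$ to be defined and continuous at the relevant limit time, i.e.\ you need to already know that the downward orbit of $x$ stays in $Q$ until it enters $B_n$. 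But a limit of orbits that remain in $Q$ can perfectly well be an orbit that reaches $\partial Q$ tangentially and leaves $Q$ before entering $B_n$: the transverse-exit case is excluded by a flow-box argument (nearby orbits would also exit, contradicting $x_k\in A_n$), but nothing in your argument excludes a tangential exit at a level below $\epsilon_0$. You cannot invoke Proposition \ref{pro813:1} at such a contact point, because that proposition concerns only gradient lines issued from $m_n$, and whether the orbit of $x$ is issued from $m_n$ is precisely what closedness is supposed to establish; at this point the argument is circular.

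The paper's proof is organized so that no such limit is ever taken. It first chooses a low level $\beta<\epsilon_0$ such that the whole component $S^n_\beta$ lies in a small neighbourhood of $m_n$ inside the basin: this uses only the local-minimum property (the component of $\{f_n\leq\beta\}$ containing $m_n$ is trapped inside a small ball on whose boundary $f_n>\beta$), so basin membership comes for free, with no limiting argument. Then $S^n_\beta\cap\partial Q\subset Z_n$ forces $S^n_\beta\subset Q\setminus\partial Q$ by Proposition \ref{pro813:1}, and the level set is pushed \emph{upward} by the gradient flow, renormalized to unit speed outside $B^n_\beta$: every upward orbit is already known to be issued from $m_n$, so any contact with $\partial Q$ at time $t<\epsilon_0-\beta$ would produce a point of $Z_n$ at level $\beta+t<\epsilon_0$, contradicting Proposition \ref{pro813:1} directly (the definition of $Z_n$ allows intermediate touchings, so no transversality is needed). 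The identification $\phi_t(S^n_\beta)=S^n_{t+\beta}$ then transports the bounding property up to level $\alpha$. To repair your proof you would have to rule out tangential escape of limit orbits, and the natural way to do that is to reverse the direction of the argument --- flow up from the basin --- as the paper does.
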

\begin{proof} 
Since $m_n$ is a local minimum for $f_n$ there exists $\beta$ (depending on $n$) with $0<\beta<\epsilon_0$ such that
all
  gradient lines passing though $S^n_\beta$ end up at $m_n$. Then $S^n_\beta\cap\partial Q \subset Z_n$ but since $\beta< \epsilon_n=\inf\{f_n(z)\mid z\in Z_n\}$ it follows that S$^n_\beta\cap\partial Q=\emptyset$. Thus $S^n_\beta$ is a closed submanifold of $Q\setminus\partial Q$ and bounds an open set $B^n_\beta$ containing $m_n$. 
  
We may choose an auxiliary Riemannian metric (depending on $n$) so that the norm of the gradient of $f_n$ is 1 outside $B^n_\beta$. It follows that if $$t<\epsilon_0-\beta,$$
then $\phi^n_t(S^n_\beta)$ is  closed submanifold of $Q\setminus\partial Q$, intersecting $\gamma_n$ and on which $f_n$ is equal to $t+\beta$. Thus 
\begin{align}
    \phi^n_t(S^n_\beta)=S^n_{t+\beta}.
\end{align}
 It follows that for all  $\alpha$ less than $\epsilon_0$, $S^n_\alpha$ bounds an open set 
 $B^n_\alpha$ containing $m_n$. \end{proof}

\begin{proposition} 
There exists an open set $O$, with $\overline O\subset Q\setminus \partial Q$, such  that $f$ is constant on $\partial O$.
\end{proposition}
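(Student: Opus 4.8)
The plan is to realise $O$ as the interior of a Hausdorff limit of the basins $\overline{B^n_\alpha}$ produced in the preceding proposition, so that its boundary becomes a level set of the limit $f$. First I would keep the normalisation $f_n(m_n)=0$; since $f_n\to f$ uniformly and $m_n\to m$ one gets $f(m)=\lim f_n(m_n)=0$. I then fix a single level $\alpha$ with $0<\alpha<\epsilon_0$, where $\epsilon_0$ is the uniform bound of Proposition \ref{pro813:1}, chosen moreover below the boundary values of $f$: here I use that, in the situation at hand (the function being proper, so that $Q$ may be taken with $f$ large on $\partial Q$), one has $\min_{\partial Q}f>0=f(m)$, and I shrink $\alpha$ so that $\alpha<\min_{\partial Q}f$ as well. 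For each $n$ the preceding proposition then provides the hypersurface $S^n_\alpha$, the connected component through $q_n$ of $f_n^{-1}(\alpha)$, bounding the open set $B^n_\alpha\ni m_n$ with $\overline{B^n_\alpha}\subset Q\setminus\partial Q$ and $f_n\le\alpha$ throughout $\overline{B^n_\alpha}$.

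The first step is a \emph{uniform} separation of these basins from $\partial Q$. Since $f\ge\min_{\partial Q}f>\alpha$ on the compact set $\partial Q$, continuity yields $\alpha'\in(\alpha,\min_{\partial Q}f)$ and a closed neighbourhood $U$ of $\partial Q$ on which $f>\alpha'$; for $n$ large enough that $|f_n-f|<\alpha'-\alpha$ everywhere, $f_n\le\alpha$ forces $f<\alpha'$, hence $\overline{B^n_\alpha}\subset\{f<\alpha'\}\subset Q\setminus U$. Thus all $\overline{B^n_\alpha}$ lie at distance at least $\mathrm{dist}(Q\setminus U,\partial Q)>0$ from $\partial Q$. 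Next I would invoke compactness of the space of closed subsets of the compact set $Q$ (Blaschke selection) to extract a subsequence along which $\overline{B^n_\alpha}$ converges in Hausdorff distance to a compact set $K$. By the uniform separation, $K\subset Q\setminus\partial Q$; by uniform convergence $f\le\alpha$ on $K$; and since $f(m)=0<\alpha$ while $m_n\to m$ with $m_n\in B^n_\alpha$, a fixed neighbourhood of $m$ eventually lies in $B^n_\alpha$ (connectedness of $\{f_n<\alpha\}$), hence in $K$, so $m\in\mathrm{int}(K)$. I then set $O\defeq\mathrm{int}(K)$, so that $m\in O$ and $\overline O\subset K\subset Q\setminus\partial Q$.

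It remains to check that $f\equiv\alpha$ on $\partial O$; since $\partial O\subset\partial K$ it suffices to treat $\partial K$. A point $x\in\partial K$ lies in $K$, so $f(x)\le\alpha$. If $f(x)<\alpha$ then $f_n<\alpha$ on a fixed connected neighbourhood $V$ of $x$ for all large $n$; as $x\in K=\lim\overline{B^n_\alpha}$, $V$ meets $B^n_\alpha$, and because $B^n_\alpha$ is the component of $\{f_n<\alpha\}$ containing $m_n$, connectedness of $V\cup B^n_\alpha$ inside $\{f_n<\alpha\}$ gives $V\subset B^n_\alpha$, whence $V\subset K$ and $x\in\mathrm{int}(K)$, contradicting $x\in\partial K$. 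Hence $f(x)=\alpha$, and $f$ is constant, equal to $\alpha>f(m)$, on $\partial O$, which is the assertion.

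The step I expect to be the main obstacle is the uniform separation of the basins from $\partial Q$: this is exactly where the hypothesis that each $m_n$ is a \emph{minimum} really enters (through the preceding proposition together with the bound $\epsilon_0$ of Proposition \ref{pro813:1}), and it is what forces the limiting level set to close up around $m$ inside $Q$ rather than escape to the boundary. Once $\overline O\subset Q\setminus\partial Q$ with $f\equiv\alpha>f(m)$ on $\partial O$ is secured, the desired conclusion of Lemma \ref{C12} follows routinely, by minimising $f$ over the compact set $\overline O$: the minimum is attained in the interior $O$, is therefore a critical point, hence equals $m$, so $m$ is a local minimum of $f$.
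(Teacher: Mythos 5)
Your overall architecture (fix a level $\alpha<\epsilon_0$, take Hausdorff limits, use connectedness to see that $m$ ends up in the interior of the limit and that its boundary is a level set of $f$) parallels the paper's, but the step you yourself single out as the crux --- the \emph{uniform} separation of the basins $\overline{B^n_\alpha}$ from $\partial Q$ --- is exactly where your argument breaks down. You derive it from the inequality $\min_{\partial Q}f>0=f(m)$, which is not among the hypotheses of Lemma \ref{C12} and cannot be imported from the application in Theorem \ref{calculus}: properness of $F_\delta$ controls the function at infinity in the fibre, whereas the $Q$ of the lemma is a small coordinate ball around a critical point of the limit $f$ (it must be small, since the lemma requires each $f_n$ to have a \emph{unique} critical point in $Q$, i.e.\ $Q$ meets a single sheet of the critical locus $N$, whose injectivity over the base is precisely what the theorem is proving), and nothing forces $f$ on $\partial Q$ to exceed $f(m)$: a priori $m$ could be a saddle of $f$, which is exactly the degeneration the lemma is designed to exclude. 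Note moreover that if $\min_{\partial Q}f>f(m)$ were available, the lemma would be immediate and this proposition pointless: the minimum of $f$ on $\overline Q$ would be attained in the interior, hence at a critical point, hence at $m$. So your extra hypothesis assumes what is to be proved.

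The separation has to come from the hypotheses themselves, and that is what the paper does: by $C^1$ convergence the gradients of the $f_n$ are uniformly bounded, say by $C$, so any path along which $f_n$ varies by $\epsilon_0/4$ has length at least $\beta=\epsilon_0/(4C)$; since $\overline{B^n_{\epsilon_0/4}}\subset B^n_{\epsilon_0/2}$ and $\overline{B^n_{\epsilon_0/2}}$ is disjoint from $\partial Q$ (the preceding proposition, which rests on Proposition \ref{pro813:1}, i.e.\ on the gradient-line estimate that uses the fact that each $m_n$ is a minimum), every path from $S^n_{\epsilon_0/4}$ to $\partial Q$ must cross $S^n_{\epsilon_0/2}$, whence $d\left(S^n_{\epsilon_0/4},\partial Q\right)>\beta$ uniformly in $n$ --- with no reference whatsoever to the boundary values of $f$. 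If you replace your separation step by this argument, the remainder of your proof (Blaschke selection, $m\in\operatorname{int}(K)$ by connectedness, the boundary of $O$ being contained in a level set of $f$) is correct and essentially reproduces the paper's, which takes the Hausdorff limit of the hypersurfaces $S^n_{\epsilon_0/4}$ rather than of the basins; that difference is cosmetic.
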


\begin{proof}  Since the gradient of $f_n$ is uniformly bounded from above there exists some positive $\beta$ such that for $n$ large enough
\begin{align}
    d\left(S^n_{\epsilon_0/2},S^n_{\epsilon_0/4}\right)>\beta  .
\end{align}
In particular 
\begin{align}
    d\left(\partial Q,S^n_{\epsilon_0/4}\right)>\beta  .
\end{align}

 It follows that $\{S^n_{\epsilon_0/4}\}_{n\in\mathbb N}$ converge to a connected component $S_{\epsilon_0/4}$ of a level set  of $f$, which  is a closed submanifold in
  $Q\setminus\partial Q$, which thus bounds an open set $O$. \end{proof}

This last proposition implies Lemma \ref{C12}: $f$ has a minimum on $O$ which has to be $m$ since $f$ has a unique critical point in $Q$. Thus $m$ is a local minimum for $f$.

\subsubsection{Proof of Theorem \ref{calculus}} By assumption $N$ is a
closed connected submanifold transverse to the fibres.

For any point $m$ in $N$, the transversality hypothesis implies that we can find neighbourhoods $U$ of $\pi(m)$, $W$ of $m$ so that we can identify $W$ with $U\times Q$, where $Q$ is diffeomorphic to an open ball and $\pi:U\times Q\to U$ is the projection on the first factor.

Let $X$ be the subset of
those $x$ in $N$ such that $x$ is a local minimal point of $F_{\pi(x)}$. Then $X$ is non empty since $F_m$ is proper. 

Then using the neighborhood $U$ and $W=U\times Q$ as above, we obtain that $X$
is open by Lemma \ref{C11} and  closed by Lemma \ref{C12}. By connectedness, $X=N$.

Thus every critical point of $f_m$ is a local minimum. Since $f_m$ is
proper positive and $P_m$ is connected, elementary Morse theory tells us that $f_m$ has a
unique critical point which is an absolute minimum. In particular, the
local diffeomorphism $\pi$ from $N$ to $M$ is injective and
surjective, thus a global diffeomorphism.

\subsection{Proof of the main Theorem}

Let $P=\mc T\times \mc H(\Sigma,\ms G_0)$ where $\mc T$ is Teichmüller
space and $\mc H(\Sigma,\ms G_0)$ is the Hitchin component of $\ms
G_0$. Let $\pi$ be the projection on the second factor. Let $F$ be the
function which associates to every $(J,\delta)$ in $P$ the energy of the
unique $\delta$-equivariant harmonic map in the
symmetric space $\ms S (\ms G_0)$  of $\ms G_0$. By \cite{Labourie:2005a}, $F$ is smooth
positive and $F_\delta$ is a proper map. By \cite{Sacks:1981}, \cite{Sacks:1982} and \cite{Schoen:1979}, the critical points of $F_\delta$ are equivariant  minimal mappings.

Finally by Hitchin's fundamental result in \cite{Hitchin:1992es}, $P$ is diffeomorphic to the bundle over $\mc T$ whose fibre at every point is
$\mc E_2\oplus\mc E_{m_2+1}$. From Theorem \ref{IFI0co} the map $\widehat\Psi:
(J,{\rm q})\mapsto (J,\Psi(J,{\rm q}))$ is transverse to the fibre.
Moreover $\Psi$ is an embedding and its image  $N$ is precisely the pairs $(J,\delta)$ such that the $\delta$-equivariant harmonic mapping $f$ from $\Sigma$ equipped with $J$ has a vanishing Hopf differential, that is $f$ is minimal. Thus $N$ 
satisfy the conditions of the Theorem \ref{calculus}.

The main Theorem then follows.

\section{The Kähler structures}\label{sec:K}

We state a more precise result using the notation of the introduction.
For any integer greater than 1, let $\mc E_m$ be the holomorphic vector
bundle over Teichmüller space whose fibre at a Riemann surface
$\Sigma$ is
\begin{align*}
  \left(\mc E_m\right)_\Sigma &\defeq H^0\left(\Sigma,{\mc
      K}^{m}\right).
\end{align*}
Let ${\rm m}=(m_1,\ldots,m_\ell)$, with all $m_i>1$. We denote by $\mc
E({\rm m})$ be the holomorphic vector bundle over Teichmüller space
whose fibre at a Riemann surface $\Sigma$ is
\begin{align*}
  \mc E({\rm m})_\Sigma &\defeq\bigoplus_{i=1}^{\ell}\left(\mc E_{m_i}\right)_\Sigma.
\end{align*}

\begin{proposition} \label{prop:Ka} The complex vector bundle $\overline{\mc E({\rm m})}$ carries a holomorphic structure which make it isomorphic to $\mc E^*{(\rm m})$ and a compatible $p$-dimensional family of 
  Kähler structures with the following properties
  \begin{enumerate}
 \item  The restriction of the Kähler structures is a multiple of  $L^2$-metric in every fibre,
  \item The Kähler structure is invariant by the mapping class group
    action,
  \item The zero section is totally geodesic,
  \item The metric induced on the zero section is Weil--Petersson
    metric.
  \end{enumerate}
\end{proposition}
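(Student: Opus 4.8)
The plan is to realise each summand $\mc E_{m_i}$ as a direct image bundle over Teichmüller space and to apply Bo Berndtsson's positivity theorem \cite{Berndtsson:2009hr}, exactly as Kim and Zhang do for cubic differentials \cite{Kim:2013wc}, and then to manufacture the Kähler metrics on the total space by a universal construction. First I would introduce the universal curve $p\colon\mc C\to\mc T$ whose fibre over $J$ is $(\Sigma,J)$, together with the relative canonical bundle $\mc K_{\mc C/\mc T}$, so that $\mc E_{m}=p_*\bigl(\mc K_{\mc C/\mc T}\otimes \mc K_{\mc C/\mc T}^{\,m-1}\bigr)$ and the fibre of $\mc E_m$ at $J$ is $H^0(\Sigma,\mc K^{m})$. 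The fibrewise hyperbolic metric endows $\mc K_{\mc C/\mc T}$ with a Hermitian metric whose restriction to each fibre has curvature the (positive) area form; pairing sections of $\mc K^{m}$ in $L^2$ against this metric is precisely the Petersson metric $h_{m}$ on $\mc E_m$.

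The key input is the semipositivity, over the total space $\mc C$, of the metric induced on the weight bundle $\mc K_{\mc C/\mc T}^{\,m-1}$ for $m\geq 2$. This is the positivity of the curvature of the relative hyperbolic metric (Wolpert, Schumacher): fibrewise it is the area form, and in the horizontal and mixed directions it is controlled by the Weil--Petersson form. Granting this, Berndtsson's theorem gives that $(\mc E_m,h_m)$ is Nakano-semipositive, equivalently that the dual $(\mc E_m^*,h_m^*)$ is Nakano-seminegative. I expect this step --- the precise curvature computation feeding Berndtsson's theorem and the verification that its hypotheses apply over the non-compact base $\mc T$ --- to be the main obstacle; it is the analytic heart of the statement.

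With positivity in hand, I would build the metric on the total space. On $\mathrm{Tot}(\mc E_m^*)$, with fibre coordinate $w$, the standard identity reads $i\partial\bar\partial\,|w|_{h_m^*}^2 = i\,|D w|^2_{h_m^*} - i\langle\Theta^{\mc E_m^*} w,w\rangle$, and Nakano-seminegativity of $\mc E_m^*$ ensures the curvature term contributes with the correct sign, making this a closed nonnegative $(1,1)$-form that is positive definite in the vertical directions. For positive parameters $(t_1,\dots,t_p)$ I then set
\[
\Omega_{t}\;\defeq\;p^*\omega_{\mathrm{WP}}\;+\;i\partial\bar\partial\Bigl(\sum_{i=1}^{p}t_i\,|w_i|_{h_{m_i}^*}^2\Bigr)
\]
on $\mathrm{Tot}\bigl(\mc E({\rm m})^*\bigr)=\bigoplus_i\mathrm{Tot}(\mc E_{m_i}^*)$. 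Since the vertical block is the positive definite form $\sum_i t_i h_{m_i}^*$ while the added term $p^*\omega_{\mathrm{WP}}$ is positive definite on the horizontal directions, a Schur-complement argument upgrades the nonnegative form to a genuine Kähler form, producing the asserted $p$-dimensional family. The holomorphic structure on $\overline{\mc E({\rm m})}$ isomorphic to $\mc E({\rm m})^*$ is the one transported through the conjugate-linear Petersson isomorphism $\mc E({\rm m})\to\mc E({\rm m})^*$, and $\Omega_{t}$ is by construction compatible with it.

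Finally I would read off the four properties. Restricting $\Omega_{t}$ to a fibre leaves only the vertical term $\sum_i t_i h_{m_i}^*$, a multiple of the $L^2$-metric, giving (1). Every ingredient --- the universal curve, the hyperbolic metric, the Petersson metric and $\omega_{\mathrm{WP}}$ --- is canonical, hence invariant under the mapping class group, giving (2). The circle action $w\mapsto e^{\sqrt{-1}\theta}w$ preserves both $|w|^2$ and $p$, so it acts by holomorphic isometries of $\Omega_{t}$; the zero section is its fixed locus and is therefore totally geodesic, giving (3). Since the $|w_i|^2$ vanish to second order along the zero section, $\Omega_{t}$ restricts there to $p^*\omega_{\mathrm{WP}}$, so the induced metric is the Weil--Petersson metric, giving (4).
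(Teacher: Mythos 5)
Your proposal follows essentially the same route as the paper (which itself follows Kim--Zhang \cite{Kim:2013wc}): realise $\mc E_m$ as a direct image along the universal curve, feed Wolpert's positivity of the relative hyperbolic metric \cite{Wolpert:1986ie} into Berndtsson's theorem \cite{Berndtsson:2009hr} to obtain Nakano positivity of $(\mc E_m,h_m)$, dualise, and take $\pi^*\omega_{\mathrm{WP}}+i\partial\overline{\partial}(\hbox{fibrewise norm squared})$ on the total space of the dual, the $p$-dimensional family coming from scaling each summand; properties (1)--(4) are then read off exactly as in the paper, via the rotation action fixing the zero section, naturality of all the ingredients, and restriction to the fibres and to the zero section.

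However, one step is genuinely misstated. You assert that Nakano semipositivity of $\mc E_m$ is ``equivalently'' Nakano seminegativity of the dual $\mc E_m^*$. That equivalence is false: duality exchanges Griffiths positivity and Griffiths negativity, but it does not preserve the Nakano condition --- the dual of a Nakano positive bundle need not be Nakano negative, which is precisely why ``dual Nakano positivity'' exists as a separate notion (cf.\ \cite{Demailly:2012um}). The correct chain, and the one the paper uses, is: Nakano positive $\Rightarrow$ Griffiths positive $\Rightarrow$ dual Griffiths negative. Fortunately this weaker conclusion is all your construction needs: in your identity $i\partial\overline{\partial}|w|^2_{h^*}=i|Dw|^2_{h^*}-i\langle\Theta^{\mc E^*}w,w\rangle$, the curvature term is only ever evaluated on decomposable vectors $\xi\otimes w$, so Griffiths seminegativity of $\mc E_m^*$ already makes the form nonnegative, and positivity along the fibres together with $\pi^*\omega_{\mathrm{WP}}$ in the horizontal directions yields a Kähler form as you argue. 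Replace ``Nakano-seminegative'' by ``Griffiths-negative'' and your proof is the paper's. A side remark: the non-compactness of Teichmüller space, which you single out as the main analytic obstacle, is harmless --- Berndtsson's theorem is local on the base and only requires compactness and smoothness of the fibres, which hold for the Teichmüller curve.
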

The $L^2$-metric on $H^0(\Sigma,{\mc K}^m)$ is taken with respect of
the hyperbolic metric on $\Sigma$.

We explain the construction of Kim and Zhang in \cite{Kim:2013wc} given in
the cubic case which extends with only slight modifications to the
general case.  We reproduce the proof here, with some small
simplifications, in order to get more specific details on the Kähler metrics that we construct.

\subsection{Positive Hermitian bundles}

Let $E\to X$ be a holomorphic bundle over a complex manifold $X$
equipped with a Hermitian metric $h$. Let $\nabla$ be the Chern
connection and $R^\nabla$ be the Chern curvature that we see
as a tensor element of $\Omega^2(\T_{\mathbb C} X,\End(E))$. We then define a
2-tensor $\Theta$ on $\T_{\mathbb C}X\otimes\End(E)$ by
\begin{align}
  \Theta(Y\otimes u, Z\otimes v)&\defeq i\cwdot
  h\left(R^\nabla\left(Y,Z\right)\cwdot u,v\right).
\end{align}

Using the symmetry of the curvature tensor, one gets that $\Theta$ is
Hermitian quadratic.  We now say that (See
\cite{Griffiths:1965wc}, Definition 3.9 in \cite{Demailly:2012um})
\begin{itemize}
\item the Hermitian bundle $E$ is {\em Nakano positive} if $\Theta$ is
  Hermitian positive,
\item the Hermitian bundle $E$ is {\em Griffiths positive} if for all
  non zero decomposable vectors $X\otimes u$, $\Theta(X\otimes
  u,X\otimes u)>0$,
\item the Hermitian bundle $E$ is {\em Griffiths negative} if for all
  non zero decomposable vectors $X\otimes u$, $\Theta(X\otimes
  u,X\otimes u)<0$,
\end{itemize}
From the definitions one immediately gets the following facts.
\begin{enumerate}
\item a Nakano positive bundle is Griffiths positive,
\item the sum of Nakano positive is Nakano positive,
\item the dual of a Griffiths positive bundle is Griffiths negative,
\end{enumerate}
As an easy consequence, the following seems well known to complex geometers.
\begin{proposition}\label{pro:KZ} {\sc [Kähler Metric on the total
    space]}

  Let $\pi:\mc E\to X$ be a holomorphic bundle over a complex manifold
  equipped with a Griffiths negative Hermitian metric $h$ that we
  consider as a fibrewise quadratic function. Assume that $\mc E$ is
  equipped with a holomorphic action of some group $\Gamma$
  preserving the Hermitian metric $h$ and a Kähler metric $g$ on
  $X$. Then for any $\epsilon>0$,
$$
H:=\epsilon\cwdot\partial\overline{\partial}h+\pi^*g,
$$ 
is a $\Gamma$-invariant Kähler metric on $\mc E$. Furthermore
\begin{enumerate}
\item $H$ is linear along the fibre,
\item the zero section is a totally geodesic isometric immersion.
\end{enumerate}

\end{proposition}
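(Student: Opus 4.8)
The plan is to compute $\partial\overline\partial h$ in local coordinates on the total space and to read off its signature. Fix local holomorphic coordinates $z=(z^i)$ on $X$ together with a local holomorphic frame $(e_a)$ of $\mc E$, so that a point of $\mc E$ is written $(z,v)$ with $v=v^a\,e_a(z)$ and $(z^i,v^a)$ are holomorphic coordinates on the total space. Writing $h_{a\bar b}=h(e_a,e_b)$, the fibrewise quadratic function is $h=h_{a\bar b}\,v^a\bar v^b$. First I would differentiate directly and then complete the square using the Chern connection form $\theta^a_b=h^{a\bar c}\,\partial h_{b\bar c}$: introducing the \emph{covariant vertical} $(1,0)$-forms $\mu^a\defeq \d v^a+\theta^a_b\,v^b$, a routine calculation gives
\begin{equation}
  i\,\partial\overline\partial h
  = i\,h_{a\bar b}\,\mu^a\wedge\overline{\mu^b}
  \;-\;i\,R_{i\bar j a\bar b}\,v^a\bar v^b\,\d z^i\wedge \d\bar z^j,
  \label{prop:formula}
\end{equation}
where $R_{i\bar j a\bar b}$ denotes the Chern curvature of $(\mc E,h)$. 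Up to the standard factor $i$ this is the real $(1,1)$-form underlying the expression $\partial\overline\partial h$ of the statement.

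The two terms in \eqref{prop:formula} have transparent meaning. The first, $i\,h_{a\bar b}\,\mu^a\wedge\overline{\mu^b}$, is positive definite on the vertical subspace $\ker\T\pi$, since $(h_{a\bar b})$ is a positive Hermitian matrix; it is in particular the only term surviving on the fibres. The second term is purely horizontal and involves only the curvature: evaluated on a horizontal $(1,0)$-vector $Y=Y^i\partial_{z^i}$ it equals $-\Theta\bigl(Y\otimes v,\,Y\otimes v\bigr)$ in the notation of the statement, hence is \emph{positive} for $v\neq0$ precisely because $(\mc E,h)$ is Griffiths negative. Therefore, adding $\pi^*g$, which is positive on horizontal directions and zero on vertical ones, makes
$$
H=\epsilon\cwdot\partial\overline\partial h+\pi^*g
$$
positive definite at every point and for \emph{every} $\epsilon>0$: the vertical directions are controlled by $\epsilon\,h$, and the horizontal directions by $\pi^*g$ reinforced by the non-negative curvature term. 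Closedness is immediate, since $\d(\partial\overline\partial h)=0$ and $\d\pi^*g=\pi^*\d g=0$ because $g$ is Kähler; thus $H$ is a Kähler metric. This positivity step, together with the sign bookkeeping in \eqref{prop:formula} matching the sign convention for $\Theta$ in the statement, is the only real content and the main thing to get right.

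The remaining assertions are then formal. Invariance under $\Gamma$ holds because $\Gamma$ acts holomorphically while preserving both $h$ and $g$, and the operations $\partial\overline\partial$ and $\pi^*$ are natural. For the behaviour along the fibres, restricting \eqref{prop:formula} to a fixed fibre kills $\d z^i$, so $\mu^a=\d v^a$ and $H|_{\mc E_x}=\epsilon\,i\,h_{a\bar b}(x)\,\d v^a\wedge\d\bar v^b$ has constant coefficients: the induced metric is the flat Hermitian metric of $h_x$, which is the precise meaning of $H$ being linear along the fibre. Finally, the fibrewise involution $\iota\colon(z,v)\mapsto(z,-v)$ is a biholomorphism with $\iota^*h=h$ and $\pi\circ\iota=\pi$, hence $\iota^*H=H$: $\iota$ is an isometry of $H$ whose fixed-point locus is exactly the zero section, so the zero section is totally geodesic. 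Its induced metric is read off from \eqref{prop:formula} by setting $v=0$ and $\d v^a=0$, leaving only $\pi^*g$, so the zero section is an isometric copy of $(X,g)$. The main obstacle is purely the careful verification of \eqref{prop:formula} with the correct signs, everything else being a direct consequence.
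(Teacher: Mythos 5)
Your proposal is correct and takes essentially the same route as the paper: where you verify the local formula for $i\,\partial\overline{\partial}h$ explicitly, the paper simply cites this classical computation (Proposition 3.1.5 of Kobayashi) to get plurisubharmonicity of $h$ on the total space, and both arguments then conclude positivity by combining fibrewise positivity with $\pi^*g$. The remaining points are handled identically, in particular the totally geodesic zero section via the isometric involution $v\mapsto -v$.
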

Compare with the content of the proof of Theorem 5.4 in
\cite{Kim:2013wc}.
\begin{proof} Let $\sigma$ be a holomorphic section of $\mc E$.  A
  classical computation (Proposition 3.1.5 of \cite{Kobayashi:1987ub})
  says that $\partial\overline{\partial}\left(h(\sigma)\right)$ is
  positive. Thus $h$ is a plurisubharmonic function on $\mc E$.  Since
  $\epsilon\cwdot\partial\overline{\partial}h$ is positive along the
  fibres, we get that $H$ is a Kähler metric. Since $u\mapsto -u$ is
  an isometry of $H$ whose fixed point is the zero section, the zero
  section is totally geodesic. The other statements are obvious by
  construction.
\end{proof}

\subsection{Pushforward bundles} Theorem 1.2. in Bo Berndtsson
\cite{Berndtsson:2009hr} is a powerful way to assert the positivity of
bundles.

\begin{theorem}{\sc [Berndtsson]}\label{theo:bo}
  Let $\pi:X\to Y$ be a holomorphic fibration with non singular and
  compact fibres. Assume $X$ is Kähler. We denote by $X_y$ the fibre
  over $y\in Y$. Let $\mc L$ be a positive line bundle on $Y$.  Then
  the vector bundle over $Y$ whose fibre at $y$ is
$$
H^0(X_y, \mc L\otimes {\mc K}_{X/Y}),
$$
equipped with the $L^2$-metric, is Nakano positive.
\end{theorem}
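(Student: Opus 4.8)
The plan is to follow Berndtsson's computation of the Chern curvature of the $L^2$-metric on the direct image bundle $E\defeq\pi_*\!\left(\mc L\otimes\mc K_{X/Y}\right)$. Here $\mc L$ is a positive line bundle on the \emph{total space} $X$ (this is what makes the fibrewise spaces $H^0(X_y,\mc L\otimes\mc K_{X/Y})$ genuinely depend on $\mc L$), and $\varphi$ denotes a local weight of its Hermitian metric, so that the curvature of $\mc L$ is $i\partial\overline\partial\varphi\geq 0$ on all of $X$. Since positivity of the curvature operator $\Theta$ of $E$ is a pointwise statement on the base, I would first localise and take $Y$ to be an open subset of $\mathbb C^{m}$ with coordinates $t=(t_1,\dots,t_m)$; the fibration then becomes smoothly trivial, only the complex structure of the fibres $X_t$ varying. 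A holomorphic section of $E$ is the same datum as a smoothly varying family $u_t$ of $\overline\partial$-closed $\mc L$-valued $(n,0)$-forms on the fibres ($n=\dim X_t$), with $L^2$-norm $\|u_t\|^2=\int_{X_t}c_n\,u_t\wedge\overline{u_t}\,e^{-\varphi}$, where $c_n=i^{n^2}$. It is essential that I work with the full multi-index curvature tensor $\Theta_{j\overline k}$ rather than restricting to a disk: Nakano positivity is strictly stronger than Griffiths positivity and cannot be detected on one-dimensional slices of the base.

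Next I would make the Chern connection explicit. Fixing a smooth horizontal lift $V_j$ of $\partial/\partial t_j$ to a $(1,0)$-vector field on $X$, I represent a holomorphic section $u$ of $E$ by a global smooth $(n,0)$-form $\tilde u$ on $X$ restricting to $u_t$ on each fibre; the $(1,0)$-part of the Chern connection $D_j u$ is then the fibrewise holomorphic (harmonic) projection of the Lie derivative $L_{V_j}\tilde u$. The key auxiliary quantities are the fibrewise obstruction forms $\mu_j\defeq\overline\partial_{X_t}\!\left(i_{V_j}\tilde u\right)$, which are $\mc L$-valued $(n,1)$-forms measuring the failure of the lift to be holomorphic; they encode both the variation of the complex structure of the fibres (the Kodaira--Spencer term) and the mixed base--fibre derivatives of $\varphi$, after a suitable choice of lift.

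The heart of the argument is the curvature formula. Computing $\Theta_{j\overline k}=[D_j,\overline D_{\overline k}]$ as an operator on sections and integrating by parts over the compact fibres, I expect to arrive, for $u=\sum_j\partial_{t_j}\otimes u_j\in T_Y\otimes E$, at an identity of the shape
\begin{align*}
\langle\Theta u,u\rangle
&=\int_{X_t}\Bigl(\sum_{j,k}c(\varphi)(V_j,\overline V_k)\,u_j\wedge\overline{u_k}\Bigr)e^{-\varphi}
+\bigl\langle(1+\square)^{-1}\textstyle\sum_j\mu_j,\ \sum_k\mu_k\bigr\rangle,
\end{align*}
where $c(\varphi)=i\partial\overline\partial\varphi$ and $\square$ is the fibrewise $\overline\partial$-Laplacian on $\mc L$-valued $(n,1)$-forms. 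The derivation is exactly where the Kähler hypothesis on the fibres is used: the Bochner--Kodaira--Nakano identity on each compact Kähler fibre converts the cross terms produced by the integration by parts into the manifestly signed expression above, and forces the second derivatives of $\varphi$ to enter only through the full Hessian $c(\varphi)$ evaluated on the lifts, rather than through uncontrolled curvature terms.

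With this formula the conclusion is immediate. The second term is nonnegative because $1+\square$ is a positive self-adjoint operator (invertible, with positive inverse) on the space of fibrewise forms, so the matrix $\bigl(\langle(1+\square)^{-1}\mu_j,\mu_k\rangle\bigr)_{j,k}$ is positive semidefinite. The first term is nonnegative because $c(\varphi)\geq 0$ on $T^{1,0}X$ by positivity of $\mc L$, so the matrix $\bigl(c(\varphi)(V_j,\overline V_k)\bigr)_{j,k}$ is positive semidefinite and pairs with the positive integrand $u_j\wedge\overline{u_k}\,e^{-\varphi}$. As both terms are positive semidefinite forms in $(u_j)$, the operator $\Theta$ is Nakano positive. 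I expect the main obstacle to be precisely the derivation of the curvature identity: one must choose the representatives and the lifts carefully, perform the integration by parts over the fibres with no boundary contributions (this is where compactness of the fibres is essential, in contrast to the pseudoconvex-domain version which requires Hörmander-type estimates), and invoke the Kähler identities to collect the error terms into the positive operator $(1+\square)^{-1}$. That step is exactly where compactness and Kählerness of the fibres, rather than the mere positivity of $\mc L$, enter, and where the gap between Nakano and the weaker Griffiths positivity genuinely appears.
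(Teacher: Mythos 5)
The paper offers no proof of this statement: as the attribution indicates, it is imported verbatim from Berndtsson \cite{Berndtsson:2009hr} (Theorem 1.2 there) and used as a black box to deduce the Nakano positivity of $\mc E_m$. So the only meaningful comparison is with Berndtsson's original argument, and your sketch is exactly that argument: localisation of the base, identification of sections of the direct image with families of $\overline{\partial}$-closed $\mc L$-valued $(n,0)$-forms on the fibres with the weight-twisted $L^2$-norm, the $(1,0)$-part of the Chern connection computed as the fibrewise harmonic projection of a derivative along horizontal lifts, and a curvature identity whose two terms --- the fibre integral of $c(\varphi)$ evaluated on the lifts, and a resolvent term $(1+\square)^{-1}$ applied to the obstruction forms --- are separately positive semidefinite Hermitian forms in $(u_j)$, yielding Nakano (not merely Griffiths) positivity. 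You also correctly repaired the misprint in the statement: $\mc L$ must be a positive line bundle on the total space $X$, not on $Y$ --- this is how the theorem is actually applied in the paper, with $\mc L=\mc K_{X/Y}^{m-1}$ on the Teichmüller curve, positive by Wolpert's Lemma 5.8 \cite{Wolpert:1986ie} --- and you correctly located where compactness and the Kähler hypothesis enter, and why Nakano positivity cannot be tested on one-dimensional slices of the base.

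Two caveats, neither fatal. First, a bookkeeping slip: $i_{V_j}\tilde u$ is an $(n-1,0)$-form, so $\mu_j=\overline{\partial}_{X_t}(i_{V_j}\tilde u)$ has bidegree $(n-1,1)$, not $(n,1)$; these are the objects pairing with the Kodaira--Spencer classes. Second, the displayed curvature identity is announced ``of the shape'' rather than derived, and its derivation --- the choice of lift adapted to $c(\varphi)$, the weight-corrected differentiation, the integration by parts over the fibres, and the fibrewise Bochner--Kodaira--Nakano identity collecting the cross terms into the resolvent --- is the entire technical content of Berndtsson's proof. As written, your text is a faithful and well-organised roadmap to that computation rather than a self-contained proof; note also that when $\mc L$ is strictly positive, $c(\varphi)$ is itself a Kähler form on $X$, so the Kähler hypothesis is automatic in the case the paper uses, and is genuinely needed only for the semipositive version.
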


As a consequence, we obtain as in \cite{Kim:2013wc}
\begin{proposition}{\sc [Inkang Kim--Genkhai Zhang]}
  The holomorphic bundle $\mc E_m$ equipped with the $L^2$ metric is
  Nakano positive.
\end{proposition}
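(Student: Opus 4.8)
The plan is to exhibit $\mc E_m$ as a direct image (pushforward) bundle and then invoke Berndtsson's Theorem \ref{theo:bo}. Let $\pi\colon\mc C\to\mc T$ be the universal curve over Teichmüller space, so that the fibre $\mc C_y$ over $y\in\mc T$ is the compact genus $\geq 2$ Riemann surface $\Sigma_y$ classified by $y$, and let $\mc K_{\mc C/\mc T}$ be the relative canonical bundle, whose restriction to each fibre is the canonical bundle $\mc K_{\Sigma_y}$. Taking $\mc L\defeq \mc K_{\mc C/\mc T}^{\,m-1}$ as the positive line bundle on the total space, one has $\mc L\otimes\mc K_{\mc C/\mc T}=\mc K_{\mc C/\mc T}^{\,m}$, whose restriction to $\mc C_y$ is $\mc K_{\Sigma_y}^{m}$; hence the fibre of the associated direct image bundle at $y$ is
\begin{align*}
H^0\!\left(\mc C_y,\,\mc L\otimes\mc K_{\mc C/\mc T}\right)=H^0\!\left(\Sigma_y,\mc K_{\Sigma_y}^{m}\right)=\left(\mc E_m\right)_{\Sigma_y},
\end{align*}
which identifies this direct image bundle with $\mc E_m$.

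The key geometric input is to equip $\mc K_{\mc C/\mc T}$ with the Hermitian metric dual to the fibrewise hyperbolic metric of curvature $-1$, and to observe that its Chern curvature is a \emph{positive} $(1,1)$-form on the whole total space $\mc C$. Along each fibre this curvature restricts to the hyperbolic area form, which is positive; the transverse and mixed positivity is the classical computation relating the first Chern form of the relative canonical bundle to the Weil--Petersson form. Granting this, $\mc L=\mc K_{\mc C/\mc T}^{\,m-1}$ is a positive line bundle on $\mc C$ for every $m\geq 2$, and, since the same positive $(1,1)$-form (after possibly adding a large multiple of the pullback of a Kähler form on $\mc T$) provides a Kähler metric, the total space $\mc C$ is Kähler, as required by the hypotheses of Theorem \ref{theo:bo}.

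With these two facts in hand, Berndtsson's theorem applies directly and yields that the direct image bundle over $\mc T$ with fibre $H^0(\mc C_y,\mc L\otimes\mc K_{\mc C/\mc T})$, endowed with the $L^2$ metric, is Nakano positive. It then remains only to check that the $L^2$ metric furnished by Berndtsson coincides with the $L^2$ metric on $H^0(\Sigma_y,\mc K_{\Sigma_y}^m)$ taken with respect to the hyperbolic metric: this is immediate once one writes a holomorphic $m$-differential as an $\mc L$-valued $(1,0)$-form on the fibre and computes its norm using the metric on $\mc L=\mc K_{\mc C/\mc T}^{\,m-1}$ together with the hyperbolic volume element. Under the identification of the first paragraph, the proposition follows.

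I expect the main obstacle to be establishing the positivity of $\mc K_{\mc C/\mc T}$ on the whole total space $\mc C$, rather than merely along the fibres; everything else is bookkeeping with the definitions and a routine comparison of $L^2$ norms. This positivity, which simultaneously supplies the Kähler condition needed to invoke Theorem \ref{theo:bo}, is precisely the point at which the special geometry of families of hyperbolic surfaces (equivalently, the plurisubharmonicity of the fibrewise hyperbolic potential) enters the argument.
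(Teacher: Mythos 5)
Your proposal is correct and follows essentially the same route as the paper: both exhibit $\mc E_m$ as the direct image of $\mc K_{X/Y}^{m-1}\otimes\mc K_{X/Y}$ over the Teichmüller curve and invoke Berndtsson's Theorem \ref{theo:bo}, with the Kähler property of the total space coming for free from the positive line bundle. The positivity of the relative canonical bundle (equipped with the fibrewise hyperbolic metric) on the whole total space, which you correctly isolate as the one nontrivial input, is precisely what the paper quotes as Lemma 5.8 of Wolpert \cite{Wolpert:1986ie}.
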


\begin{proof}
  We apply Theorem \ref{theo:bo} to the following situation: $Y$ is
  Teichmüller space, $X$ is the Teichmüller curve, ${\mc L}= {\mc
    K}_{X/Y}^{m-1}$ is the canonical bundle of the fibre to the power
  $m-1$. Then at a Riemann surface $\Sigma$,
$$
H^0(X_y, {\mc L}\otimes {\mc K}_{X/Y})=H^0(\Sigma, {\mc
  K}_\Sigma^n)=(\mathcal E_n)_\Sigma.
$$
It remains to check the hypothesis. Indeed
\begin{itemize}
\item ${\mc L}$ is positive by Lemma 5.8 of \cite{Wolpert:1986ie},
\item $X$ is Kähler, as a consequence, being the base of a positive line bundle.
\end{itemize}
The result follows.
\end{proof}

Since the sum of Nakano positive bundles is Nakano positive, we immediately
get that the bundle $\mc E({\rm m})$ is Nakano positive, where ${\rm
  m}=(m_1,\ldots,m_\ell)$. Thus using our preliminary remarks, we have

\begin{proposition}\label{pro:neg}
  The bundle $\mc E^*({\rm m})$ is Griffiths negative.
\end{proposition}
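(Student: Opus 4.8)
The plan is to deduce this statement directly from the Nakano positivity just established for $\mc E({\rm m})$ together with the three elementary implications listed before Proposition \ref{pro:KZ}. Indeed, almost all the substantive content has already been extracted: the previous proposition, via Berndtsson's Theorem \ref{theo:bo}, shows that each summand $\mc E_{m_i}$ equipped with its $L^2$ metric is Nakano positive, and the stability of Nakano positivity under direct sums then gives that $\mc E({\rm m})=\bigoplus_i \mc E_{m_i}$ is itself Nakano positive. So the only remaining work is a short chain of formal implications relating Nakano positivity, Griffiths positivity, and duality.

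Concretely, I would proceed in three quick steps. First, I would invoke the preceding result to record that $\mc E({\rm m})$ is Nakano positive. Second, I would apply the first of the elementary facts (a Nakano positive bundle is Griffiths positive) to conclude that $\mc E({\rm m})$ is Griffiths positive. Third, I would apply the third elementary fact (the dual of a Griffiths positive bundle is Griffiths negative) to the bundle $\mc E({\rm m})$, which immediately yields that $\mc E^*({\rm m})$ is Griffiths negative. Each of these is a one-line consequence of the definitions of $\Theta$ in terms of the Chern curvature $R^\nabla$, once one recalls that dualizing a Hermitian holomorphic bundle replaces its curvature endomorphism by the negative of its transpose, so that the sign of $\Theta(X\otimes u, X\otimes u)$ on decomposable vectors is reversed.

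There is essentially no obstacle here: the statement is the tautological "easy consequence" of the positivity theorem, and the genuine difficulty lies entirely upstream, in verifying the hypotheses of Berndtsson's theorem for the Teichmüller curve (positivity of $\mc K_{X/Y}^{m-1}$ and the Kähler property of the total space). The one point that deserves a word of care is that Griffiths negativity is only a condition on \emph{decomposable} vectors $X\otimes u$, whereas Nakano positivity is the stronger statement about all of $\T_{\mathbb C}X\otimes\End(E)$; I would make explicit that the dualization argument only needs the Griffiths (decomposable) version, so it is clean to pass from "Nakano positive" to "Griffiths positive" to "dual is Griffiths negative" without claiming that $\mc E^*({\rm m})$ is Nakano negative. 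This is exactly the level of positivity required by Proposition \ref{pro:KZ} in order to produce the Kähler metrics on the total space, so the conclusion feeds directly into the construction of the metrics asserted in Proposition \ref{prop:Ka}.
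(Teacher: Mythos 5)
Your proof is correct and is exactly the paper's argument: the paper deduces Proposition \ref{pro:neg} from the Nakano positivity of $\mc E({\rm m})$ (established via Berndtsson's theorem and closure of Nakano positivity under direct sums) together with the two listed elementary facts, namely that Nakano positive implies Griffiths positive and that the dual of a Griffiths positive bundle is Griffiths negative. Your additional remark that only the decomposable (Griffiths) version survives dualization, and that this is precisely the level of positivity needed for Proposition \ref{pro:KZ}, is consistent with how the paper uses the result.
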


\subsection{The Kähler property}
In order to get \ref{prop:Ka}, we apply Proposition \ref{pro:KZ} to
$\mc F= \mc E^*({\rm m})$ using Proposition \ref{pro:neg}.

Actually we have a family of Kähler metrics since we have a natural
holomorphic $\mathbb C^p$ action on $\mc E^*({\rm m})$.

By construction the metric is invariant under the mapping class group.

Since furthermore the metric is invariant by rotation in the fibres,
the zero section is totally geodesic. Furthermore, by construction the
zero section is an isometry from $Y$ into $\mc F$.
\section{Area rigidity}
For any split real rank 2 group $\ms G_0$,  let $c(\ms G_0)$ be the curvature of the totally geodesic hyperbolic plane associated to the principal $\ms{SL}_2$ in $\ms G_0$. Our goal is to prove in this section the following
\begin{theorem}\label{area-rigid}
Let $\delta$ be a Hitchin representation of $\pi_1(\Sigma)$ in $\ms G_0$ where $\ms G_0$ has rank 2.
Then
$$
\operatorname{MinArea}(\delta)\geq c(\ms G_0)\cwdot \chi(S),
$$
with equality only if $\delta$ is Fuchsian.
\end{theorem}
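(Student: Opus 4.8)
The plan is to identify $\MinArea(\delta)$ with the area of the unique equivariant minimal surface, to lift that surface to a cyclic surface, and then to bound the Gaussian curvature of its induced metric from below by $c(\ms G_0)$; the inequality on areas is then a consequence of Gauss--Bonnet. First I would carry out the reduction to the minimal surface. By the properness of $J\mapsto e_\delta(J)$ on $\mathcal T(\Sigma)$ established in \cite{Labourie:2005a} and used in Section \ref{sec:Proper}, together with the uniqueness Theorem \ref{MainA}, the function $e_\delta$ has a single critical point, which is its global minimum and at which the equivariant harmonic map $f_0$ is conformal, hence minimal. Thus $\MinArea(\delta)=\operatorname{Energy}(f_0)=\operatorname{Area}(f_0)$. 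By the results of Section \ref{sec:CS}, $f_0$ lifts to a cyclic surface and the associated Higgs bundle is cyclic: its Hopf differential $\Kill{\Phi}{\Phi}$ vanishes, this being exactly minimality, and in rank $2$ the only subdominant differential is this quadratic one, so the Higgs field has the cyclic form $\Phi=Y+q\cwdot{\rm x}_\eta$ with $q$ holomorphic of degree $m_2+1$. By Corollary \ref{coro:Higgs-cyc}, $\operatorname{Area}(f_0)=i\cwdot\int_\Sigma\Kill{\Phi}{\Phi^\dagger}$, and the induced conformal metric has area density proportional to $\|\Phi\|^2=-\Kill{\Phi}{\widehat\rho(\Phi)}$.

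Next I would compute the Gaussian curvature $K$ of this induced metric. Writing the metric as $e^{2u}\lvert dz\rvert^2$ with $e^{2u}\propto\|\Phi\|^2$ and using that $\Phi$ is holomorphic, that $\widehat\nabla\widehat\rho=0$, and the self-duality equation $R^{\widehat\nabla}=2\cwdot\Phi\ww\Phi^\dagger$, a Bochner computation for $\partial\overline\partial\log\|\Phi\|^2$ should yield $K=c(\ms G_0)+P$. Here the constant $c(\ms G_0)$ is produced by the simple-root (principal $\ms{SL}_2$) part $Y$ of $\Phi$, which on its own is exactly the Fuchsian model of Proposition \ref{fuchs}, while $P\geq 0$ collects a Cauchy--Schwarz term together with the contribution of the longest-root component $q\cwdot{\rm x}_\eta$. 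The nonnegativity of $P$ is where the cyclic structure enters decisively: the bracket relations of Proposition \ref{crucial}, specialised to rank $2$, control the sign of the quadratic expression $\Kill{[[\Phi,\Phi^\dagger],\Phi]}{\widehat\rho(\Phi)}$ through the Cartan matrix and should force $P$ to be a sum of manifestly nonnegative $(1,1)$-terms weighted by $\lvert q\rvert^2$. Consequently $K\geq c(\ms G_0)$ pointwise, with $P\equiv 0$ if and only if $q\equiv 0$. This generalises the classical fact that the Blaschke metric of a hyperbolic affine sphere has curvature $-1+2\|C\|^2\geq -1$, the $\ms{SL}(3,\mathbb R)$ case.

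Finally, Gauss--Bonnet $\int_\Sigma K\,dA=2\pi\chi(\Sigma)$ combined with $K\geq c(\ms G_0)<0$ gives the desired lower bound on $\operatorname{Area}(f_0)=\MinArea(\delta)$, the constant $c(\ms G_0)$ entering through the normalisation of the symmetric metric. Equality forces $K\equiv c(\ms G_0)$, hence $P\equiv 0$ and $q\equiv 0$; by the last assertion of Theorem \ref{th:H2} this means the Higgs field is $Y$ and $\delta$ is Fuchsian, while conversely a Fuchsian representation gives the totally geodesic surface of Proposition \ref{fuchs}, realising equality.

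I expect the main obstacle to be the pointwise curvature estimate of the second step: verifying that the curvature term arising from $R^{\widehat\nabla}=2\cwdot\Phi\ww\Phi^\dagger$ reorganises, via Proposition \ref{crucial}, into $c(\ms G_0)$ plus a nonnegative remainder. The delicate point is the sign of the mixed simple-root/longest-root brackets, which is precisely what keeps the clean form of the argument confined to rank $2$, where the only subdominant differential is the quadratic Hopf differential and the minimal surface is therefore genuinely cyclic.
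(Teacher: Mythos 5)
Your reduction to the unique equivariant minimal surface and its cyclic lift is fine and matches the paper's setup, but the central step of your argument --- the pointwise identity $K=c(\ms G_0)+P$ with $P\geq 0$ obtained ``by a Bochner computation whose signs are controlled by Proposition \ref{crucial}'' --- does not hold as stated, and cannot be produced by that algebra. For a minimal surface the Gauss equation reads $K=\bar K(\T\Sigma)-\frac{1}{2}\vert \mathrm{II}\vert^2$, where $\bar K(\T\Sigma)$ is the ambient sectional curvature of the tangent plane and $\mathrm{II}$ the second fundamental form; the Cauchy--Schwarz term you propose to absorb into $P$ is exactly $-\frac{1}{2}\vert\mathrm{II}\vert^2$ and pushes $K$ \emph{down}, so it can never be part of a nonnegative remainder. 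Worse, the ambient term itself is not bounded below by $c(\ms G_0)$ by root-system algebra alone. At a point, after gauging, write $\phi=Y+q\,{\rm x}_\eta$ and let $t$ be the ratio of the harmonic-metric norms $\Vert q\,{\rm x}_\eta\Vert^2_h/\Vert Y\Vert^2_h$. The cross brackets vanish because $\alpha+\eta$ is never a root, so $[\phi,\rho(\phi)]=[Y,\rho(Y)]+t\,[{\rm x}_\eta,\rho({\rm x}_\eta)]$; for $\ms{SL}(3,\mathbb R)$ this equals $(t-1)\,{\rm h}_\eta$ up to sign, and the sectional curvature of the cyclic tangent plane comes out to $-\tfrac{(t-1)^2}{3(t+2)^2}$ in the Killing normalisation. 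This is $c(\ms G_0)=-\tfrac{1}{12}$ at $t=0$, is \emph{zero} at $t=1$ (the plane is then tangent to a maximal flat), and drops \emph{below} $c(\ms G_0)$ as soon as $t>4$. Hence your claimed inequality $K\geq c(\ms G_0)$ already forces a pointwise bound on the harmonic-metric norm of $q$, together with a matching bound on $\vert\mathrm{II}\vert^2$. These are global statements about the solution of Hitchin's equations, of the maximum-principle type proved later by Dai and Li for cyclic Higgs bundles; Proposition \ref{crucial} is pure Lie algebra, knows nothing about the harmonic metric, and cannot supply them. Note also that your model case conflates two metrics: the classical estimate ``curvature $\geq -1$'' for $\ms{SL}(3,\mathbb R)$ concerns the Blaschke metric of the affine sphere (via Wang's equation), not the induced metric on the minimal surface in the symmetric space, and Gauss--Bonnet applied to it controls the Blaschke area rather than $\MinArea$.

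The paper avoids curvature estimates entirely. It introduces, for $a\in\mk h^*$, the closed $2$-forms $\Omega_a=\braket{a\mid\omega\ww\omega}$ on $\X$ and shows by a Chern--Weil integrality argument that $\int_{\Sigma_\delta}\Omega_a$ is independent of $\delta$ (Proposition \ref{omegatopo}); it writes the area as $\int_{\Sigma_\delta}(\Omega_0+\Omega_1)$ with both forms positive on cyclic surfaces (Proposition \ref{pro:area}); and it chooses $v_0$ so that $\Omega_{v_0}=\Omega_0-k_0\cwdot\Omega_1$ with $k_0>0$ (Corollary \ref{cor:u}), whence $\operatorname{Area}=\int_{\Sigma_\delta}\Omega_{v_0}+(1+k_0)\int_{\Sigma_\delta}\Omega_1$, a deformation invariant evaluated on the Fuchsian locus (where $\omega_\eta$ vanishes, Proposition \ref{fuchs}) plus a manifestly nonnegative term that vanishes exactly when $\omega_\eta\equiv 0$, i.e.\ when $\delta$ is Fuchsian by Theorem \ref{th:H2}. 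So the inequality and its equality case come from topology plus the positivity of a single form, with no PDE estimate. If you wish to pursue your route, the missing ingredient is precisely the pointwise control of the harmonic metric along the minimal surface; as written, your key estimate is unproved and is not a consequence of the bracket relations you invoke.
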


\subsection{Forms}
Using the notation given in the decomposition \eqref{MC-dec}, let us consider for any $a\in\mk h^*$ the 2-form
\begin{align}
\Omega_a\defeq\braket{a \mid\omega\ww\omega}\in\Omega^2(\X).
\end{align}
Then we have the following result
\begin{proposition}\label{omegatopo}
For any $a\in\mk h^*$, the form $\Omega_a$ is closed. Moreover, for any $\delta$ in the Hitchin component, let $\Sigma_\delta$ be the unique minimal surface in $\X/\delta(\pi_1(\Sigma))$ then 
$
\int_{\Sigma_\delta}\Omega_a$
does not depend on $\delta$.
\end{proposition}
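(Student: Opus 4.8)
The plan is to read $\Omega_a$ through the curvature equation \eqref{curvfund} to get closedness, and then, via the cyclic lift, to recognise $\int_{\Sigma_\delta}\Omega_a$ as a Chern--Weil number of the line bundles $\widehat{\mc G}_\alpha=\mk g_\alpha\otimes\mc K^{\deg(\alpha)}$, hence topological. First I would make $\Omega_a$ into a genuine form. Since $\ms T\subset\ms H$ and $\ms H$ is abelian, $\ms T$ acts trivially on $\mk h$, so the covector $a\in\mk h^*$ (identified with an element of $\mk h$ through the Killing form) defines a $\ms G$-equivariant section of $\mc H\subset\mc G$, which is parallel for $\nabla=\D-\ad(\omega)$ by the discussion preceding Proposition \ref{vec-bun-des}. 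As $\ad(\omega)$ acts by derivations, $\nabla$ preserves the Killing form; together with $\ms G$-invariance of $\omega$ and of $a$ and $\Ad$-invariance of $\Kill{\cdot}{\cdot}$, this shows $\Omega_a$ is $\ms G$-invariant, so it descends to each quotient $\X/\delta(\pi_1(\Sigma))$.

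For closedness, $\nabla a=0$ and compatibility of $\nabla$ with $\Kill{\cdot}{\cdot}$ give $\d\Omega_a=\Kill{a}{\d^\nabla(\omega\ww\omega)}$. Leibniz, the identity $\d^\nabla\omega=-\omega\ww\omega-R^\nabla$ coming from \eqref{curvfund}, and the graded Jacobi identity $\omega\ww(\omega\ww\omega)=0$ yield $\d^\nabla(\omega\ww\omega)=-2\,R^\nabla\ww\omega$. By Proposition \ref{vec-bun-des} the curvature $R^\nabla$ is valued in $\mc T\subset\mc H$, and since $[\mk h,\mk h]=0$ while $[\mk h,\mk g_\alpha]\subset\mk g_\alpha$, the form $R^\nabla\ww\omega$ takes values in $\bigoplus_\alpha\mc G_\alpha$, which is Killing-orthogonal to $a\in\mc H$. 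Hence $\d\Omega_a=-2\,\Kill{a}{R^\nabla\ww\omega}=0$.

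For the invariance of the integral I would pass to the cyclic lift. For $\delta$ in the Hitchin component, Sections \ref{sec:CS} and \ref{sec:Proper} identify $\Sigma_\delta$ with the image of the cyclic map $f$ attached to the cyclic Higgs bundle $(\widehat{\mc G},\Phi)$, and $\int_{\Sigma_\delta}\Omega_a=\int_\Sigma f^*\Omega_a$ by $\ms G$-invariance. On a cyclic surface $f^*\omega=\Phi+\Phi^\dagger$ and $\d^{\widehat\nabla}(f^*\omega)=0$ by \eqref{higgs2}--\eqref{higgs2*}, so the pulled-back equation \eqref{curvfund} gives $f^*(\omega\ww\omega)=-R^{\widehat\nabla}$ with $R^{\widehat\nabla}=f^*R^\nabla$, whence $\int_{\Sigma_\delta}\Omega_a=-\int_\Sigma a(R^{\widehat\nabla})$. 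By Proposition \ref{hpara} the bundle $\widehat{\mc H}=f^*\mc H$ is $\widehat\nabla$-parallel, so $\widehat\nabla$ reduces to the maximal torus and $R^{\widehat\nabla}$ is $\mk h$-valued; the connection it induces on the line bundle $\widehat{\mc G}_\alpha=\mk g_\alpha\otimes\mc K^{\deg(\alpha)}$ has Chern curvature $\alpha(R^{\widehat\nabla})$. Chern--Weil then gives $\frac{i}{2\pi}\int_\Sigma\alpha(R^{\widehat\nabla})=\deg(\widehat{\mc G}_\alpha)=\deg(\alpha)(2g-2)$, a topological integer. Writing $a=\sum_\alpha c_\alpha\alpha$ and using $\deg(\alpha)=\alpha(\mathsf a)$ (Proposition \ref{prop:Kostant}, with $\mathsf a=\tfrac12\sum_{\beta\in\dep}{\rm h}_\beta$), I obtain $\int_{\Sigma_\delta}\Omega_a=2\pi i\,(2g-2)\,a(\mathsf a)$, manifestly independent of $\delta$.

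The main obstacle is the Chern--Weil step: one must verify that $\alpha(R^{\widehat\nabla})$ really is the curvature of a holomorphic Hermitian line bundle of $\delta$-independent degree. This rests on $\widehat\nabla$ being the Chern connection of $(\widehat{\mc G},\widehat\rho)$ (the remark after \eqref{def:sd}), on $\widehat{\mc H}$ being $\widehat\nabla$-parallel so that the torus reduction and the splitting into the $\widehat{\mc G}_\alpha$ are holomorphic, and on the smooth type of $\mc K^{\deg(\alpha)}$ (hence $\deg(\widehat{\mc G}_\alpha)$) staying fixed as the complex structure $J$ varies. A more cohomological alternative would invoke closedness directly: $\int_{\Sigma_\delta}\Omega_a$ is a pairing with a homology class, constant along the connected Hitchin component by a Stokes argument on a path $\delta_t$. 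There the subtlety is the cocycle term arising from the varying monodromy $\delta_t$, which prevents the primitive $f_t^*i_{\xi_t}\Omega_a$ from descending to the closed surface and must be absorbed through the $\ms G$-invariance of $\Omega_a$; this makes the direct Chern--Weil computation the cleaner route.
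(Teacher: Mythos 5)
Your proof is correct, but it reaches the $\delta$-independence by a genuinely different route than the paper's. On closedness the two arguments use the same ingredients (the covector $a$ gives a $\nabla$-parallel section of $\mc H$, and $R^\nabla$ is $\mc T$-valued): the paper writes $-\Omega_a=\d \braket{a\mid\omega}+\braket{a\mid R^\nabla}$ and invokes the Bianchi identity, which at the same time identifies the cohomology class of $\Omega_a$ with that of $-\braket{a\mid R^\nabla}$; you instead differentiate $\Omega_a$ directly and kill the term $\Kill{a}{R^\nabla\ww\omega}$ by root-space orthogonality. (Your explicit check that $\Omega_a$ is well defined and $\ms G$-invariant, hence descends to the quotients, is a point the paper leaves implicit.) For the second claim the paper exploits precisely the cohomological identification you do not make: for integral $a$ the class of $\braket{a\mid R^\nabla}$ is, up to a fixed constant, the curvature of a circle bundle $P_a$ over $\X$, so $\int_{\Sigma_\delta}\Omega_a$ lies in a fixed discrete subset of the reals; continuity of $\delta\mapsto\Sigma_\delta$ over the connected Hitchin component then forces constancy, and linearity in $a$ handles the general case. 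You instead evaluate the integral exactly on the cyclic surface: pulling back \eqref{curvfund}, using $\d^{\widehat\nabla}\Phi=\d^{\widehat\nabla}\Phi^\dagger=0$, and recognising $\alpha(R^{\widehat\nabla})$ as the curvature of the $\widehat\nabla$-parallel line bundle $f^*\mc G_\alpha\cong\mk g_\alpha\otimes \mc K^{\deg(\alpha)}$, whose degree $\deg(\alpha)(2g-2)$ is topological. This buys strictly more: an explicit, manifestly $\delta$-independent value, which is in effect the evaluation the paper only performs later, in the Area Rigidity proof, by specialising to a Fuchsian point; and it dispenses with the continuity and integrality/linearity steps (the former is nowhere justified in the paper). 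The price is that you must invoke the full Hitchin-section description of $\Sigma_\delta$ coming from Theorem \ref{Higgs2cyc} and Section \ref{sec:Proper}, which is legitimately available where this proposition sits, and which the paper's continuity assertion also tacitly relies on. Two remarks: your announced ``main obstacle'' is not one, since Chern--Weil for a degree needs only a smooth complex line bundle with a connection over a closed oriented surface, not holomorphicity of the torus reduction; and the sign and normalisation in your final formula depend on curvature and wedge conventions about which the paper itself is not consistent, so the safe formulation is that $\int_{\Sigma_\delta}\Omega_a$ equals a universal group-theoretic constant times $(2g-2)$, which is all that constancy requires.
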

\begin{proof} 
By equation \eqref{curvfund}, 
$$
\omega\ww\omega= -R^\nabla-\d^\nabla\omega.
$$
Thus 
\begin{align}
-\Omega_a&=\braket{a\mid \d^\nabla\omega}+\braket{a\mid R^\nabla}\cr
&=\d \braket{a\mid \omega}+\braket{a\mid R^\nabla}.
\end{align}
Thus
$\Omega_a$ is closed (by the Bianchi identity) and in the same cohomology class as $-\braket{a\mid R^\nabla}$. Since $R^\nabla$ is the curvature of the $\ms T$-bundle $\ms G\to\ms G/\ms T$, choosing $a$ to be integral,
 it follows that there exists a constant $b         (\ms G_0)$  and an $S^1$-bundle $P_a$ over $\X$ so that $\frac{1}{b(\ms G_0)}\Omega_a$  is the curvature of $P_a$. In particular
$$
f_a(\delta)\defeq \frac{1}{b(\ms G_0)}\int_{\Sigma_\delta}\Omega_a\in\mathbb Z.
$$
Since $f_a(\delta)$ depends continuously on $\delta$, $f_a(\delta)$ is constant. The result extend by linearity to all $a$. \end{proof}

Let us now consider the following 2-forms on $\X$ (using the convention of paragraph \ref{prel-form})
\begin{align}
\Omega_0&\defeq i\cwdot\sum_{\alpha\in\Pi}\Kill{\omega_{-\alpha}}{\omega_{\alpha}}\cr
\Omega_1&\defeq i\cwdot\Kill{\omega_{\eta}}{\omega_{-\eta}}.
\end{align}
Then we have
\begin{proposition}\label{pro:area}
The form $\Omega_0$ and $\Omega_1$ are positive on any cyclic surface. Moreover
if $\Sigma$ be a cyclic surface in $\X_0$ and $p$ is the projection of $\X_0$ to $\ms S(G_0)$,
then
$$
\int_{\Sigma}\Omega_0+\Omega_1=\operatorname{Area}(p(\Sigma)).
$$
\end{proposition}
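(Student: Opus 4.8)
The plan is to reduce both assertions to three facts already at our disposal: the reality condition of Definition \ref{cyc-map}, the sign Proposition \ref{Sign}, and the area formula \eqref{area-phi} of Corollary \ref{coro:Higgs-cyc}. I work throughout on the surface and write $\Phi=f^*\phi$, $\Phi^\dagger=f^*\phi^\dagger$. Recall that $Z=\{-\alpha\mid\alpha\in\Pi\}\cup\{\eta\}$, so that by Proposition \ref{prop:Higgs-cyc} the form $\phi=\omega_\eta+\sum_{\alpha\in\Pi}\omega_{-\alpha}$ is of type $(1,0)$ on $\Sigma$, while $\phi^\dagger=\omega_{-\eta}+\sum_{\alpha\in\Pi}\omega_\alpha$ is of type $(0,1)$.

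First I would extract the consequences of the reality condition. Since the antilinear involution $\rho$ maps $\mc G_\mu$ to $\mc G_{-\mu}$, comparing components in the identity $f^*(\rho(\phi))=-f^*(\phi^\dagger)$ of Assertion \eqref{def:cyc3} gives, on $\Sigma$,
\begin{align*}
\rho(\omega_\eta)&=-\omega_{-\eta}, & \rho(\omega_{-\alpha})&=-\omega_\alpha\quad(\alpha\in\Pi).
\end{align*}
Feeding these into the definitions of $\Omega_0$ and $\Omega_1$ yields
\begin{align*}
\Omega_1\VS=-i\cwdot\Kill{\omega_\eta}{\rho(\omega_\eta)},\qquad
\Omega_0\VS=-i\cwdot\sum_{\alpha\in\Pi}\Kill{\omega_{-\alpha}}{\rho(\omega_{-\alpha})}.
\end{align*}
As $\omega_\eta$ and each $\omega_{-\alpha}$ are of type $(1,0)$, the pointwise inequality established inside the proof of Proposition \ref{Sign} (namely $i\cwdot\Kill{\beta}{\rho(\beta)}(u,Ju)\le 0$ for $\beta$ of type $(1,0)$) shows that $\Omega_0\VS$ and $\Omega_1\VS$ are non-negative $(1,1)$-forms; and since the simple root forms $\omega_\alpha$, hence $\omega_{-\alpha}=-\rho(\omega_\alpha)$, never vanish by Assertion \eqref{def:cyc6}, the form $\Omega_0$ is in fact strictly positive. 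This proves the first assertion.

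For the identity I would expand $\Kill{\Phi}{\Phi^\dagger}$ using that the root space decomposition is Killing-orthogonal, so that $\Kill{\mc G_\mu}{\mc G_\nu}=0$ unless $\mu+\nu=0$. Only the pairs $(\eta,-\eta)$ and $(-\alpha,\alpha)$ contribute, whence
\begin{align*}
\Kill{\Phi}{\Phi^\dagger}=\Kill{\omega_\eta}{\omega_{-\eta}}+\sum_{\alpha\in\Pi}\Kill{\omega_{-\alpha}}{\omega_\alpha}.
\end{align*}
Multiplying by $i$ identifies the right-hand side with $(\Omega_0+\Omega_1)\VS$. Integrating over $\Sigma$ and invoking Equation \eqref{area-phi} of Corollary \ref{coro:Higgs-cyc},
\begin{align*}
\int_\Sigma(\Omega_0+\Omega_1)=i\cwdot\int_\Sigma\Kill{\Phi}{\Phi^\dagger}=\operatorname{Area}(p(\Sigma)),
\end{align*}
which is the claimed equality, the area of $p(\Sigma)$ being that of the minimal map $p\circ f$.

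The computation is entirely routine; the only delicate point, and the one I would check most carefully, is the tracking of signs, since $\rho$ is antilinear and conjugates the scalar part of a form. It is precisely the sign in the reality condition $\rho(\phi)=-\phi^\dagger$ that turns the non-positive expressions furnished by Proposition \ref{Sign} into the positive forms $\Omega_0$ and $\Omega_1$. The vanishing of the cross terms in $\Kill{\Phi}{\Phi^\dagger}$ is immediate from Killing-orthogonality, so beyond this bookkeeping there is no genuine obstacle.
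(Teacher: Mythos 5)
Your proposal is correct and follows essentially the same route as the paper's proof: the reality condition $\rho(\omega_\beta)=-\omega_{-\beta}$ together with the $(1,0)$ type of $\omega_\eta$ and $\omega_{-\alpha}$ gives positivity via Proposition \ref{Sign}, and the Killing-orthogonality of root spaces combined with Equation \eqref{area-phi} gives the area identity. Your only refinement is to invoke the pointwise inequality from the proof of Proposition \ref{Sign} rather than the proposition itself (which is stated as an integral inequality) — a small but legitimate sharpening of the citation.
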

\begin{proof} Recall that for a cyclic surface  $\omega_\eta$ and $\omega_{-\alpha}$ are of type $(1,0)$ for $\alpha\in\Pi$ and $\omega_{-\beta}=-\rho(\omega_\beta)$. Thus the positivity of $\Omega_0$ and $\Omega_1$ on cyclic surfaces  follows from Proposition \ref{Sign}. From Equation \eqref{area-phi}, it follows
$$
\operatorname{Area}(p(\Sigma))=i.\int_\Sigma\Kill{\Phi}{\Phi^\dagger}. 
$$
Recall that
\begin{align}
 \Phi&=\omega_\eta+\sum_{\alpha\in\Pi}\omega_{-\alpha},\cr
 \Phi^\dagger&=\omega_{-\eta}+\sum_{\alpha\in\Pi}\omega_{\alpha}.
 \end{align}
Thus since $\mc G_\alpha$ and $\mc G_\beta$ are orthogonal with respect to the Killing form unless $\alpha+\beta=0$. It follows that
\begin{align*}
i\cwdot\Kill{\Phi}{\Phi^\dagger}&=i\cwdot\Kill{\omega_{\eta}}{\omega_{-\eta}}+i\cwdot\sum_{\alpha\in\Pi}\Kill{\omega_{-\alpha}}{\omega_{\alpha}}\cr 
&=\Omega_0+\Omega_1.
 \end{align*}
\end{proof}

We finally will need
\begin{proposition}\label{pro:u} Let $\Pi$ be the set of simple roots in $\ms G_0$. There there exists a unique element  $u_0$ in $\mk h^*$, such that 
for any simple root $\alpha$ and $X\in \mk g_{-\alpha}$, $Y\in \mk g_{\alpha}$, we have
$$
\Kill{X}{Y}=\braket{u_0\mid[X,Y]}.
$$
Moreover, there exists a positive constant $k_0$, so that if
$X\in \mk g_{-\eta}$, $Y\in \mk g_{\eta}$, we have
$$
k_0\cwdot \Kill{X}{Y}=\braket{u_0\mid[X,Y]}.
$$
\end{proposition}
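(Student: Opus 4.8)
The plan is to exhibit $u_0$ explicitly as (minus) the Killing dual of the Kostant element $a$, and then reduce the whole statement to a single computation on the Chevalley generators. First I would record the structural fact that for $X\in\mk g_{-\alpha}$ and $Y\in\mk g_{\alpha}$ the bracket $[X,Y]$ lies in $\mk h$ (indeed $[\mk g_{-\alpha},\mk g_\alpha]\subset\mk h$) and is a scalar multiple of ${\rm h}_\alpha$. Since both $\Kill{X}{Y}$ and $[X,Y]$ are bilinear in $(X,Y)$, it suffices to verify every asserted identity on the single pair $X={\rm x}_{-\alpha}$, $Y={\rm x}_\alpha$. From the Chevalley relation $[{\rm x}_\alpha,{\rm x}_{-\alpha}]=-{\rm h}_\alpha$ I get $[{\rm x}_{-\alpha},{\rm x}_\alpha]={\rm h}_\alpha$, and using the general identity $[u,v]=\Kill{u}{v}\,t_\alpha$ for $u\in\mk g_\alpha$, $v\in\mk g_{-\alpha}$, where $t_\alpha=\tfrac{\Kill{\alpha}{\alpha}}{2}{\rm h}_\alpha$ is the Killing dual of $\alpha$ (so that $-{\rm h}_\alpha=-\tfrac{2}{\Kill{\alpha}{\alpha}}t_\alpha$), I read off $\Kill{{\rm x}_{-\alpha}}{{\rm x}_\alpha}=-\tfrac{2}{\Kill{\alpha}{\alpha}}$.

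Next I would define $u_0\in\mk h^*$ by $\braket{u_0\mid h}=-\Kill{a}{h}$ for all $h\in\mk h$, where $a=\tfrac12\sum_{\beta\in\Delta^+}{\rm h}_\beta$ is the element of Proposition \ref{prop:Kostant}. Its defining property $\alpha(a)=\deg(\alpha)$ together with $\Kill{{\rm h}_\alpha}{a}=\tfrac{2}{\Kill{\alpha}{\alpha}}\alpha(a)$ yields, for any root $\alpha$,
\[
\braket{u_0\mid{\rm h}_\alpha}=-\Kill{a}{{\rm h}_\alpha}=-\tfrac{2}{\Kill{\alpha}{\alpha}}\,\deg(\alpha).
\]
For a simple root $\deg(\alpha)=1$, so $\braket{u_0\mid{\rm h}_\alpha}=-\tfrac{2}{\Kill{\alpha}{\alpha}}=\Kill{{\rm x}_{-\alpha}}{{\rm x}_\alpha}$, which by the bilinearity reduction is exactly the first identity $\Kill{X}{Y}=\braket{u_0\mid[X,Y]}$.

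For uniqueness I would observe that the first identity forces $\braket{u_0\mid{\rm h}_\alpha}=-\tfrac{2}{\Kill{\alpha}{\alpha}}$ for every simple root $\alpha$; since the simple coroots $\{{\rm h}_\alpha\}_{\alpha\in\Pi}$ form a basis of $\mk h$, a functional on $\mk h$ is determined by these values, so $u_0$ is unique. For the longest root I would rerun the computation with $X={\rm x}_{-\eta}$, $Y={\rm x}_\eta$: then $[X,Y]={\rm h}_\eta$, $\Kill{X}{Y}=-\tfrac{2}{\Kill{\eta}{\eta}}$, while $\braket{u_0\mid{\rm h}_\eta}=-\tfrac{2}{\Kill{\eta}{\eta}}\deg(\eta)$, whence $\braket{u_0\mid[X,Y]}=\deg(\eta)\,\Kill{X}{Y}$. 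This gives $k_0=\deg(\eta)=m_\ell$ by Proposition \ref{etaHiggs}, a positive integer (equal to $m_2$ in rank $2$, i.e. $2,3,5$), as required.

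I do not expect a genuine obstacle: the statement is linear algebra inside the root system, and the only thing to watch is the bookkeeping of conventions fixed earlier — the sign in $[{\rm x}_\alpha,{\rm x}_{-\alpha}]=-{\rm h}_\alpha$ and the normalisation $\Kill{{\rm h}_\alpha}{u}=\tfrac{2}{\Kill{\alpha}{\alpha}}\alpha(u)$. The one conceptual point worth isolating is that a single $u_0$ cannot reproduce $\Kill{\cdot}{\cdot}$ on both the simple and the longest root spaces without a rescaling, and the factor is exactly $\deg(\eta)$; this explains both why the constant $k_0$ is needed and why it is forced to be positive.
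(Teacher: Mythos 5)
Your proof is correct, and it actually establishes slightly more than the paper does. Both arguments begin identically: bilinearity together with $\dim\mk g_{\pm\alpha}=1$ reduces every identity to the single pair $({\rm x}_{-\alpha},{\rm x}_{\alpha})$, and uniqueness follows in both cases because the simple coroots form a basis of $\mk h$. The difference is in how $u_0$ and $k_0$ are produced. The paper never evaluates the Killing numbers $\Kill{{\rm x}_\alpha}{{\rm x}_{-\alpha}}$: it defines $u_0$ implicitly by prescribing its values on the simple coroots in terms of these numbers, and then obtains only the \emph{positivity} of $k_0$ from two sign facts — the Cartan involution of Proposition \ref{UniqueCompact} gives $\Kill{{\rm x}_\alpha}{{\rm x}_{-\alpha}}=\Kill{{\rm x}_\alpha}{\rho({\rm x}_\alpha)}<0$, and ${\rm h}_\eta$ is a positive linear combination of the simple coroots. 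You instead compute $\Kill{{\rm x}_\alpha}{{\rm x}_{-\alpha}}=-2/\Kill{\alpha}{\alpha}$ exactly, via the standard identity $[x,y]=\Kill{x}{y}\,t_\alpha$ with $t_\alpha\in\mk h$ the Killing dual of $\alpha$; this lets you exhibit $u_0$ in closed form as the negative Killing dual of the Kostant element $a$ (using $\alpha(a)=\deg(\alpha)$ from Proposition \ref{prop:Kostant}) and to evaluate the constant exactly, $k_0=\deg(\eta)=m_\ell$, rather than merely proving $k_0>0$. In particular you need neither the Cartan involution nor the fact that ${\rm h}_\eta$ expands with positive coefficients in the simple coroots; what the paper's route buys in exchange is that no normalisation of the Killing form on root vectors is ever needed, only signs. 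A small side remark: your bracket bookkeeping ($[{\rm x}_{-\alpha},{\rm x}_\alpha]=+{\rm h}_\alpha$, deduced from the paper's convention $[{\rm x}_\alpha,{\rm x}_{-\alpha}]=-{\rm h}_\alpha$) is the consistent one; the displayed formula $\braket{u_0\mid[X,Y]}=-xy\cwdot\Kill{u_0}{{\rm h}_\alpha}$ in the paper's own proof carries a spurious minus sign, which propagates self-consistently (the paper's $u_0$ is the negative of yours) and so does not affect its conclusion that $k_0>0$.
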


\noindent{\em Proof:} Let us choose a Chevalley basis $\{\rm x_\alpha\}_{\alpha\in\Delta}$. Let us write $X=x\cwdot{\rm x_{-\alpha}}$, $Y=y\cwdot{\rm x_{\alpha}}$, then
 $$
 \Kill{X}{Y}=x\cwdot y\cwdot  \Kill{\rm x_\alpha}{\rm x_{-\alpha}}.
 $$
 On the other hand
$$
\braket{u_0\mid[X,Y]}=-x\cwdot y \cwdot\Kill{u_0}{{\rm h}_\alpha}.
$$ 
Thus $u_0$ is uniquely determined by
$$
\Kill{u_0}{{\rm h}_\alpha}=-\Kill{\rm x_\alpha}{\rm x_{-\alpha}}.
$$
We may choose a Cartan involution so that ${\rm x}_{-\alpha}=\rho({\rm x_\alpha})$. Thus $\Kill{\rm x_\alpha}{\rm x_{-\alpha}}<0$.
Since 
$$
{\rm h}_\eta=\sum_{\alpha\in\Pi} r_\alpha\cwdot {\rm h}_\alpha,
$$
with $r_\alpha>0$. It follows that
$$
\Kill{u_0}{{\rm h}_\eta}>0.
$$
In particular, if $X\in \mk g_\eta$ and $Y\in\mk g_{-\eta}$ we have
$$
k_0\cwdot\Kill{X}{Y}=\braket{u_0\mid[X,Y]},
$$
where
$$
k_0=-\frac{\Kill{u_0}{{\rm h}_\eta}}{\Kill{\rm x_\eta}{\rm x_{-\eta}}}>0.
$$
\qedhere
As a corollary of this proposition one obtains immediately
\begin{corollary}\label{cor:u}
Let $u_0$ be defined as in Proposition \ref{pro:u} and $v_0=i\cwdot u_0$, then
$$
\Omega_{v_0}=\Omega_0-k_0\cwdot\Omega_1.
$$
\end{corollary}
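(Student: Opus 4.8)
The plan is to exploit that $v_0=i\cwdot u_0$ lies in $\mk h^*$, so that the pairing $\braket{v_0\mid\cdot}$ annihilates every root space and only detects the $\mc H$-valued part $\pi_0(\omega\ww\omega)$ of $\omega\ww\omega$. Thus I would first reduce $\Omega_{v_0}=\braket{v_0\mid\omega\ww\omega}$ to $\braket{v_0\mid\pi_0(\omega\ww\omega)}$. I would carry out the computation along a cyclic surface, which is the setting in which the identity is applied: there $\omega_0=\omega_1=0$ and hence $\omega=\phi+\phi^\dagger$, so that by Equation \eqref{zeta090} the surviving part is $\pi_0(\omega\ww\omega)=2\cwdot\phi\ww\phi^\dagger$. (For $\ms{SL}(3,\mathbb R)$ there are no intermediate roots and the identity holds verbatim on $\X$; for $\ms{Sp}(4,\mathbb R)$ and $\ms G_{2,0}$ the intermediate root spaces must be suppressed, which is exactly what the vanishing $\omega_1\VS=0$ of a cyclic surface does.)

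Next I would unfold $\phi\ww\phi^\dagger$ over roots. By Proposition \ref{crucial} this form is already $\mc H$-valued, and the only non-vanishing brackets come from pairs $\alpha,-\alpha$ with $\alpha\in Z$ and $-\alpha\in Z^\dagger$, so $\phi\ww\phi^\dagger=\sum_{\alpha\in Z}\omega_\alpha\ww\omega_{-\alpha}$. The decisive structural point is that $Z=\{-\beta:\beta\in\Pi\}\cup\{\eta\}$, so precisely the simple roots and the longest root $\eta$ contribute; every intermediate positive root lies in $\mk g_1$ and its $\omega$-component has already vanished on the cyclic surface. I would then apply Proposition \ref{pro:u} termwise. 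For a simple root $\beta$, taking $X\in\mk g_{-\beta}$ and $Y\in\mk g_\beta$, the proposition gives $\braket{u_0\mid\omega_{-\beta}\ww\omega_\beta}=\Kill{\omega_{-\beta}}{\omega_\beta}$, and summing over $\Pi$ with the factor $i$ reproduces $\Omega_0$; for $\eta$, taking $X\in\mk g_{-\eta}$, $Y\in\mk g_\eta$, it gives $\braket{u_0\mid\omega_{-\eta}\ww\omega_\eta}=k_0\cwdot\Kill{\omega_{-\eta}}{\omega_\eta}$.

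The step requiring genuine care is the sign bookkeeping between the wedge product and the Killing pairing. Because both are built from $1$-forms, the wedge is symmetric, $\omega_\eta\ww\omega_{-\eta}=\omega_{-\eta}\ww\omega_\eta$, whereas the Killing pairing is antisymmetric, $\Kill{\omega_{-\eta}}{\omega_\eta}=-\Kill{\omega_\eta}{\omega_{-\eta}}$; this sign is exactly what turns the $\eta$-contribution into $-k_0\Omega_1$ while leaving the simple-root contributions as $+\Omega_0$. Assembling these and multiplying through by the $i$ coming from $v_0=i\cwdot u_0$ then yields $\Omega_{v_0}=\Omega_0-k_0\cwdot\Omega_1$. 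The main obstacle is thus organisational rather than conceptual: matching each $\braket{u_0\mid\omega_\gamma\ww\omega_{-\gamma}}$ to the correct orientation of Proposition \ref{pro:u} (stated for $X\in\mk g_{-\gamma}$, $Y\in\mk g_\gamma$), reconciling the factor $2\cwdot\phi\ww\phi^\dagger$ with the normalisation of $\omega\ww\omega$ fixed in Paragraph \ref{prel-form}, and confirming that the intermediate root components genuinely drop out on a cyclic surface.
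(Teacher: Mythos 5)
Your plan is the argument the paper intends: Corollary \ref{cor:u} is given no proof beyond ``one obtains immediately'', and the implicit computation is exactly yours --- pairing with $v_0\in\mk h^*$ kills all root-space components of $\omega\ww\omega$, the $\mc H$-valued part is the sum of the pairs $\omega_\alpha\ww\omega_{-\alpha}$ with $\alpha\in Z$, Proposition \ref{pro:u} is applied termwise, and the minus sign in front of $k_0\cwdot\Omega_1$ comes precisely from the symmetry of $\ww$ on $1$-forms against the antisymmetry of $\Kill{\cdot}{\cdot}$ on $1$-forms. Your first ``point of care'' is moreover a genuine necessity, not a convenience: for $\ms{Sp}(4,\mathbb R)$ and $\ms G_{2,0}$ an intermediate positive root $\gamma\not\in Z\cup Z^\dagger$ contributes $2i\cwdot\braket{u_0\mid\omega_{-\gamma}\ww\omega_\gamma}$ to $\Omega_{v_0}$, and this $2$-form is not identically zero on $\X$, because $\braket{u_0\mid h_\gamma}\not=0$ (the paper's own proof of Proposition \ref{pro:u} gives $\Kill{u_0}{h_\alpha}>0$ for every simple $\alpha$, and $h_\gamma$ is a non-negative combination of the $h_\alpha$). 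So the Corollary is false as an identity of forms on $\X$ and holds only along cyclic surfaces, where $\omega_1$ pulls back to zero; since that is the only place it is used (the proof of Theorem \ref{area-rigid}), your reading is the correct one.

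The one point you leave hanging, however, does not ``reconcile'': the factor of $2$ is a genuine discrepancy with the stated formula, located in the paper rather than in your argument. Carrying your own steps to the end, on a cyclic surface $\pi_0(\omega\ww\omega)=2\cwdot\phi\ww\phi^\dagger$ by Equation \eqref{zeta090}, and your termwise application of Proposition \ref{pro:u} gives $i\cwdot\braket{u_0\mid\phi\ww\phi^\dagger}=\Omega_0-k_0\cwdot\Omega_1$; hence $\Omega_{v_0}\vert_\Sigma=2\left(\Omega_0-k_0\cwdot\Omega_1\right)$. This relative factor is convention-independent: each unordered pair $\{\omega_\alpha,\omega_{-\alpha}\}$ occurs twice in the quadratic expression $\omega\ww\omega$ but only once in the definitions of $\Omega_0$ and $\Omega_1$, so no renormalisation of the scalar wedge can remove it (it is of the same nature as the factor-of-two tension between Equation \eqref{curvfund} and the definition of $\ww$ in Paragraph \ref{prel-form}). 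The discrepancy is harmless where the Corollary is used: the area rigidity argument only needs the $\delta$-independence of $\int_{\Sigma_\delta}\Omega_{v_0}$ (Proposition \ref{omegatopo}), its evaluation on the Fuchsian locus, and the positivity of $\Omega_1$, all of which survive replacing $\Omega_{v_0}$ by $\tfrac12\Omega_{v_0}$. Your write-up should therefore prove $\Omega_{v_0}\vert_\Sigma=2(\Omega_0-k_0\cwdot\Omega_1)$ and remark that the constant is immaterial, rather than assert the paper's normalisation and defer the bookkeeping.
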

\subsection{Proof of the Area Rigidity Theorem}
We can now prove Theorem \ref{area-rigid}. From Corollary \ref{cor:u} and Proposition \ref{pro:area}, on gets that
\begin{align}
\operatorname{Area}(\pi(\Sigma_\delta))=\int_{\Sigma_{\delta}}\Omega_{v_0}+ (k_0+1)\int_{\Sigma_{\delta}}\Omega_1\,.
\end{align}
If $\delta_0$ is a Fuchsian representation, then the corresponding cyclic surface is Fuchsian (see Proposition \ref{fuchs}) and thus $\omega_\eta$ and  $\Omega_1$ vanish. Thus we get that if $\delta_0$ is Fuchsian
\begin{align}
\int_{\Sigma_{\delta_0}}\Omega_{v_0}
=\operatorname{Area}(\Sigma_{\delta_0})=c(\ms G_0)\chi(S)\,.
\end{align}
By Proposition \ref{omegatopo} $\int_{\Sigma_{\delta}}\Omega_{v_0}$ does not depend on $\delta$.
It thus follows that
\begin{align}
\operatorname{MinArea}(\delta)&=\operatorname{Area}(\pi(\Sigma_\delta))\cr
&=c(\ms G_0)\chi(S)+(k_0+1)\int_{\Sigma_\delta}\Omega_1\,.
\end{align}
The result now follows from the fact that $\Omega_1$ is positive on cyclic surfaces by Proposition \ref{pro:area}. Moreover $\Omega_1$ vanishes if and only if $\omega_\eta$ vanishes, but  $\omega_\eta$ vanishes if and only if the Higgs field takes values in $\sum_{\alpha\in\Pi}\mc G_\alpha$ -- that is $\delta$ is Fuchsian ( see Theorem \ref{th:H2} ).

\providecommand{\bysame}{\leavevmode\hbox to3em{\hrulefill}\thinspace}
\providecommand{\MR}{\relax\ifhmode\unskip\space\fi MR }
% \MRhref is called by the amsart/book/proc definition of \MR.
\providecommand{\MRhref}[2]{%
  \href{http://www.ams.org/mathscinet-getitem?mr=#1}{#2}
}
\providecommand{\href}[2]{#2}

\end{document}